\newcommand\undermat[3]{%
  \makebox[0pt][l]{$\smash{\underbrace{\phantom{%
    \begin{matrix}#2\end{matrix}}}_{\text{$#1$}}}$}#3}
\newcommand{\myol}[2][3]{{}\mkern#1mu\overline{\mkern-#1mu#2}}
\newcounter{conditioncounter}
\newcommand*{\cond}[1]{\renewcommand{\theconditioncounter}{#1}\refstepcounter{conditioncounter}\textbf{(\theconditioncounter)}\label{cond:#1}}
\newcommand*{\condu}[1]{\renewcommand{\theconditioncounter}{#1}\refstepcounter{conditioncounter}\textbf{(\underline\theconditioncounter)}\label{condu:#1}}
\newcommand*{\condo}[1]{\renewcommand{\theconditioncounter}{#1}\refstepcounter{conditioncounter}($\overline{\mbox{\theconditioncounter}}$)\label{condo:#1}}
\newcommand*{\refcond}[1]{(\ref{cond:#1})}
\newcommand*{\refcondu}[1]{(\underline{\ref{condu:#1}})}
\newcommand*{\refcondo}[1]{($\myol{\mbox{\ref{condo:#1}}}$)}
\newcommand*{\refcondot}[1]{($\overline{\mbox{\ref{condo:#1}}}$)}
\newcommand{\T}{\mathbb{T}}
\newcommand{\N}{\mathbb{N}}
\newcommand{\hati}{\hat\imath}
\newcommand{\floor}[1]{\lfloor #1\rfloor}
\newcommand{\ceil}[1]{\lceil #1\rceil}
\newcommand{\norm}[1]{\left\|#1\right\|}
\newcommand{\normhs}[1]{\left\|#1\right\|_{\mathrm{HS}}}
\newcommand{\C}{\mathbb{C}}
\renewcommand{\epsilon}{\varepsilon}
\newcommand{\ux}{{\underline{x}}}
\newcommand{\uE}{{\underline{E}}}
\newcommand{\upsi}{{\underline{\psi}}}
\newcommand{\oE}{{\overline{E}}}
\newcommand{\uiota}{{\underline{\iota}}}
\newcommand{\ox}{{\overline{x}}}
\let\deg\relax
\DeclareMathOperator{\deg}{deg}
\DeclareMathOperator{\spec}{spec}
\newcommand{\cI}{\mathcal{I}}
\newcommand{\fC}{\mathfrak{C}}
\newcommand{\fd}{\mathfrak{d}}
\newcommand{\fE}{\mathfrak{E}}
\newcommand{\fG}{\mathfrak{G}}
\newcommand{\fc}{\mathfrak{c}}
\newcommand{\fg}{\mathfrak{g}}
\newcommand{\fT}{\mathfrak{T}}
\newcommand{\fH}{\mathfrak{H}}
\newcommand{\fe}{\mathfrak{e}}
\newcommand{\fn}{\mathfrak{n}}
\newcommand{\fp}{\mathfrak{p}}
\newcommand{\fq}{\mathfrak{q}}
\newcommand{\uN}{\underline N}
\newcommand{\uPi}{\underline \Pi}
\newcommand{\car}{\mathop{\rm{Car}}\nolimits}
\newcommand{\Car}{\mbox{\rm Car}}
\newcommand{\nn}{\nonumber}
\newcommand{\cB}{{\mathcal{B}}}
\newcommand{\cD}{{\mathcal{D}}}
\newcommand{\cG}{{\mathcal{G}}}
\newcommand{\cS}{{\mathcal{S}}}
\newcommand{\cP}{{\mathcal{P}}}
\newcommand{\IC}{{\mathbb{C}}}
\newcommand{\Res}{\mathop{\rm{Res}}\nolimits}
\renewcommand{\mod}{{\rm mod\ }}
\newcommand{\be}{\begin{eqnarray}}
\newcommand{\ee}{\end{eqnarray}}
\newcommand{\R}{{\mathbb R}}
\newcommand{\tor}{{\mathbb T}}
\newcommand{\Z}{{\mathbb Z}}
\newcommand{\dist}{\mbox{\rm dist}}
\newcommand{\mes}{{\rm mes}}
\newcommand{\Proj}{{\rm Proj}}
\renewcommand{\mod}{{\rm{mod}\, }}
\newtheorem{theorem}{Theorem}[section]
\newtheorem{lemma}[theorem]{Lemma}
\newtheorem{cor}[theorem]{Corollary}
\newtheorem{prop}[theorem]{Proposition}
\newtheorem{thma}{Theorem}
\newtheorem{thmb}{Theorem}
\newtheorem{thmc}{Theorem}
\newtheorem{thmd}{Theorem}
\newtheorem{thme}{Theorem}
\theoremstyle{definition}
\newtheorem{defi}[theorem]{Definition}
\newtheorem{definition}[theorem]{Definition}
\theoremstyle{remark}
\newtheorem{remark}[theorem]{Remark}
\numberwithin{equation}{section}
\begin{document}
\title[On the Spectrum of Multi-Frequency Quasiperiodic Operators]
{On the Spectrum of Multi-Frequency  Quasiperiodic Schr\"odinger Operators
with Large Coupling}
\date{}
\author{Michael Goldstein, Wilhelm Schlag, Mircea Voda}

\address{Department of Mathematics, University of Toronto, Toronto, Ontario, Canada M5S 1A1}
\email{gold@math.toronto.edu}

\address{Department of Mathematics, The University of Chicago, 5734 S. University Ave., Chicago, IL 60637, U.S.A.}
\email{schlag@math.uchicago.edu}

\address{Department of Mathematics, The University of Chicago, 5734 S. University Ave., Chicago, IL 60637, U.S.A. and Department of Mathematics, University of Toronto, Toronto, Ontario, Canada M5S 1A1}
\email{mvoda@uchicago.edu}

\thanks{The first author was partially supported by an NSERC grant. The second
  author was partially supported by the NSF, DMS-1500696.}

\begin{abstract} We study multi-frequency quasiperiodic
  Schr\"{o}dinger operators on $\Z$. We
  prove that for a large  real analytic potential satisfying certain restrictions the spectrum
  consists of a single interval. The result is a consequence
  of a criterion for the spectrum to contain an
  interval at a given location that we establish non-perturbatively in
  the regime of positive Lyapunov exponent.
\end{abstract}

\maketitle\tableofcontents

\section{Introduction}\label{sec:intro}
In the last 40 years after the groundbreaking paper ~\cite{DinSin75}
the theory of quasiperiodic Schr\"odinger operators has been developed
extensively, see the monograph \cite{Bou05} for an overview and
\cite{JitMar16} for a survey of the more recent results.  For shifts
on a one-dimensional torus $ \T $ most of the results have been
established non-perturbatively, i.e., either in the regime of almost
reducibility or in the regime of positive Lyapunov exponent, and
Avila's global theory, see \cite{Avi15}, gives a qualitative spectral
picture, covering both regimes, for generic potentials. One of the
main results of the one-dimensional theory is the fact that the
spectrum is a Cantor set. For the case of the almost Mathieu operator
(corresponding to a cosine potential), this result has been proved for
any non-zero coupling and any irrational shift, see 
\cite{Pui04} and \cite{AviJit09,AviJit10}. For general analytic
potentials in the regime of positive Lyapunov
exponent with generic shift the Cantor structure of the spectrum has
been obtained in \cite{GolSch11}.

On the other
hand, shifts on a multidimensional torus $ \T^d $ turned out to be
harder to analyze and the theory is less developed, even in the
perturbative setting.
In particular, not much is known about the geometry of the spectrum
for multidimensional shifts.  In their pioneering paper
\cite{ChuSin89}, Chulaevsky and Sinai conjectured that in contrast to
the shift on the one-dimensional torus, for the two-dimensional shift
the spectrum can be an interval for generic large smooth
potentials. In this paper we prove this conjecture for large analytic potentials.

Heuristically, gaps in the spectrum of the one-frequency
operators are created by horizontal ``forbidden
zones'' appearing at the points of  intersection of the graphs of shifted finite
scale eigenvalues parametrized by phase,
see~\cite{Sin87,GolSch11}. In contrast to this, the heuristic
principle underlying \cite{ChuSin89} is that for multiple frequencies, the intersection curves
of the  graphs of shifted finite scale 
eigenvalues may not be too flat, thus preventing the appearance of the  horizontal  ``forbidden
zones'' and stopping the formation of gaps.  It is clear that some genericity
assumption on the potential function is needed for this to be true,
since potentials like $V(x,y)=v(x)$ lead to flat intersection curves
and have Cantor spectrum. Furthermore, the largeness of the potential
is also needed. Indeed, it is known that for small potentials with
atypical frequency vector the spectrum has gaps, see \cite{Bou02}. 

Implementing such an argument, appears to be very challenging for a
number of reasons. First, the analytical techniques available in
finite volume are less favorable (mainly the large deviation theorems
and everything that depends on them) as compared to the
case of one frequency. In particular, it is difficult to implement an
approach based on finite scale localization as in
\cite{GolSch11}. This is due to the fact that it is hard to handle
long chains of resonances and to control the intersections of the resonant
curves with the level sets of the eigenvalues. Second, it is
inevitable that the intersection curves of the graphs of shifted
finite scale eigenvalues flatten near the absolute extrema and
handling this situation seems to be a delicate matter.


In \cite{GolSchVod16} we addressed some of the issues regarding the
analytical techniques, including establishing finite scale
localization. We will use most of the basic tools from
\cite{GolSchVod16}. However, for the purpose of this paper one would
need a refined version of finite scale localization, beyond what is achieved in that paper.
We analyze the spectrum of the operator $H_N(x)$, $x\in \tor^d$, on a finite interval $[1,N]$ subject to Dirichlet boundary conditions.
To keep this spectrum under control requires resolving the following problem.
Given $E$ let $\mathcal{R}_N(E)$ be the set of all phases $x$ such that $E$ is in the spectrum of the 
operator $H_N(x)$. One has to identify phases $x\in \mathcal{R}_N(E)$ for which $x+n\omega$ is not too close to $\mathcal{R}_N(E)$
as $n$ runs in the interval $N\ll n< N^A$, $A\gg 1$. This issue, commonly referred to as {\em double resonances},  is well-known. 
Similar strategies, leading to the formation of intervals in the
spectrum, have been implemented for the skew-shift
in~\cite{Kru12} and for continuous two-dimensional Schr\"odinger
operators in~\cite{KarSht14}.
The main new device that we develop in this work,  consists of an elimination of double resonances for \emph{all}
shifts $x+h$, and not just the ``arithmetic ones'' $x+n\omega$. Of course the shift $h$ cannot be too small.
Although this problem looks  less accessible, it  turns out to provide more control on the  resonant set $\mathcal{R}_N(E)$ of the previous scale. 
The level sets $V(x)=E$ of the potential in question must satisfy the requirements of this more general elimination in order to launch the multi-scale analysis.
This is exactly the origin of our main condition on the potential, see Definition~\ref{defi:genericU} below. 

Furthermore, in order to show that the spectrum is actually an
interval,  we develop a Cartan type estimate that
controls the intersections of the level sets of an analytic function
near a non-degenerate extremum with their shifts.


The core of our approach is \emph{non-perturbative} and works in the
regime of positive Lyapunov exponent. More precisely, we develop two
non-perturbative inductive schemes, one leading to the formation of
intervals in the bulk of the spectrum and the other leading to
intervals at the edges of the spectrum. We will only use the largeness
of the potential to check that the initial inductive conditions are satisfied.

We introduce some notation and definitions that we need to state our
main result. We work with operators 
\begin{equation}\label{eq:H-lambda}
  [H_\lambda (x)\psi](n)= -\psi(n+1)-\psi(n-1) + \lambda V(x+n\omega)\psi(n),
\end{equation}
with $\lambda>0$ being a real parameter, and with the potential $V$ a real analytic function on
the torus $\tor^d$, $ \T=\R/\Z $, $ d\ge 2 $. We 
assume that the frequency vector $ \omega\in \T^d $ obeys
the standard Diophantine condition
\begin{equation}\label{eq:vecdiophant}
  \|k\cdot\omega\|\ge\frac{a}{|k|^{b}}\qquad\text{for all nonzero }k\in\Z^d,
\end{equation}
where $ a>0 $, $ b>d $ are some constants, $ \norm{\cdot} $ denotes
the usual norm on $\tor$, and $ |\cdot| $ denotes the sup-norm on
$ \Z^d$.  Unless otherwise stated, throughout the paper $ a,b $ will refer to the constants
from \cref{eq:vecdiophant}. In this paper we don't use
elimination of frequencies and our results apply to any Diophantine
frequency $ \omega $. To simplify notation, we omit dependence on $
\omega $ from notation whenever possible. The dependence on frequency
will still be reflected by having some of the constants depend on $
a,b $.

\begin{defi}\label{defi:genericU}
  We let $ \fG $ be the class of real-analytic functions $ V $ on
  $ \T^d $, $ d\ge 2 $, for which there exist constants
  $ \fc_0=\fc_0(V,d)\in(0,1) $, $ \fc_1=\fc_1(d)\in(0,1) $,
  $ \fC_0=\fC_0(V,d)> 1 $, such that the following properties hold.

  \medskip\noindent (i) $ V $ is a Morse function, i.e., all its critical
  points are non-degenerate.

  \medskip\noindent (ii) $ V $ attains each global extremum at just one point.

  \medskip\noindent (iii) Given $ h\in \T^d $, let
  \begin{equation*}
	g_{V,h,i,j}(x)= \det \begin{bmatrix}
      \partial_{x_i} V(x) & \partial_{x_j} V(x)\\
      \partial_{x_i} V(x+h) & \partial_{x_j} V(x+h)
    \end{bmatrix}.
  \end{equation*}
  For any $ i\neq j $,
  $K\ge \fC_0$, and any $\|h\|\ge \exp(-\mathfrak{c}_0K)$ we have
  \begin{equation*}
    \mes \{x_{\hati}\in \T^{d-1}: \min_{x_i}\left( |V(x+h)-V(x)|+|g_{V,h,i,j}(x)|
    \right)<\exp(-K)\}\le \exp(-K^{\fc_1}),
  \end{equation*}
  where $ x_{\hati}=(x_1,\dots,x_{i-1},x_{i+1},\dots,x_d) $.

  \medskip\noindent (iv) For any $ i $, $ K\ge \fC_0 $,
  $ \eta\in \R $, and $ h_0\in \R^d $, $ \norm{h_0}=1 $, we have
  \begin{gather*}
    \mes \{x_{\hati}\in \T^{d-1}:\min_{x_i} \left( |V(x)-\eta|+|\langle \nabla V(x),h_0 \rangle| \right)
    <\exp(-K)\}\le \exp(-K^{\fc_1}).
  \end{gather*}
\end{defi}

Recall that $ \spec H_\lambda(x) $ is known not to depend on the
phase. We will use the notation $ \cS_\lambda:=\spec H_\lambda (x) $.

\begin{thma}\label{thm:A}
  There exists $\lambda_0=\lambda_0(V,a,b,d)$ such that the following
  statements hold for
  $\lambda\ge \lambda_0$.

  \medskip\noindent
  (a) Assume that $ V $ attains its global minimum at exactly one
  non-degenerate critical point $ \ux $. Then there exists $ \uE\in \R
  $, $ |\lambda^{-1}\uE-V(\ux)|<\lambda^{-1/4} $, such that
  \begin{equation*}
	[\uE,\uE+\lambda\exp(-(\log\lambda)^{1/2})]\subset
    \cS_\lambda\quad \text{ and } \quad (-\infty,\uE)\cap \cS_\lambda=\emptyset.
  \end{equation*}
  An analogous statement holds relative to the global maximum of $ V $
  (using the notation $ \ox,\oE $).

  \medskip\noindent
  (b)  Assume that $V\in \mathfrak{G}$ and let $ \uE,\oE $ be as in
  (a). Then $ \cS_\lambda=[\uE,\oE] $.
\end{thma}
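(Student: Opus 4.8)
The plan is to set up two non-perturbative multi-scale inductions, one near the bottom edge $\uE$ and one in the bulk, and to show that together they force $\cS_\lambda$ to have no gaps in $[\uE,\oE]$. The starting point is part (a), which locates the edge $\uE$: for large $\lambda$ the potential $\lambda V$ is dominant, so near the unique non-degenerate global minimum $\ux$ one expects, by a Temple/variational argument at scale $N_0\sim(\log\lambda)^{C}$, that the finite-volume operators $H_{N_0}(x)$ for $x$ near $\ux$ produce an eigenvalue whose graph over phase space has a non-degenerate minimum with value $\approx\lambda V(\ux)$; controlling the bottom of the spectrum then amounts to showing this eigenvalue branch covers a full interval of length $\sim\lambda\exp(-(\log\lambda)^{1/2})$ to the right of $\uE$, while positivity of the Lyapunov exponent (valid for $\lambda\ge\lambda_0$, all $E$ in the relevant range, by the large-$\lambda$ theory from \cite{GolSchVod16}) plus the large deviation estimates rule out spectrum below $\uE$. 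The largeness of $\lambda$ enters only here, to verify the base-case inductive hypotheses.

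For part (b), fix $E\in(\uE,\oE)$; I want to show $E\in\cS_\lambda$. Following the strategy sketched in the introduction, one runs an induction on scales $N_{s+1}=N_s^{A}$ producing, at each scale, a point $x^{(s)}\in\cR_{N_s}(E)$ (so $E\in\spec H_{N_s}(x^{(s)})$) together with a quantitative non-resonance statement: $\dist(x^{(s)}+h,\cR_{N_s}(E))$ is not too small for all shifts $h$ with $\|h\|\ge\exp(-\fc_0 N_s^{\fc_1})$, the key innovation being that this is required for \emph{all} such $h$, not just arithmetic shifts $h=n\omega$. The inductive step uses finite-scale localization from \cite{GolSchVod16} to express eigenvalues at scale $N_{s+1}$ as perturbations of eigenvalues at scale $N_s$ localized near resonant boxes, so controlling $\cR_{N_{s+1}}(E)$ reduces to controlling intersections of the level set $\{V(x+h)-V(x)=0\}$ (more precisely the vanishing of the Wronskian-type determinant $g_{V,h,i,j}$) with the resonant set at the previous scale — and this is exactly what conditions (iii) and (iv) of Definition~\ref{defi:genericU} are built to supply, via the measure bounds $\exp(-K^{\fc_1})$. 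Passing to the limit, one gets a point $x^{(\infty)}$ with $E\in\spec H_\lambda(x^{(\infty)})=\cS_\lambda$. Near the edges $\uE,\oE$ the same scheme is run, but now the level sets flatten at the non-degenerate extremum, so condition (iv) together with a Cartan-type estimate on intersections of level sets of an analytic function near a non-degenerate extremum with their shifts (the second new device mentioned above) is needed; the two schemes (bulk and edge) are glued using the interval already produced in part (a).

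The main obstacle, and the technical heart of the paper, is the inductive step controlling $\cR_{N_{s+1}}(E)$ from $\cR_{N_s}(E)$ under the \emph{all-shifts} elimination: one must show that the ``bad'' phases — those $x^{(s)}\in\cR_{N_s}(E)$ whose shifts return close to $\cR_{N_s}(E)$, creating a double resonance that could open a gap at scale $N_{s+1}$ — form a set small enough (measure $\lesssim\exp(-N_s^{\fc_1})$, say) that a good non-resonant $x^{(s)}$ survives at every scale. This requires (a) a sufficiently sharp description of $\cR_{N_s}(E)$ as an approximate level set $\{V(x)\approx\lambda^{-1}E\}$ with quantitative regularity, coming from a refined finite-scale localization beyond \cite{GolSchVod16}; (b) the Diophantine property of $\omega$ to bound the number of relevant shifts $h=x^{(s)}-x^{(s)}+n\omega$-type returns; and (c) the genericity conditions (iii)–(iv) to ensure the intersection of these shifted level sets with $\cR_{N_s}(E)$ is transverse enough — the determinant $g_{V,h,i,j}$ measuring precisely the failure of the gradients of $V(\cdot)$ and $V(\cdot+h)$ to be parallel. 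Near the absolute extrema this transversality degrades and must be replaced by the Cartan-type intersection estimate; making that estimate uniform in $h$ down to the scale $\exp(-\fc_0K)$ is the delicate point flagged in the introduction.
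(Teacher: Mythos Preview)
Your overall strategy matches the paper's in broad strokes --- two inductions (bulk and edge), seeded perturbatively and propagated non-perturbatively, then glued --- but there is a genuine gap in how you deploy conditions (iii)--(iv) of \cref{defi:genericU}. You repeatedly reduce the inductive step to ``intersections of the level set $\{V(x+h)-V(x)=0\}$'' and describe $\cR_{N_s}(E)$ as ``an approximate level set $\{V(x)\approx\lambda^{-1}E\}$'', applying (iii)--(iv) at each scale. This breaks: the approximation $|E^{[-N_s',N_s'']}(x)-\lambda V(x)|=O(1)$ from \cref{lem:efextension2} is far too crude once $s\ge 1$, since at scale $N_s$ one must resolve resonances to accuracy $\exp(-N_s^{\beta})$. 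The paper's fix is to formulate abstract inductive conditions \refcond{A}--\refcond{E} (\cref{sec:bulk}) \emph{on the eigenvalue} $E^{[-N_s',N_s'']}$ and its parametrized level set $x_s(\phi,E)$ --- condition \refcond{D} is the eigenvalue analogue of (iii), condition \refcond{E} of (iv) --- and to prove (\cref{thm:D}) that these conditions self-propagate with no reference to $V$. Conditions (iii)--(iv) on $V$ appear only once, to verify the base case $s=0$ via \cref{prop:A-to-D}; the proof of \cref{thm:A} itself then just assembles \cref{prop:A-to-D}, \cref{prop:A-to-E}, \cref{thm:B}, \cref{thm:C}.

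A second misplacement: you invoke condition (iv) for the edge scheme, but part (a) uses only the Morse hypothesis (i)--(ii). The edge induction (conditions \refcondu{A}--\refcondu{D}, \cref{thm:E}) and its large-coupling seeding (\cref{prop:A-to-E}) rely solely on non-degeneracy of the extremum; the Cartan-type estimate of \cref{prop:levelsetshifts} is applied to the eigenvalue function $\uE^{[-N_s',N_s'']}$, not to $V$, and never touches (iii)--(iv). This is precisely why part (a) holds under strictly weaker hypotheses than part (b), a distinction your sketch elides.
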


\begin{remark}\label{rem:thmAquantify}
  (a) The constant $ \lambda_0(V,a,b,d) $ can be expressed explicitly,
  see the proof of \cref{thm:A}.

  \smallskip\noindent
  (b)
  The genericity of the assumptions on $ V $ will be addressed in
  \cite{Vod17}. More precisely, the following result will be established. Consider  real trigonometric
  polynomials of the form
  \begin{equation*}
    V(x)=\sum_{m\in \mathbb{Z}^d:|m|\le n} c_{m}e^{2\pi i m\cdot x},\quad x\in \mathbb{R}^d
  \end{equation*}
  of a given cumulative degree $n\ge 1$,
  $|m|:=\sum_{1\le j\le d}|m_j|$.  Then for almost all vectors
  $(c_m)_{|m|\le n}$ one has $V\in \mathfrak{G}$.  

  \smallskip\noindent (c) For the completeness of our paper we include
  a particular example of potential $ V\in \fG $ that can be obtained
  by the  methods from \cite{Vod17}. Namely, in \cref{sec:example}, we
  show that
  \begin{equation*}
    V(x,y)=\cos(2\pi x)+s\cos(2\pi y)
  \end{equation*}
  satisfies the
  assumptions of \cref{defi:genericU} for all $ s\in \R\setminus \{
  -1,0,1 \} $. We note that as $ s $ approaches $ \{ -1,0,1 \} $ our
  explicit value for $ \lambda_0 $ diverges to $ \infty $ and the
  geometry of the spectrum cannot be decided by continuity. Of course,
  for $ s=0 $ the spectrum is a Cantor set. However, for $ s=\pm 1 $,
  part (a) of \cref{thm:A} still applies and guarantees the existence
  of intervals at the edges of the spectrum.
  
  \smallskip\noindent (d) The measure estimates from conditions
  (iii) and (iv)  of \cref{defi:genericU} are Cartan type estimates (see
  \cref{sec:Cartan}). We note that one cannot apply Cartan's estimate
  directly to the functions from this conditions. Instead, the
  estimates can be obtained by applying Cartan's estimate to some
  resultants associated with these functions, see \cref{sec:example}.
\end{remark}

\smallskip As mentioned above, the derivation of \cref{thm:A} is based on two
non-perturbative statements in the regime of positive Lyapunov
exponent. Namely, \cref{thm:B} produces an interval in the spectrum in
the vicinity of a spectral value at which certain finite scale
conditions hold, and \cref{thm:C} shows that the spectrum is an
interval under certain additional finite scale conditions. Since the
conditions are rather technical, we do not state the theorems
here. Their statements can be found in \cref{sec:main-thm}.
The inductive conditions and their corresponding inductive theorems
are discussed
in \cref{sec:bulk} (see \cref{thm:D}) and \cref{sec:edges} (see \cref{thm:E}). In \cref{sec:A-to-DE} we show
how these conditions hold at large coupling, given a potential as in
\cref{thm:A}. Throughout the paper we will employ the basic tools
discussed in \cref{sec:basic-tools} for the non-perturbative regime
and in \cref{sec:perturbative-refinements} for large coupling. The
Cartan type estimate that we use to handle the edges of the spectrum
is discussed in \cref{sec:Cartan-Morse}.

\section{Basic Tools}\label{sec:basic-tools}

In this section we discuss some basic results that we will use
throughout the paper. The results will apply to a family of discrete
Schr\"odinger operators,
\begin{equation}  \label{eq:schr100}
  [H(x)\psi](n)= -\psi(n+1)-\psi(n-1)+ V(x+n\omega)\psi(n).
\end{equation}
with $ V $ real-analytic on $ \T^d $ and $ \omega $ as in
\cref{eq:vecdiophant}. Note that we omit the coupling constant
$ \lambda $ because the results of this section are non-perturbative.
We also assume that $ V $ extends complex analytically to
\begin{equation*}
  \tor^d_\rho:=\{x+iy : x\in\tor^d,\quad y\in \mathbb{R}^d,\quad |y|<\rho \},
\end{equation*}
with some $ \rho>0 $. Note that we use $ |\cdot| $ to denote the
sup-norm on $ \R^d $ and $ \norm{\cdot} $ to denote the Euclidean norm
on $ \R^d $. At the same time when we apply it to shifts on $ \T^d $, $
\norm{\cdot} $ will stand for the usual norm on $ \T^d $. It is well-known that for any real-analytic
function on $ \T^d $, such $ \rho=\rho(V) $ exists. To simplify some
later estimates we also assume $ \rho\le 1 $. Throughout the
paper, with the exception of \cref{sec:Cartan-Morse}, we reserve $
\rho $ for this constant.

We recall some standard notation. Given an interval $ [a,b]\subset \Z $, the transfer matrix is defined by
\begin{equation*}
  M_{[a,b]}(x,E)= \prod_{n=b}^a \begin{bmatrix}
	V(x+n\omega)-E & -1 \\
    1 & 0
  \end{bmatrix}.
\end{equation*}
We let $H_{[a,b]}(x)$ be the restriction of $H(x)$ to
the interval $[a,b]$ with Dirichlet boundary conditions and we
denote the corresponding Dirichlet determinant by $ f_{[a,b]}(x,E):= \det
(H_{[a,b]}(x)-E) $. We use $ E^{[a,b]}_j(x) $, $
\psi_j^{[a,b]}(x,\cdot) $ to denote the eigenpairs of $
H^{[a,b]}(x) $, with $ \psi_j^{[a,b]}(x,\cdot) $ being $
\ell^2 $-normalized. The
transfer matrix is related to the Dirichlet determinants
through the following  formula
\begin{equation}\label{eq:M-f}
  M_{[a,b]}(x,E) = \begin{bmatrix}
    f_{[a, b]} (x,E) & - f_{[a+1, b]} (x,E)\\
    f_{[a, b-1]} (x,E) & - f_{[a+1, b-1]}(x,E)
 \end{bmatrix}.
\end{equation}
We let $ M_N:=M_{[1,N]} $, $ H_N:=H_{[1,N]} $, $ f_N:=f_{[1,N]} $.
The {\em Lyapunov exponent} is defined by
\begin{equation*}
  L(E)=\lim_{N\to\infty} L_N(E)= \inf_{N} L_N(E)
  ,\quad L_N(E) = \frac{1}{N} \int_{\tor^d} \log\|M_N(x,E)\|\,dx.
\end{equation*}

Most of the results in this section {\em do not use} the fact that $V$
assumes only real values on the torus $\mathbb{T}^d$ and therefore
they also hold on $ \T^d+iy $, $ |y|<\rho/2 $, by replacing $ V $ with
$ V(\cdot+iy)$. In particular, this applies to all the results up to
and including \cref{cor:4.6zeros}. Of course, when we change the
potential, we also need to adjust the Lyapunov exponents. To this end
we define
\begin{equation}\label{eq:lapyomega}
\begin{split}
  L_N(y,E) &= \frac{1}{N}\int_{\mathbb{T}^d}  \log \|M_N(x+iy,E)\|\,dx, \\
  L(y,E) &= \lim_{N\to \infty} L_N(y,E).
\end{split}
\end{equation}

We will use some standard conventions. Unless stated otherwise,
the constants denoted by $ c,C $ might have different values each time
they are used. We let $ a\lesssim b $ denote
$ a\le Cb $ with some positive $ C $, $ a\ll b $ denote
$ a\le C b $ with a sufficiently large positive $ C $, and
$ a\simeq b $ stand for $ a\lesssim b $ and $ b\lesssim a $. 
It will be clear from the context what the implicit constants are allowed to
depend on. To emphasize the dependence on some parameter we may use it
as a subscript for the above symbols (e.g., $ a\simeq_d b $).

Our constants will depend on $ \omega $, $ V $, $ E $, $ d $, and $
\gamma $, where $ \gamma>0 $ will stand for a lower bound on the
Lyapunov exponent. The dependence on $ \omega $ will be through the
parameters $ a,b $ from \cref{eq:vecdiophant}. The dependence on $ V $
will be through $ \rho $ and
\begin{equation*}
  \norm{V}_\infty:=\sup \{ |V(z)|: z\in \T_{3\rho/4}^d \}.
\end{equation*}
The dependence on $ E $ will be uniform on bounded sets. In most cases
we leave the dependence on $ d $ implicit and, unless stated otherwise, all constants may depend on the
dimension $ d $.

When we work in the perturbative setting we will need to replace $ V $
by $ \lambda V $ and we will need explicit knowledge of the dependence
on $ \lambda $. This means that we need to keep track explicitly of
the dependence on $ \norm{V}_\infty $, $ E $ (because the range of
energies we need to consider depends on $ V $), and $ \gamma $ (note
that $ \rho $ remains unchanged when we introduce the coupling
constant). To this end we will use the quantity
\begin{equation*}
  S_{V,E}:=\log(3+\norm{V}_\infty+|E|).
\end{equation*}
This definition is motivated by the fact that
\begin{equation*}
  \norm{\begin{bmatrix}
      V(x+n\omega)-E & -1\\
      1 & 0
    \end{bmatrix}}\le 1+\norm{V}_\infty+|E|
\end{equation*}
and therefore
\begin{gather}
  \label{eq:monodr1}
  0\le \log \|M_N(x,E)\|\le N\log(1+\norm{V}_\infty+|E|),\\
  \label{eq:LEupperb1}
  0\le L_N(E)\le \log(1+\norm{V}_\infty+|E|).
\end{gather}
The choice of the absolute constant in  the definition of $ S_{V,E} $
is for the convenience of having $ S_{V,E}>1 $.
Since
\begin{equation*}
  \spec H_N(x)\subset [-2-\norm{V}_\infty,2+\norm{V}_\infty],
\end{equation*}
it will actually be enough to work with $ |E|\le \norm{V}_\infty+4
$ and when we want to suppress the dependence on $ E $ we will use
\begin{equation}\label{eq:SV}
  S_V:=\log(3+\norm{V}_\infty).
\end{equation}
Note that $ S_{V,E}\simeq S_V $ for $ |E|\le \norm{V}_\infty+4 $.

We will make repeated use of the observation that using the mean value
theorem and Cauchy estimates, we have
\begin{equation}\label{eq:stability-x}
  |E_j^{[a,b]}(x)-E_j^{[a,b]}(x_0)|\le \norm{H_{[a,b]}(x)-H_{[a,b]}(x_0)}
  \le C_\rho\norm{V}_\infty |x-x_0|.
\end{equation}
We will also use the following basic identity
\begin{equation}\label{eq:basic-identity}
  \spec  H_{m+[a,b]}(x)=\spec  H_{[a,b]}(x+m\omega).
\end{equation}

\subsection{Large Deviations Estimates}
We recall the Large Deviations Theorem (LDT) for the transfer
matrix. We refer to \cite{Bou05} and \cite{GolSch01} for two different
approaches to its proof. The particular formulation we give here is
based on \cite{GolSch01} (see Corollary 9.2 therein).

\begin{theorem}\label{thm:anyLDT}
  Assume $ E\in \C $.
  There exist $\sigma =\sigma(a,b)$, $\tau=\tau(a,b)$,
  $ \sigma,\tau\in (0,1) $, $ C_0=C_0(a,b,\rho) $,
  such that for $N\ge 1$ one has
  \begin{equation*}
    \mes \left\{ x\in\tor^d : |\log\|M_N(x,E)\|-NL_N(E)| > C_0S_{V,E}N^{1-\tau} \right\}
    < \exp(-N^{\sigma}).
  \end{equation*}
\end{theorem}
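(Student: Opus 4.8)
The plan is to reduce \cref{thm:anyLDT} to the general large deviation estimate for a bounded subharmonic function that is almost invariant under a Diophantine shift; for $ d=1 $ that estimate is \cite[Corollary~9.2]{GolSch01}, and its proof carries over to $ d\ge 2 $. From my point of view there are three things to do: (i) record the structural properties of $ v_N:=\frac1N\log\norm{M_N(\cdot,E)} $ that feed into the estimate; (ii) quote the estimate itself; and (iii) keep track of how the constants depend on $ \norm{V}_\infty $ and $ |E| $, so that everything comes out in terms of $ S_{V,E} $, which is what the later perturbative arguments need.

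First I would record the three properties of $ v_N $. Fix $ E\in\C $. The entries of $ M_N(x,E) $ are polynomials in $ V(x+\omega),\dots,V(x+N\omega) $, so $ M_N(\cdot,E) $ extends holomorphically to $ \tor^d_\rho $, and since $ \det M_N\equiv 1 $ and $ \norm{B_n}\le 1+\norm{V}_\infty+|E| $ on $ \tor^d_{3\rho/4} $ we get, as in \eqref{eq:monodr1}, that $ 0\le v_N\le\log(1+\norm{V}_\infty+|E|)\le S_{V,E} $ on $ \tor^d_{\rho/2} $; in particular, freezing all but one coordinate and complexifying that one, $ v_N $ is a bounded subharmonic function of $ x_j $ on $ \{|\Im x_j|<\rho/2\} $. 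Finally, with $ B_n(x)=\begin{bmatrix}V(x+n\omega)-E&-1\\1&0\end{bmatrix} $, the telescoping identity $ M_N(x+\omega,E)=B_{N+1}(x)\,M_N(x,E)\,B_1(x)^{-1} $, together with $ \det B_n=1 $ (whence $ \norm{B_n^{-1}}=\norm{B_n}\le 1+\norm{V}_\infty+|E| $ for $ 2\times2 $ matrices), gives the almost invariance
\begin{equation*}
  \norm{v_N(\cdot+\omega)-v_N(\cdot)}_{L^\infty(\tor^d)}\le\frac2N\log(1+\norm{V}_\infty+|E|)\le\frac{2S_{V,E}}{N}.
\end{equation*}

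Next I would invoke the abstract estimate: if $ u\colon\tor^d\to\R $ extends to a subharmonic function with $ |u|\le M $ on $ \{|\Im x_j|<\rho/2\} $ and satisfies $ \norm{u(\cdot+\omega)-u}_\infty\le\delta $ with $ \omega $ as in \eqref{eq:vecdiophant}, then
\begin{equation*}
  \mes\Bigl\{x\in\tor^d:\ \Bigl|u(x)-\int_{\tor^d}u\Bigr|>M\delta^{c}\Bigr\}<\exp(-\delta^{-c'})
\end{equation*}
for some $ c,c'\in(0,1) $ depending only on $ a,b,d $, valid once $ \delta $ is small in terms of $ a,b,\rho,d $. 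This is where the Diophantine hypothesis enters: combining the Riesz-representation bound $ |\hat u(k)|\le C_\rho M/|k| $ for bounded subharmonic $ u $ with $ |e^{2\pi i k\cdot\omega}-1|\ge c\,a\,|k|^{-b} $, one makes the low-frequency part of $ u-\int u $ small by averaging along a long $ \omega $-orbit and estimating the resulting Weyl sums, while the high-frequency tail is controlled by a Cartan-type bound for the sublevel sets of $ u $. Applying this with $ u=v_N $, $ M\simeq S_{V,E} $, $ \delta=2S_{V,E}/N $, and renaming the exponents yields the claimed inequality after multiplying through by $ N $ inside the absolute value and recalling $ NL_N(E)=\int_{\tor^d}\log\norm{M_N(x,E)}\,dx $; since every step used only the $ \sup $-norm of $ x\mapsto V(x+n\omega)-E $, the argument never sees whether $ E $ is real, and the exponents $ \sigma=\sigma(a,b) $, $ \tau=\tau(a,b) $ come out independent of $ \rho $, only $ C_0 $ depending on $ \rho $ (and, as throughout, on $ d $).

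The step I expect to be the real obstacle is the abstract subharmonic estimate itself, and inside it the control of the high-frequency tail: the $ |k|^{-1} $ decay of the Fourier coefficients of a bounded subharmonic function is not square-summable over $ \IZ^d $ when $ d\ge 2 $, so one cannot simply apply Chebyshev to the tail in $ L^2 $, and this is exactly the point at which the Cartan estimate for subharmonic functions, together with the Diophantine condition, must be used. By contrast, step (i) and the homogeneity bookkeeping in step (iii) are routine, and the passage from $ d=1 $ to $ d\ge 2 $ is purely notational.
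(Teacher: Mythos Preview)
The paper does not prove this theorem: it simply cites \cite{Bou05} and \cite{GolSch01} (specifically Corollary~9.2 of the latter) for the proof, and then moves on. Your sketch is precisely the \cite{GolSch01} approach---reduce to the abstract LDT for a bounded separately-subharmonic function that is almost shift-invariant, with the almost-invariance coming from the telescoping identity and the subharmonicity from the analyticity of $V$---so there is nothing to compare; you have reconstructed the argument the paper outsources, and you have correctly flagged the one nontrivial point (the high-frequency tail when $d\ge 2$, where $|\hat u(k)|\lesssim |k|^{-1}$ is not in $\ell^2(\Z^d)$ and one needs the BMO/John--Nirenberg route rather than a naive Chebyshev).
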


In \cite{GolSch08} it was shown (see Proposition 2.11 therein) that
in the the regime of positive Lyapunov exponent, the large deviations
estimate extends to the entries of the transfer matrix.

\begin{theorem}\label{thm:DirLDT}
  Assume $ E\in \C $, and $ L(E)> \gamma >0 $.
  There exist $\sigma =\sigma(a,b)$, $\tau=\tau(a,b)$,
  $ \sigma,\tau\in (0,1) $,
  such that for $N\ge N_0(V,a,b,E,\gamma)$ one has
  \begin{equation*}
    \mes \left\{ x\in\tor^d : |\log |f_N(x,E)|-NL_N(E)| > N^{1-\tau} \right\}
    < \exp(-N^{\sigma}).
  \end{equation*}
\end{theorem}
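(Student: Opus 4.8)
The statement is precisely \cite[Proposition~2.11]{GolSch08}, and the plan is to deduce it from \cref{thm:anyLDT} together with the positivity hypothesis $L(E)>\gamma>0$ and the subharmonic–function machinery already underlying \cref{thm:anyLDT} (in the style of \cite{GolSch01,Bou05}). Write $v_N(z):=\frac1N\log|f_N(z,E)|$. Since $f_N(\cdot,E)=\det(H_N(\cdot)-E)$ extends holomorphically to $\T^d_{\rho/2}$, the function $v_N$ is (pluri)subharmonic there, and by \cref{eq:M-f} and \cref{eq:monodr1} it obeys $v_N(z)\le\frac1N\log\|M_N(z,E)\|\le\log(1+\norm{V}_\infty+|E|)$ on $\T^d_{\rho/2}$, so its Riesz mass over fixed disks is $\lesssim 1$. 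Combining $|f_N|\le\|M_N\|$ with \cref{thm:anyLDT} gives $v_N(x)\le L_N(E)+C_0S_{V,E}N^{-\tau}$ off a set of measure $<\exp(-N^\sigma)$, and using the crude global bound on that exceptional set yields $\int_{\T^d}(v_N(x)-L_N(E))_+\,dx\lesssim N^{-\tau}$. Thus the theorem will follow, via the John--Nirenberg inequality for bounded subharmonic functions (Bourgain's lemma; cf. \cite{GolSch01,Bou05}), once one establishes the matching $L^1$ bound, and by the above this reduces to the lower bound on the torus average
\begin{equation*}
  \frac1N\int_{\T^d}\log|f_N(x,E)|\,dx\ \ge\ L_N(E)-o(1)
\end{equation*}
with a \emph{polynomial} rate in $N$.

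For that lower bound — which is where $L(E)>\gamma>0$ is genuinely used — I would invoke the Thouless formula. From $|f_N(x,E)|=\prod_j|E_j^{[1,N]}(x)-E|$ one gets $\frac1N\int_{\T^d}\log|f_N(x,E)|\,dx=\int_{\R}\log|E'-E|\,d\bar k_N(E')$, where $\bar k_N:=\frac1N\sum_j\int_{\T^d}\delta_{E_j^{[1,N]}(x)}\,dx$ is the averaged finite–scale density of states, while the classical Thouless formula gives $L(E)=\int_{\R}\log|E'-E|\,dk(E')$ with $k$ the integrated density of states. It then remains to quantify two facts: (a) $|L_N(E)-L(E)|\lesssim N^{-\kappa}$, and (b) $\big|\int_{\R}\log|E'-E|\,d(\bar k_N-k)(E')\big|\lesssim N^{-\kappa}$. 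Step (a) is exactly the point that needs the positive Lyapunov exponent (and the Diophantine condition \cref{eq:vecdiophant}): in that regime the finite–scale Lyapunov exponents converge to $L(E)$ at a polynomial rate, obtainable from the Avalanche Principle of \cite{GolSch01} applied to $M_N$ together with \cref{thm:anyLDT} at an intermediate scale. For step (b), $\bar k_N\to k$ weakly with a polynomial rate, but $\log|E'-E|$ has a logarithmic singularity at $E'=E$; this is controlled by the H\"older continuity of $k$ in the positive Lyapunov exponent regime (a result of \cite{GolSch01}) together with its finite–scale counterpart, a Wegner–type estimate keeping $\bar k_N((E-\delta,E+\delta))+k((E-\delta,E+\delta))\lesssim\delta^{c}$. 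Combining, $\frac1N\int_{\T^d}\log|f_N(x,E)|\,dx=L(E)+O(N^{-\kappa})=L_N(E)+O(N^{-\kappa})$, which is the desired lower bound (and, trivially from $|f_N|\le\|M_N\|$, also an upper bound).

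With the two–sided bound $\norm{v_N-L_N(E)}_{L^1(\T^d)}\lesssim N^{-\kappa}$ in hand — $(v_N-L_N)_+$ is small by the first paragraph, hence so is $(v_N-L_N)_-=(v_N-L_N)_+-(v_N-L_N)$ once the average is pinned down — one invokes the Bourgain/John--Nirenberg estimate: a subharmonic function on $\T^d_{\rho/2}$ bounded above by an absolute constant with small $L^1(\T^d)$ oscillation is, off an exponentially small set, uniformly close to its mean. Quantitatively this gives $\mes\{x\in\T^d:|v_N(x)-L_N(E)|>t\}<\exp(-c\,t^{\alpha}N^{\beta})$ with $\alpha,\beta$ depending only on $a,b$ (the multidimensional torus is handled slice–by–slice, or via its product structure, as in \cite{GolSch01}); taking $t=N^{-\tau}$ turns $|v_N-L_N|>t$ into $|\log|f_N|-NL_N|>N^{1-\tau}$ and produces the exceptional–set bound $<\exp(-N^\sigma)$, with $\sigma,\tau$ functions of $a,b$ alone, all remaining dependence on $V,E,\gamma$ being absorbed into the threshold $N_0(V,a,b,E,\gamma)$ (it enters only through the onset scales of \cref{thm:anyLDT}, of the rate estimates in (a)--(b), and of the H\"older/Wegner estimates).

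The main obstacle is step (b), and more precisely obtaining the convergence $\frac1N\int_{\T^d}\log|f_N|\to L$ with a \emph{polynomial} rather than merely an $o(1)$ rate: a subpolynomial rate would only yield an exceptional set of size $\exp(-(\log N)^{O(1)})$, far short of the required $\exp(-N^\sigma)$. This forces one to pair a quantitative weak–convergence rate for $\bar k_N$ with uniform H\"older/Wegner control of the spectral mass near $E$, and both of these rest essentially on $L(E)>\gamma>0$ and on \cref{eq:vecdiophant}; the whole argument degrades when the Lyapunov exponent collapses. By comparison, the Thouless identity at finite volume, the subharmonicity of $\log|f_N|$, and the John--Nirenberg upgrade are routine, and the bookkeeping that keeps $\sigma,\tau$ dependent only on $a,b$ is a matter of care.
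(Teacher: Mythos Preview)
The paper does not reprove this theorem; it cites \cite[Prop.~2.11]{GolSch08}, and Section~3 establishes a large-coupling variant (\cref{lem:condLDTf}, \cref{prop:LLBasic}) by the same method. That method is \emph{not} the Thouless route: one writes $|f_N|=\lVert P M_N P\rVert$ with $P$ the rank-one projection onto the first coordinate (see \cref{eq:fN-MN}), chunks $M_N$ into blocks of size $\ell\ll N$, and applies the Avalanche Principle. The interior blocks are controlled by \cref{thm:anyLDT}; the two edge blocks $M_\ell P$ and $P M_\ell$ are bounded below via $\lVert M_\ell P\rVert\ge|f_\ell|$ together with the LDT for $f_\ell$ at the smaller scale --- this is the inductive hypothesis \cref{eq:ell-LDT-assumption}. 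The resulting expansion \cref{eq4.4f} yields $\log|f_N|\ge NL_N-O(N^{1-\tau})$ off a small bad set; one good point then feeds Cartan's estimate and closes the induction. So the lower bound on $\int v_N$ comes directly from the AP expansion, with no appeal to the density of states.

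Your Thouless approach is genuinely different, and the difficulty you single out in step~(b) is more serious than you acknowledge: as written it is circular. You need $\bar k_N\bigl((E-\delta,E+\delta)\bigr)\lesssim\delta^c$ uniformly in $N$ --- otherwise $\int_{|E'-E|<\delta}\log|E'-E|\,d\bar k_N$ is uncontrolled and the lower bound on $\int v_N$ collapses --- but the standard derivation of this finite-scale Wegner bound (via Jensen's formula for $E'\mapsto f_N(x,E')$, or equivalently via resolvent bounds) uses precisely the LDT for $|f_N|$ you are trying to prove. The H\"older continuity of the \emph{limiting} $k$ from \cite{GolSch01} is fine, and your step~(a) is exactly \cref{prop:uniform}; but there is no known route to the finite-scale Wegner estimate in the multi-frequency positive-Lyapunov regime that does not pass through the determinant LDT or something equivalent to the AP expansion of $\log|f_N|$. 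Either you supply an independent argument for the finite-scale Wegner --- and the natural candidate is again the Avalanche Principle applied to $f_N$, which puts you back on the \cite{GolSch08} track --- or the proof does not close.
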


Note that the large deviations estimates also hold with any other
smaller choices of the actual exponents $ \sigma,\tau $.
The sharpness of these exponents plays no role for
us, so we will also assume without loss of generality that the exponents are the same in both
statements and  $\sigma\ll\tau\ll 1$.

We claim that by inspecting the proof from \cite{GolSch08} it can be
seen that the constant $ N_0 $ from \cref{thm:DirLDT} can be chosen to
be $ (S_{V,E}+ \gamma^{-1})^{C} $, $ C=C(a,b,\rho) $. In fact, all
the large constants in our statements can be chosen of this form
(though not optimally). Since the proof in \cite{GolSch08} is quite
lengthy and intricate, and we only need to be explicit about $ N_0 $
in the perturbative setting, we will give a simpler proof of the (LDT)
for determinants at large coupling in \cref{sec:perturbative-refinements}.

The usefulness of the (LDT) is enhanced by the following result, known
as the Avalanche Principle.
\begin{prop}[{\cite[Prop.~2.2]{GolSch01}}]
  \label{prop:AP}
  Let $A_1,\ldots,A_n$ be a sequence of  $2\times 2$--matrices whose determinants satisfy
  \begin{equation}
    \label{eq:detsmall}
    \max\limits_{1\le j\le n}|\det A_j|\le 1.
  \end{equation}
  Suppose that
  \be
  &&\min_{1\le j\le n}\|A_j\|\ge\mu>n\mbox{\ \ \ and}\label{large}\\
  &&\max_{1\le j<n}[\log\|A_{j+1}\|+\log\|A_j\|-\log\|A_{j+1}A_{j}\|]<\frac12\log\mu\label{diff}.
  \ee
  Then
  \begin{equation}
    \Bigl|\log\|A_n\ldots A_1\|+\sum_{j=2}^{n-1} \log\|A_j\|-\sum_{j=1}^{n-1}\log\|A_{j+1}A_{j}\|\Bigr|
    < C\frac{n}{\mu}
    \label{eq:AP}
  \end{equation}
  with some absolute constant $C$.
\end{prop}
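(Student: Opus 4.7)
The plan is to use the singular value decomposition of each factor and reduce to an inductive telescoping argument. For every $j$ write the SVD of $A_j$ as $A_j = \sigma_j^+ u_j^+ (v_j^+)^\ast + \sigma_j^- u_j^- (v_j^-)^\ast$, with $\sigma_j^+ = \|A_j\|$ and $\sigma_j^- = |\det A_j|/\sigma_j^+$; the hypotheses \cref{eq:detsmall} and \cref{large} then give $\sigma_j^-/\sigma_j^+ \le \mu^{-2}$. The key quantity is
\begin{equation*}
c_j := \langle u_j^+, v_{j+1}^+\rangle,
\end{equation*}
which measures how well the expanding output direction of $A_j$ aligns with the expanding input direction of $A_{j+1}$. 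Expanding $A_{j+1}A_j v$ in the bases $\{v_j^\pm\}$ on the right and $\{u_{j+1}^\pm\}$ on the left and maximizing over unit $v$ yields
\begin{equation*}
\log\|A_{j+1}A_j\| = \log\|A_{j+1}\| + \log\|A_j\| + \log|c_j| + O(\mu^{-3/2}),
\end{equation*}
so the gap hypothesis \cref{diff} is equivalent (up to an absolute constant in the exponent) to the uniform lower bound $|c_j|\ge c\mu^{-1/2}$.

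The next step is to prove by induction on $k$ that $T_k := A_k \cdots A_1$ satisfies
\begin{equation*}
\log\|T_k\| = \sum_{j=1}^k \log\|A_j\| + \sum_{j=1}^{k-1}\log|c_j| + E_k,\qquad |E_k|\le Ck/\mu,
\end{equation*}
together with the geometric statement that the top left singular vector of $T_k$ lies within angle $O(\mu^{-3/2})$ of $u_k^+$. In the inductive step one writes $T_{k+1} = A_{k+1}T_k$, uses the SVD of $T_k$ to express $T_k$ in terms of its own top singular value $\tau_k^+ = \|T_k\|$ and singular vectors, and computes the entries of $A_{k+1}T_k$ in the bases $\{u_{k+1}^\pm\}$ (left) and the right singular vectors of $T_k$ (right). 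The diagonal $(+,+)$ entry has magnitude $\tau_k^+ \sigma_{k+1}^+ |c_k|(1+O(\mu^{-3/2}))$; the three other entries are smaller by a factor $O(\mu^{-3/2})$ using $\sigma_{k+1}^- \le \mu^{-2}\sigma_{k+1}^+$, the determinant bound $\tau_k^- \le 1/\tau_k^+$, and the lower bound $|c_k|\ge c\mu^{-1/2}$. Substituting the two-factor identity into the resulting recursion produces the desired telescoping $\log\|T_n\| = \sum_{j=1}^{n-1}\log\|A_{j+1}A_j\| - \sum_{j=2}^{n-1}\log\|A_j\| + O(n/\mu)$.

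The main obstacle is the non-accumulation of angular errors in the induction. A priori, each multiplication by $A_{k+1}$ could rotate the approximate top direction of $T_k$ away from $u_{k+1}^+$, and these errors could compound; a crude bookkeeping would produce a total error of size $\sim n\mu^{1/2}$, which is useless. The geometric observation that rescues the argument is that for any unit vector $d$ with $|\langle d, v_{k+1}^+\rangle|\gtrsim \mu^{-1/2}$, the normalized image $A_{k+1}d/\|A_{k+1}d\|$ lies within angle $O(\mu^{-3/2})$ of $u_{k+1}^+$, \emph{independent of the input direction}. Thus the direction of $T_k$ never drifts by more than $O(\mu^{-3/2})$ from $u_k^+$, and the hypothesis $\mu>n$ ensures that this drift remains negligible compared to the gap scale $\mu^{-1/2}$ at every intermediate step, so the two-factor linearization remains valid throughout the chain and the errors sum rather than multiply.
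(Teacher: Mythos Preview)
The paper does not prove this proposition; it is quoted verbatim from \cite{GolSch01} and used as a black box. So there is no in-paper proof to compare against.

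Your outline is correct and is in fact the approach of the original proof in \cite{GolSch01}: one writes each $A_j$ in its singular value decomposition, interprets hypothesis \cref{diff} as the alignment bound $|c_j|=|\langle u_j^+,v_{j+1}^+\rangle|\gtrsim\mu^{-1/2}$, and runs an induction on the partial products $T_k=A_k\cdots A_1$. The crucial point, which you identify correctly, is the ``reset'' phenomenon: because $\sigma_j^-/\sigma_j^+\le\mu^{-2}$, any unit vector $d$ with $|\langle d,v_{k+1}^+\rangle|\gtrsim\mu^{-1/2}$ is mapped by $A_{k+1}$ to within angle $O(\mu^{-3/2})$ of $u_{k+1}^+$, so the angular defect of the top left singular vector of $T_k$ never accumulates. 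One small point you leave implicit: to verify that the normalized vector $d=T_k\tilde v_{k+1}^+/\|T_k\tilde v_{k+1}^+\|$ (where $\tilde v_{k+1}^+$ is the top right singular vector of $T_{k+1}$) indeed satisfies $|\langle d,v_{k+1}^+\rangle|\gtrsim\mu^{-1/2}$, one uses $\|A_{k+1}d\|\ge\tau_{k+1}^+/\tau_k^+\gtrsim\sigma_{k+1}^+\mu^{-1/2}$, which forces the required alignment since $\sigma_{k+1}^-\le\sigma_{k+1}^+\mu^{-2}$. With that detail filled in, the per-step error is $O(\mu^{-1})$ (coming from $O(\mu^{-3/2})/|c_k|$), the total is $O(n/\mu)$, and the telescoping substitution of the two-factor identity yields \cref{eq:AP}.
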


To apply the Avalanche Principle one needs to be in the positive
Lyapunov exponent regime and to be able to compare the Lyapunov
exponents $ L_N $ at different scales. This can be achieved through
the following result.
\begin{prop}[{\cite[Lem.~10.1]{GolSch01}}]\label{prop:uniform}
  Assume $ E\in \C $, and
  $L(E)>\gamma>0$.  Then for any $ N\ge 2 $,
  \[ 0\le L_N(\omega,E)-L(\omega,E)< C_0\frac{(\log
      N)^{1/\sigma}}{N}\]
  where $C_0=C_0(V,a,b,E,\gamma)$ and $ \sigma $ is as in (LDT).
\end{prop}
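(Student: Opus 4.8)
The plan is to combine the Large Deviations Theorem (\cref{thm:anyLDT}) with the Avalanche Principle (\cref{prop:AP}) in the standard way, organised around a telescoping sum over dyadic scales; the lower bound $L_N(\omega,E)-L(\omega,E)\ge 0$ will come for free. I abbreviate $c_V:=\log(1+\norm{V}_\infty+|E|)$, so that $0\le\log\norm{M_n(x,E)}\le nc_V$ by \cref{eq:monodr1}, and recall that in our conventions $L_N(\omega,E)=L_N(E)$, $L(\omega,E)=L(E)$. First, submultiplicativity $\norm{M_{n+n'}(x)}\le\norm{M_{n'}(x+n\omega)}\norm{M_n(x)}$ makes $n\mapsto nL_n(E)$ subadditive, so by Fekete's lemma $L(E)=\lim_nL_n(E)=\inf_nL_n(E)$. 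In particular $L_N(E)\ge L(E)$ (the asserted lower bound), $L_{2m}(E)\le L_m(E)$ for all $m$, and $L_n(E)$ decreases to $L(E)$; this last, purely qualitative, fact is what gets the quantitative estimate started.

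The heart of the matter is the one-step bound $0\le L_m(E)-L_{2m}(E)\le C_1(\log m)^{1/\sigma}/m$ for $m\ge m_0(V,a,b,E,\gamma)$. To prove it I fix a block length $\ell=\ell(m)$ with $\ell^\sigma\simeq\log m$ (large implicit constant, fixed below) and for $n\ge 2$ factor $M_{n\ell}(x,E)=A_n(x)\cdots A_1(x)$, where $A_j(x)=M_{[(j-1)\ell+1,\,j\ell]}(x,E)=M_\ell(x+(j-1)\ell\omega,E)$. By \cref{thm:anyLDT} applied at scales $\ell$ and $2\ell$ there is a set $\Omega$ with $\mes(\Omega^c)\le Cn\exp(-\ell^\sigma)$ on which $\log\norm{A_j(x)}=\ell L_\ell(E)+O(S_{V,E}\ell^{1-\tau})$ and $\log\norm{A_{j+1}(x)A_j(x)}=2\ell L_{2\ell}(E)+O(S_{V,E}\ell^{1-\tau})$ for all relevant $j$. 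Since $L_\ell(E)\ge L(E)>\gamma$, and since $L_\ell(E)-L_{2\ell}(E)$ can be made as small as we like by taking $\ell$ large (qualitative preliminaries), the hypotheses \cref{eq:detsmall}, \cref{large}, \cref{diff} of \cref{prop:AP} hold on $\Omega$ with $\mu\ge\exp(\ell\gamma/2)$, and $\mu>n$ because $\sigma<1$ forces $\ell\gamma\simeq\gamma(\log m)^{1/\sigma}\gg\log m\ge\log n$.

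Integrating \cref{eq:AP} over $\Omega$, using $\int_{\T^d}\log\norm{A_j}\,dx=\ell L_\ell(E)$ and $\int_{\T^d}\log\norm{A_{j+1}A_j}\,dx=2\ell L_{2\ell}(E)$ by translation invariance, bounding $\log\norm{M_n(x,E)}$ trivially by $nc_V$ on $\Omega^c$, and choosing the constant in $\ell^\sigma\simeq\log m$ so large that $n\exp(-\ell^\sigma)\le m^{-2}$, one obtains after dividing by $n\ell$
\[ L_{n\ell}(E)=\bigl(2L_{2\ell}(E)-L_\ell(E)\bigr)+O\!\left(\frac{\ell c_V}{m}\right), \]
the error collecting the $O(n/\mu)$ term of \cref{eq:AP}, the $O\bigl(n\exp(-\ell^\sigma)\,mc_V\bigr)$ contribution of $\Omega^c$, and the $O(1/n)\bigl(L_\ell(E)-L_{2\ell}(E)\bigr)$ term produced when $\tfrac{n-2}{n},\tfrac{2(n-1)}{n}$ are replaced by $1,2$. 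I apply this with $n=\floor{m/\ell}$ and with $n=\floor{2m/\ell}$ — the \emph{same} $\ell$ in both — and absorb the last incomplete block of length $<\ell$ via $|L_m(E)-L_{n\ell}(E)|\le 2\ell c_V/m$ (submultiplicativity of the transfer matrices together with $\norm{M_r^{-1}}=\norm{M_r}$, valid since $\det M_r=1$). This gives $L_m(E)=(2L_{2\ell}(E)-L_\ell(E))+O(\ell c_V/m)$ and, identically, $L_{2m}(E)=(2L_{2\ell}(E)-L_\ell(E))+O(\ell c_V/m)$; subtracting cancels the common main term and yields the one-step bound with $\ell\simeq(\log m)^{1/\sigma}$.

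Finally, since $L_{2^jN}(E)\to L(E)$ with nonnegative increments, the one-step estimate gives, for $N\ge m_0$,
\[ L_N(E)-L(E)=\sum_{j\ge 0}\bigl(L_{2^jN}(E)-L_{2^{j+1}N}(E)\bigr)\le C_1\sum_{j\ge 0}\frac{(\log(2^jN))^{1/\sigma}}{2^jN}\le C_0\,\frac{(\log N)^{1/\sigma}}{N}, \]
using $(\log(2^jN))^{1/\sigma}\le(1+j)^{1/\sigma}(\log N)^{1/\sigma}$ for $N\ge 2$ and $\sum_{j\ge 0}(1+j)^{1/\sigma}2^{-j}<\infty$; the finitely many scales $2\le N<m_0$ are absorbed into $C_0$ since there $L_N(E)-L(E)\le c_V$ while $(\log N)^{1/\sigma}/N$ is bounded below by a positive constant depending only on $m_0$. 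The delicate point is the bookkeeping in the one-step bound: $\ell$ must grow with $\log m$ fast enough that the accumulated bad set $n\exp(-\ell^\sigma)$ is summably small, yet the \emph{same} $\ell$ has to serve both $m$ and $2m$ so that the two main terms $2L_{2\ell}(E)-L_\ell(E)$ are literally identical and cancel; and all hypotheses of the Avalanche Principle must be checked on $\Omega$, which is precisely where the a priori convergence $L_\ell(E)\downarrow L(E)$ enters, since it is what forces $L_\ell(E)-L_{2\ell}(E)$ below the threshold needed for \cref{diff}.
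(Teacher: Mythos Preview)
The paper does not prove this proposition itself, merely citing \cite[Lem.~10.1]{GolSch01}; your argument is correct and is precisely the standard LDT + Avalanche Principle + dyadic telescoping proof from that reference. The paper in fact reproduces the same mechanism in its large-coupling analogue (\cref{lem:condLDTf} and \cref{prop:LLBasic}(a)), so your approach matches both the cited source and the paper's own use of the method.
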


The constant $ C_0 $ from the previous proposition can be evaluated
explicitly by inspecting its proof in \cite{GolSch01}. However, we
will obtain an explicit perturbative version of this result in
\cref{sec:perturbative-refinements}.

The remaining results that we state without proof in this section are proved in 
\cite{GolSchVod16}. The specific constants from their statements are
obtained by a simple inspection of the proofs in
\cite{GolSchVod16}. Note that in the choice of constants we favour
simplicity over sharpness. Some of the constants will depend on the
constants $ N_0 $ from \cref{thm:DirLDT} and $ C_0 $ from \cref{prop:uniform}. To keep
track of this we fix
\begin{equation}\label{eq:B0}
  B_0:=N_0+ C_0.
\end{equation}

As a consequence of the (LDT) and the submean value property for
subharmonic functions one gets the following uniform upper estimate.
\begin{prop}[{\cite[Prop.~2.13]{GolSchVod16}}]\label{prop:logupper}
  Let $ E\in \C $ and $ \tau $ as in (LDT). Then for all $N\ge 1$,
  \begin{equation*}
    \sup\limits_{x\in\tor^d} \log \| M_N (x,E) \| \le
    NL_N(E)+ C_0 S_{V,E} N^{1-\tau},
  \end{equation*}
  with $ C_0=C_0(a,b,\rho) $.
\end{prop}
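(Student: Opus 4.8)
The plan is to get this from the large deviations estimate \cref{thm:anyLDT} together with the sub-mean value property, the one real subtlety being that $u(x):=\log\norm{M_N(x,E)}$ is not subharmonic on the real torus — it is only plurisubharmonic on the strip $\tor^d_\rho$ once $V$ is extended there. First I would record the two elementary features of $u$ on which everything rests. Since $\det M_N(z,E)=1$ for every $z$ and every $E\in\C$ (each transfer factor has determinant $1$), we have $\norm{M_N(z,E)}\ge 1$, so $u\ge 0$ on $\tor^d_\rho$; and since $|V(z)|\le\norm{V}_\infty$ on $\tor^d_{3\rho/4}$, the estimate behind \cref{eq:monodr1} gives $0\le u(z)\le NS_{V,E}$ there. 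For $N$ below a threshold $N_0=N_0(a,b,\rho)$ the assertion is then immediate from $u\le NS_{V,E}\le N_0S_{V,E}N^{1-\tau}$ and $L_N(E)\ge 0$, so I may assume $N\ge N_0$.

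Next, fix $x_0\in\tor^d$ and set $r:=\min\!\big(N^{-\tau},\rho/(8\sqrt d)\big)$, which equals $N^{-\tau}$ for $N\ge N_0$. Iterating the sub-mean value property of $u$ in each complex coordinate over discs of radius $r$ centred at the coordinates of $x_0$ gives
\[
  u(x_0)\le \frac{1}{(\pi r^2)^d}\int_{P}u(x_0+w)\,dm(w),\qquad P:=\{w\in\C^d:\ |w_j|<r\ \text{for all }j\},
\]
with $m$ the $2d$-dimensional Lebesgue measure. Writing $w=s+it$ with $s,t\in\R^d$ and integrating in $s$ first, for each fixed $t$ the slice $\{s:\ s_j^2+t_j^2<r^2\}$ is a cube of measure $\le(2r)^d$ contained in $[-r,r]^d$. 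On that slice I would apply \cref{thm:anyLDT} to the shifted potential $V(\cdot+it)$: this is legitimate because $|t|<\rho/8$ keeps us well inside the strip, and after shrinking the analyticity radius by a fixed fraction the relevant sup-norm is still $\le\norm{V}_\infty$ and the constant is still $C_0(a,b,\rho)$, while $\sigma,\tau$ are unchanged. It yields an exceptional set of $s$ of measure $<\exp(-N^\sigma)$; off it $u(x_0+s+it)\le NL_N(t,E)+C_0S_{V,E}N^{1-\tau}$, and on it I only use $u\le NS_{V,E}$.

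The crux is now to replace $L_N(t,E)$ by $L_N(E)=L_N(0,E)$ at a cost $O_\rho(N^{1-\tau}S_{V,E})$. For this I would use that $y_j\mapsto L_N(y,E)$ is convex in each variable (the $x_j$-average of a function subharmonic in $z_j$ and periodic in $x_j$ is convex in $y_j$, since $\int_{\tor}\partial_{x_j}^2u\,dx_j=0$), and that $0\le L_N(y,E)\le S_{V,E}$ for $|y|<3\rho/4$ by the bound on $u$ above. A convex function on $[-\rho/2,\rho/2]$ with values in $[0,S_{V,E}]$ obeys $\phi(\xi)\le\phi(0)+2|\xi|\rho^{-1}S_{V,E}$; moving from $0$ to $t$ one coordinate at a time and using $|t_j|<r$ gives $NL_N(t,E)\le NL_N(E)+C\rho^{-1}d\,rNS_{V,E}=NL_N(E)+C\rho^{-1}d\,N^{1-\tau}S_{V,E}$. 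Feeding this back, and using $\int_{|t_j|<r}\mes\{s:(s,t)\in P\}\,dt=(\pi r^2)^d$ together with $u\ge 0$, the ``good'' part of $\int_Pu$ is at most $(\pi r^2)^d\big(NL_N(E)+C(a,b,\rho)S_{V,E}N^{1-\tau}\big)$ and the ``bad'' part at most $(2r)^d\exp(-N^\sigma)NS_{V,E}$. Dividing by $(\pi r^2)^d$, the bad term becomes $\lesssim r^{-d}\exp(-N^\sigma)NS_{V,E}=N^{d\tau}\exp(-N^\sigma)NS_{V,E}$, which is $\ll S_{V,E}N^{1-\tau}$ once $N\ge N_0(a,b,\rho)$ — this is how $N_0$ is fixed. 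Hence $u(x_0)\le NL_N(E)+C_0S_{V,E}N^{1-\tau}$ with $C_0=C_0(a,b,\rho)$, uniformly in $x_0$.

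I expect the main obstacle to be precisely this passage from $L_N(t,E)$ to $L_N(E)$ that plurisubharmonicity forces on us: one has to localize in the imaginary directions on a scale $r$ fine enough that the convexity bound on $L_N(y,E)$ costs only $O(rNS_{V,E})$, yet coarse enough — compared with the LDT exceptional measure $\exp(-N^\sigma)$ — that this set is negligible inside the polydisc average. The choice $r\simeq N^{-\tau}$ is the balance making both errors lower order than the advertised $N^{1-\tau}$; the remaining work (the analyticity-strip bookkeeping for $V(\cdot+it)$ and the routine convexity/measure estimates) is standard.
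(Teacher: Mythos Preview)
Your argument is correct and follows precisely the route the paper indicates (``a consequence of the (LDT) and the submean value property for subharmonic functions''); the paper itself does not spell out a proof, citing \cite{GolSchVod16}. The only organizational difference is that the Lipschitz bound $L_N(t,E)\le L_N(0,E)+C_\rho S_{V,E}\sum_j|t_j|$, which you derive inline via convexity of $y\mapsto L_N(y,E)$, is stated separately in the paper as \cref{cor:liplap}; your convexity argument is exactly how that lemma is proved.
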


To extend the uniform upper estimate to a complex neighborhood of $
\T^d $ we need the following result.
\begin{lemma}[{\cite[Cor.~2.12]{GolSchVod16}}]\label{cor:liplap}
  Let $ E\in \C $. For any $ N\ge 1 $ we have
  \begin{equation*}
    |L_N(y,E) - L_N(E)|
    \le C_\rho S_{V,E}\sum_{i=1}^d |y_i|.
  \end{equation*}
  In particular, the same bound holds with $ L $ instead of $ L_N $.
\end{lemma}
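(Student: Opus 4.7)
The plan is to establish that $y\mapsto L_N(y,E)$ is separately convex in each coordinate $y_i$, and then convert convexity into the stated Lipschitz bound via a uniform two-sided bound on $L_N$. For convexity, I would use plurisubharmonicity of $\log\|M_N(z,E)\|$ on $\T^d_\rho$, which follows from holomorphy of $M_N$ in $z$: each entry of $M_N(z,E)$ is a Dirichlet determinant $f_{[a,b]}(z,E)$, see \cref{eq:M-f}, hence a polynomial in $V$ and therefore holomorphic on $\T^d_\rho$. Fixing $i$ together with $z_j=x_j+iy_j$ for $j\ne i$, the function $u(x_i,y_i):=\log\|M_N(x+iy,E)\|$ is subharmonic in $z_i=x_i+iy_i$ and $1$-periodic in $x_i$. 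The distributional computation
\[
\frac{d^2}{dy_i^2}\int_{\T}u(x_i,y_i)\,dx_i
=\int_{\T}\Delta_{z_i}u\,dx_i
-\int_{\T}\partial_{x_i}^2 u\,dx_i
=\int_{\T}\Delta_{z_i}u\,dx_i\ge 0,
\]
in which the second integral vanishes by periodicity, shows convexity of the $x_i$-average in $y_i$. Fubini in the remaining $x_j$, $j\ne i$, then yields convexity of $y_i\mapsto L_N(y,E)$ for any fixed values of the other $y_j$'s.

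For the uniform bounds on $L_N(y,E)$, the identity $\det M_N(x+iy,E)=1$ forces $\|M_N(x+iy,E)\|\ge 1$ and so $L_N(y,E)\ge 0$, while the crude bound on the transfer matrix factors gives $\|M_N(x+iy,E)\|\le(1+\|V\|_\infty+|E|)^N$ for $|y|<3\rho/4$, and hence $L_N(y,E)\le \log(1+\|V\|_\infty+|E|)\le S_{V,E}$. Combined with the elementary fact that a convex function $\phi$ on $(-R,R)$ with $|\phi|\le M$ is Lipschitz on $[-R/2,R/2]$ with constant $\le 4M/R$, applied in each $y_i$-direction with $R=3\rho/4$ and telescoped over the $d$ coordinates, this yields
\[
|L_N(y,E)-L_N(0,E)|\le\sum_{i=1}^d\frac{C}{\rho}\,S_{V,E}|y_i|
\le C_\rho S_{V,E}\sum_{i=1}^d |y_i|,
\]
which is the stated bound. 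The analogous bound for $L$ follows by passing to the limit $N\to\infty$.

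The main point requiring care is plurisubharmonicity itself, since the operator norm $\|\cdot\|$ is not literally the log of an analytic quantity. This is harmless because $\|M\|\simeq\max_{i,j}|M_{ij}|$ up to a dimensional constant, so $\log\|M_N(z,E)\|$ differs from the manifestly plurisubharmonic function $\max_{i,j}\log|[M_N(z,E)]_{ij}|$ by an additive $O(1)$. That additive constant preserves convexity of the $x_i$-average and perturbs the two-sided bound on $L_N$ only by $O(1)$, which is absorbed into $C_\rho S_{V,E}$ in the final estimate.
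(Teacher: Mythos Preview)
The paper does not prove this lemma here but imports it from \cite{GolSchVod16}; your approach---plurisubharmonicity of $\log\|M_N(\cdot,E)\|$, hence convexity of each $y_i\mapsto L_N(y,E)$ after averaging in $x$, combined with the two-sided bound $0\le L_N(y,E)\le S_{V,E}$ on $|y|<3\rho/4$ to extract a Lipschitz constant of order $S_{V,E}/\rho$ in each coordinate---is the standard argument and is essentially what one finds in that reference.

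There is, however, a real slip in your final paragraph. You worry that $\log\|M_N\|$ might not be plurisubharmonic and propose to replace it by $\max_{i,j}\log|[M_N]_{ij}|$, claiming that the additive $O(1)$ discrepancy ``preserves convexity of the $x_i$-average''. It does not: adding a bounded non-constant function can destroy convexity, and the resulting $O(1)/N$ gap between the two averaged quantities is \emph{not} absorbed into the Lipschitz constant when $|y|$ is small relative to $1/N$. The fix is simply that the workaround is unnecessary: $\log\|M_N(z,E)\|$ is plurisubharmonic directly, since
\[
\|M_N(z,E)\|=\sup_{\|u\|=\|v\|=1}\bigl|\langle M_N(z,E)u,v\rangle\bigr|
\]
expresses the norm as a (continuous) supremum of moduli of holomorphic functions of $z$, and such a supremum is plurisubharmonic. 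With that correction your proof goes through as written.
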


\begin{cor}\label{cor:logupper}
  Let $ E\in \C $ and $ \tau $ as in (LDT). Then for all $N\ge 1$ and
  all $ y\in \R^d $, $ |y|<\min(\rho/2,1/N) $,
  \begin{equation}\label{eq:uniform-estimate}
    \sup\limits_{x\in\tor^d} \log \| M_N (x+iy,E) \| \le
    NL_N(E)+ C_0 S_{V,E} N^{1-\tau},
  \end{equation}
  with  $ C_0=C_0(a,b,\rho) $. In particular we also have
  \begin{equation*}
    \sup\limits_{x\in\tor^d} \log | f_N (x+iy,E) | \le
    NL_N(E)+ C_0 S_{V,E} N^{1-\tau}.
  \end{equation*}
\end{cor}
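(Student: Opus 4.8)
The plan is to deduce the estimate from the bound on the real torus, \cref{prop:logupper}, by applying that proposition to the shifted potential $\tilde V:=V(\,\cdot+iy)$, and then to replace $L_N(y,E)$ by $L_N(E)$ with the help of \cref{cor:liplap} and the extra hypothesis $|y|<1/N$.

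We would first fix $y\in\R^d$ with $|y|<\min(\rho/2,1/N)$ and put $\tilde V(z):=V(z+iy)$. Since $|y|<\rho/2$ we have $\rho/4+|y|<\rho$, so $\tilde V$ may be regarded as analytic on $\T^d_{\rho/4}$; computing its sup-norm relative to this radius and using $3\rho/16+|y|<3\rho/16+\rho/2<3\rho/4$, we get
\[
  \norm{\tilde V}_\infty=\sup\{|V(w)|:w\in\T^d,\ |\Im w|<3\rho/16+|y|\}\le\norm{V}_\infty,
\]
and hence $S_{\tilde V,E}\le S_{V,E}$. Note also that the transfer matrices of $\tilde V$ are $M_N^{\tilde V}(x,E)=M_N(x+iy,E)$ for $x\in\T^d$, and therefore $L_N^{\tilde V}(E)=L_N(y,E)$ by \cref{eq:lapyomega}.

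Since \cref{prop:logupper} does not use that the potential is real-valued on $\T^d$, applying it to $\tilde V$ (with analyticity radius $\rho/4$) gives, with $C_0=C_0(a,b,\rho)$,
\[
  \sup_{x\in\T^d}\log\norm{M_N(x+iy,E)}\le NL_N(y,E)+C_0S_{\tilde V,E}N^{1-\tau}\le NL_N(y,E)+C_0S_{V,E}N^{1-\tau}.
\]
By \cref{cor:liplap} and $|y|<1/N$ we have $|L_N(y,E)-L_N(E)|\le C_\rho S_{V,E}\sum_{i=1}^d|y_i|\le C_\rho d\,S_{V,E}\,|y|<C_\rho d\,S_{V,E}/N$, and hence $NL_N(y,E)\le NL_N(E)+C_\rho d\,S_{V,E}\le NL_N(E)+C_\rho d\,S_{V,E}N^{1-\tau}$, using $N^{1-\tau}\ge1$. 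Combining the last two estimates yields \cref{eq:uniform-estimate} with a constant of the form $C_0(a,b,\rho)$. The bound on $|f_N(x+iy,E)|$ then follows from \cref{eq:M-f}, since $f_N(x+iy,E)$ is the $(1,1)$-entry of $M_N(x+iy,E)$ and therefore $\log|f_N(x+iy,E)|\le\log\norm{M_N(x+iy,E)}$.

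The one point requiring care is the middle step: we must assign to $\tilde V$ an analyticity radius small enough (here $\rho/4$) that the domain over which $\norm{\tilde V}_\infty$ is evaluated still lies inside $\T^d_{3\rho/4}$. This is what keeps $\norm{V}_\infty$ out of the constant except through $S_{V,E}$; in particular it rules out a bound that would deteriorate as $|y|\to\rho/2$. Everything else is a routine combination of the two cited results.
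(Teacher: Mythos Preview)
Your proof is correct and follows exactly the approach indicated in the paper: apply \cref{prop:logupper} to the shifted potential $V(\cdot+iy)$ and then use \cref{cor:liplap} to pass from $L_N(y,E)$ to $L_N(E)$. The extra care you take in choosing the analyticity radius $\rho/4$ for $\tilde V$ and checking $S_{\tilde V,E}\le S_{V,E}$ is the right way to make the paper's one-line argument precise.
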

\begin{proof}
  The conclusion follows by applying \cref{prop:logupper} with $
  V(x+iy) $ instead of $ V(x) $ and by using \cref{cor:liplap}.
\end{proof}

Next we recall a way of obtaining off-diagonal decay for Green's
function. We use the notation $ \cG_{[a,b]}(x,E):=(H_{[a,b]}(x)-E)^{-1} $.
\begin{lemma}[{\cite[Lem. 2.24]{GolSchVod16}}]\label{lem:Green}
  Assume $ x_0\in \tor^d $,
  $ E_0\in \C $, and $ L(E_0)> \gamma >0 $.  Let $ K\in \R $
  and $ \tau $ as in (LDT). There exists
  $ C_0=C_0(a,b,\rho) $ such that if
  $ N\ge (B_0+ S_{V,E_0}+ \gamma^{-1})^{C} $, $ C=C(a,b,\rho) $, and
  \begin{equation}\label{eq:fN_unter}
    \log \big | f_N(x_0,E_0) \big | > NL_N(\omega_0,E_0) - K,
  \end{equation}
  then for any $ (x,E)\in \T^d\times \C $ with
  $|x - x_0|,\ |E - E_0| < \exp(-(K+C_0N^{1-\tau}))$, we have
  \begin{align} \label{eq:Gjk}
    \big | \cG_{[1, N]} (x,E;j,k)\big | & \le \exp\left(-
               \frac{\gamma}{2}|k - j|+K+2C_0N^{1-\tau}\right), \\
    \big \| \cG_{[1, N]} (x,E) \big \| & \le
                                                 \exp(K+3C_0N^{1-\tau}). \label{eq:normG}
  \end{align}
\end{lemma}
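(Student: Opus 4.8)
The plan is to first prove the two bounds for $E=E_0$ and all $x$ with $|x-x_0|<\exp(-(K+C_0N^{1-\tau}))$, and then to transfer them to all nearby $E$ by a Neumann series for the resolvent. Since $N\ge(B_0+S_{V,E_0}+\gamma^{-1})^C$, I will throughout absorb fixed constants and factors of $S_{V,E_0}$, $\log N$, $\gamma^{-1}$ into terms of the form $C_0N^{1-\tau}$, at the harmless cost of slightly decreasing $\tau$.

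For $E=E_0$ the starting point is Cramer's rule: for $1\le j\le k\le N$,
\[
  \cG_{[1,N]}(x,E_0;j,k)=\frac{f_{[1,j-1]}(x,E_0)\,f_{[k+1,N]}(x,E_0)}{f_N(x,E_0)},
\]
with the Dirichlet determinant of an empty interval set to $1$, and with $\cG_{[1,N]}(j,k)=\cG_{[1,N]}(k,j)$ for $j>k$. For the denominator I would first move the hypothesis \eqref{eq:fN_unter} from $x_0$ to $x$: by \cref{cor:logupper}, $f_N(\cdot,E_0)$ is bounded by $\exp(NL_N(E_0)+C_0S_{V,E_0}N^{1-\tau})$ on the strip $|\Im z|<1/N$, so a Cauchy estimate bounds $|\nabla_xf_N(x_0,E_0)|$ by $N$ times that quantity, whence $|f_N(x,E_0)-f_N(x_0,E_0)|<\tfrac12\exp(NL_N(E_0)-K)$ provided $C_0$ is large, and therefore $|f_N(x,E_0)|>\tfrac12\exp(NL_N(E_0)-K)$. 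For the numerator, translating $f_{[k+1,N]}$ via \eqref{eq:basic-identity} and combining \cref{cor:logupper} with \cref{prop:uniform} gives, for any interval of length $m\le N$,
\[
  \log|f_{[1,m]}(x,E_0)|\le mL_m(E_0)+C_0S_{V,E_0}m^{1-\tau}\le mL(E_0)+C_0N^{1-\tau}.
\]
Putting these together and using $L_N(E_0)\ge L(E_0)>\gamma$,
\[
  |\cG_{[1,N]}(x,E_0;j,k)|\le 2\exp\bigl((N-1-|k-j|)L(E_0)-NL_N(E_0)+K+C_0N^{1-\tau}\bigr)\le\exp\bigl(-\tfrac{\gamma}{2}|k-j|+K+2C_0N^{1-\tau}\bigr),
\]
which is \eqref{eq:Gjk} at $(x,E_0)$. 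The Schur test $\nnorm{\cG_{[1,N]}(x,E_0)}\le\max_{1\le j\le N}\sum_{k=1}^{N}|\cG_{[1,N]}(x,E_0;j,k)|$ then yields \eqref{eq:normG} at $(x,E_0)$, the sum $\sum_{l\in\Z}e^{-\gamma|l|/2}$ being absorbed.

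For a general $E$ with $|E-E_0|<\exp(-(K+C_0N^{1-\tau}))$ I would, after enlarging $C_0$ once more so that $|E-E_0|\,\nnorm{\cG_{[1,N]}(x,E_0)}<\tfrac12$, expand $\cG_{[1,N]}(x,E)=\sum_{n\ge0}(E-E_0)^n\,\cG_{[1,N]}(x,E_0)^{n+1}$. Estimating the terms entrywise by iterating the decay \eqref{eq:Gjk} at $(x,E_0)$ (the resulting convolutions of $e^{-\gamma|\cdot|/2}$ contribute only $\gamma$-dependent constants, again absorbed) and summing the geometric series produces \eqref{eq:Gjk} at $(x,E)$, and a final Schur test produces \eqref{eq:normG} at $(x,E)$.

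I expect the essential difficulty to be exactly this $E$-dependence. One cannot propagate estimates on $f_N$ or on the transfer matrices from $E_0$ to $E$ by direct perturbation, because their $\partial_E$-derivatives are exponentially large in $N$ whereas the admissible shift $|E-E_0|$ is only exponentially small in $N^{1-\tau}$. The resolution is that $E$ is never perturbed at the level of determinants — all the finite-scale analysis is carried out at $E_0$ — and the passage to nearby $E$ is performed only at the level of the resolvent, where the cost is the genuinely small quantity $|E-E_0|\,\nnorm{\cG_{[1,N]}(x,E_0)}$.
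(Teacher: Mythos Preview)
The paper does not prove this lemma itself; it is cited from \cite[Lem.~2.24]{GolSchVod16}. Your approach is the standard one and is essentially correct: Cramer's rule for the entries, the uniform upper bound (\cref{cor:logupper}) together with \cref{prop:uniform} for the sub-determinants in the numerator, a Cauchy estimate in $x$ to transfer the lower bound on $|f_N|$ from $x_0$ to nearby $x$, and finally a Neumann series in $E-E_0$ at the resolvent level. Your diagnosis of why the $E$-perturbation must be carried out on the resolvent rather than on $f_N$ is exactly right.

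There is one bookkeeping slip to fix. Your Cramer computation at $(x,E_0)$ actually yields decay at rate $L(E_0)\ge\gamma$, not $\gamma/2$; you then record only $\gamma/2$ and feed that rate into the Neumann step, asserting that ``the resulting convolutions of $e^{-\gamma|\cdot|/2}$ contribute only $\gamma$-dependent constants.'' That is not true as stated: iterating decay $e^{-\gamma|\cdot|/2}$ through $\cG(E_0)^{n+1}$ does not preserve the rate $\gamma/2$, since each convolution costs some decay and the loss accumulates in $n$. The remedy is simply to keep the full rate $\gamma$ at $(x,E_0)$. Then a Combes--Thomas type conjugation by $\diag(e^{\pm\gamma j/2})$ gives $|\cG(E_0)^{n+1}(j,k)|\le(AC_\gamma)^{n+1}e^{-\gamma|j-k|/2}$ with $A=\exp(K+2C_0N^{1-\tau})$ and $C_\gamma=\sum_{l\in\Z} e^{-\gamma|l|/2}$, and summing the geometric series in $|E-E_0|\,AC_\gamma<\tfrac12$ produces \eqref{eq:Gjk} at $(x,E)$ with rate $\gamma/2$ as stated.
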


\subsection{Cartan's Estimate}\label{sec:Cartan} We recall the
definition of Cartan sets from \cite{GolSch08}. We use the notation $
\cD(z_0,r)=\{ z\in \C: |z-z_0|<r \} $.
\begin{defi}\label{defi:cartansets} Let $H \ge 1$.  For an arbitrary set $\cB \subset \cD(z_0,
1)\subset \IC$ we say that $\cB \in \car_1(H, K)$ if $\cB\subset
\bigcup\limits^{j_0}_{j=1} \cD(z_j, r_j)$ with $j_0 \le K$, and
\begin{equation}
\sum_j\, r_j < e^{-H}\ .
\end{equation}
If $d\ge 1$ is an  integer and $\cB \subset
\prod\limits_{j=1}^d \cD(z_{j,0}, 1)\subset \IC^d$, then we define
inductively that $\cB\in \car_d(H, K)$ if for any $1 \le j \le d$ there
exists $\cB_j \subset \cD(z_{j,0}, 1)\subset \IC, \cB_j \in \car_1(H,
K)$ so that $\cB_z^{(j)} \in \car_{d-1}(H, K)$ for any $z \in \IC
\setminus \cB_j$,  here $\cB_z^{(j)} = \left\{(z_1, \dots, z_d) \in \cB:
z_j = z\right\}$.
\end{defi}

The definition is motivated by the following
generalization of the usual Cartan estimate to several variables. Note
that given a set $ S $ that has a centre of symmetry, we will let
$ \alpha S $, $ \alpha>0 $, stand for the set scaled with respect to
its centre of symmetry.

\begin{lemma}[{\cite[Lem.~2.15]{GolSch08}}]\label{lem:high_cart}
 Let $\varphi(z_1, \dots, z_d)$ be an analytic function defined
on a polydisk $\cP = \prod\limits^d_{j=1} \cD(z_{j,0}, 1)$, $z_{j,0} \in
\IC$.  Let $M \ge \sup\limits_{z\in\cP} \log |\varphi(z)|$,  $m \le
\log|\varphi(z_0) |$,
$z_0 = (z_{1,0},\dots, z_{d,0})$.  Given $H
\gg 1$ there exists a set $\cB \subset \cP$,  $\cB \in
\car_d\left(H^{1/d}, K\right)$, $K = C_d H(M - m)$,  such that
\begin{equation}\label{eq:cart_bd}
  \log | \varphi(z)| > M-C_d H(M-m)
\end{equation}
for any $z \in \frac{1}{6}\cP\setminus \cB$.
Furthermore, when $ d=1 $ we can take
$ K=C(M-m) $ and keep only the disks of $ \cB $ containing a zero of $\phi$ in them.
\end{lemma}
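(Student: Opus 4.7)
The plan is to induct on the dimension $d$, using the classical one-variable Cartan estimate as the base case and a single coordinate slicing as the inductive step, with parameters $H_1 := H^{1/d}$ and $H_2 := H^{(d-1)/d}$. These values are forced by the twin constraints $H_1 H_2 = H$ and $H_2^{1/(d-1)} = H_1$: the latter makes the final $\car_d$-parameter come out to $H^{1/d}$, and the former arranges that the cumulative loss in the lower bound after $d$ slicings is of order $H(M-m)$ rather than a higher power.

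\emph{Base case $d=1$.} On $\cD(z_{1,0},1)$, Jensen's formula together with $\sup\log|\varphi|\le M$ and $\log|\varphi(z_{1,0})|\ge m$ caps the number $N$ of zeros of $\varphi$ in $\tfrac{1}{2}\cD(z_{1,0},1)$ by $N\le C(M-m)$. Factor $\varphi = P\cdot \psi$ on that smaller disk, where $P$ is a monic polynomial of degree $N$ with exactly these zeros and $\psi$ is nonvanishing; since $M - \log|\psi|$ is a nonnegative harmonic function bounded above by $C(M-m)$ at $z_{1,0}$, Harnack's inequality gives $\log|\psi| \ge M - C(M-m)$ on $\tfrac{1}{6}\cD(z_{1,0},1)$. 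The classical Cartan lemma applied to $P$ produces at most $N$ disks $\cD(z_j, r_j)$ with $\sum_j r_j < e^{-H}$ outside of which $\log|P| > -CH(M-m)$. Combining the two bounds yields the claim with $K \le N \le C(M-m)$, and one can keep only those disks of the covering that actually contain a zero of $\varphi$.

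\emph{Inductive step.} Assume the claim holds in dimension $d-1$, and first slice in $z_d$: regard $z_d\mapsto \varphi(z_{1,0},\ldots,z_{d-1,0},z_d)$ as a one-variable function on $\cD(z_{d,0},1)$ and apply the base case with parameter $H_1$ to produce $\cB_d\in \car_1(H_1, C(M-m))$ such that for every $z_d \in \tfrac{1}{6}\cD(z_{d,0},1)\setminus \cB_d$,
\[
\log|\varphi(z_{1,0},\ldots,z_{d-1,0},z_d)| \;>\; M - C H_1 (M-m) \;=:\; m_1.
\]
For each such good $z_d$, apply the inductive hypothesis to $(z_1,\ldots,z_{d-1})\mapsto \varphi(z_1,\ldots,z_{d-1},z_d)$ on $\prod_{j<d}\cD(z_{j,0},1)$ with upper bound $M$, lower bound $m_1$, and Cartan parameter $H_2$; since $H_2^{1/(d-1)} = H_1$, this produces $\cB_{z_d}\in \car_{d-1}(H_1, C'_{d-1} H(M-m))$ off of which
\[
\log|\varphi(z_1,\ldots,z_d)| \;>\; M - C_{d-1} H_2(M-m_1) \;=\; M - C_d H(M-m),
\]
using $H_2(M-m_1) = C H (M-m)$. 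Setting $\cB := \{z: z_d\in \cB_d\}\cup \bigcup_{z_d\notin \cB_d}\cB_{z_d}\times\{z_d\}$ yields a set in $\car_d(H^{1/d}, C_d H(M-m))$ verifying the claim for the $z_d$-slicing; to meet the symmetric form of the Cartan property required by the definition, repeat the construction with each of the remaining coordinates in the role of $z_d$ and take the union of the resulting bad sets.

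\emph{Main obstacle.} The principal difficulty is the calibration of constants and the choice of slicing parameters: the split $H = H_1 H_2$ with $H_1 = H^{1/d}$ is forced simultaneously by the need to recover $H^{1/d}$ as the $\car_d$-parameter at the end of the induction and by the need to have the two successive lower-bound losses compound to something of order $H(M-m)$ rather than $H^{d}(M-m)$ or $H^{1+1/d}(M-m)$. A secondary subtlety is that the inductive hypothesis produces estimates on $\tfrac{1}{6}$-shrunk polydisks while it is applied to full-radius polydisks at each stage; this is absorbed into the standard $\tfrac{1}{6}$-concentric-disk setup of the 1D Cartan lemma and causes no essential difficulty in the higher-dimensional recursion.
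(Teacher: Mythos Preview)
The paper does not give its own proof of this lemma; it is quoted from \cite{GolSch08}. Your inductive scheme with the parameter splitting $H_1=H^{1/d}$, $H_2=H^{(d-1)/d}$ is exactly the standard argument used there, and your treatment of the base case and of the parameter bookkeeping in the inductive step is correct.

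There is, however, a genuine error in your symmetrization step. The set $\cB^{[d]}$ you build by slicing first in $z_d$ satisfies only the $z_d$-clause of the $\car_d$ definition; you are right about that. But taking the \emph{union} $\bigcup_{k}\cB^{[k]}$ over all $d$ choices of the first-sliced coordinate does not yield a $\car_d$ set: for coordinate $j$, the $z_j$-slice of the union contains the $z_j$-slices of each $\cB^{[k]}$ with $k\neq j$, and those slices are not controlled by your construction (there is no reason the set $\{z_2\notin\cB_2: z_1\in\cB_{z_2}\}$ should be small for most $z_1$, already in the case $d=2$).

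The repair is a one-word change: take the \emph{intersection} $\cB:=\bigcap_{k}\cB^{[k]}$ instead. The lower bound still holds outside $\cB$, since $z\notin\cB$ means $z\notin\cB^{[k]}$ for some $k$ and then the $k$-th construction supplies the bound. And $\cB\in\car_d(H^{1/d},C_dH(M-m))$: for coordinate $j$, the $z_j$-slice of $\cB$ is contained in the $z_j$-slice of $\cB^{[j]}$, which lies in $\car_{d-1}$ for $z_j$ outside the $\car_1$ set $\cB^{[j]}_j$, and $\car_{d-1}$ is downward closed under inclusion. Equivalently, one may observe directly that the set where the lower bound fails is contained in every $\cB^{[k]}$ and therefore inherits all $d$ slicing properties.
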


We note that the definition of the Cartan sets gives implicit information
about their measure.
\begin{lemma}\label{lem:Cartan-measure}
  If  $ \cB\in \Car_d(H,K) $ then
  \begin{equation*}
    \mes_{\C^d}(\cB)\le C(d) e^{-H} \text{ and } \mes_{\R^d}(\cB\cap \R^d)\le C(d) e^{-H}.
  \end{equation*}
\end{lemma}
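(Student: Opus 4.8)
The plan is to prove the two bounds essentially in parallel, by induction on the dimension $d$, peeling off one coordinate at a time exactly as in the recursive structure of \cref{defi:cartansets}. First I would handle the base case $d=1$: if $\cB\in\car_1(H,K)$ then $\cB$ is covered by at most $K$ disks $\cD(z_j,r_j)$ with $\sum_j r_j<e^{-H}$, so
\begin{equation*}
  \mes_{\C}(\cB)\le \sum_j \pi r_j^2 \le \pi\Bigl(\sum_j r_j\Bigr)^2 < \pi e^{-2H}\le \pi e^{-H},
\end{equation*}
and similarly $\mes_\R(\cB\cap\R)\le \sum_j 2r_j < 2e^{-H}$. (Here I am using $H\ge 1$, so $e^{-2H}\le e^{-H}$; in fact one gets the stronger bound $e^{-2H}$, but $e^{-H}$ suffices and matches the inductive step.)

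For the inductive step, suppose the claim holds in dimension $d-1$ and let $\cB\in\car_d(H,K)$, say with respect to the first coordinate: there is $\cB_1\subset\cD(z_{1,0},1)$, $\cB_1\in\car_1(H,K)$, such that for every $z\in\C\setminus\cB_1$ the slice $\cB_z^{(1)}=\{(z_1,\dots,z_d)\in\cB: z_1=z\}$ lies in $\car_{d-1}(H,K)$ (viewed as a subset of $\prod_{j=2}^d\cD(z_{j,0},1)\subset\C^{d-1}$). Then I would write, by Fubini,
\begin{equation*}
  \mes_{\C^d}(\cB)=\int_{\cD(z_{1,0},1)} \mes_{\C^{d-1}}(\cB_z^{(1)})\,dm(z)
  \le \int_{\cB_1}\mes_{\C^{d-1}}(\text{unit polydisk})\,dm(z)+\int_{\C\setminus\cB_1}\mes_{\C^{d-1}}(\cB_z^{(1)})\,dm(z).
\end{equation*}
On $\cB_1$ I bound the slice crudely by the volume $\pi^{d-1}$ of the $(d-1)$-dimensional unit polydisk and use $\mes_\C(\cB_1)\le \pi e^{-H}$ from the base case; on the complement I use the inductive hypothesis $\mes_{\C^{d-1}}(\cB_z^{(1)})\le C(d-1)e^{-H}$ together with $\mes_\C(\cD(z_{1,0},1))=\pi$. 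This gives $\mes_{\C^d}(\cB)\le (\pi^d + \pi C(d-1))e^{-H}=:C(d)e^{-H}$. The real-variable bound is identical with $dm$ replaced by Lebesgue measure on the real slice of $\cD(z_{1,0},1)$ (an interval of length $2$), using $\mes_\R(\cB_1\cap\R)\le 2e^{-H}$ and the inductive real bound; note that $\cB\cap\R^d$ is contained in the union over $z\in\cB_1\cap\R$ of real unit cubes plus the union over real $z\notin\cB_1$ of the real slices $\cB_z^{(1)}\cap\R^{d-1}$, which is exactly the structure needed.

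I do not expect a serious obstacle here; the statement is really a bookkeeping consequence of the definition. The only mild subtlety is that \cref{defi:cartansets} only guarantees the slicing property for \emph{one} choice of coordinate (``for any $1\le j\le d$ there exists $\cB_j$'' — in fact the definition gives it for every $j$, but a single $j$ is all that is used), so one must be a little careful to fix one coordinate and not inadvertently claim a product structure. A second minor point is measurability of $\cB$ and of the slices: since every $\cB$ in the relevant class is contained in a finite union of disks (a Borel set), and the slices inherit this, Fubini applies without issue — though if one wants to be fully rigorous one replaces $\cB$ by its covering union of disks throughout, which only makes the bounds larger. The constant $C(d)$ that comes out is explicit, $C(d)=\pi^d+\pi C(d-1)$ with $C(1)=\pi$, hence $C(d)\le (d+1)\pi^d$ or the like, which is all that is needed downstream.
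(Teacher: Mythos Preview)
Your proof is correct and follows essentially the same approach as the paper, which simply states that the case $d=1$ is immediate from the definition of $\Car_1$ and the case $d>1$ follows by induction using Fubini and the definition of $\Car_d$. You have merely filled in the details that the paper leaves to the reader.
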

\begin{proof}
  The case $ d=1 $ follows immediately from the definition of
  $ \Car_1 $. The case $ d>1 $ follows by induction, using Fubini and
  the definition of $ \Car_d $.
\end{proof}

The following simple corollary of the Cartan estimate will allow us to
upgrade estimates from $ \T^d $, where we can take advantage of the
fact that $ H(x) $ is self-adjoint, to some complex neighborhood of $ \T^d $.
\begin{cor}\label{cor:high_cart}
  Let $\varphi(z_1, \dots, z_d)$ be an analytic function defined
  on a polydisk $\cP = \prod\limits^d_{j=1} \cD(x_{j,0},1)$, $x_{j,0} \in
  \R $.  Assume $\sup_\cP \log |\varphi(z)|\le 0$ and
  $\log
  |\varphi(x)|\le m<0$ for any $x\in \cP\cap \R^d$ .
  Then for any $ z\in \frac{1}{24}\cP $,
  \begin{equation*}
	\log
  |\varphi(z)|< c_0m, 
  \end{equation*}
  with some $c_0\ll_d 1$.
\end{cor}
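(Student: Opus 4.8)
The plan is to apply \cref{lem:high_cart} to $\varphi$ directly, using the hypothesis $\sup_\cP\log|\varphi|\le 0$ as the value of $M$ and exploiting the real-axis bound to produce a suitable $m$. First I would fix a parameter $H$, say $H$ a sufficiently large absolute constant depending only on $d$ (so that $H\gg 1$ in the sense of \cref{lem:high_cart}), and normalize: take $M=0$ since $\sup_\cP\log|\varphi|\le 0$. To get a lower bound at a point, note that $\cP\cap\R^d$ is nonempty — it contains the center $x_0=(x_{1,0},\dots,x_{d,0})$ — and by hypothesis $\log|\varphi(x_0)|\le m<0$, but we actually need a point where $\varphi$ is not too small from \emph{below} in order to run Cartan. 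The subtlety here is that \cref{lem:high_cart} requires a lower bound $m\le\log|\varphi(z_0)|$ at some point $z_0\in\cP$; the hypothesis of \cref{cor:high_cart} only gives an upper bound on the real axis. So the real input must be used differently.

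The correct route is: apply \cref{lem:high_cart} with $M=0$ and with $m$ replaced by an \emph{a priori} lower bound for $\log|\varphi|$ at the center, which can be taken to be $\log|\varphi(x_0)|$ itself — Cartan's estimate is stated with $m\le\log|\varphi(z_0)|$, so we may simply set $z_0=x_0$ and $m=\log|\varphi(x_0)|$. But this quantity is not controlled from below by the hypothesis. Instead, I would argue as follows: the statement we want to prove, $\log|\varphi(z)|<c_0 m$ on $\frac1{24}\cP$, is only useful when $m$ is very negative, and in that regime we may as well assume $\varphi\not\equiv 0$ and set $m_0:=\log|\varphi(x_0)|\le m<0$. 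Then \cref{lem:high_cart} (with $M=0$, $m=m_0$) yields a set $\cB\in\car_d(H^{1/d},K)$, $K=C_d H(-m_0)=C_d H|m_0|$, with $\log|\varphi(z)|>-C_d H|m_0|$ for $z\in\frac16\cP\setminus\cB$. That is a lower bound, the wrong direction; so this is still not quite it.

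Reconsidering, the intended mechanism must be: since $|\varphi|\le e^m$ on all of $\cP\cap\R^d$ (a set of real dimension $d$ that is "large" inside the polydisk), one wants to conclude $|\varphi|$ stays small on a complex neighborhood. This is a quantitative unique-continuation / three-lines type statement, and the clean way to get it from \cref{lem:high_cart} is to apply that lemma to an auxiliary function or to use it contrapositively: if $\log|\varphi(z)|\ge c_0 m$ somewhere in $\frac1{24}\cP$, then since $M=0$ the oscillation $M-m'$ with $m'$ the value at that point is at most $|c_0 m|$, so by \cref{lem:high_cart} applied with base point there, $\log|\varphi|>c_0 m - C_d H|c_0 m|$ off a Cartan set $\cB\in\car_d(H^{1/d},C_dH|c_0m|)$. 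Choosing $H$ a large absolute constant and $c_0$ small, $C_dH|c_0m|<|m|/2$, so $\log|\varphi|>2c_0 m - |m|/2 > -|m|$ (for $c_0$ small), i.e. $|\varphi|>e^{m}$ on $\frac16\cP\setminus\cB$. But by \cref{lem:Cartan-measure}, $\cB\cap\R^d$ has measure $\le C(d)e^{-H^{1/d}}$, which is a tiny fraction of $\mes(\frac16\cP\cap\R^d)$ once $H$ is large enough; hence there exists a real point $x\in\frac16\cP\setminus\cB$ with $|\varphi(x)|>e^m$, contradicting the hypothesis $\log|\varphi(x)|\le m$ on $\cP\cap\R^d$. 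This contradiction proves $\log|\varphi(z)|<c_0 m$ throughout $\frac1{24}\cP$.

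The main obstacle I expect is bookkeeping the constants so that the Cartan set $\cB$ produced (with $K$ proportional to $H|m|$) is genuinely negligible in $\R^d$-measure compared to the polydisk slice, simultaneously with forcing the error term $C_d H|c_0 m|$ to be a small fraction of $|m|$ — these pull $H$ in opposite directions ($H$ large for the measure to be small, but $H$ multiplies the error), so one must first fix $H=H(d)$ large enough for the measure bound $C(d)e^{-H^{1/d}}\ll \mes(\frac1{24}\cP\cap\R^d)$, and only \emph{then} choose $c_0=c_0(d)$ small enough that $C_d H c_0<\tfrac12$; since $H$ is by then an absolute ($d$-dependent) constant this is harmless, but the order of quantifiers matters. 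A secondary point is making sure the base-point application of \cref{lem:high_cart} is legitimate (the putative point where $\log|\varphi|\ge c_0 m$ lies in $\frac1{24}\cP\subset\frac16\cP$, so the rescaling factors in the lemma — which outputs control on $\frac16$ of the polydisk centered appropriately — must be tracked; shrinking from $\frac1{6}$ to $\frac1{24}$ gives the needed slack). Everything else is a routine application of \cref{lem:Cartan-measure} and the hypotheses.
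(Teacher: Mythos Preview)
Your proposal is correct and follows essentially the same approach as the paper: argue by contradiction from a putative point $z_0\in\frac{1}{24}\cP$ with $\log|\varphi(z_0)|\ge c_0m$, apply \cref{lem:high_cart} centered there with $M=0$, and use \cref{lem:Cartan-measure} together with the fact that the recentered $\frac{1}{6}$-polydisk still meets $\R^d$ in a set of measure $\gtrsim_d 1$ to locate a real point violating the hypothesis. Your discussion of the order of quantifiers (fix $H=H(d)$ first, then $c_0=c_0(d)$) is exactly the point; the minor arithmetic slips in your displayed inequalities (an extra $c_0m$ term, a stray factor of $2$) are harmless and the paper's version reads simply $\log|\varphi|>-c_0C_dH|m|>m/2$ once $c_0$ is chosen small enough.
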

\begin{proof} Assume, to the contrary, that there exists
  ${z}_0=(z_{j,0})$, $|{z}_0-x_0|<1/24$, such
  that $\log \bigl |\varphi({z}_0)\bigr |\ge c_0m$, with $ c_0 $
  to be specified later.
  Take $H\gg 1$ and find
  $\cB \subset \prod\limits^d_{j=1} \cD(x_{j,0},1/2)$,
  $2(\cB-z_0) \in \car_d\left(H^{1/d}, K\right)$,
  $K = c_0C_d H|m|$, such that
  \begin{equation}\label{eq:cart_bd11}
    \log \bigl | \varphi(z)\bigr | > -c_0C_d H|m|
  \end{equation}
  for any $z \in \prod^d_{j=1} \cD(z_{j,0},1/12)\setminus \cB$.
  Note that since $|z_0-x_0|<1/24$,
  \begin{gather*}
    \mes_{\mathbb{R}^d}\big(\prod^d_{j=1} \cD(z_{j,0},1/12)
    \cap \mathbb{R}^d \big)\ge  c_1(d),\ c_1>0.
  \end{gather*}
  On the other hand
  \begin{gather*}
    \mes_{\R^d}\big(\cB\cap \R^d\big)\le C(d) \exp(-H^{\frac{1}{d}})\ll c_1,
  \end{gather*}
  provided $H\gg 1$. So, there exists $x \in\big( \prod^d_{j=1} \cD(z_{j,0},1/12)\setminus \cB\big)
  \cap \R^d $. This implies $\log \bigl | \varphi(x)\bigr | > -c_0C_d H|m|>{m \over 2}$,
  provided we choose $c_0\ll 1$ appropriately. This contradicts our assumptions.
\end{proof}

Another simple consequence of Cartan's estimate is the following
statement that we refer to as the \emph{spectral form} of (LDT).
\begin{cor}[{\cite[Cor.~2.21]{GolSchVod16}}]\label{cor:4.6zeros}
  Assume $ x\in \T^d $, $ E\in \C $, and $ L(E)> \gamma >0 $. Let
  $ \sigma,\tau $ as in (LDT). If $ N\ge (B_0+ S_{V,E})^C $, $ C=C(a,b,\rho) $, and
  \begin{equation*}
    \norm{(H_N(x)-E)^{-1}}\le \exp(N^{\sigma/2}),
  \end{equation*}
  then
  \begin{equation*}
    \log|f_N(x,E)|> NL_N(E)-N^{1-\tau/2}.
  \end{equation*}
\end{cor}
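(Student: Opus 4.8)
The plan is to argue by contradiction: suppose $\log|f_N(x,E)|\le NL_N(E)-N^{1-\tau/2}$, and derive that $\|(H_N(x)-E)^{-1}\|$ must be larger than $\exp(N^{\sigma/2})$. The link between the size of the Dirichlet determinant and the norm of the resolvent is Cramer's rule: on the real torus $H_N(x)$ is self-adjoint, so $\|(H_N(x)-E)^{-1}\|=\mathrm{dist}(E,\spec H_N(x))^{-1}$, and more usefully the matrix entries of the resolvent are ratios of sub-determinants of $H_N(x)-E$ over $f_N(x,E)$. By \cref{eq:M-f} those sub-determinants are themselves (entries of) transfer matrices $M_{[a,b]}(x,E)$ for sub-intervals $[a,b]\subset[1,N]$, so by the uniform upper bound \cref{eq:uniform-estimate} (equivalently \cref{prop:logupper}) they are bounded above by $\exp(NL_N(E)+C_0S_{V,E}N^{1-\tau})$. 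Hence
\begin{equation*}
  \|(H_N(x)-E)^{-1}\| \le \frac{C\,N\,\exp(NL_N(E)+C_0S_{V,E}N^{1-\tau})}{|f_N(x,E)|}.
\end{equation*}

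First I would make this Cramer's-rule reduction precise, being a little careful that the numerator involves determinants of $H_{[a,b]}(x)-E$ for $[a,b]$ not necessarily starting at $1$; but by \cref{eq:basic-identity} these equal Dirichlet determinants of a shifted operator, and the uniform upper estimate \cref{prop:logupper} applies to all of them with the same $L_N(E)$ up to the lower-order error (using that $L_{N'}(E)$ for $N'\le N$ is controlled, e.g. via subadditivity $N'L_{N'}\le NL_N$ plus the elementary bounds \cref{eq:LEupperb1}; here the precise bookkeeping of which scale's Lyapunov exponent appears is the one genuinely fiddly point). Then, plugging in the contradiction hypothesis $|f_N(x,E)|\le\exp(NL_N(E)-N^{1-\tau/2})$, the $NL_N(E)$ terms cancel and I get
\begin{equation*}
  \|(H_N(x)-E)^{-1}\| \le C N\exp\!\big(C_0S_{V,E}N^{1-\tau}-N^{1-\tau/2}\big).
\end{equation*}
Since $\tau/2<\tau$ and $N\ge (B_0+S_{V,E})^C$ with $C=C(a,b,\rho)$ chosen large enough, the term $N^{1-\tau/2}$ dominates $C_0S_{V,E}N^{1-\tau}+\log(CN)$, so the right-hand side is at most $\exp(-\tfrac12 N^{1-\tau/2})<\exp(N^{\sigma/2})$ (using also $\sigma\ll\tau$, so $\sigma/2<1-\tau/2$). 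This contradicts the hypothesis $\|(H_N(x)-E)^{-1}\|\le\exp(N^{\sigma/2})$ being the \emph{only} information — wait, rather: this shows that \emph{if} the determinant were that small the resolvent would be \emph{small}, which is not yet a contradiction.

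So the logic must instead run forward from the resolvent bound to a \emph{lower} bound on $|f_N|$, and the role of Cartan's estimate (advertised in the corollary's nickname "spectral form of (LDT)") is exactly to avoid needing a pointwise lower bound on $f_N$ at the specific $x$: one knows $f_N(\cdot,E)$ is small in measure only on a bad set, but here we have no such largeness input at our fixed $x$. The correct route: consider $\varphi(z)=f_N(z,E)$ as analytic in $z$ near $x$ (it extends to $\T^d_\rho$). The uniform upper bound gives $M:=\sup\log|\varphi|\le NL_N(E)+C_0S_{V,E}N^{1-\tau}$ on a unit polydisk (after rescaling by $\rho$). The hypothesis $\|(H_N(x)-E)^{-1}\|\le\exp(N^{\sigma/2})$ gives, via Cramer as above, the \emph{lower} bound $|\varphi(x)|=|f_N(x,E)|\ge \exp(NL_N(E)-C_0S_{V,E}N^{1-\tau}-N^{\sigma/2})=:e^m$ directly at $x$ — no Cartan needed for that either. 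Then $M-m\lesssim S_{V,E}N^{1-\tau}+N^{\sigma/2}\ll N^{1-\tau/2}$, and Cartan's estimate \cref{lem:high_cart} (the $d=1$ slice form, or the averaging \cref{cor:high_cart}) upgrades the pointwise lower bound to the stated $\log|f_N(x,E)|>M-C_dH(M-m)>NL_N(E)-N^{1-\tau/2}$ on the relevant smaller polydisk containing $x$, with the choice of $H$ and of the exponent $C$ in $N\ge (B_0+S_{V,E})^C$ absorbing all constants. The main obstacle I anticipate is purely the bookkeeping in the Cramer step — identifying the numerators as quantities already controlled by \cref{prop:logupper} with the $\textit{same}$ $L_N(E)$, and checking $S_{V,E}N^{1-\tau}+N^{\sigma/2}\ll N^{1-\tau/2}$ holds for all $N$ past the threshold, which forces the threshold to be polynomial in $B_0+S_{V,E}$ exactly as stated.
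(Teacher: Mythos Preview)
Your proof has a genuine gap at the step where you claim that Cramer's rule together with the resolvent bound yields
\[
  |f_N(x,E)|\ \ge\ \exp\bigl(NL_N(E)-C_0S_{V,E}N^{1-\tau}-N^{\sigma/2}\bigr).
\]
This is not what Cramer gives. From the corner entry $\cG_{[1,N]}(x,E;1,N)=1/f_N(x,E)$ and $|\cG(j,k)|\le\|(H_N(x)-E)^{-1}\|$ you get only $|f_N(x,E)|\ge e^{-N^{\sigma/2}}$, which carries no $NL_N(E)$ term. Your displayed inequality $\|(H_N(x)-E)^{-1}\|\le CN\exp(NL_N(E)+\cdots)/|f_N(x,E)|$ is (up to the bookkeeping you flagged) correct, but combining it with an \emph{upper} bound on the resolvent norm yields an \emph{upper} bound on $|f_N|$, not a lower bound---indeed you noticed in your first pass that a hypothetically small $|f_N|$ forces the resolvent to be \emph{small}, not large. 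And if the claimed $m$ were available, Cartan would be superfluous: $m$ already exceeds the target.

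The missing idea is that the resolvent bound controls how $f_N$ changes under small perturbations of $x$. Since $x\in\T^d$, one has $|E_j^{[1,N]}(x)-E|\ge e^{-N^{\sigma/2}}$ for every $j$; by \cref{eq:stability-x}, for any real $x'$ with $|x'-x|<\epsilon\ll (C_\rho\|V\|_\infty)^{-1}e^{-N^{\sigma/2}}$ each eigenvalue moves by at most $C_\rho\|V\|_\infty\epsilon$, so $\bigl|\log|E_j(x')-E|-\log|E_j(x)-E|\bigr|\lesssim \epsilon\, e^{N^{\sigma/2}}$ and hence $\bigl|\log|f_N(x',E)|-\log|f_N(x,E)|\bigr|\lesssim N\epsilon\, e^{N^{\sigma/2}}$. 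Choose $\epsilon=e^{-2N^{\sigma/2}}$: this difference is $o(1)$, while the $d$-ball of radius $\epsilon$ around $x$ has measure $\sim e^{-2dN^{\sigma/2}}\gg e^{-N^{\sigma}}$, so by \cref{thm:DirLDT} it contains an $x'$ with $\log|f_N(x',E)|>NL_N(E)-N^{1-\tau}$. Transferring back to $x$ gives the conclusion. (Equivalently, the resolvent bound and a Neumann-series argument show $f_N(\cdot,E)$ has no zeros in a complex polydisk of radius $\sim e^{-N^{\sigma/2}}$ around $x$; one then applies the one-variable Cartan estimate---the last clause of \cref{lem:high_cart}, where the bad disks contain zeros---along a slice through a good LDT point, and the zero-free region guarantees $x$ lies outside the Cartan bad set. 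This is the sense in which the statement is a ``consequence of Cartan's estimate''.)
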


\subsection{Poisson's Formula}

Recall that for any solution $ \psi $ of the difference equation $ H(x)\psi=E\psi $,
Poisson's formula reads
\begin{equation}\label{eq:poissonC}
  \psi(m) = \cG_{[a, b]} (x,E;m, a)\psi(a-1) + \cG_{[a, b]} (x,E;m,b)\psi(b+1),
  \quad m \in [a, b].
\end{equation}
With the help of Poisson's formula one  gets the following {\em covering lemma}.
\begin{lemma}[{\cite[Lem. 2.22]{GolSchVod16}}]\label{lem:Poissoncover}
  Let $ x\in \mathbb{T}^d$, $ E\in\mathbb{R} $, and
  $ [a,b]\subset \mathbb{Z} $. If for any $ m\in[a,b] $, there exists an interval
  $ I_m=[a_m,b_m]\subset[a,b] $ containing $m$ such that
  \begin{equation}\label{eq:Poissoncover-condition}
    (1- \delta_{a,a_m}) \left| \cG_{I_m}(x,E;m,a_m) \right|
    +(1- \delta_{b,b_m}) \left| \cG_{I_m}(x,E;m,b_m) \right|
    <1,
  \end{equation}
  then $ E\notin \spec H_{[a,b]}(x) $ (here $ \delta_{\cdot,\cdot} $
  stands for the Kronecker delta).
\end{lemma}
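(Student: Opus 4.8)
The statement to prove is the covering lemma (\cref{lem:Poissoncover}): if every $m\in[a,b]$ sits inside a subinterval $I_m\subset[a,b]$ on which a certain Green's function bound holds, then $E\notin\spec H_{[a,b]}(x)$.

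The plan is to argue by contradiction: suppose $E\in\spec H_{[a,b]}(x)$, so there is an $\ell^2$-normalized eigenfunction $\psi$ with $H_{[a,b]}(x)\psi=E\psi$ (extended by zero outside $[a,b]$, so $\psi(a-1)=\psi(b+1)=0$). Pick $m$ where $|\psi(m)|$ is maximal; by normalization $|\psi(m)|>0$. Now apply Poisson's formula \cref{eq:poissonC} on the interval $I_m=[a_m,b_m]$ — this is legitimate because $E$ need not be in the spectrum of the \emph{sub}-operator $H_{I_m}(x)$, and even if it were one would perturb, but in fact the hypothesis \cref{eq:Poissoncover-condition} implicitly forces $E\notin\spec H_{I_m}(x)$ since $\cG_{I_m}$ appears; so $\cG_{I_m}(x,E)$ is well-defined. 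Poisson's formula gives
\[
  \psi(m)=\cG_{I_m}(x,E;m,a_m)\,\psi(a_m-1)+\cG_{I_m}(x,E;m,b_m)\,\psi(b_m+1).
\]

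The key step is to handle the boundary terms. If $a_m=a$ then the first term drops out because $\psi(a_m-1)=\psi(a-1)=0$, which is exactly why the factor $(1-\delta_{a,a_m})$ appears in the hypothesis; similarly for $b_m=b$. If $a_m>a$, then $a_m-1\in[a,b]$ so $|\psi(a_m-1)|\le|\psi(m)|$ by maximality; likewise $|\psi(b_m+1)|\le|\psi(m)|$ when $b_m<b$. Combining,
\[
  |\psi(m)|\le\big((1-\delta_{a,a_m})|\cG_{I_m}(x,E;m,a_m)|+(1-\delta_{b,b_m})|\cG_{I_m}(x,E;m,b_m)|\big)\,|\psi(m)|<|\psi(m)|,
\]
a contradiction. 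Hence $E\notin\spec H_{[a,b]}(x)$.

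The only genuine subtlety — the "main obstacle," though it is minor — is making sure the resolvent $\cG_{I_m}(x,E)$ is well-defined, i.e. that $E\notin\spec H_{I_m}(x)$ for each $m$; one should note that this is part of the standing hypothesis (the expression \cref{eq:Poissoncover-condition} only makes sense when the Green's function exists) and otherwise the statement would be read vacuously/automatically. The rest is a clean maximum-principle argument built directly on Poisson's formula \cref{eq:poissonC}, with the Kronecker-delta factors precisely accounting for the two cases where a boundary term of $\psi$ vanishes.
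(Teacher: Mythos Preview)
Your proof is correct and is exactly the standard maximum-principle argument one expects here. The paper does not give its own proof of this lemma---it is quoted from \cite{GolSchVod16}---but your argument (choose $m$ maximizing $|\psi(m)|$, apply Poisson's formula on $I_m$, use the Dirichlet boundary values $\psi(a-1)=\psi(b+1)=0$ to justify the Kronecker-delta factors, and invoke maximality for the remaining boundary terms) is precisely the intended one.
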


We refer to the next result as the {\em covering form}  of (LDT).
\begin{lemma}[{\cite[Lem. 2.25]{GolSchVod16}}]\label{lem:Greencoverap0}
  Assume $ N\ge 1 $, $ x_0\in \tor^d $, $ E_0\in \R $, and
  $ L(E_0)> \gamma >0 $.  Let $ \sigma,\tau $  as in (LDT).
  Suppose that for each point $ m\in [1,N] $ there exists an interval $ I_m\subset [1,N] $
  such that:
  \begin{enumerate}
  \item $ \dist(m,[1,N]\setminus I_m)\ge |I_m|/100 $,
  \item $ |I_m|\ge (B_0+ S_{V,E_0}+ \gamma^{-1})^C $, $ C=C(a,b,\rho) $,
  \item $ \log|f_{I_m}(x_0,E_0)|> |I_m|L_{|I_m|}(E_0)-|I_m|^{1-\tau/4} $.
  \end{enumerate}
  Then for any $ (x,E)\in \T^d\times \C $ such that
  \begin{equation*}
    |x-x_0|,\ |E-E_0|< \exp(- 2 \max_m |I_m|^{1-\tau/4}),
  \end{equation*}
  we have
  \begin{equation*}
    \dist(E,\spec H_N(x))\ge \exp(- 2 \max_m |I_m|^{1-\tau/4}).
  \end{equation*}
\end{lemma}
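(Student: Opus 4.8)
The plan is to reduce the conclusion to an application of the covering lemma (\cref{lem:Poissoncover}) together with Green's function decay (\cref{lem:Green}), applied at the reference pair $(x_0,E_0)$ and then transferred to nearby $(x,E)$ by continuity. First I would fix $(x,E)$ with $|x-x_0|,|E-E_0|<\exp(-2\max_m|I_m|^{1-\tau/4})=:\delta$. To prove $\dist(E',\spec H_N(x))\ge \delta$ for all $E'$ with $|E'-E_0|<\delta$ — equivalently for all $E'$ in the disk of radius $\delta$ about $E_0$ — it suffices to show that for every such $E'$ one has $E'\notin \spec H_N(x)$ and that the resolvent is bounded by $\delta^{-1}$; in fact it is cleanest to show directly that $\|(H_N(x)-E')^{-1}\|<\delta^{-1}$, which gives the distance bound. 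So the real task is a norm bound on the resolvent of $H_N(x)$ at energies within $\delta$ of $E_0$.

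The key steps, in order: (1) For each $m\in[1,N]$, apply \cref{lem:Green} on the interval $I_m$ (rescaled to start at $1$) at the pair $(x_0,E_0)$. Hypothesis (3) says $\log|f_{I_m}(x_0,E_0)|>|I_m|L_{|I_m|}(E_0)-|I_m|^{1-\tau/4}$, which is exactly condition \cref{eq:fN_unter} with $K=|I_m|^{1-\tau/4}$ (note $L_{|I_m|}(\omega,E_0)=L_{|I_m|}(E_0)$ in the notation of that lemma since the shift is $\omega$); hypothesis (2) guarantees $|I_m|$ is large enough to invoke the lemma. Hence for $(x,E')$ with $|x-x_0|,|E'-E_0|<\exp(-(K+C_0|I_m|^{1-\tau}))$ — which holds since $\delta$ is much smaller than this, as $1-\tau/4>1-\tau$ so $|I_m|^{1-\tau/4}$ dominates — we get the off-diagonal bound $|\cG_{I_m}(x,E';j,k)|\le \exp(-\tfrac\gamma2|k-j|+K+2C_0|I_m|^{1-\tau})$ on $I_m$. (2) Use hypothesis (1): $\dist(m,[1,N]\setminus I_m)\ge |I_m|/100$, so for the endpoints $a_m,b_m$ of $I_m$ one has $|m-a_m|,|m-b_m|\ge |I_m|/100$ whenever that endpoint is not also an endpoint of $[1,N]$. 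Plugging into the off-diagonal bound gives $|\cG_{I_m}(x,E';m,a_m)|\le \exp(-\tfrac{\gamma}{200}|I_m|+K+2C_0|I_m|^{1-\tau})$, and since $K+2C_0|I_m|^{1-\tau}=|I_m|^{1-\tau/4}+2C_0|I_m|^{1-\tau}\ll \gamma|I_m|/200$ once $|I_m|$ exceeds the threshold in (2) (choosing $C=C(a,b,\rho)$ large enough, absorbing the $\gamma$-dependence into the threshold as allowed by (2)), each such term is $<1/2$. So condition \cref{eq:Poissoncover-condition} of \cref{lem:Poissoncover} holds at $(x,E')$, giving $E'\notin\spec H_N(x)$. (3) To upgrade "$E'\notin\spec$ for all $E'$ in the $\delta$-disk" to the quantitative distance bound: either observe that the same Poisson-covering argument combined with the full resolvent estimate \cref{eq:normG} on each $I_m$ and a standard paving/resolvent-identity bound (the resolvent of $H_N$ built from the resolvents of the covering intervals $I_m$ via Poisson's formula) yields $\|(H_N(x)-E)^{-1}\|\le \exp(2\max_m|I_m|^{1-\tau/4})=\delta^{-1}$ directly; or, more simply, apply the already-established fact that no point of the closed $\delta$-disk about $E_0$ lies in $\spec H_N(x)$, which for a self-adjoint operator means $\dist(E_0,\spec H_N(x))\ge \delta$, hence $\dist(E,\spec H_N(x))\ge \delta-|E-E_0|$; to get the clean bound with the stated $\delta$ one runs the argument with a slightly larger radius (replace the $2$ in the exponent by $2$ and note the threshold constants were chosen with room to spare), or simply states the conclusion with $\dist(E,\spec H_N(x))\ge \delta$ and absorbs the harmless factor. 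I would present the self-adjointness route as the main line since it is shortest.

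The main obstacle is bookkeeping of the exponents and constants: one must check that $\exp(-2\max_m|I_m|^{1-\tau/4})$ is small enough to satisfy \emph{simultaneously} the smallness requirement $|x-x_0|,|E-E_0|<\exp(-(K+C_0|I_m|^{1-\tau}))$ of \cref{lem:Green} for \emph{every} $m$ (this is why $\max_m$ appears), and that $|I_m|^{1-\tau/4}+2C_0|I_m|^{1-\tau}<\tfrac{\gamma}{200}|I_m|$, which forces the lower bound $|I_m|\ge(B_0+S_{V,E_0}+\gamma^{-1})^C$ in hypothesis (2) with $C$ chosen large depending only on $a,b,\rho$ (recall $\sigma\ll\tau\ll1$ so all these powers of $|I_m|$ are genuinely sublinear). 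None of this is deep, but the choice of the exponent $1-\tau/4$ — sitting strictly between the LDT gain $1-\tau$ and the trivial exponent $1$ — is exactly what makes the $C_0|I_m|^{1-\tau}$ error terms from \cref{lem:Green} negligible against $|I_m|^{1-\tau/4}$ while keeping $|I_m|^{1-\tau/4}$ itself negligible against the linear decay rate $\gamma|I_m|$, so this is the point to be careful about.
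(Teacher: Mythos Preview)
The paper does not give its own proof of this lemma: it is quoted verbatim from \cite[Lem.~2.25]{GolSchVod16}, and the surrounding text explicitly says that the remaining results of that subsection are proved there. So there is no in-paper proof to compare against. That said, your plan is correct and is exactly the standard argument; it is also the same argument the paper \emph{does} write out for the closely related \cref{lem:Greencoverap1} (spectral form of LDT feeding into \cref{lem:Green}, then the covering lemma \cref{lem:Poissoncover}, then the self-adjointness of $H_N(x)$ to pass from ``no spectrum in an interval'' to a distance bound). Your bookkeeping of the exponents is right: the choice $K=|I_m|^{1-\tau/4}$ makes the perturbation window in \cref{lem:Green} strictly larger than $\exp(-2\max_m|I_m|^{1-\tau/4})$, and the linear decay $\tfrac{\gamma}{200}|I_m|$ dominates $|I_m|^{1-\tau/4}+2C_0|I_m|^{1-\tau}$ once $|I_m|$ exceeds the threshold in hypothesis~(2); your final paragraph identifies precisely why $1-\tau/4$ is the right intermediate exponent.

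One small clarification on your step~(3): the clean route is the second one you sketch. Since $|I_m|^{1-\tau/4}+C_0|I_m|^{1-\tau}<\tfrac{3}{2}|I_m|^{1-\tau/4}$ for $|I_m|$ past the threshold, the covering argument actually excludes spectrum from the larger real interval $|E'-E_0|<\exp(-\tfrac{3}{2}\max_m|I_m|^{1-\tau/4})=:\delta'$, and then for any $E\in\C$ with $|E-E_0|<\delta$ one has $\dist(E,\spec H_N(x))\ge \delta'-\delta\ge\delta$ because $\delta'/\delta=\exp(\tfrac12\max_m|I_m|^{1-\tau/4})\gg 2$. This removes the ``absorb the harmless factor'' hand-wave.
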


We also give another formulation of the covering form of (LDT) that is
better suited for the setting of this paper.

\begin{lemma}\label{lem:Greencoverap1}
  Assume $ x_0\in \tor^d $, $ S\subset \R $, and $ L(E)>\gamma>0 $
  for $ E\in S $.  Let $ \sigma $ as in (LDT), and $ a<b
  $ integers.
  Suppose that for each point $ m\in [a,b] $ there exists an interval $ J_m $
  such that $ m\in J_m $ and:
  \begin{enumerate}
  \item $ \dist(m,\partial J_m)\ge |J_m|/100 $,
  \item $\dist (\spec H_{J_m}(x_0),S)\ge \exp(-K)$,
    with  $K< \frac{1}{2} \min_m |J_m|^{\sigma/2}$,
  \item $ K\ge (B_0+ S_V+ \gamma^{-1})^C $, $
    C=C(a,b,\rho) $ (here $ a,b $ are as in \cref{eq:vecdiophant}),
  \end{enumerate}
  Let $ J=\bigcup_{m\in [a,b]} J_m $.
  Then for any $ |x-x_0|<\exp(-2K) $ 
  we have
  \begin{equation*}
    \dist(\spec H_J(x),S)\ge \frac{1}{2}\exp(-K).
  \end{equation*}
\end{lemma}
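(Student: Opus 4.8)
The idea is to reduce this statement to the previously established \cref{lem:Greencoverap0} by translating the hypotheses on the spectra of the blocks $H_{J_m}(x_0)$ into the three hypotheses there, and then to combine this with the covering lemma \cref{lem:Poissoncover} to exclude energies near $S$ from $\spec H_J(x)$. First I would fix $E\in S$; since $L(E)>\gamma$ by hypothesis it suffices to produce, for each $m\in[a,b]$, an interval $I_m\ni m$ with $\dist(m,[a,b]\setminus I_m)\gtrsim|I_m|$ on which the Green's function of $H_{I_m}(x_0,E)$ is exponentially small off the diagonal. The natural candidate is $I_m=J_m$. Condition (2) of the present lemma, $\dist(\spec H_{J_m}(x_0),S)\ge\exp(-K)$, means precisely that $\|(H_{J_m}(x_0)-E)^{-1}\|\le\exp(K)$; since by (3) we have $K\le\tfrac12|J_m|^{\sigma/2}$ (using (2): $K<\tfrac12\min|J_m|^{\sigma/2}$), this resolvent bound is of the form required to apply the spectral form of (LDT), \cref{cor:4.6zeros}, provided $|J_m|\ge(B_0+S_V)^C$, which follows from (3) because $|J_m|^{\sigma/2}\ge 2K\ge 2(B_0+S_V+\gamma^{-1})^C$. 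Hence \cref{cor:4.6zeros} gives
\begin{equation*}
  \log|f_{J_m}(x_0,E)|>|J_m|L_{|J_m|}(E)-|J_m|^{1-\tau/2},
\end{equation*}
which is exactly hypothesis (3) of \cref{lem:Greencoverap0} (with $\tau/2$ in place of $\tau/4$, which is stronger). Hypotheses (1) and (2) of \cref{lem:Greencoverap0} are immediate from conditions (1) and (3) here.

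Now I would like to invoke \cref{lem:Greencoverap0} with $N$ replaced by the interval $J=\bigcup_{m\in[a,b]}J_m$. There is a bookkeeping point: \cref{lem:Greencoverap0} is stated for $[1,N]$, but by the translation invariance \cref{eq:basic-identity} and the affine change of index it applies verbatim to any finite interval of $\Z$, so applying it on $J$ is legitimate after relabeling. Its conclusion gives, for $|x-x_0|,|E-E_0|<\exp(-2\max_m|J_m|^{1-\tau/4})$, that $\dist(E,\spec H_J(x))\ge\exp(-2\max_m|J_m|^{1-\tau/4})$. Here is where I would need a quantitative comparison: since $K\le\tfrac12\min_m|J_m|^{\sigma/2}$ and $\sigma\ll\tau\ll1$, we have $\max_m|J_m|^{1-\tau/4}\ll K$ is \emph{false} in general — in fact $|J_m|^{1-\tau/4}$ can be much larger than $K$ — so the radii $\exp(-2\max|J_m|^{1-\tau/4})$ coming from \cref{lem:Greencoverap0} are \emph{smaller} than the target $\tfrac12\exp(-K)$ and $\exp(-2K)$. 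This is the opposite of what we want, so a direct black-box application does not quite close the gap: the present lemma claims a \emph{larger} exclusion window ($\exp(-K)$ rather than $\exp(-\text{poly}(|J|))$). The fix is to run the covering argument of \cref{lem:Greencoverap0} by hand rather than quoting it, keeping the sharper dependence: the Green's function bound one actually gets from \cref{lem:Green} applied with the block $J_m$ and the resolvent bound $K$ is
\begin{equation*}
  |\cG_{J_m}(x,E;j,k)|\le\exp\!\left(-\tfrac{\gamma}{2}|k-j|+K+2C_0|J_m|^{1-\tau}\right),
\end{equation*}
valid for $|x-x_0|,|E-E_0|<\exp(-(K+C_0|J_m|^{1-\tau}))$, and then one feeds this into \cref{lem:Poissoncover}: at the point $m$, using that $\dist(m,\partial J_m)\ge|J_m|/100$, the two boundary terms in \cref{eq:Poissoncover-condition} are each bounded by $\exp(-\tfrac{\gamma}{2}|J_m|/100+K+2C_0|J_m|^{1-\tau})$, which is $<1/2$ once $|J_m|\gtrsim(K+\text{const})/\gamma$, i.e. once $K\ge(B_0+S_V+\gamma^{-1})^C$ with $C$ large (condition (3)) and $|J_m|^{\sigma/2}\ge 2K$ (condition (2)). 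Thus $E\notin\spec H_J(x)$ for all $E\in S$ and all $x$ with $|x-x_0|<\exp(-2K)$ (the factor $2$ absorbing the $C_0|J_m|^{1-\tau}$ error since $|J_m|^{1-\tau}\ll|J_m|^{\sigma/2}/2\le$, hmm).

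\textbf{The main obstacle} — and the step I would spend the most care on — is exactly this comparison of scales between $K$ and $|J_m|^{1-\tau}$. We need the perturbation radius $\exp(-(K+C_0|J_m|^{1-\tau}))$ in \cref{lem:Green} to be at least $\exp(-2K)$, i.e. $C_0|J_m|^{1-\tau}\le K$; but condition (2) only gives $|J_m|^{\sigma/2}\ge 2K$, hence $|J_m|\ge(2K)^{2/\sigma}$, which makes $|J_m|^{1-\tau}$ potentially \emph{much larger} than $K$. So the naive chain of inequalities fails, and the resolution must be that one does not apply \cref{lem:Green} with the full block $J_m$ but rather first shrinks to a sub-block of $J_m$ of length $\simeq K^{2/\sigma}$ (or $\simeq(K/\gamma)\cdot\text{const}$) centered appropriately around $m$ — small enough that its $1-\tau$ power is $\lesssim K$, yet (by condition (1), $\dist(m,\partial J_m)\ge|J_m|/100$, and condition (2)) still long enough and still satisfying the resolvent bound on the sub-block (which follows from the resolvent bound on $J_m$ by interlacing/the fact that restricting can only improve distance to spectrum — this needs the self-adjointness on $\T^d$, which holds since $E\in\R$). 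Once the block size is calibrated so that $|I_m|^{1-\tau}\le\tfrac12 K$ while $|I_m|\ge(B_0+S_V+\gamma^{-1})^{C'}$ and the off-diagonal decay $\exp(-\tfrac{\gamma}{2}|I_m|/100)$ beats $\exp(K)$, the Poisson/covering argument above goes through with the claimed windows $\exp(-2K)$ in $x$ and $\tfrac12\exp(-K)$ for $\dist(\spec H_J(x),S)$. The remaining details (checking the covering of $[a,b]$ by the $I_m$, summing the two boundary contributions, and the Kronecker-delta edge cases in \cref{lem:Poissoncover}) are routine once the scales are set correctly.
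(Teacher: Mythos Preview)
Your overall strategy matches the paper's: fix $E_0\in S$, convert the spectral gap on each block into a determinant lower bound via \cref{cor:4.6zeros}, then into Green's function decay via \cref{lem:Green}, and finish with the Poisson covering \cref{lem:Poissoncover}. The ``main obstacle'' you flag, however, is a phantom produced by the order in which you apply the tools.

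You try to apply \cref{lem:Green} at the unperturbed point $x_0$ with this lemma's $K$ playing the role of the deviation in \eqref{eq:fN_unter}, and then use its perturbation clause to move to $(x,E')$; this forces you to compare $K$ with $|J_m|^{1-\tau}$, and the scales indeed do not match. The paper instead perturbs \emph{first}, using only the elementary continuity estimate \eqref{eq:stability-x}: for $|x-x_0|<\exp(-2K)$ and $|E'-E_0|\le\tfrac12\exp(-K)$ one still has
\[
\dist\bigl(\spec H_{I_m}(x),E'\bigr)\ge\tfrac14\exp(-K)>\exp\bigl(-|I_m|^{\sigma/2}\bigr).
\]
Now \cref{cor:4.6zeros} is applied \emph{at} $(x,E')$, giving $\log|f_{I_m}(x,E')|>|I_m|L_{|I_m|}(E')-|I_m|^{1-\tau/2}$, and \cref{lem:Green} is applied at $(x,E')$ with no further perturbation. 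The deviation parameter entering \cref{lem:Green} is therefore $|I_m|^{1-\tau/2}$, not $K$, and the Green's function error is $\tfrac32|I_m|^{1-\tau/2}$, which is beaten by the off-diagonal gain $\tfrac{\gamma}{2}\cdot\tfrac{|I_m|}{100}$ once $|I_m|$ is large --- and $|I_m|$ \emph{is} large, precisely because $|I_m|^{\sigma/2}>2K\ge 2(B_0+S_V+\gamma^{-1})^C$. No sub-block surgery is needed.

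Your proposed fix --- restricting to a shorter sub-interval and claiming the spectral gap persists ``by interlacing'' --- is in any case unsound: for Jacobi matrices, eigenvalues of a Dirichlet sub-block interlace those of the full block, but this does not prevent a sub-block eigenvalue from landing \emph{closer} to a given $E$ than any full-block eigenvalue was.

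One minor point you gloss over: the paper also does a short bookkeeping step to extend the covering from $[a,b]$ to $J=\bigcup_m J_m=[c,d]$, assigning to each $m\in[c,d]$ (not just $m\in[a,b]$) an interval $I_m$ drawn from the given $J_m$'s so that $\dist(m,J\setminus I_m)\ge|I_m|/100$.
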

\begin{proof}
  It is enough to consider the case $ S=\{ E_0 \} $ because the full
  result follows by applying this particular case to each $ E_0\in S
  $. Furthermore, we can assume $ |E_0|\le \norm{V}_\infty+4 $,
  because otherwise the conclusion holds trivially.

  First we need to set up some intervals for which we will be able to apply
  the covering lemma.  Let $ J_m=[c_m,d_m] $. Then
  \begin{equation*}
	J=[c,d],\quad c=\inf_m c_m,\quad d=\sup_m d_m.
  \end{equation*}
  Let
  \begin{equation*}
	m_-=\sup \{ m\in[a,b] : c_m=c \},\quad m_+=\inf \{ m\in[a,b] : d_m=d \},
  \end{equation*}
  \begin{equation*}
	I_m=\begin{cases}
      J_{m_-} &,m\in[c,m_-]\\
      J_m &, m\in[m_-,m_+]\\
      J_{m+} &, m\in[m_+,d]
    \end{cases}.
  \end{equation*}
  Then $ \dist(m,J\setminus I_m)\ge |I_m|/100 $.

  Take $ m\in [c,d] $. 
  Using (2) and (3) (also recall \cref{eq:stability-x}), for any
  \begin{equation*}
    |x-x_0|< \exp(-2K),\quad \ |E'-E_0|\le \frac{1}{2}\exp(-K)
  \end{equation*}
  we have
  \begin{equation*}
	\dist (\spec H_{I_m}(x),E')\ge \frac{1}{4}\exp(-K)>\exp(-|I_m|^{\sigma/2})
  \end{equation*}
  Combining  the spectral form of (LDT) from
  Corollary~\ref{cor:4.6zeros} with \cref{lem:Green} we get
  \begin{equation*}
    \left| \cG_{I_m}(x,E';m,k) \right|\le
    \exp \left( -\frac{\gamma}{2}|m-k|+\frac{3}{2}|I_m|^{1-\tau/2} \right).
  \end{equation*}
  Using (1) and (3) (which implies $ |I_m|\gg 1 $), the assumptions of
  \cref{lem:Poissoncover} are satisfied, and therefore
  $ E'\notin \spec H_N(x) $ for any
  $ |E'-E_0|\le \frac{1}{2}\exp(-K) $.  This yields the conclusion.
\end{proof}

\begin{remark}
  Obviously, for the covering forms of (LDT) it is enough to have a
  collection of intervals that overlap near their edges for a fraction
  of their size. We will use
  this observation tacitly when we invoke the above results.
\end{remark}

In connection with the estimates given by the covering form of (LDT)
we recall the following elementary criterion for an energy not to be
in the spectrum.

\begin{lemma}[{\cite[Lem. 2.39]{GolSchVod16}}]\label{lem:elemspec1}
  If for some $ x\in \T^d $, $ E\in \R $, $ \rho>0 $, there exist sequences
  $N'_s\to\infty$, $N''_s\to +\infty$ such that
  \[
    \dist(E,\spec H_{[-N_s',N_s'']}(x))\ge \rho,
  \]
  then
  \begin{equation*}
    \dist(E,\spec H(x))\ge \rho.
  \end{equation*}
\end{lemma}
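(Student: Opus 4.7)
The plan is to reduce the assertion about the full-line operator to the finite-volume resolvent bounds supplied by hypothesis. Since $H(x)$ is a bounded self-adjoint operator on $\ell^2(\Z)$, the desired conclusion $\dist(E,\spec H(x))\ge \rho$ is equivalent to the coercive lower bound
\[
\|(H(x)-E)\psi\|_{\ell^2(\Z)}\ge \rho\,\|\psi\|_{\ell^2(\Z)}\qquad\text{for every }\psi\in\ell^2(\Z).
\]
I would fix such a $\psi$ and, for each $s$, let $\psi_s$ denote its restriction to $[-N'_s,N''_s]$, regarded as an element of $\ell^2([-N'_s,N''_s])$.

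The next step is to compare $(H_{[-N'_s,N''_s]}(x)-E)\psi_s$ with the restriction of $(H(x)-E)\psi$ to the same interval. Reading off the action from \cref{eq:schr100}, the two vectors agree on the interior and differ only at the two endpoints, since the Dirichlet prescription treats $\psi(-N'_s-1)$ and $\psi(N''_s+1)$ as zero. A direct computation gives the identity
\[
(H_{[-N'_s,N''_s]}(x)-E)\psi_s=\bigl((H(x)-E)\psi\bigr)\big|_{[-N'_s,N''_s]}+\psi(-N'_s-1)\,\delta_{-N'_s}+\psi(N''_s+1)\,\delta_{N''_s}.
\]
Combining this with the hypothesis in the form $\rho\|\psi_s\|\le \|(H_{[-N'_s,N''_s]}(x)-E)\psi_s\|$ and applying the triangle inequality produces
\[
\rho\,\|\psi_s\|\le \|(H(x)-E)\psi\|_{\ell^2(\Z)}+|\psi(-N'_s-1)|+|\psi(N''_s+1)|.
\]

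Finally, I would let $s\to\infty$. The left-hand side increases to $\rho\,\|\psi\|_{\ell^2(\Z)}$ by monotone convergence, while both boundary contributions tend to zero because any $\psi\in\ell^2(\Z)$ satisfies $\psi(n)\to 0$ as $|n|\to\infty$. This yields the desired lower bound and completes the proof. There is no serious obstacle in this argument; the only point demanding care is accurate bookkeeping of the two endpoint corrections, which is forced by the precise Dirichlet convention fixed in the paragraph preceding \cref{eq:M-f}.
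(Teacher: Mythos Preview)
Your argument is correct. The paper does not supply its own proof of this lemma but merely cites it from \cite{GolSchVod16}, so there is no in-paper argument to compare against; your route via the coercivity characterization $\|(H(x)-E)\psi\|\ge\rho\|\psi\|$ together with explicit tracking of the two Dirichlet boundary corrections is the standard one. One cosmetic point: the phrase ``increases to $\rho\,\|\psi\|_{\ell^2(\Z)}$ by monotone convergence'' presumes the intervals $[-N'_s,N''_s]$ are nested, which the hypothesis does not guarantee; however, $\|\psi_s\|\to\|\psi\|$ holds regardless (since $\|\psi_s\|\le\|\psi\|$ always and $\liminf_s\|\psi_s\|\ge\|\psi\|$ because the intervals eventually contain any fixed finite window), so the conclusion is unaffected.
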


\subsection{Finite Scale Localization}

The covering and spectral forms of (LDT) can be used to obtain
localization of the eigenfunctions on a finite interval.  The
following result is a version of \cite[Prop. 3.1]{GolSchVod16} that
is better suited to the setting of \cref{sec:bulk} and
\cref{sec:edges}.

\begin{prop}\label{prop:localization}
  Let $ x_0\in \tor^d$, $ E_0\in \R $, and assume $ L(E_0)>\gamma>0
  $. Let $ \sigma $ as in (LDT) and $ 0<\beta<\sigma/2 $.  Let
  $ N\ge N_0 $ be integers.  Assume that for any $ 3N_0/2<|m|\le N $
  there exists an interval $ J_m $ such that $ m\in J_m $,
  $ \dist(m,\partial J_m)\ge N_0-N_0^{1/2} $, $ |J_m|\le 10N_0  $, and
  \begin{equation*}
    \dist ( \spec H_{J_m}(x_0),E_0)\ge \exp(-N_0^{\beta}).
  \end{equation*}
  Let
  \begin{equation*}
	[-N',N'']=[-3N_0/2,3N_0/2]\cup \bigcup_{3N_0/2<|m|\le N} J_m.
  \end{equation*}
  Then the following holds provided $ N_0\ge (B_0+ S_V+
  \gamma^{-1})^C $, $ C=C(a,b,\rho,\beta) $.
  If 
  \[
    |x-x_0|<\exp(-2N_0^{\beta}),\quad |E_k^{[-N',N'']}(x)-E_0|<\frac{1}{4}\exp(-N_0^\beta),
  \]
  then
  \[
    |\psi_k^{[-N',N'']}(x,n)|<\exp \left( -\gamma | n|/10 \right),
    \quad |n|\ge 3N_0/4.
  \]
\end{prop}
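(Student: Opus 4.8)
The plan is to combine the covering form of (LDT) (\cref{lem:Greencoverap1}) with Poisson's formula (\cref{eq:poissonC}) and the Green's function decay of \cref{lem:Green}. First I would verify that the collection $\{J_m\}_{3N_0/2<|m|\le N}$ (suitably padded at the two ends by the block $[-3N_0/2,3N_0/2]$, exactly as in the definition of $[-N',N'']$) satisfies the hypotheses of \cref{lem:Greencoverap1} relative to the singleton $S=\{E_0\}$: condition (1) holds because $\dist(m,\partial J_m)\ge N_0-N_0^{1/2}\ge |J_m|/100$ once $|J_m|\le 10N_0$; condition (2) holds with $K=N_0^\beta$, using the hypothesis $\dist(\spec H_{J_m}(x_0),E_0)\ge\exp(-N_0^\beta)$ together with $\beta<\sigma/2$, which gives $K=N_0^\beta<\frac12\min_m|J_m|^{\sigma/2}$ for $N_0$ large; condition (3) is exactly the lower bound $N_0\ge(B_0+S_V+\gamma^{-1})^C$ we are assuming. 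Hence for $|x-x_0|<\exp(-2N_0^\beta)$ we obtain $\dist(\spec H_{[-N',N'']}(x),E_0)\ge\tfrac12\exp(-N_0^\beta)$ — but wait, that would contradict $|E_k^{[-N',N'']}(x)-E_0|<\tfrac14\exp(-N_0^\beta)$. The point is that \cref{lem:Greencoverap1} must instead be applied locally, at each site, to the sub-block $I_m$ containing $m$ but \emph{not} containing the eigenvalue index information at the global scale; this is the standard ``paving'' argument and I address it next.

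The correct route is: fix $x$ and an eigenpair $(E_k,\psi_k)=(E_k^{[-N',N'']}(x),\psi_k^{[-N',N'']}(x,\cdot))$ with $|E_k-E_0|<\tfrac14\exp(-N_0^\beta)$, and fix a site $n$ with $|n|\ge 3N_0/4$. I would select the interval $J_n$ (for $3N_0/2<|n|$) or a translate of the central block (for $3N_0/4\le|n|\le 3N_0/2$), and use that $\dist(\spec H_{J_n}(x_0),E_0)\ge\exp(-N_0^\beta)$ together with \cref{eq:stability-x} and $|x-x_0|<\exp(-2N_0^\beta)$ to get $\dist(\spec H_{J_n}(x),E_k)\ge\tfrac14\exp(-N_0^\beta)>\exp(-|J_n|^{\sigma/2})$ for $N_0$ large. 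Then the spectral form of (LDT), \cref{cor:4.6zeros}, gives the lower bound on $\log|f_{J_n}(x,E_k)|$, and \cref{lem:Green} (applied with $x_0$ replaced by $x$, $E_0$ by $E_k$, and $K\simeq N_0^\beta$) yields
\begin{equation*}
  |\cG_{J_n}(x,E_k;n,k')|\le\exp\!\left(-\tfrac{\gamma}{2}|n-k'|+CN_0^\beta\right),\quad k'\in\partial J_n.
\end{equation*}
Now apply Poisson's formula \cref{eq:poissonC} on the interval $I_n:=J_n$ to the eigenfunction $\psi_k$ (which solves $H_{[-N',N'']}(x)\psi_k=E_k\psi_k$, hence the finite-volume equation on any sub-block, up to the boundary terms that Poisson's formula accounts for):
\begin{equation*}
  |\psi_k(n)|\le |\cG_{J_n}(x,E_k;n,a_n)|\,|\psi_k(a_n-1)|+|\cG_{J_n}(x,E_k;n,b_n)|\,|\psi_k(b_n+1)|,
\end{equation*}
where $J_n=[a_n,b_n]$. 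Since $\dist(n,\partial J_n)\ge N_0-N_0^{1/2}$ and $|\psi_k|\le 1$ (it is $\ell^2$-normalized), the Green's function bound gives $|\psi_k(n)|\le 2\exp(-\tfrac{\gamma}{2}(N_0-N_0^{1/2})+CN_0^\beta)$. Because $\beta<\sigma/2<1$, the term $CN_0^\beta$ is negligible compared to $\tfrac{\gamma}{2}N_0$, and for $|n|\le N_0$ this already beats $\exp(-\gamma|n|/10)$; for $|n|>N_0$ one iterates, using that along a chain of overlapping blocks $J_m$ the Green's function decay compounds, to upgrade the bound to the claimed $\exp(-\gamma|n|/10)$ for all $|n|\ge 3N_0/4$.

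The main obstacle is the bookkeeping in the last (iteration) step: turning the single-block decay $|\psi_k(n)|\lesssim\exp(-\tfrac{\gamma}{2}N_0+CN_0^\beta)$ into a clean exponential bound $\exp(-\gamma|n|/10)$ valid uniformly for $3N_0/4\le|n|\le N$. This requires telescoping Poisson's formula through a sequence of blocks $J_{m_1},J_{m_2},\dots$ marching from the center toward $n$, each pair overlapping by at least a fixed fraction of its length (the remark after \cref{lem:Greencoverap1} is exactly what licenses using overlapping rather than nested blocks), and keeping track that the accumulated error exponents $\sum CN_0^\beta$ over $\sim |n|/N_0$ steps total only $C|n|N_0^{\beta-1}\ll\gamma|n|/100$. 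One also has to handle the transition region $3N_0/4\le|n|\le 3N_0/2$ separately using the central block, and check that the $\ell^2$-normalization (rather than $\ell^\infty$) is harmless since trivially $\|\psi_k\|_\infty\le\|\psi_k\|_2=1$. None of these steps is deep; the content is entirely in the already-cited (LDT)-type inputs, and the proposition is essentially a repackaging of \cite[Prop.~3.1]{GolSchVod16} with the hypotheses arranged to feed the multi-scale schemes of \cref{sec:bulk} and \cref{sec:edges}.
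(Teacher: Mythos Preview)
Your approach is workable but differs from the paper's in one key respect: you iterate Poisson's formula through a chain of small blocks $J_m$ of size $\sim N_0$, whereas the paper avoids iteration entirely. For each site $n$ (say $n\ge 3N_0/4$) the paper sets $d=\lceil n-N_0/2\rceil>n/3$ and forms a single aggregated interval
\[
  J=\bigcup\bigl\{J_m:\ m\in[n-d,\,n+d+N_0]\cap(3N_0/2,N]\bigr\},
\]
so that $n\in J$, $|J|\lesssim d$, and $\dist(n,[-N',N'']\setminus J)\ge d$. The covering form of (LDT), \cref{lem:Greencoverap1}, applied to this $J$ (not to all of $[-N',N'']$ --- that was your correct diagnosis in the first paragraph), gives $\dist(\spec H_J(x),E_k)\ge\tfrac14\exp(-N_0^\beta)>\exp(-|J|^{\sigma/2})$; then the spectral form of (LDT) and \cref{lem:Green} are applied once to $J$, and a \emph{single} Poisson step on $J$ yields
\[
  |\psi_k(n)|\le 2\exp\!\bigl(-\tfrac{\gamma}{2}d+C|J|^{1-\tau/2}\bigr)<\exp(-\gamma d/3)<\exp(-\gamma n/10)
\]
directly. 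Your telescoping route also reaches the goal, but it genuinely needs the path-counting you allude to (the two boundary terms at each Poisson step branch, and one must check that the total path length dominates both the branching entropy and the accumulated error); the paper's aggregation trades that bookkeeping for one invocation of \cref{lem:Greencoverap1}.

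Two small corrections to your sketch. First, after the spectral form of (LDT) the error in the Green's-function bound from \cref{lem:Green} is $C|J_n|^{1-\tau/2}\sim CN_0^{1-\tau/2}$, not $CN_0^\beta$; both are sublinear in $N_0$ so the conclusion is unaffected, but your later estimate $\sum CN_0^\beta\sim C|n|N_0^{\beta-1}$ should read $C|n|N_0^{-\tau/2}$. Second, for $3N_0/4\le|n|\le3N_0/2$ you cannot use ``the central block'' $[-3N_0/2,3N_0/2]$ (it carries the eigenvalue $E_k$) nor an unspecified ``translate'' of it. The correct cover there is $J_m$ for the smallest $|m|>3N_0/2$: by hypothesis $J_m\supseteq m+[-(N_0-N_0^{1/2}),N_0-N_0^{1/2}]$, so its inner edge sits near $N_0/2$ and it contains such $n$ with distance $\gtrsim N_0/4$ to that edge. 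The paper's aggregated $J$ handles this region automatically.
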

\begin{proof}
  Take $ x,E=E_k^{[-N',N'']}(x) $, satisfying the assumptions, and
  without loss of generality assume
  $ n\ge 3N_0/4 $. Let $ d=\ceil{n-N_0/2} $. Note that
  $ d>n/3 $.
  Let
  \begin{equation*}
	J=\bigcup \{ J_m : m\in [n-d,n+d+N_0]\cap(3N_0/2,N]   \}
  \end{equation*}
  (we add $ N_0 $ to make sure $ 3N_0/2< n+d+N_0 $, so that the
  intersection is not empty).
  Note that by the assumptions on $ J_m $ we have $
  m+[-(N_0-N_0^{1/2}),N_0-N_0^{1/2}]\subseteq J_m  $, 
  $ N_0< |J|\lesssim d $, $ n\in J $, and $ \dist(n,[-N',N'']\setminus
  J)\ge d $. Using the covering form of (LDT),
  \begin{equation*}
	\dist(H_J(x),E)\ge \frac{1}{4}\exp(-N_0^\beta)>\exp(-|J|^{\sigma/2}),
  \end{equation*}
  and by the spectral form of (LDT),
  \begin{equation}\label{eq:LDE-J}
    \log|f_J(x,E)|\ge |J|L(E)-|J|^{1-\tau/2}.
  \end{equation}
  Using \cref{lem:Green} and Poisson's formula we get
  \begin{equation*}
    \left| \psi_k^{[-N',N'']}(x,n) \right|\le 2 \exp \left( -\frac{\gamma}{2} d+C|J|^{1-\tau/2} \right)
    < \exp \left( -\frac{\gamma}{3} d \right)<\exp \left( -\frac{\gamma}{10} n \right)
  \end{equation*}
  (recall that $ \psi $ is normalized).
\end{proof}

Next we discuss the stability of localized eigenpairs when we increase
the scale. Again, the particular set-up is motivated by the setting of \cref{sec:bulk} and
\cref{sec:edges}. We will use the following elementary lemma from basic perturbation theory.
\begin{lemma}[{\cite[Lem. 2.40]{GolSchVod16}}]\label{lem:eigenvector-stability} Let
  $A $ be a $ N\times N $ Hermitian matrix.  Let
  $E,\epsilon \in \mathbb{R}$, $ \epsilon>0 $, and suppose there exists
  $\phi\in \mathbb{\R}^N$, $\|\phi\|=1$, such that
  \begin{equation}\label{eq:2contraction1}
    \begin{split}
      \|(A-E)\phi\|< \varepsilon.
    \end{split}
  \end{equation}
  Then the following statements hold.

  \vspace{0.5em}
  \noindent {\normalfont (a)}
  There exists a normalized eigenvector $\psi$ of $A$ with an eigenvalue $E_0$
   such that
  \begin{equation}\label{eq:2contraction1aaaM}
    \begin{split}
    E_0\in (E-\varepsilon\sqrt2,E+\varepsilon\sqrt 2),\\
      |\langle \phi,\psi\rangle|\ge (2N)^{-1/2}.
    \end{split}
  \end{equation}

  \vspace{0.5em}
  \noindent {\normalfont (b)}  If in addition there exists $ \eta>\epsilon $ such that the subspace of the
  eigenvectors of $A$ with eigenvalues falling into the interval
  $(E-\eta,E+\eta)$ is at most of dimension one,
  then there exists a normalized eigenvector $\psi$ of $A$ with an eigenvalue
  $E_0\in (E-\varepsilon,E+\varepsilon)$, such that
  \begin{equation}\label{eq:2contraction1aaa}
    \begin{split}
      \|\phi-\psi\|<\sqrt{2}\eta^{-1}\varepsilon.
    \end{split}
  \end{equation}
\end{lemma}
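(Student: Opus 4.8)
The plan is to diagonalize $A$ and reduce everything to an elementary Chebyshev-type estimate on Fourier coefficients. Since $A$ is Hermitian, fix an orthonormal eigenbasis $\psi_1,\dots,\psi_N$ with $A\psi_j=\lambda_j\psi_j$, and expand $\phi=\sum_j c_j\psi_j$, so that $\sum_j|c_j|^2=1$ and
\[
  \|(A-E)\phi\|^2=\sum_{j}|\lambda_j-E|^2|c_j|^2<\epsilon^2.
\]
All three assertions of the lemma will follow by splitting this sum according to how close $\lambda_j$ is to $E$.

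For part (a), I would introduce the threshold set $S=\{j:|\lambda_j-E|<\epsilon\sqrt2\}$. For $j\notin S$ one has $|\lambda_j-E|^2\ge 2\epsilon^2$, hence $\sum_{j\notin S}|c_j|^2\le(2\epsilon^2)^{-1}\|(A-E)\phi\|^2<\tfrac12$, so $\sum_{j\in S}|c_j|^2>\tfrac12$. In particular $S\neq\emptyset$, and since $|S|\le N$ there is an index $j_*\in S$ with $|c_{j_*}|^2>1/(2N)$. Taking $\psi=\psi_{j_*}$ and $E_0=\lambda_{j_*}$ then yields $E_0\in(E-\epsilon\sqrt2,E+\epsilon\sqrt2)$ and $|\langle\phi,\psi\rangle|=|c_{j_*}|>(2N)^{-1/2}$.

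For part (b), the extra hypothesis says the set $T=\{j:|\lambda_j-E|<\eta\}$ has at most one element. It cannot be empty: otherwise $|\lambda_j-E|\ge\eta>\epsilon$ for every $j$, forcing $\|(A-E)\phi\|^2\ge\eta^2>\epsilon^2$. So $T=\{j_0\}$, and then
\[
  \sum_{j\neq j_0}|c_j|^2\le\eta^{-2}\sum_{j\neq j_0}|\lambda_j-E|^2|c_j|^2\le\eta^{-2}\|(A-E)\phi\|^2<\epsilon^2/\eta^2,
\]
so $|c_{j_0}|^2>1-\epsilon^2/\eta^2$. Next, if we had $|\lambda_{j_0}-E|\ge\epsilon$, then combining with $|\lambda_j-E|\ge\eta>\epsilon$ for $j\neq j_0$ would again give $\|(A-E)\phi\|\ge\epsilon$; hence $E_0:=\lambda_{j_0}\in(E-\epsilon,E+\epsilon)$. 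Finally, after multiplying $\psi_{j_0}$ by a scalar of modulus one we may assume $c_{j_0}=|c_{j_0}|>0$, and then $\|\phi-\psi_{j_0}\|^2=2-2c_{j_0}<2\bigl(1-\sqrt{1-\epsilon^2/\eta^2}\bigr)\le 2\epsilon^2/\eta^2$, using $1-\sqrt{1-t}\le t$ on $[0,1]$; this gives $\|\phi-\psi_{j_0}\|<\sqrt2\,\epsilon/\eta$.

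There is no genuine obstacle here — it is a short perturbation-theoretic computation. The only points that need a little care are bookkeeping ones: in part (b) one must use \emph{both} the spectral-gap hypothesis and the smallness of $\|(A-E)\phi\|$ to pin $\lambda_{j_0}$ inside the smaller interval $(E-\epsilon,E+\epsilon)$ rather than merely $(E-\eta,E+\eta)$, and one must remember that an eigenvector is only defined up to a scalar in order to normalize the phase of $\psi_{j_0}$ when estimating $\|\phi-\psi_{j_0}\|$ (for real $\phi$ one simply chooses the sign of $\psi_{j_0}$).
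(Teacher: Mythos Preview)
Your proof is correct. The paper does not supply its own proof of this lemma---it is quoted verbatim from \cite[Lem.~2.40]{GolSchVod16}---so there is nothing to compare against here; the spectral-expansion argument you give is exactly the standard one and would be the expected proof in the cited reference as well.
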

\begin{prop}\label{prop:stabilization}
  We use the notation and assumptions of \cref{prop:localization}.  We
  further assume that there exist integers $ |N_0'-N_0|<N_0^{1/2} $,
  $ |N_0''-N_0|<N_0^{1/2} $, and $ k_0 $, such that the following
  conditions hold:
  \begin{gather*}
	\tag{i}|E_{k_0}^{[-N'_0,N''_0]}(x_0)-E_0|< \exp(-2N_0^\beta),\\
    \tag{ii}|E_j^{[-N'_0,N''_0]}(x_0)-E_{k_0}^{[-N'_0,N''_0]}(x_0)|>  \exp(-N_0^\beta),
	\quad j \neq k_0,\\
    \tag{iii}|\psi_{k_0}^{[-N'_0,N''_0]}(x_0,-N'_0)|,
    |\psi_{k_0}^{[-N'_0,N''_0]}(x_0,N''_0)|< \exp(-2N_0^\beta).
  \end{gather*}
  Then there exist $ E_k^{[-N',N'']},\psi_k^{[-N',N'']} $, such that the following
  estimates hold for any  $|x-x_0|<\exp(-2N_0^{\beta})$,   provided
  $N_0\ge (B_0+ S_V+ \gamma^{-1})^C$, $ C=(a,b,\rho,\beta) $:
  \begin{gather*}
	\tag{1}|E_k^{[-N',N'']}(x)-E_{k_0}^{[-N'_0,N''_0]}(x)|< \exp(-\gamma N_0/20),\\
    \tag{2}|E_j^{[-N',N'']}(x)-E_k^{[-N',N'']}(x)|> \frac{1}{8}\exp(-N_0^{\beta}),\quad j\neq k,\\
    \tag{3}|\psi_k^{[-N',N'']}(x,n)|<\exp \left( -\gamma | n|/10 \right),
    \quad |n|\ge 3N_0/4,\\
    \tag{4}\|\psi_k^{[-N',N'']}(x,\cdot)-\psi_{k_0}^{[-N'_0,N''_0]}(x,\cdot)\|<\exp(-\gamma N_0/20).\\
  \end{gather*}
  Furthermore, if we additionally have
  \begin{equation}\label{eq:Jm-below}
    \dist ( \spec H_{J_m}(x_0),(-\infty,E_0])\ge \exp(-N_0^{\beta}), \quad 3N_0/2<|m|\le N
  \end{equation}
  ($ J_m $ as in \cref{prop:localization}) and
  \begin{equation*}
	\tag{ii'} E_j^{[-N'_0,N''_0]}(x_0)-E_{k_0}^{[-N'_0,N''_0]}(x_0)>  \exp(-N_0^\beta),\quad j\neq k_0,
  \end{equation*}
  then
  \begin{equation*}
	\tag{2'} E_j^{[-N',N'']}(x)-E_k^{[-N',N'']}(x)> \frac{1}{8}\exp(-N_0^{\beta}),\quad j\neq k.
  \end{equation*}
\end{prop}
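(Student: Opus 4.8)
The plan is to combine the localization bound from \cref{prop:localization} with the perturbation-theoretic stability \cref{lem:eigenvector-stability}, applied to the localized eigenvector of $ H_{[-N'_0,N''_0]}(x_0) $ viewed as an approximate eigenvector of the larger operator $ H_{[-N',N'']}(x) $. First I would fix $ x $ with $ |x-x_0|<\exp(-2N_0^\beta) $ and set $ \phi $ equal to $ \psi_{k_0}^{[-N'_0,N''_0]}(x_0,\cdot) $, extended by zero to $ [-N',N''] $. Because of the decay hypothesis (iii) at the endpoints $ -N'_0,N''_0 $ and the fact that $ H_{[-N',N'']}(x) $ differs from $ H_{[-N'_0,N''_0]}(x_0) $ only through the potential shift (size $ \lesssim \norm{V}_\infty |x-x_0| $, handled via \cref{eq:stability-x}) and the extra sites beyond $ [-N'_0,N''_0] $ (where $ \phi $ is tiny by (iii) and the one-step coupling in the discrete Laplacian only sees $ \phi(\pm N_0) $), one gets $ \|(H_{[-N',N'']}(x)-E_{k_0}^{[-N'_0,N''_0]}(x_0))\phi\|<\varepsilon $ with $ \varepsilon\lesssim \exp(-2N_0^\beta) $, which combined with (i) gives $ \varepsilon $ of the same order relative to $ E_0 $. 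Then I would invoke \cref{lem:eigenvector-stability}(a) to produce an eigenpair $ E_k^{[-N',N'']}(x),\psi_k^{[-N',N'']}(x,\cdot) $ with $ E_k $ close to $ E_0 $ and $ |\langle \phi,\psi_k\rangle|\ge (2N)^{-1/2} $; this last bound certifies that $ E_k $ lands in the window where \cref{prop:localization} applies, giving (3). Integrating (3) against the equation recovers a slightly smaller error, and then a second application of \cref{lem:eigenvector-stability}(b)—using (ii) to guarantee the relevant eigenspace is one-dimensional at scale $ \eta\simeq\exp(-N_0^\beta) $—upgrades the closeness to $ \|\phi-\psi_k\|<\sqrt2\,\eta^{-1}\varepsilon\ll\exp(-\gamma N_0/20) $, which is (4). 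Once (4) is in hand, (1) follows by comparing Rayleigh quotients (or from $ |E_k-E_{k_0}^{[-N'_0,N''_0]}(x)|\le\|(H-E_k)\phi\|+ \dots $ together with \cref{eq:stability-x}), and (2) follows from (1), (ii), and a counting/interlacing argument: any $ E_j^{[-N',N'']}(x) $ with $ j\neq k $ that were within $ \tfrac18\exp(-N_0^\beta) $ of $ E_k $ would have an eigenvector with nontrivial overlap on $ [-N'_0,N''_0] $ by a second localization argument, contradicting (ii).

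For the final assertion, I would observe that the extra hypothesis \cref{eq:Jm-below} is exactly what is needed so that the localization/covering argument of \cref{prop:localization} and \cref{lem:Greencoverap1} not only separates $ E_0 $ from $ \spec H_{J_m}(x_0) $ but separates it from the entire half-line $ (-\infty,E_0] $; propagating this one-sided separation through the avalanche/Poisson estimate shows that $ H_{[-N',N'']}(x) $ has no spectrum in an interval of the form $ (E_k-\tfrac18\exp(-N_0^\beta),E_k) $ except the eigenvalue $ E_k $ itself. Concretely, the one-sided version of (ii), namely (ii$'$), transfers to the larger scale by the same perturbation bound used for (2), now keeping track of signs: eigenvalues $ E_j^{[-N',N'']}(x) $ below $ E_k $ would force, via \cref{lem:eigenvector-stability} applied on sub-blocks $ J_m $, a vector with energy near $ E_0 $ localized inside some $ J_m $, contradicting \cref{eq:Jm-below}. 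Hence (2$'$).

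The main obstacle I anticipate is the bookkeeping that turns the qualitative ``$ \phi $ is an approximate eigenvector'' into the precise two-stage error estimate: the first pass through \cref{lem:eigenvector-stability}(a) only gives $ |\langle\phi,\psi_k\rangle|\ge(2N)^{-1/2} $, which is far too weak to conclude (4) directly, so one genuinely needs the bootstrap—use (a) to get into the regime of \cref{prop:localization}, use the decay (3) to sharpen the residual $ \varepsilon $ (the contributions from $ |n|\ge 3N_0/4 $ become exponentially small in $ N_0 $, not merely $ \exp(-N_0^\beta) $), and only then apply (b). Keeping the thresholds consistent—$ \beta<\sigma/2 $, the gap $ \eta\simeq\exp(-N_0^\beta) $ versus the sharpened $ \varepsilon\simeq\exp(-cN_0) $, and the requirement $ N_0\ge (B_0+S_V+\gamma^{-1})^C $ absorbing all the implicit constants from the covering and spectral forms of (LDT)—is the delicate part, but it is routine given the tools already assembled in this section.
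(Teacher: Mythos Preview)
Your plan is the paper's proof. One clarification that matters for the bookkeeping: in the bootstrap, the second application of \cref{lem:eigenvector-stability}(b) is with $A=H_{[-N_0',N_0'']}(x)$ (the \emph{small} operator, where (ii) supplies the one-dimensionality), and the approximate eigenvector is $\psi_k^{[-N',N'']}(x,\cdot)$ restricted to $[-N_0',N_0'']$; the decay (3) makes the boundary residual $\lesssim\exp(-\gamma(N_0-N_0^{1/2})/10)$, and part (b) then yields (1) and (4) in one stroke. For (2$'$), your first sentence is exactly the argument: if $E_j^{[-N',N'']}(x)<E_k-\tfrac18\exp(-N_0^\beta)$ then $E_j<E_0-\tfrac14\exp(-N_0^\beta)$, so by (ii$'$) and \cref{eq:Jm-below} every covering block $[-N_0',N_0'']$, $J_m$ has spectrum separated from $E_j$, and \cref{lem:Greencoverap1} forces $E_j\notin\spec H_{[-N',N'']}(x)$, a contradiction. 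Your alternative ``\cref{lem:eigenvector-stability} on sub-blocks $J_m$'' formulation is unnecessary and would not go through as stated, since for such $E_j$ one has no localization of $\psi_j^{[-N',N'']}$ to produce an approximate eigenvector on any $J_m$.
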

\begin{proof}
  Due to condition (iii),
  \begin{equation*}
    \|(H_{[-N',N'']}(x_0)-E_{k_0}^{[-N_0',N_0'']}(x_0))\psi_{k_0}^{[-N_0',N_0'']}(x_0,\cdot)\|
    \lesssim \exp(-2N_0^\beta),
  \end{equation*}
  where we naturally extend $ \psi_{k_0}^{[-N_0',N_0'']} $ to $ [-N',N''] $
  by adding zero entries. Part $(a)$ in Lemma~\ref{lem:eigenvector-stability}
  applies and we get that there exists $ k=k(x_0) $ such that
  \begin{equation*}
	|E_k^{[-N',N'']}(x_0)-E_{k_0}^{[-N'_0,N''_0]}(x_0)|\lesssim \exp(-2N_0^\beta).
  \end{equation*}
  Then for $ |x-x_0|<\exp(-2N_0^\beta) $ (recall \cref{eq:stability-x}) we have
  \begin{gather*}
	|E_k^{[-N',N'']}(x)-E_{k_0}^{[-N'_0,N''_0]}(x)|\ll \exp(-N_0^\beta),\\
    |E_k^{[-N',N'']}(x)-E_0|\ll \exp(-N_0^\beta).
  \end{gather*}
  Due to the last estimate,
  Proposition~\ref{prop:localization} applies and (3) follows.  This implies
  \begin{gather*}
    \|(H_{[-N'_0,N''_0]}(x)-E_k^{[-N',N'']}(x)){\psi}_k^{[-N',N'']}(x,\cdot)\|\lesssim
    \exp(-\gamma (N_0-N_0^{1/2})/10).
  \end{gather*}
  Due to condition (ii), part (b) in
  Lemma~\ref{lem:eigenvector-stability} applies with
  $H_{[-N'_0,N''_0]}(x)$ in the role of $A$ and $
  \eta=c\exp(-N_0^\beta) $, $ c\ll 1 $. This yields (1) and (4).
  To prove (2) assume to the contrary that there exist
  $j\neq k$ and $ x $ such that
  \[
    |E_j^{[-N',N'']}(x)-E_k^{[-N',N'']}(x)|\le \frac{1}{8}\exp(-N_0^{\beta}).
  \]
  It follows that
  \begin{gather*}
	|E_j^{[-N',N'']}(x)-E_{k_0}^{[-N'_0,N''_0]}(x)|<\frac{1}{4} \exp(-N_0^\beta),\\
    |E_j^{[-N',N'']}(x)-E_0|<\frac{1}{4} \exp(-N_0^\beta).
  \end{gather*}
  Proposition~\ref{prop:localization} applies and we get
  \[
    |\psi_j^{[-N',N'']}(x,n)|<\exp \left( -\gamma | n|/10 \right),
      \quad |n|\ge 3N_0/4.
  \]
  Now just as above we have
  \[
    \|\psi_j^{[-N',N'']}(x,\cdot)-\psi_{k_0}^{[-N_0',N_0'']}(x,\cdot)\|
    <\exp(-\gamma N_0/20)
  \]
  and hence
  \[
    \|\psi_j^{[-N',N'']}(x,\cdot)-\psi_k^{[-N',N'']}(x,\cdot)\|\lesssim \exp(-\gamma N_0/20)<1.
  \]
  Since, $\psi_k^{[-N',N'']}(x,\cdot),\psi_j^{[-N',N'']}(x,\cdot)$ are normalized eigenvectors with
  different eigenvalues
  \[
    \|\psi_j^{[-N',N'']}(x,\cdot)-\psi_k^{[-N',N'']}(x,\cdot)\|^2=2.
  \]
  This contradiction verifies (2).

  Finally, we check (2'). Clearly all the estimates obtained so far
  hold with the extra assumptions.  Assume to the contrary that there exist $
  j\neq k $ and $ x $ such that (4') fails. By (4) we must have
  \begin{equation*}
	E_j^{[-N',N'']}(x)< E_k^{[-N',N'']}(x)- \frac{1}{8}\exp(-N_0^{\beta})
  \end{equation*}
  It follows that
  \begin{gather*}
	E_j^{[-N',N'']}(x)< E_{k_0}^{[-N'_0,N''_0]}(x)-\frac{1}{4} \exp(-N_0^\beta),\\
    E_j^{[-N',N'']}(x)< E_0-\frac{1}{4} \exp(-N_0^\beta).
  \end{gather*}
  By (ii') and \cref{eq:Jm-below} (recall \cref{eq:stability-x}) we get
  \begin{gather*}
    \dist(\spec H_{[-N_0',N_0'']}(x),E_j^{[-N',N'']}(x))>\frac{1}{4}\exp(-N_0^\beta),\\
	\dist(\spec H_{J_m}(x),E_j^{[-N',N'']}(x))>\frac{1}{2}\exp(-N_0^\beta).
  \end{gather*}
  It follows from \cref{lem:Greencoverap1} that $
  E_j^{[-N',N'']}(x)\notin \spec H_{[-N',N'']}(x) $. This
  contradiction concludes the proof.
\end{proof}

\subsection{Semialgebraic Sets}

Recall that a set $ \cS\subset\R^n $ is called semialgebraic if it is
a finite union of sets defined by a finite number of polynomial
equalities and inequalities. More precisely, a semialgebraic set
$ \cS\subset\R^n $ is given by an expression
\begin{equation*}
  \cS=\cup_j \cap_{\ell\in L_j} \{ P_\ell s_{j\ell} 0  \},
\end{equation*}
where $ \{P_1,\ldots,P_s\} $ is a collection of polynomials of $ n $
variables,
\begin{equation*}
  L_j\subset\{1,\ldots,s\}\text{ and }s_{j\ell}\in\{>,<,=\}.
\end{equation*}
If the degrees of the polynomials are
bounded by $ d $, then we say that the degree of $ \cS $ is bounded by
$ sd $. See \cite[Ch.~9]{Bou05} for more information on
semialgebraic sets.

In our context, semialgebraic sets can be introduced by approximating
the analytic potential $ V $ with a polynomial $ \tilde V $. More
precisely, given $ N\ge 1 $, by truncating $ V $'s Fourier series and
the Taylor series of the trigonometric functions, one can obtain a
polynomial $ \tilde V $ of degree less than
\begin{equation*}
  C(d,\rho)(1+\log \norm{V}_\infty)N^4
\end{equation*}
such that
\begin{equation}\label{eq:V-tilde}
  \sup_{x\in\T^d}|V(x)-\tilde V(x)|\le \exp(-N^2).
\end{equation}
If we let $ \tilde H $ be the operator with the truncated potential $
\tilde V $,  we have
\begin{equation}\label{eq:H-tilde}
  \sup_{x\in \T^d} \norm{H_{[a,b]}(x)-\tilde H_{[a,b]}(x)}\le \exp(-N^2)
\end{equation}
for any $ [a,b]\subseteq \Z $.

Our use of semialgebraic sets will be limited to applying the following result.
\begin{lemma}[{\cite[Cor. 9.6]{Bou05}}]\label{lem:sa-covering}
  Let $ \cS\subset [0,1]^n $ be semialgebraic of degree $ B $. Let
  $ \epsilon>0 $ be a small number and $ \mes_n (\cS)<\epsilon^n
  $.
  Then $ \cS $ may be covered by at most
  $ B^C \left( \frac{1}{\epsilon} \right)^{n-1} $ balls of radius
  $ \epsilon $
\end{lemma}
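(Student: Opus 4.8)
Since \cref{lem:sa-covering} is quoted from \cite{Bou05}, I only indicate the argument. The strategy is to reduce the combinatorial covering bound to a metric estimate for an algebraic hypersurface, where the degree hypothesis enters through B\'ezout's theorem and integral geometry. There are two soft reductions. First, if $\{p_1,\dots,p_M\}\subset\cS$ is a maximal $\epsilon$-separated subset, then the balls of radius $\epsilon$ about the $p_i$ cover $\cS$, while the balls of radius $\epsilon/2$ about them are pairwise disjoint and contained in the $\epsilon/2$-neighbourhood $\cS^{(\epsilon/2)}$ of $\cS$; hence $M\lesssim_n\epsilon^{-n}\mes_n(\cS^{(\epsilon/2)})$, and it suffices to show $\mes_n(\cS^{(\epsilon/2)})\le B^{C_n}\epsilon$. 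Second, writing $\cS=\bigcup_j\bigcap_{\ell\in L_j}\{P_\ell\, s_{j\ell}\, 0\}$ with $\sum_\ell\deg P_\ell\le B$ (after discarding trivially true or false conditions, so that no $P_\ell$ vanishes identically), the boundary $\partial\cS$ is contained in the real-algebraic hypersurface $Z:=\{\prod_\ell P_\ell=0\}$, of degree $\le B$: at a point where all $P_\ell$ are nonzero every defining sign condition is locally constant, so such a point is interior to $\cS$ or to its complement. Since a point within $\epsilon/2$ of $\cS$ either lies in $\cS$, or (travelling along a shortest segment to $\cS$) lies within $\epsilon/2$ of $\partial\cS$, we have $\cS^{(\epsilon/2)}\subset\cS\cup Z^{(\epsilon/2)}$, and using $\mes_n(\cS)<\epsilon^n$ the whole matter reduces to the single estimate
\begin{equation*}
  \mes_n\bigl(Z^{(r)}\cap[0,1]^n\bigr)\le B^{C_n}\,r,\qquad 0<r\le 1 .
\end{equation*}

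To prove this estimate I would use integral geometry. First one bounds the $(n-1)$-volume of $Z$: being a real-algebraic set of dimension $\le n-1$, $Z\cap[0,1]^n$ is $(n-1)$-rectifiable, and by the Cauchy--Crofton formula its $\mathcal H^{n-1}$-measure is a fixed multiple of the average, over affine lines $\ell$ meeting $[0,1]^n$, of $\#(Z\cap\ell\cap[0,1]^n)$. A line not contained in $Z$ meets $Z$ in at most $\deg(\prod_\ell P_\ell)\le B$ points (restrict the polynomial to the line), while the lines lying inside $Z$ form a proper algebraic, hence null, subset of the space of lines; since the lines meeting $[0,1]^n$ carry bounded kinematic measure, we get $\mathcal H^{n-1}(Z\cap[0,1]^n)\le C_nB$. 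Second, one converts this into the neighbourhood bound: stratifying $Z$ into $\le B^{C_n}$ smooth semialgebraic pieces of degree $\le B^{C_n}$ — each of dimension $\le n-1$ and, being of bounded degree, of locally bounded geometry — each piece is covered by $\le B^{C_n}r^{-(n-1)}$ balls of radius $r$, hence so is $Z\cap[0,1]^n$, and therefore $\mes_n(Z^{(r)}\cap[0,1]^n)\le B^{C_n}r^{-(n-1)}\cdot C_nr^n=B^{C_n}r$. Taking $r=\epsilon/2$ and combining with the two reductions yields $M\le C_n\epsilon^{-n}(\epsilon^n+B^{C_n}\epsilon)\le B^{C_n}\epsilon^{-(n-1)}$.

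The main obstacle is precisely the passage, carried out in the second paragraph, from the \emph{algebraic} degree bound $B$ to a \emph{metric} bound on $Z$: one must make the B\'ezout count $\#(Z\cap\ell)\le B$ uniform in $\ell$ (disposing of the lines contained in $Z$), and one must invoke the quantitative structure theory of semialgebraic sets — a decomposition into $\le B^{C_n}$ smooth pieces of bounded degree and uniformly bounded geometry (Milnor--Thom-type bounds together with effective cell decomposition) — to know that the $\mathcal H^{n-1}$-bound really controls the Minkowski content. These ingredients are standard but form the heart of the proof; everything else is elementary covering geometry. One can also avoid integral geometry altogether by an induction on $n$: decompose $\cS$ into $\le B^{C_n}$ cylindrical cells adapted to $\cS$, slice each full-dimensional band cell over the first coordinate into intervals of length $\epsilon$, and cover each resulting slab by applying the inductive hypothesis to its $(n-1)$-dimensional fibres, using that the total variation of a bounded-degree semialgebraic ``width function'' is $\le B^{C_n}$; this trades the Crofton formula for a quantitative cylindrical algebraic decomposition, but the substantive input is again the structure theory of semialgebraic sets.
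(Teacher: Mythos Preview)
The paper does not prove this lemma; it is quoted as a black box from \cite[Cor.~9.6]{Bou05}, so there is no in-paper argument to compare against. Your outline is correct, and you have located the genuine content accurately: the two reductions (maximal $\epsilon$-net, and $\partial\cS\subset Z$ with $Z$ algebraic of degree $\le B$) are soft, and the only substantive step is converting the degree bound into a metric covering of $Z$, which requires either Yomdin--Gromov-type reparametrization or an equivalent quantitative stratification. One small remark: the Cauchy--Crofton bound $\mathcal H^{n-1}(Z\cap[0,1]^n)\le C_nB$ is correct but does not by itself yield the Minkowski-content bound $\mes_n(Z^{(r)})\le B^{C_n}r$ without the regularity coming from the stratification, so in your write-up the Crofton step is redundant once you invoke the effective decomposition --- you may as well go straight to the covering of each stratum.

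For comparison, Bourgain's actual proof in \cite{Bou05} is closer to the inductive alternative you sketch at the end: he projects to a coordinate hyperplane, uses Tarski--Seidenberg to keep the degree under control, bounds the number of connected components of the fibres by Milnor--Thom, and inducts on the dimension; integral geometry does not enter. Both routes rest on the same effective real-algebraic inputs (component bounds of size $B^{C_n}$ and quantitative cell decomposition), so neither is materially shorter, but if you want to match the cited source you should emphasise the projection/induction version rather than the Crofton one.
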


\subsection{Resultants}

We briefly recall the definition of the resultant of two univariate
polynomials and some of the basic properties that we will use in \cref{sec:example}.  Let
\begin{equation*}
  P(z) = a_nz^n + a_{n-1} z^{n-1} + \cdots + a_0,\qquad Q(z) = b_mz^m + b_{m-1} z^{m-1} + \cdots + b_0,
\end{equation*}
be polynomials,
$a_i, b_j \in \IC$, $a_n\neq 0$, $b_m\neq 0$.  Let $\zeta_i$,
$1 \le i \le n$ and $\eta_j$, $1 \le j \le m$ be the zeros of $P$
and $Q$ respectively.  The resultant of $P$ and $Q$ is the quantity
\begin{equation}\label{eq:resdef}
  \Res(P, Q) = a_n^mb_m^n\prod_{i,j} (\zeta_i - \eta_j).
\end{equation}
The resultant can be expressed explicitly in terms of the
coefficients (see \cite{Lan02}):
\begin{equation}\label{eq:resdef1}
  \Res(P,Q)=
  \left |
    \begin{array}{cccccc}
      a_n     &        &         & b_m     &        &            \\
      a_{n-1} & \ddots &         & b_{m-1} & \ddots &            \\ 
      \vdots  & \ddots & a_n     & \vdots  & \ddots & b_m        \\
      a_0     & \ddots & a_{n-1} & b_0     & \ddots & b_{m-1}    \\
              & \ddots & \vdots  &         & \ddots & \vdots     \\
      \kern-2em\undermat{m}{ a_{n-1} & \ddots & a_{n-1}}{ &  & a_0}
       & \kern-2em \undermat{n}{b_{n-1} & \ddots & b_{n-1}}{ &  & b_0} 
    \end{array}
  \right |
  \vspace{2em}
\end{equation}

\begin{lemma}\label{lem:resultant}
  Let $ P,Q,\zeta_i,\eta_j $ as above and $ r_P=\max_i |\zeta_i| $,
  $ r_Q=\max_j |\eta_j| $.  If there exists $z$ such that
  \begin{equation}\label{eq:max-bound}
	 \max (|P(z)|,|Q(z)|)<\min(|a_n|,|b_m|)\delta^{\max(m,n)},
  \end{equation}
  for some $ \delta\in (0,1) $,
  then
  \[
    \big|\Res(P, Q)\big|< 2|a_n|^m |b_m|^n (2r)^{mn-1}\delta,
  \]
  with  $r=\max (r_P,r_Q)$.
\end{lemma}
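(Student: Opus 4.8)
The plan is to exploit the product formula \eqref{eq:resdef} for the resultant and to show that the hypothesis \eqref{eq:max-bound} forces one of the factors $|\zeta_i - \eta_j|$ to be small. First I would use the factored forms $P(z) = a_n\prod_i (z-\zeta_i)$ and $Q(z) = b_m\prod_j (z-\eta_j)$. Given a point $z$ as in \eqref{eq:max-bound}, set $p = |z - \zeta_{i_0}| = \min_i |z-\zeta_i|$ and $q = |z - \eta_{j_0}| = \min_j |z-\eta_j|$ for the nearest root of each polynomial. Then $|P(z)| \ge |a_n| p^n$ is false in general (that inequality goes the wrong way), so instead I would argue: $|P(z)| = |a_n|\prod_i |z-\zeta_i| \ge |a_n| p \cdot \prod_{i \ne i_0}|z - \zeta_i|$ — still not directly useful. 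The cleaner route: since $|P(z)| < |a_n|\delta^{\max(m,n)} \le |a_n|\delta^n$, and $|P(z)| = |a_n|\prod_i|z-\zeta_i| \ge |a_n| p^n$ would require all roots within distance $p$ of $z$, which need not hold. So the right estimate is $|P(z)| \ge |a_n|\, p \cdot \min(1, \dots)$... Let me restart the bookkeeping: the correct and simplest bound is $p \le (|P(z)|/|a_n|)^{1/n} \cdot (\text{something})$; more precisely, since at least one factor must be small, $p^n \le \prod_i |z-\zeta_i|$ fails, but $p \le \max_i|z-\zeta_i|$-type reasoning does give $|P(z)| \ge |a_n|\, p \cdot r_1$ where... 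Rather than fuss, I would use: there is an index $i_0$ with $|z - \zeta_{i_0}| \le (|P(z)|/|a_n|)^{1/n}$, because if every factor exceeded this value the product would exceed $|P(z)|/|a_n|$. Hence $p \le \delta^{\max(m,n)/n} \le \delta$ (using $\delta<1$ and $\max(m,n)\ge n$). Similarly $q \le \delta$.

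Next, by the triangle inequality $|\zeta_{i_0} - \eta_{j_0}| \le |\zeta_{i_0} - z| + |z - \eta_{j_0}| = p + q \le 2\delta$. For all the other pairs $(i,j)$ I would use the crude bound $|\zeta_i - \eta_j| \le |\zeta_i| + |\eta_j| \le 2r$ with $r = \max(r_P, r_Q)$. There are $mn$ factors in the product \eqref{eq:resdef} in total; singling out the factor $(\zeta_{i_0} - \eta_{j_0})$ and bounding the remaining $mn - 1$ factors by $2r$ each gives
\begin{equation*}
  |\Res(P,Q)| = |a_n|^m |b_m|^n \prod_{i,j}|\zeta_i - \eta_j|
  \le |a_n|^m |b_m|^n \cdot (2\delta) \cdot (2r)^{mn-1}
  = 2|a_n|^m|b_m|^n (2r)^{mn-1}\delta,
\end{equation*}
which is exactly the claimed inequality (the factor of $2$ in front absorbs the $2\delta$). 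This is even slightly stronger than asserted, so the stated form follows a fortiori.

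The only genuine subtlety is the first step: extracting from \eqref{eq:max-bound} a single root $\zeta_{i_0}$ of $P$ close to $z$ and a single root $\eta_{j_0}$ of $Q$ close to $z$, with the exponent $\max(m,n)$ in the hypothesis being exactly what is needed so that the $n$-th (resp.\ $m$-th) root of $\delta^{\max(m,n)}$ is still at most $\delta$. I expect this pigeonhole-type argument on the factors of $\prod_i|z-\zeta_i|$ to be the main (and essentially only) point requiring care; everything else is the triangle inequality and counting the $mn$ factors. One should also note the degenerate cases $m=0$ or $n=0$ are excluded since $P,Q$ are assumed to have positive degree (so $r_P, r_Q$ and the products are well defined), and that $r \ge $ the relevant root moduli makes $2r$ a valid uniform bound on $|\zeta_i - \eta_j|$.
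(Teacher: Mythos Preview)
Your proposal is correct and, once you settle on the pigeonhole argument, it is essentially identical to the paper's proof: from $|P(z)|=|a_n|\prod_i|z-\zeta_i|<|a_n|\delta^n$ one extracts some $|z-\zeta_{i_0}|<\delta$, similarly some $|z-\eta_{j_0}|<\delta$, then the triangle inequality gives $|\zeta_{i_0}-\eta_{j_0}|<2\delta$ and the remaining $mn-1$ factors in the product formula are bounded by $2r$. The exploratory false starts in your write-up should be deleted, but the final argument is exactly the intended one.
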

\begin{proof}
  For \cref{eq:max-bound} to hold there must exist
  $ \zeta_{i_0}, \eta_{j_0} $ such that $ |z-\zeta_{i_0}|<\delta $, $ |z-\eta_{j_0}|<\delta $ and therefore,
  using \cref{eq:resdef},
  \begin{equation*}
    |\Res(P,Q)|\le  |a_n|^m |b_m|^n (2r)^{mn-1}|\zeta_{i_0}-\eta_{j_0}|<|a_n|^m |b_m|^n (2r)^{mn-1}2\delta.
  \end{equation*}
\end{proof}

For the application of the previous lemma in \cref{sec:example} we
will also need a couple of auxiliary results. First, recall the
following elementary bound for the location of zeros of a polynomial
due to Cauchy (see \cite[Thm. (27,2)]{Mar66}).
\begin{lemma}\label{lem:Cauchy-bound}
  All the zeros of a polynomial $ P(z)=a_n z^n+a_{n-1} z^{n-1}\dots+a_0 $, $
  a_n\neq 0 $, $ n\ge 1 $, are located in the disk $ |z|<1+\max_{k<n}|a_k/a_n| $.
\end{lemma}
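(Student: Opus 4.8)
The plan is to prove the contrapositive by a direct triangle-inequality estimate. Write $M := \max_{k<n}|a_k/a_n|$; I will show that if $|z|\ge 1+M$ then $P(z)\ne 0$, which places every zero of $P$ in the open disk $|z|<1+M$.

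First I would dispose of the degenerate case $M=0$. Then $a_k=0$ for every $k<n$, so $P(z)=a_nz^n$ has $z=0$ as its only zero, and $0$ lies in $|z|<1=1+M$. So from now on one may assume $M>0$, and let $z$ be any point with $|z|\ge 1+M$; in particular $|z|>1$.

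The key step is the estimate, obtained from the triangle inequality applied to $P(z)=a_nz^n+\sum_{k=0}^{n-1}a_kz^k$ together with $|a_k|\le M|a_n|$,
\[
  |P(z)|\ \ge\ |a_n||z|^n-\sum_{k=0}^{n-1}|a_k||z|^k\ \ge\ |a_n|\Bigl(|z|^n-M\sum_{k=0}^{n-1}|z|^k\Bigr).
\]
Summing the geometric series, $\sum_{k=0}^{n-1}|z|^k=(|z|^n-1)/(|z|-1)$, and since $|z|-1\ge M>0$ this sum is at most $(|z|^n-1)/M$. Substituting gives
\[
  |P(z)|\ \ge\ |a_n|\bigl(|z|^n-(|z|^n-1)\bigr)\ =\ |a_n|\ >\ 0,
\]
so $z$ is not a zero of $P$. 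Hence every zero satisfies $|z|<1+M$, which is the assertion.

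The only point that needs a little care — the ``main obstacle'', such as it is — is handling the boundary circle $|z|=1+M$: a cruder bound such as $\sum_{k<n}|z|^k\le n|z|^{n-1}$ would only yield zeros in $|z|<1+nM$, whereas bounding the sum by $(|z|^n-1)/M$ uses precisely the inequality $|z|-1\ge M$ and is what produces the sharp statement. No analyticity or other structural facts about polynomials enter the argument.
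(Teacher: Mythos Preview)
Your proof is correct and is precisely the standard argument for Cauchy's bound. The paper itself does not prove this lemma at all; it simply cites it as a classical result (Theorem (27,2) in Marden's \emph{Geometry of Polynomials}), so there is nothing further to compare.
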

Second, we will need the following consequence of Cartan's estimate.

\begin{lemma}\label{lem:Cartan-P} Let $P(z)=a_n
  z^n+a_{n-1}z^{n-1}+\dots+a_0$, $ n\ge 1 $, $ a_n\neq 0 $, $ M=\max_i
  |a_i| $. There exists an absolute constant $ C_0 $ such that for any $ H\gg 1
  $, we have
  \begin{equation*}
	\mes \{ x\in[0,2\pi]: \log |P(\exp(ix))|<\log M-C_0nH \}< \exp(-H/2).
  \end{equation*}
\end{lemma}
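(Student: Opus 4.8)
The plan is to reduce \cref{lem:Cartan-P} to the one-variable Cartan estimate from \cref{lem:high_cart} (in the case $d=1$), applied to a rescaled version of $P$. First I would normalize: set $\widetilde P(z) := P(z)/M$, so that $\max_i |\widetilde a_i| = 1$ and hence $|\widetilde P(z)| \le (n+1)\max_i |z|^i \le 2n$ for $|z|\le 2$; in particular $\sup_{|z-0|<1}\log|\widetilde P(z)| \le \log(2n) =: \mathfrak M$. For the lower bound at the center, I would use that since $\max_i|\widetilde a_i|=1$, at least one coefficient has modulus $1$; evaluating at $z=0$ only controls $\widetilde a_0$, so instead I would pick a point $z_0$ with $|z_0|$ a suitable absolute constant (say $|z_0|$ ranging over a fixed finite set, or just invoke that some coefficient is large) to get $\log|\widetilde P(z_0)| \ge m_0$ for an absolute constant $m_0$ — alternatively, and more cleanly, note that $\widetilde P$ is a degree-$n$ polynomial whose leading-or-some coefficient is $1$, and a standard argument (e.g. comparing $L^2$ norms on the circle $|z|=2$, or using that the product of the $|z_i|$-type quantities is controlled) gives a point on $|z|\le 2$ where $\log|\widetilde P| \gtrsim -Cn$. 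Actually the cleanest route: apply \cref{lem:high_cart} directly on the disk $\cD(0,2)$ rescaled to unit size, with $M \leftarrow \log(2n)$ and $m \leftarrow \log|\widetilde a_n| \ge -\log M$ taken at $z_0$ chosen so that... — I'll settle this in the writeup by using the coefficient of largest modulus.

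The key steps in order: (1) Normalize $P$ by $M$ and bound $\sup_{|z|\le 2}|\widetilde P(z)| \le 2n$. (2) Produce a point $z_*$ with $|z_*| \le 2$ and $\log|\widetilde P(z_*)| \ge -C_1 n$ for an absolute constant $C_1$: since $\max_i |\widetilde a_i| = 1$, if the max is attained at index $k$, then the coefficient $\widetilde a_k = \frac{1}{2\pi}\int_0^{2\pi}\widetilde P(e^{i\theta})e^{-ik\theta}\,d\theta$ has modulus $1$, so $\max_\theta |\widetilde P(e^{i\theta})| \ge 1$, giving $z_*$ on the unit circle with $\log|\widetilde P(z_*)|\ge 0$ — this is even better, no $n$-loss at the center. (3) Apply the $d=1$ case of \cref{lem:high_cart} to $\widetilde P$ on the polydisk (disk) $\cP = \cD(z_*, 1)$: with $\widetilde M := \log(2n) + \log(\text{something})$ an upper bound for $\log|\widetilde P|$ on $\cD(z_*,1)\subset \cD(0,3)$, which is $\le \log(2\cdot 3^n) \le n\log 6 + \log 2 =: \widetilde M$, and $m := \log|\widetilde P(z_*)| \ge 0 \ge -1$; this yields a set $\cB \in \Car_1(H, K)$ with $K = C(\widetilde M - m) \le C n H$ — wait, I need to insert $H$ as the Cartan parameter — so with Cartan parameter $H$ one gets $\cB\in\Car_1(H,K)$, $K = CH(\widetilde M - m) \le C' n H$, such that $\log|\widetilde P(z)| > \widetilde M - C' n H$ off $\cB$ on $\frac16\cD(z_*,1)$. (4) By \cref{lem:Cartan-measure} (the $d=1$ case, directly from the definition of $\Car_1$), $\cB$ is covered by $\le K$ disks of total radius $< e^{-H}$, so $\mes_{\R}(\cB \cap \R) \le 2e^{-H} < e^{-H/2}$ once $H \gg 1$. (5) Translate back: $\log|P(z)| = \log|\widetilde P(z)| + \log M > \log M - C' n H$ on $\frac16\cD(z_*,1) \setminus \cB$; since $z_*$ is on the unit circle and $\frac16\cD(z_*,1)$ contains an arc of $\{|z|=1\}$ of absolute length, and we can cover the whole circle $\{e^{ix} : x\in[0,2\pi]\}$ by boundedly many (absolute constant) such small disks $\frac16\cD(z_*^{(\ell)},1)$ centered at unit-circle points where $\log|\widetilde P|\ge 0$... — here I need the lower-bound point to exist near every arc, which it may not.

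The fix for step (5), which is where the main subtlety lies: rather than needing a good center everywhere on the circle, I would instead apply \cref{lem:high_cart} on a single disk large enough to contain the entire unit circle, namely rescale $\cD(0,12)$ to the unit polydisk $\cP$; then $\frac16\cP$ corresponds to $\cD(0,2) \supset \{|z|=1\}$. On $\cD(0,12)$ we have $\sup \log|\widetilde P| \le \log(2 \cdot 12^n) \le n\log 12 + 1 =: \widetilde M$, and the center is $z=0$ where $\log|\widetilde P(0)| = \log|\widetilde a_0|$ — which could be $-\infty$ if $a_0 = 0$. To avoid that, translate the center slightly, or better: the definition of $\Car_1$ and Cartan's lemma are stated for the unit disk, but by an affine change of variables $w \mapsto z_* + 12w$ (with $|z_*|=1$) I can center at the good point $z_*$ while still covering $\{|z|=1\}$ inside $\frac16 \cdot(\text{radius }12\text{ disk around }z_*)$... no — $\frac16 \cdot 12 = 2$, and $\cD(z_*, 2)$ with $|z_*|=1$ does contain the entire unit circle. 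So: apply \cref{lem:high_cart} ($d=1$) to $\widetilde P$ on $\cD(z_*,12)$ (rescaled), $\widetilde M = \log(\sup_{\cD(z_*,12)}|\widetilde P|) \le \log(2\cdot 13^n) \le n\log 13 + 1$, $m = \log|\widetilde P(z_*)| \ge 0$, getting $\cB\in\Car_1(H, CnH)$ with $\log|\widetilde P| > \widetilde M - CnH \ge -CnH$ on $\cD(z_*,2)\setminus \cB \supseteq \{|z|=1\}\setminus\cB$; then $\mes_\R(\cB\cap\{|z|=1\})$, i.e. the measure in the $x$ variable of $\{x : e^{ix}\in\cB\}$, is $\le 2\pi \cdot(\text{sum of radii}) < 2\pi e^{-H} < e^{-H/2}$ for $H\gg 1$. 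This gives exactly the claim with $C_0 = C$ absolute. The main obstacle, as indicated, is bookkeeping the rescaling so that (a) the center of the Cartan disk is the good lower-bound point $z_*$ on the unit circle (handled by the $L^2$/Fourier-coefficient argument in step (2)), and (b) the shrunk disk $\frac16\cP$ still covers the whole circle $\{|z|=1\}$ (handled by using radius $12$, since $12/6 = 2 > 1$), and (c) converting the $\Car_1$ covering into an arc-length (i.e. $x$-measure) bound, which is routine via \cref{lem:Cartan-measure}.
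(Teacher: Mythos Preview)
Your final argument is correct and matches the paper's proof essentially step for step: the paper also uses Cauchy estimates to find $z_0$ on the unit circle with $\log|P(z_0)|\ge\log M$, bounds $\sup_{|z|\le 100}|P(z)|\le 2M\cdot 100^n$, and applies the one-variable Cartan estimate on a large disk centered at $z_0$ (radius large enough that the shrunk disk contains the whole unit circle) to conclude. Your normalization by $M$ and choice of radius $12$ versus the paper's $100$ are cosmetic; the Fourier-coefficient argument in your step (2) is exactly the Cauchy estimate the paper invokes.
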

\begin{proof}
  Using Cauchy estimates,
  \begin{equation*}
	M\le \max_{|z|=1}|P(z)|.
  \end{equation*}
  In particular, there exists $ z_0 $, $ |z_0|=1 $, such that $
  \log|P(z_0)|\ge \log M $. At the same time
  \begin{equation*}
	\sup_{|z|\le 100} |P(z)|\le 2M 100^n.
  \end{equation*}
  Given $ H\gg 1 $,
  by Cartan's estimate, there exists $ \cB=
  \bigcup_{k=1}^{k_0}\cD(\zeta_k,r_k) $, $ \sum_k r_k\lesssim\exp(-H) $, such
  that
  \begin{equation*}
	\log|P(z)|\ge \log(2M100^n)-CH(\log(2M100^n)-\log M)\ge \log M-C'nH,
  \end{equation*}
  for any $ z\in \cD(0,2)\setminus \cB $. The conclusion follows.
\end{proof}

\section{Basic Tools at Large Coupling}\label{sec:perturbative-refinements}

In this section we discuss some results that rely on having a large
coupling constant. So, we work with operators of the form
\cref{eq:H-lambda}. As in the previous section we assume that $ V $
extends complex analytically to $ \T^d_\rho $. Furthermore, we assume
that $ V $ is not constant.

Our first goal is to give an explicit expression for the constant $ B_0
$ from the previous section (recall \cref{eq:B0}). To this end we will
obtain, in \cref{prop:LLBasic}, a version of \cref{thm:DirLDT} and \cref{prop:uniform} at large
coupling.

Let
\begin{equation}\label{eq:uiota}
  \uiota=\uiota(V):=\inf_{x\in \T^d}\sup \{ |V(x')-V(x)| : x'\in \T^d,\ |x'-x|\le \rho/100 \}.
\end{equation}
Since $ V $ is continuous and non-constant we have $ \uiota >0 $.

\begin{lemma}\label{lem:LLCartan1}
  Let $ \eta\in \C $. For any $ H\gg 1 $ we have
  \begin{equation*}
	\mes\{x\in\T^d: |\log|V(x)-\eta||> H_{V,\eta}H\}\le C(d)\exp(-H^{1/d}),
  \end{equation*}
  with
  \begin{equation*}
	H_{V,\eta}=C(d)(1+\max(0,\log(\norm{V}_\infty+|\eta|))+\max(0,\log \uiota ^{-1})).
  \end{equation*}
\end{lemma}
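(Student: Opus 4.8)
The statement is a Cartan-type estimate for the sublevel and superlevel sets of $\log|V(x)-\eta|$ on the torus, with explicit control of the constant in terms of $\norm{V}_\infty$, $|\eta|$, and the quantity $\uiota$. The natural approach is to apply the multivariable Cartan estimate, \cref{lem:high_cart}, to the analytic function $\varphi(z)=V(z)-\eta$ on an appropriate polydisk, after first establishing the two quantitative inputs that Cartan's estimate requires: an upper bound $M$ for $\log|\varphi|$ on the polydisk, and a lower bound $m$ for $\log|\varphi|$ at a well-chosen center point.

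First I would fix a point $x\in\T^d$ and work on the polydisk $\cP$ centered at $x$ with polyradius comparable to $\rho$ (after rescaling to unit polydisks as in \cref{lem:high_cart}; the factor $\rho/100$ in the definition of $\uiota$ is chosen precisely so that this fits). The upper bound is immediate: since $V$ is bounded by $\norm{V}_\infty$ on $\T^d_{3\rho/4}$, we get $\sup_{z\in\cP}\log|V(z)-\eta|\le \log(\norm{V}_\infty+|\eta|)\le M$, where I can take $M=\max(0,\log(\norm{V}_\infty+|\eta|))+O(1)$. The lower bound is where the definition of $\uiota$ enters: by \cref{eq:uiota}, for every $x$ there exists $x'$ with $|x'-x|\le\rho/100$ and $|V(x')-V(x)|\ge\uiota$, hence by the triangle inequality at least one of $|V(x')-\eta|\ge\uiota/2$ or $|V(x)-\eta|\ge\uiota/2$ holds. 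So there is a real point $x''$ in the polydisk with $\log|V(x'')-\eta|\ge\log(\uiota/2)\ge -m$ with $m=\max(0,\log\uiota^{-1})+O(1)$. Thus $M-m\le C(d)(1+\max(0,\log(\norm{V}_\infty+|\eta|))+\max(0,\log\uiota^{-1}))=H_{V,\eta}$ up to constants.

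Then I apply \cref{lem:high_cart} on the polydisk centered at this point $x''$: given $H\gg1$ there is a set $\cB\in\car_d(H^{1/d},K)$, $K=C_dH(M-m)$, outside of which (on a fixed fraction of the polydisk) $\log|\varphi(z)|>M-C_dH(M-m)\ge -C(d)H\cdot H_{V,\eta}$. Absorbing constants, this gives the lower bound $\log|V(x)-\eta|>-H_{V,\eta}H$ off $\cB$. For the upper bound $\log|V(x)-\eta|<H_{V,\eta}H$ one simply uses the trivial $M$-bound, which holds everywhere. By \cref{lem:Cartan-measure}, $\mes_{\R^d}(\cB\cap\R^d)\le C(d)\exp(-H^{1/d})$. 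Since the real points of the torus near $x$ that are covered by the bad set of this one application are controlled, and a standard compactness/covering argument lets me pass from the local polydisk statement at each base point to a global statement on $\T^d$ (covering $\T^d$ by boundedly many polydisks of radius $\simeq\rho$, with the number depending only on $\rho$ and $d$, absorbed into $C(d)$), I obtain the claimed measure bound $C(d)\exp(-H^{1/d})$.

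The main obstacle, and the one point requiring care, is the bookkeeping of the rescaling: \cref{lem:high_cart} is stated for unit polydisks and yields the conclusion only on $\frac16\cP$, so I must rescale $V$ to the unit polydisk, track how $\rho$ enters the Cauchy estimates (it does not affect $M$ since the sup is just the sup of $|V-\eta|$, but it does affect the radius on which the good estimate holds), and then cover $\T^d$ by $\simeq\rho^{-d}$ translated copies of $\frac16\cP$. None of this is deep, but it is the source of the dimension-dependent constant $C(d)$ and of the need for $H\gg1$ (so that $\exp(-H^{1/d})$ beats the number of covering polydisks). The other inputs — the upper bound and the $\uiota$-based lower bound — are essentially immediate from the definitions.
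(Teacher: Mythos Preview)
Your proposal is correct and follows essentially the same approach as the paper: use the definition of $\uiota$ to guarantee a nearby real point where $|V-\eta|\ge\uiota/2$, apply the multivariable Cartan estimate \cref{lem:high_cart} on a polydisk of radius $\simeq\rho$ centered there, bound the bad set via \cref{lem:Cartan-measure}, and finish with a covering argument. The paper's proof is in fact even terser than your outline, recording only the $\uiota/2$ dichotomy and then invoking \cref{lem:high_cart}, \cref{lem:Cartan-measure}, and ``a covering argument'' without further comment.
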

\begin{proof}
  Given $ x_0\in \T^d $ there exists $ x_0'\in \T^d $ such that $
  |x_0-x_0'|\le \rho/100 $ and either
  \begin{equation*}
	|V(x_0)-\eta|\ge \uiota /2\quad\text{ or }\quad|V(x_0')-\eta|\ge \uiota /2.
  \end{equation*}
  The conclusion follows by \cref{lem:high_cart},
  \cref{lem:Cartan-measure}, and a covering argument.
\end{proof}

To keep track of the dependence of the various constants on the
potential we introduce
\begin{equation}\label{eq:TV}
  T_{V}=2+\max(0,\log\norm{V}_\infty)+\max(0,\log \uiota ^{-1}).
\end{equation}
Note that $ S_{\lambda V}\le 2\log\lambda $, when $ \log \lambda\gg T_V $.
In what follows we will restrict ourselves to ``spectral'' values of $ E $, that is, we
will assume 
$ |E|\le \lambda\norm{V}_\infty+4 $.

\begin{lemma}\label{lem:LLrate1}
  There exists $ \lambda_0(V)=\exp((T_{V})^C) $,
  $ C=C(d) $, such that the following hold for $
  \lambda\ge \lambda_0 $ and $ |E|\le \lambda\norm{V}_\infty+4 $. 
  For any
  $N\le\exp((\log \lambda)^{\frac{1}{4d}})$ we have
  \begin{gather*}
    |L_N(E)-2 L_{2}(E)+L_1(E)|\lesssim \frac{(\log\lambda)^{\frac{1}{2}}}{N} ,\\
    |L_N(E)-\log \lambda|\lesssim (\log \lambda)^{\frac{1}{2}},
  \end{gather*}
  and there exists a set $ \cB_N $, $ \mes (\cB_N) <\exp(-(\log
  \lambda)^{\frac{1}{3d}}) $, such that
  \begin{equation}\label{eq:fN-MN-lambda}
    \big|\log |f_{N}(x,E)|-\log \| M_N (x,E) \|\big |\lesssim (\log \lambda)^{1/2},
  \end{equation}
  for any $x\notin\cB_N$. 
\end{lemma}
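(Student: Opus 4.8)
The plan is to exploit the fact that for $\lambda$ large the transfer matrix essentially factorizes into its one-step pieces. Throughout, write $v_k(x):=\lambda V(x+k\omega)-E$ and $\eta:=E/\lambda$; the hypothesis $|E|\le\lambda\norm{V}_\infty+4$ gives $|\eta|\le\norm{V}_\infty+1$, hence $H_{V,\eta}\lesssim_d T_V$, while $\lambda\ge\lambda_0=\exp(T_V^C)$ forces $T_V\le(\log\lambda)^{1/C}$. Choosing $C=C(d)$ large enough will therefore push every error that is a fixed power of $T_V$ below $(\log\lambda)^{1/2}$, and will make all the other error terms below as small as needed.

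First I would fix the exceptional set $\cB_N:=\bigcup_{k=1}^N\{x\in\T^d:|v_k(x)|<\lambda^{1/4}\}$. Since $\{x:|v_k(x)|<\lambda^{1/4}\}$ is a Lebesgue-measure translate of $\{x\in\T^d:|\log|V(x)-\eta||>\tfrac34\log\lambda\}$, \cref{lem:LLCartan1} applied with $H=\tfrac{3\log\lambda}{4H_{V,\eta}}\gg1$ bounds its measure by $C(d)\exp(-(c_d\log\lambda/T_V)^{1/d})$; summing over the $\le N\le\exp((\log\lambda)^{1/(4d)})$ values of $k$ and using $T_V\le(\log\lambda)^{1/C}$ gives $\mes(\cB_N)<\exp(-(\log\lambda)^{1/(3d)})$ for $C$ large. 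For $x\notin\cB_N$ every factor obeys $|v_k(x)|\ge\lambda^{1/4}$, and I would then obtain a product formula in two ways. (a) Apply the Avalanche Principle \cref{prop:AP} to the one-step matrices $A_j=\begin{bmatrix}v_j&-1\\1&0\end{bmatrix}$ with $\mu=\lambda^{1/4}$: its hypotheses hold because $\det A_j=1$, $\norm{A_j}\ge|v_j|\ge\lambda^{1/4}>N$, $\log\norm{A_j}=\log|v_j|+O(\lambda^{-1/2})$ and $\log\norm{A_{j+1}A_j}=\log|v_{j+1}v_j|+O(\lambda^{-1/2})$, and telescoping the resulting identity yields $\log\norm{M_N(x,E)}=\sum_{k=1}^N\log|v_k(x)|+O(N\lambda^{-1/4})$ (the cases $N\le2$ being immediate directly). (b) Iterate the Dirichlet recursion $f_{[1,n]}=v_nf_{[1,n-1]}-f_{[1,n-2]}$ (with $f_{[1,0]}=1$, $f_{[1,1]}=v_1$) to get $f_N(x,E)=\bigl(\prod_{k=1}^Nv_k(x)\bigr)(1+O(N\lambda^{-1/2}))$, hence $\log|f_N(x,E)|=\sum_{k=1}^N\log|v_k(x)|+O(N\lambda^{-1/2})$. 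Since $N\lambda^{-1/4}\to0$ on our range of $N$, subtracting (a) and (b) gives \cref{eq:fN-MN-lambda} with room to spare, proving the last assertion.

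For the two Lyapunov-exponent estimates I would integrate the formula from (a) over $\T^d$. On $\cB_N^c$ one may restore the $\cB_N$-contribution to the sum at the price of $\sum_{k=1}^N\int_{\cB_N}|\log|v_k||\,dx$; by translation invariance and the distribution-function bound implicit in \cref{lem:LLCartan1}, namely $\mes\{x:|\log|V(x)-\eta||>s\}\le\min(1,C(d)\exp(-(s/H_{V,\eta})^{1/d}))$, together with $\mes(\cB_N)<\exp(-(\log\lambda)^{1/(3d)})$, this quantity and the $\cB_N$-contribution of $\int\log\norm{M_N}$ (bounded via $0\le\log\norm{M_N}\le N\log(1+\lambda\norm{V}_\infty+|E|)\lesssim N\log\lambda$) are all $\le N\delta(\lambda)$ with $\delta(\lambda)\lesssim\exp(-(\log\lambda)^{1/(4d)})$. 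Using $\int_{\T^d}\log|v_k(x)|\,dx=\log\lambda+I$, where $I:=\int_{\T^d}\log|V-\eta|\,dx$ satisfies $|I|\lesssim_d H_{V,\eta}\lesssim_d T_V$ (again by integrating the distribution-function bound), I obtain $|L_N(E)-\log\lambda-I|\le\delta(\lambda)$ \emph{uniformly in $N$}, in particular for $N=1,2$. The second claim then follows from $|L_N(E)-\log\lambda|\le|I|+\delta(\lambda)\lesssim T_V\lesssim(\log\lambda)^{1/2}$, and the first from $|L_N(E)-2L_2(E)+L_1(E)|\le4\delta(\lambda)\lesssim\exp(-(\log\lambda)^{1/(4d)})\le(\log\lambda)^{1/2}/N$, the last step because $N\le\exp((\log\lambda)^{1/(4d)})$.

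The step I expect to be the main obstacle is the last one. The set $\cB_N$ is tiny in measure, but $\log|V-\eta|$ has genuine logarithmic singularities, so it is not enough that a single level set of $\log|V-\eta|$ is small: one must use the full distribution-function strength of \cref{lem:LLCartan1} to see that $\int_{\cB_N}|\log|v_k||$ is negligible, and one must produce an error bound for $L_N(E)$ that is \emph{independent of $N$}, so that it survives division by $N$ in the second-difference estimate. Everything else reduces to choosing $C=C(d)$ large enough that $T_V\le(\log\lambda)^{1/C}$ forces all powers of $T_V$ below $(\log\lambda)^{1/2}$ and makes $\cB_N$ sufficiently small.
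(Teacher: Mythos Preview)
Your proof is correct and follows the same large-coupling Avalanche-Principle strategy as the paper, with two differences in execution worth noting. You telescope the AP expansion all the way down to $\sum_k\log|v_k(x)|$ and derive $\log|f_N|$ from the three-term recursion directly, whereas the paper stops the AP expansion at $\log\|M_1\|,\log\|M_2\|$ (and, for $f_N$, at $\log\|M_2P\|,\log\|PM_2\|$ with $P=\bigl(\begin{smallmatrix}1&0\\0&0\end{smallmatrix}\bigr)$) and never unpacks these further. Your route yields the clean uniform formula $L_N=\log\lambda+I+O(\delta(\lambda))$, from which both Lyapunov estimates drop out immediately, but at the price of the tail-integral argument you flag as the main obstacle: $\log|V-\eta|$ is unbounded, so controlling $\int_{\cB_N}\sum_k|\log|v_k||$ genuinely needs the distribution-function strength of \cref{lem:LLCartan1}. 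The paper sidesteps this entirely, because $0\le\log\|M_\ell\|\le\ell S_{\lambda V}$ makes the $\cB_N$ contribution to $\int\log\|M_\ell\|$ trivially $\le\ell S_{\lambda V}\mes(\cB_N)$; the second-difference bound then comes out as $|L_N-2L_2+L_1|\le\exp(-(\log\lambda)^{1/(4d)})+2(L_1-L_2)/N$. So the obstacle you single out is real for your argument but avoidable, and your handling of it is fine.
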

\begin{proof}
  Denote by $ \cB $ the set from \cref{lem:LLCartan1} with
  $ \eta=E/\lambda $ and $ H=(\log \lambda)^{\frac{1}{3}+\epsilon} $,
  $ \epsilon\ll 1 $. Set
  $\cB_N=\bigcup_{1\le j\le N} \big(\cB-j\omega\big)$. Note that we
  have $ (\log \lambda)^{1/2}\ge H_{V,\eta}H $ and
  \begin{equation*}
	\mes(\cB_N)\le NC(d)\exp(-(\log \lambda)^{(\frac{1}{3}+\epsilon)\frac{1}{d}})
    < \exp(-(\log \lambda)^{\frac{1}{3d}}).
  \end{equation*}
  For $ x\notin \cB_N $, $ 1\le
  j\le N $, 
  \begin{equation*}
	\left| \log|\lambda V(x+j\omega)-E|-\log\lambda \right|\le (\log \lambda)^{\frac{1}{2}}
  \end{equation*}
  and therefore
  \begin{gather}
    \big|\log |f_\ell(x+(j-1)\omega,E)|-\ell \log\lambda \big|\lesssim (\log \lambda)^{\frac{1}{2}},
    \ \ell=1,2,\label{eq:M-lambda}\\
    \big|\log \|M_\ell(x+(j-1)\omega,E)\| -\ell\log \lambda\big|\lesssim (\log \lambda)^{{\frac{1}{2}}},
    \ \ell=1,2.\label{eq:f-lambda}
  \end{gather}  
  Applying the avalanche principle we get that for any $ x\notin \cB_N $,
  \begin{equation}\label{eq:MN-AP}
    \log \| M_N (x,E) \|= \sum_{j=0}^{N-2} \log \| M_{2} (x+j\omega,E) \|
    - \sum_{j=1}^{N-2} \log \| M_1 (x+j\omega,E) \| + O(\lambda^{-\frac{1}{2}})\\
  \end{equation}
  and
  \begin{multline}\label{eq:fN-AP}
	    \log|f_N(x,E)|\\
    =\log\norm{M_2(x,E)\begin{bmatrix}
        1 & 0\\
        0 & 0
      \end{bmatrix}}+
    \sum_{j=1}^{N-3} \log \| M_{2} (x+j\omega,E) \|+\log\norm{\begin{bmatrix}
        1 & 0\\
        0 & 0
      \end{bmatrix}M_2(x+(N-2)\omega,E)}\\
    - \sum_{j=1}^{N-2} \log \| M_1 (x+j\omega,E) \| + O(\lambda^{-\frac{1}{2}}).
  \end{multline}
  We used the fact that
  \begin{equation}\label{eq:fN-MN}
	\log|f_N(x)|=\log\norm{\begin{bmatrix}
        1 & 0\\
        0 & 0
      \end{bmatrix}M_N(x)\begin{bmatrix}
        1 & 0\\
        0 & 0
      \end{bmatrix}}
  \end{equation}
  (recall \cref{eq:M-f}).
  It follows that \cref{eq:fN-MN-lambda} holds.
  Integrating \cref{eq:MN-AP} yields
  \begin{equation*}
	|NL_N(E)-(N-1)2L_2(E)+(N-1)L_1(E)|\le C\lambda^{-\frac{1}{2}}+4\mes(\cB_N)S_{\lambda V}
    \le \exp(-(\log\lambda)^{\frac{1}{4d}}).
  \end{equation*}
  By integrating \cref{eq:M-lambda} we get
  \begin{equation*}
    |L_1(E)-\log\lambda|, |L_{2}(E)-\log\lambda|\lesssim (\log\lambda)^{\frac{1}{2}}
    +(S_{\lambda V}+\log\lambda)\exp(-(\log\lambda)^{\frac{1}{3d}})\lesssim (\log\lambda)^{\frac{1}{2}}.
  \end{equation*}
  Therefore
  \begin{equation*}
    |L_N(E)-2 L_{2}(E)+L_1(E)|\le \exp(-(\log\lambda)^{\frac{1}{4d}})+\frac{2(L_1(E)-L_2(E))}{N}
    \lesssim \frac{(\log\lambda)^{\frac{1}{2}}}{N}
  \end{equation*}
  and
  \begin{equation*}
	|L_N(E)-\log \lambda|\lesssim \frac{(\log\lambda)^{\frac{1}{2}}}{N}+(\log \lambda)^{\frac{1}{2}}
    \lesssim (\log \lambda)^{\frac{1}{2}}.
  \end{equation*}
\end{proof}

We use the avalanche principle to extend by induction the estimates of the previous
lemma for arbitrarily large $ N $.

\begin{lemma}\label{lem:condLDTf}
  Let $ E\in \C $, and $ \sigma,\tau $ as in \cref{thm:anyLDT}. There
  exist $ \ell_0(a,b,\rho) $  and $
  \lambda_0(V)=\exp((T_{V})^C) $, $ C=C(d) $, such that
  the following hold for $ \lambda\ge \lambda_0 $,
  $ \ell\ge \ell_0 $, and $ |E|\le \lambda\norm{V}_\infty+4 $.  Assume that for any
  $ \ell\le \ell',\ell''\le 4\ell $ we have
  \begin{gather}
	|L_{\ell'}(E)-L_{\ell''}(E)|\le \frac{(\log \lambda)\log \ell}{\ell},
    \qquad  L_{\ell'}(E)\ge \frac{1}{2}\log \lambda,\label{eq:ell-Lyapunov-assumption}
    \\
    \mes \left\{x\in \T^d: \big|\log |f_{\ell'}(x,E)|-\ell' L_{\ell'}(E)\big|
      >S_{\lambda V}(\ell')^{1-\tau/2}\right\}<\exp(-(\ell')^{\sigma/2})\label{eq:ell-LDT-assumption}.
  \end{gather}
  Then for $ \ell^{10}\le N\le \ell^{100} $, $ N\le N',N''\le 4N $, we have
  \begin{gather*}
	|L_{N'}(E)-L_{N''}(E)|\le \frac{(\log \lambda)\log N}{N},\\
    L_{N'}(E)\ge L_\ell(E)- \frac{2(\log\lambda)\log \ell}{\ell}
    -\frac{(\log\lambda)\log N'}{3N'},\\
	\mes \left\{x\in \T^d: \big|\log |f_{N'}(x,E)|-N' L_{N'}(E)\big|
      >S_{\lambda V}(N')^{1-\tau/2}\right\}<\exp(-(N')^{\sigma/2}).    
  \end{gather*}
\end{lemma}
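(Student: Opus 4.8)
\noindent The plan is to run the avalanche principle \cref{prop:AP} at scale $N'$ using blocks of length $\simeq\ell$ — for which \cref{eq:ell-Lyapunov-assumption} and \cref{eq:ell-LDT-assumption}, together with the unconditional bounds \cref{thm:anyLDT} and \cref{cor:logupper}, supply exactly the required input — and then to treat the three conclusions separately. Concretely, I would fix $N\le N'\le 4N$, partition $[1,N']$ into $n\simeq N'/\ell$ consecutive intervals $I_1,\dots,I_n$ of length in $[\ell,2\ell]$, almost all of length exactly $\ell$, and set $A_j=M_{I_j}(x,E)$, so that $M_{N'}(x,E)=A_n\cdots A_1$, $A_{j+1}A_j=M_{I_j\cup I_{j+1}}(x,E)$, $|I_j\cup I_{j+1}|\in[2\ell,4\ell]$ and $\det A_j=1$. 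Applying \cref{thm:anyLDT} (for the potential $\lambda V$) and \cref{eq:ell-LDT-assumption} at the scales $|I_j|,|I_j\cup I_{j+1}|\in[\ell,4\ell]$, and using $\nnorm{M_I(x)}\ge|f_I(x)|$ from \cref{eq:M-f}, $L_m(E)\ge\tfrac12\log\lambda$ for the relevant $m$, and \cref{cor:logupper}, one obtains a set $\cB^*$ with $\mes(\cB^*)\lesssim n\exp(-\ell^{\sigma/2})$ off which all blocks and all adjacent products have norm $\ge\mu:=\exp(\tfrac13\ell\log\lambda)>n$, and $\log\nnorm{A_j(x)}$, $\log\nnorm{A_{j+1}A_j(x)}$ agree with $|I_j|L_{|I_j|}(E)$, $|I_j\cup I_{j+1}|L_{|I_j\cup I_{j+1}|}(E)$ up to $O(S_{\lambda V}\ell^{1-\tau/2})$; feeding \cref{eq:ell-Lyapunov-assumption} into the latter shows the gluing quantity in \cref{diff} is $\ll\log\mu$, so \cref{prop:AP} applies for every $x\notin\cB^*$.

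For the two Lyapunov statements I would integrate the identity \cref{eq:AP} over $\tor^d\setminus\cB^*$, use $\int_{\tor^d}\log\nnorm{M_m(x,E)}\,dx=mL_m(E)$ and the trivial bound $\log\nnorm{M_{N'}(x,E)}\lesssim S_{\lambda V}N'$ on $\cB^*$, to get
\[ N'L_{N'}(E)=\sum_{j=1}^{n-1}|I_j\cup I_{j+1}|L_{|I_j\cup I_{j+1}|}(E)-\sum_{j=2}^{n-1}|I_j|L_{|I_j|}(E)+(\text{super-exponentially small in }\ell), \]
and then read off $|L_{N'}(E)-L_{N''}(E)|\le\tfrac{(\log\lambda)\log N}{N}$ and $L_{N'}(E)\ge L_\ell(E)-\tfrac{2(\log\lambda)\log\ell}{\ell}-\tfrac{(\log\lambda)\log N'}{3N'}$ by a direct computation, the point being that the leading behaviour is governed by $2L_{2\ell}(E)-L_\ell(E)$, which is the same for all admissible $N'$, so that the $\tfrac{(\log\lambda)\log\ell}{\ell}$ fluctuations enter only with a factor $\ell/N'$ and $\log N'\ge 10\log\ell$ absorbs the numerical constants.

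For the large deviation estimate on $f_{N'}$, the bound $\log|f_{N'}(x,E)|\le N'L_{N'}(E)+C_0S_{\lambda V}(N')^{1-\tau}$ is free from \cref{cor:logupper}. For the matching lower bound I would first estimate the mean: running the avalanche principle on the sandwiched product $\bigl[\begin{smallmatrix}1&0\\0&0\end{smallmatrix}\bigr]M_{N'}(x,E)\bigl[\begin{smallmatrix}1&0\\0&0\end{smallmatrix}\bigr]$ as in the proof of \cref{lem:LLrate1} (cf.\ \cref{eq:fN-AP}, \cref{eq:fN-MN}), for $x\notin\cB^*$ the bulk terms coincide with those of the avalanche principle for $\nnorm{M_{N'}(x,E)}$, and only two endpoint corrections — each of order $\log(\nnorm{M_I(x)}/|f_I(x)|)=O(S_{\lambda V}\ell^{1-\tau/2})$ for $I=I_1\cup I_2,\ I_{n-1}\cup I_n$, by \cref{eq:ell-LDT-assumption} and \cref{cor:logupper} — remain; hence $\bigl|\log|f_{N'}(x,E)|-\log\nnorm{M_{N'}(x,E)}\bigr|\lesssim S_{\lambda V}\ell^{1-\tau/2}$ off $\cB^*$, and integrating (bounding $\int_{\cB^*}\log|f_{N'}|$ from below by Hölder against a uniform $L^2(\tor^d)$ bound on $\log|f_{N'}(\cdot,E)|$ and $\mes(\cB^*)\lesssim n\exp(-\ell^{\sigma/2})$) yields $\int_{\tor^d}\log|f_{N'}(x,E)|\,dx\ge N'L_{N'}(E)-C_0'S_{\lambda V}(N')^{1-\tau}$ since $\ell\le(N')^{1/10}$. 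Then, as in \cite{GolSch08}, I would combine this with the uniform upper bound: $\log|f_{N'}(\cdot+iy,E)|$ is subharmonic in each variable for $|y|<\min(\rho/2,1/N')$ and bounded above there by $N'L_{N'}(E)+C_0'S_{\lambda V}(N')^{1-\tau}$ (by \cref{cor:logupper}, \cref{cor:liplap}), so the Cartan type estimate \cref{lem:high_cart}, applied on a unit polydisk after rescaling this thin strip and with Cartan parameter $\simeq(N')^{\tau/2}$, puts $\log|f_{N'}(x,E)|>N'L_{N'}(E)-S_{\lambda V}(N')^{1-\tau/2}$ off a set of measure $\lesssim\exp(-(N')^{\tau/(2d)})<\exp(-(N')^{\sigma/2})$, the last step using $\sigma\ll\tau$.

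All of this requires $\log\lambda\gg T_V$ (so $S_{\lambda V}\le 2\log\lambda$), which fixes $\lambda_0(V)=\exp((T_V)^C)$, and $\ell\ge\ell_0(a,b,\rho)$ large enough for every ``$\ell$ large'' and ``$N'\ge\ell^{10}$'' step ($B_0$ from \cref{eq:B0} enters only through such $\ell_0$). I expect the real obstacle to be the bookkeeping rather than any single estimate: verifying \cref{diff} with the \emph{exact} near-constancy rate of \cref{eq:ell-Lyapunov-assumption}, keeping the numerical constants in the Lyapunov statements honest via $\log N'\ge 10\log\ell$, and — crucially — recognising that the bad set $\exp(-(N')^{\sigma/2})$ in the conclusion cannot be produced by unioning scale-$\ell$ bad sets (which are only $\exp(-\ell^{\sigma/2})$, and $\ell\le(N')^{1/10}$), so that the $f_{N'}$ estimate must be routed through the scale-$N'$ bound \cref{thm:anyLDT} and the Cartan argument above rather than obtained by direct propagation.
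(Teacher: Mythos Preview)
Your proposal is correct and follows essentially the same route as the paper: avalanche principle at scale $\ell$ (with \cref{thm:anyLDT} for the block norms and \cref{eq:ell-LDT-assumption} for the sandwiched endpoint factors), integration for the Lyapunov statements, and then Cartan at scale $N'$ for the determinant LDT. The only cosmetic difference is that the paper feeds Cartan a \emph{pointwise} reference value (the AP lower bound on $\log|f_{N'}|$ off the bad set already furnishes, for every $x_0$, a nearby $x_1$ with $\log|f_{N'}(x_1,E)|\ge N'L_{N'}(E)-CS_{\lambda V}(N')^{1-\tau}$), whereas you pass through the mean via an $L^2$--H\"older bound on $\int_{\cB^*}\log|f_{N'}|$; your route works but the pointwise one is shorter and avoids having to justify the $L^2$ control.
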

\begin{proof} We first prove the statements pertaining to the Lyapunov exponents.
  The derivation follows the method in \cite[Lemma 4.2]{GolSch01}. We
  omit some details. We also suppress $E$ from most of the notation.
  To shorten the presentation we consider the case $N=n\ell$, $ n\in
  \N $,  only.
  By \cref{thm:anyLDT} and \cref{eq:ell-Lyapunov-assumption} we have
  \begin{equation}\label{eq:M-ell}
    \log \|M_{\ell}(x+j\ell\omega)\|\ge \ell L_{\ell}-C_0S_{\lambda V}\ell^{1-\tau}
    \ge \frac{1}{4}\ell\log\lambda
  \end{equation}
  and
  \begin{multline}\label{eq:M-2ell}
	\log \|M_{\ell}(x+j\ell\omega)\|+\log \|M_{\ell}(x+(j+1)\ell\omega)\|
    -\log \|M_{2\ell}(x+j\ell\omega)\|\\
    \le 2\ell(L_\ell-L_{2\ell})+2C_0S_{\lambda V}\ell^{1-\tau}+C_0S_{\lambda V}(2\ell)^{1-\tau}
    < \frac{1}{8}\ell\log\lambda,
  \end{multline}
  for any $0\le j\le N$, $x\notin \cB$, $\mes (\cB)\le
  2n\exp(-\ell^{\sigma})\le \exp(-\ell^{\sigma}/2)$.
  With these estimates in hand the avalanche principle kicks in and yields
  \begin{equation}\label{eq4.4}
    \log \| M_N (x) \|
    = \sum_{j=0}^{n-2}  \log \| M_{2\ell} (x+j\ell\omega) \|
    - \sum_{j=1}^{n-2} \log \| M_\ell (x+j\ell\omega) \| + O(\exp(-(\ell\log\lambda)/8)),
  \end{equation}
  for any $x\notin \cB$. 
  Recalling \cref{eq:LEupperb1} and integrating~\eqref{eq4.4} over~$x$
  yields
  \begin{equation*}
    \left|L_N-\frac{n-1}{n}2 L_{2\ell}+\frac{n-2}{n}L_\ell\right|
    \le \frac{1}{N}C\exp(-(\ell\log\lambda)/8)+4\mes(\cB)S_{\lambda V}
    \le \exp(-c_0\ell^{\sigma}/4)\log\lambda.
  \end{equation*}
  Therefore
  \begin{equation*}
    |L_N-2L_{2\ell}+L_\ell|\le \exp(-c_0\ell^{\sigma}/4)\log\lambda+\frac{2}{n}(L_\ell-L_{2\ell})
    \le \frac{3(\log \lambda)\log\ell}{N} \le \frac{(\log\lambda)\log N}{3N}.
  \end{equation*}
  The same estimate also holds for general $ N $ (not just $ N=n\ell
  $) and $ N\le N',N''\le 4N
  $. This implies the estimates for the Lyapunov exponents.

  Next, we consider the statement about the determinants. The main
  tool here is the application of the avalanche principle to expand
  $\log |f_N|$.
  The argument is very close to the one in
  \cite[Corollary 3.10]{GolSch08}.  Again we omit some details and
  assume $N=n\ell$, $ n\in \N $. On top of \cref{eq:M-ell} and
  \cref{eq:M-2ell}, using \cref{thm:anyLDT} and \cref{eq:ell-LDT-assumption} we have
  \begin{gather*}
    \log\norm{M_\ell(x)\begin{bmatrix}
        1 & 0\\
        0 & 0
      \end{bmatrix}}\ge \log|f_\ell(x)|\ge \ell L_\ell
    -S_{\lambda V}\ell^{1-\tau/2}\ge \frac{1}{4}\ell \log\lambda,\\
    \log\norm{\begin{bmatrix}
        1 & 0\\
        0 & 0
      \end{bmatrix}M_\ell(x+(n-1)\ell\omega)}\ge \log|f_\ell(x+(n-1)\ell\omega)|
    \ge \frac{1}{4}\ell \log\lambda,\\
    \log \|M_{\ell}(x)\|+\log \|M_{\ell}(x+\omega)\|
    -\log \|M_{2\ell}(x)\textstyle{\begin{bmatrix} 1 & 0\\ 0 & 0\end{bmatrix}}\|
    < \frac{1}{8}\ell\log\lambda,\\
    \log \|M_{\ell}(x+(n-2)\ell\omega)\|+\log \|M_{\ell}(x+(n-1)\ell\omega)\|
    -\log \|\textstyle{\begin{bmatrix} 1 & 0\\ 0 & 0\end{bmatrix}}M_{2\ell}(x+(n-2)\ell\omega)\|
    < \frac{1}{8}\ell\log\lambda
  \end{gather*}
  for any $x\notin \cB'$, $\mes(\cB')\le
  4\exp(-\ell^{\sigma/2})$.
  So we can apply the avalanche principle to expand $\log |f_N(x)|$
  for $ x\notin \cB\cup \cB' $ (similarly to \cref{eq:fN-AP}).
  Combining this with \eqref{eq4.4} we get
  \begin{multline}\label{eq4.4f}
    \log |f_N(x)|=\log \| M_N (x) \|
    + \log\left\|M_{2\ell}(x)\begin{bmatrix} 1 & 0\\ 0 &
        0\end{bmatrix}\right\|-\log \|M_{2\ell}(x)\|\\
    +\log\left\|\begin{bmatrix} 1 & 0\\ 0 &
        0\end{bmatrix}M_{2\ell}(x+(n-2)\ell\omega)\right\|
      -\log\|M_{2\ell}(x+(n-2)\ell\omega)\|
    + O(\exp(-(\ell\log\lambda)/8)\\
    \ge \log \| M_N (x) \|-2S_{\lambda V}(2\ell)^{1-\tau/2}-2C_0S_{\lambda V}(2\ell)^{1-\tau}\ge
    NL_N-S_{\lambda V}N^{1-\tau}
  \end{multline}
  for any $x\notin \cB\cup\cB'$ (recall that $ \tau\ll 1 $).
  In particular,
  for any $x_0\in \T^d$ there exists $ x_1\in \T^d $, $|x_1-x_0|\ll
  \rho N^{-1}$  such that
  $\log |f_N(x_1)|\ge NL_N-S_{\lambda V}N^{1-\tau}$.
  On the other hand due to \cref{cor:logupper}
  \begin{equation*}
    \sup\limits_{x\in\tor^d,|y|<\rho N^{-1}} \log |f_N (x+iy)| \le
    NL_N+C(a,b,\rho)S_{\lambda V} N^{1-\tau}.
  \end{equation*}
  Applying Cartan's estimate (with $ H=N^{\tau/3} $) and using a
  covering argument we get
  \begin{equation*}
	\mes \left\{x : |\log|f_N(x)|-NL_N|>S_{\lambda V}N^{1-\tau/2}  \right\}
    \le C(d)\exp(-N^{\tau/(3d)})
    <\exp(-N^{\sigma/2}),
  \end{equation*}
  (recall that $ \sigma\ll \tau $). The same estimate also holds
  for general $ N $ and $ N\le N',N''\le 4N $.
\end{proof}

\begin{prop}\label{prop:LLBasic}
  Let $ E\in \C $, and $ \sigma,\tau $ as in \cref{thm:anyLDT}. There
  exists $ \lambda_0(a,b,\rho,V)=\exp((T_{V})^C) $,
  $ C=C(a,b,\rho) $, such that the following statements hold for
  $ \lambda\ge \lambda_0 $ and $ |E|\le \lambda\norm{V}_\infty+4 $.
  \begin{enumerate}[(a),leftmargin=2em]
  \item We have
    \begin{gather*}
      L_N(E)-L(E)\le \frac{C_0(\log \lambda) \log N}{N},\quad N\ge 2,\\
      L(E)\ge \log \lambda - C_1(\log \lambda)^{\frac{1}{2}}> \frac{1}{2}\log\lambda,
    \end{gather*}
    with $ C_0=C_0(a,b,\rho) $ and $ C_1 $ an absolute constant.
  \item For any $ N\ge \log\lambda $ we have
    \begin{equation*}
      \mes \left\{ x\in \T^d:
        |\log|f_N(x,E)|-L_N(E)|>S_{\lambda V}N^{1-\tau/2} \right\}<\exp(-N^{\sigma/2}).
    \end{equation*}
  \end{enumerate}
\end{prop}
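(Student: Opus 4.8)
The plan is a bootstrap over scales. First I would use \cref{lem:LLrate1} to verify the hypotheses of \cref{lem:condLDTf} at a base scale $\ell_0$ that is polynomial in $\log\lambda$, and then iterate \cref{lem:condLDTf} to propagate its three conclusions — the comparison of Lyapunov exponents at nearby scales, the lower bound $L_{N'}(E)\ge\frac12\log\lambda$ together with its quantitative refinement, and the determinant large deviation estimate — to all scales $N\ge\ell_0^{10}$. Parts (a) and (b) then follow by telescoping and a limiting argument, plus a short direct treatment of the intermediate range $\log\lambda\le N<\ell_0^{10}$ via \cref{lem:LLrate1}.

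For the base case I would fix an exponent $C'=C'(a,b,d)$ constrained by $\frac12\le C'<\frac{1}{15d\sigma}$ — a nonempty range once $\sigma$ is taken small enough, which is permitted — and set $\ell_0:=\floor{(\log\lambda)^{C'}}$; for $\lambda\ge\exp((T_V)^C)$ this is at least the constant $\ell_0(a,b,\rho)$ of \cref{lem:condLDTf} and at most $\frac14\exp((\log\lambda)^{1/(4d)})$. For $\ell_0\le\ell',\ell''\le4\ell_0$, \cref{lem:LLrate1} gives $|L_{\ell'}(E)-L_{\ell''}(E)|\lesssim(\log\lambda)^{1/2}/\ell_0\le(\log\lambda)(\log\ell_0)/\ell_0$ and $L_{\ell'}(E)\ge\log\lambda-C(\log\lambda)^{1/2}\ge\frac12\log\lambda$, which is \cref{eq:ell-Lyapunov-assumption}. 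For \cref{eq:ell-LDT-assumption} I would combine the bound $|\log|f_{\ell'}(x,E)|-\log\|M_{\ell'}(x,E)\||\lesssim(\log\lambda)^{1/2}$, valid off the set $\cB_{\ell'}$ of \cref{lem:LLrate1} of measure $<\exp(-(\log\lambda)^{1/(3d)})$, with \cref{thm:anyLDT}: off a set of measure $<\exp(-(\log\lambda)^{1/(3d)})+\exp(-(\ell')^{\sigma})$ one gets $|\log|f_{\ell'}(x,E)|-\ell'L_{\ell'}(E)|\lesssim(\log\lambda)^{1/2}+S_{\lambda V}(\ell')^{1-\tau}\le S_{\lambda V}(\ell')^{1-\tau/2}$, and the choice $C'<1/(15d\sigma)$ makes the exceptional measure $<\exp(-(\ell')^{\sigma/2})$.

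Next I would iterate \cref{lem:condLDTf} with base scales $\ell_{k+1}:=\ell_k^{10}$: an application with base $\ell_k$ yields all three conclusions on $[\ell_k^{10},4\ell_k^{100}]$, and since $[\ell_{k+1},4\ell_{k+1}]\subset[\ell_k^{10},\ell_k^{100}]$ the hypotheses hold again at $\ell_{k+1}$, with consecutive output intervals overlapping, so after the iteration all three assertions hold for every $N\ge\ell_0^{10}$. Telescoping the middle conclusion $L_{N'}(E)\ge L_\ell(E)-2(\log\lambda)(\log\ell)/\ell-(\log\lambda)(\log N')/(3N')$ along $\ell_0,\ell_1,\ell_2,\dots$, with the series $\sum_k(\log\ell_k)/\ell_k$ converging trivially since $\ell_k=\ell_0^{10^k}$, gives $L_N(E)\ge\log\lambda-C_1(\log\lambda)^{1/2}$ for all large $N$, hence, as $L(E)=\lim_N L_N(E)$, the lower bound for $L(E)$ in (a) and $L(E)>\frac12\log\lambda$ for $\lambda$ large. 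For the first inequality in (a) I would sum the geometric series $\sum_{j\ge0}|L_{2^jN}(E)-L_{2^{j+1}N}(E)|\le\sum_{j\ge0}(\log\lambda)\log(2^jN)/(2^jN)\lesssim(\log\lambda)(\log N)/N$, each term being legitimate once the scales lie in the established range, obtaining $L_N(E)-L(E)\le C_0(\log\lambda)(\log N)/N$ for $N\ge\ell_0^{10}$; for $2\le N<\ell_0^{10}$ the same bound follows from $|L_N(E)-L_{\ell_0^{10}}(E)|\lesssim(\log\lambda)^{1/2}/N$ (\cref{lem:LLrate1}) together with the case $N=\ell_0^{10}$. Finally, (b) is output directly by the iteration for $N\ge\ell_0^{10}$, while for $\log\lambda\le N<\ell_0^{10}$ it follows exactly as in the base case, since there $(\log\lambda)^{1/2}+S_{\lambda V}N^{1-\tau}\le S_{\lambda V}N^{1-\tau/2}$ (as $N\ge\log\lambda$) and $\exp(-(\log\lambda)^{1/(3d)})+\exp(-N^{\sigma})<\exp(-N^{\sigma/2})$ by the choice of $C'$.

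No single step here is deep — the substance lies in \cref{lem:LLrate1} and \cref{lem:condLDTf} — so I expect the main obstacle to be purely organizational: choosing the exponent $C'$ in $\ell_0=(\log\lambda)^{C'}$ so that both the base-case large deviation requirement and the intermediate-range requirement in (b) are simultaneously met, arranging the iteration so that it leaves no gap in the scales, and checking that the resulting threshold $\lambda_0$ — the maximum of those from \cref{lem:LLrate1} and \cref{lem:condLDTf} and of finitely many conditions $\log\lambda\ge\mathrm{const}(a,b,\rho)$ used to absorb implicit constants — does have the asserted form $\exp((T_V)^C)$, $C=C(a,b,\rho)$.
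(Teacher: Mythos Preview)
Your proposal is correct and follows essentially the same bootstrap as the paper's proof: verify the hypotheses of \cref{lem:condLDTf} at a base scale via \cref{lem:LLrate1} and \cref{thm:anyLDT}, iterate to all larger scales, telescope for (a), and handle the intermediate range in (b) directly. The paper's organization differs only cosmetically --- it starts the Lyapunov induction from the fixed constant $\ell_0(a,b,\rho)$ of \cref{lem:condLDTf} rather than from $(\log\lambda)^{C'}$, and extracts the lower bound $L(E)\ge\log\lambda-C_1(\log\lambda)^{1/2}$ by evaluating at the single scale $\ell\approx\exp((\log\lambda)^{1/(4d)})$ (where Lemma~\ref{lem:LLrate1} still applies and the correction $(\log\lambda)(\log\ell)/\ell$ is negligible) rather than by telescoping; if you do telescope, take $C'>\tfrac12$ strictly so the accumulated error $(\log\lambda)(\log\ell_0)/\ell_0$ is $o((\log\lambda)^{1/2})$ and $C_1$ stays absolute.
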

\begin{proof}[Proof of Proposition~\ref{prop:LLBasic}]
  (a) By \cref{lem:LLrate1}, for $ 1\ll \ell \le \exp((\log
  \lambda)^{\frac{1}{4d}})/4 $, $ \ell\le \ell',\ell''\le 4\ell $, we have
  \begin{gather*}
	|L_{\ell'}(E)-L_{\ell'}(E)|\le \frac{C(\log\lambda)^{\frac{1}{2}}}{\ell}\le
    \frac{(\log \lambda)\log\ell}{\ell},\\
    L_{\ell'}(E)\ge \log\lambda-C(\log\lambda)^{\frac{1}{2}}\ge \frac{1}{2}\log\lambda.
  \end{gather*} 
  Let $ \ell_0 $ as in \cref{lem:condLDTf}. We choose $ \lambda_0 $
  such that $ \ell_0\le \log\lambda_0 $.
  Using the above, \cref{lem:condLDTf}, and induction we get
  that for any $ N\ge \ell_0 $, $ N\le N',N''\le 4N $ we have
  \begin{gather*}
	|L_{N'}(E)-L_{N''}(E)|\le \frac{(\log\lambda)\log N}{N}.
  \end{gather*}
  In particular we have
  \begin{equation*}
	L_N(E)-L_{2^kN}(E)\le \sum_{j=0}^{k-1}\frac{(\log\lambda)\log (2^jN)}{2^j N}
    \le \frac{C(\log\lambda)\log N}{N},
  \end{equation*}
  with $ C $ an absolute constant. The first statement of part (a) follows
  by letting $ k\to \infty $ and by adjusting the constant $ C $ to
  also cover the case $ N<\ell_0 $. The second statement follows from
  the fact that for $ \ell=\lfloor
  \exp((\log\lambda)^{\frac{1}{4d}}) \rfloor $, we have
  \begin{equation*}
	L(E)\ge L_{\ell}(E)-\frac{C(\log \lambda)\log \ell}{\ell}
    \ge \log \lambda -C(\log\lambda)^{\frac{1}{2}}-\exp(-(\log\lambda)^{\frac{1}{5d}})
    \ge \log \lambda-C'(\log\lambda)^{\frac{1}{2}}.
  \end{equation*}

  (b) Take $ \log \lambda\le \ell \le (\log \lambda)^{100} $. Using 
  \cref{lem:LLrate1} and \cref{thm:anyLDT} we get
  \begin{equation*}
	\mes \left\{ x : |\log|f_\ell(x,E)|-\ell L_\ell(E)|>C_0S_{\lambda V}\ell^{1-\tau}
      +C(\log\lambda)^\frac{1}{2}  \right\}
    <\exp(-(\log \lambda)^\frac{1}{3d}).
  \end{equation*}
  Note that with this choice of $ \ell $ we have
  \begin{equation*}
	C_0S_{\lambda V}\ell^{1-\tau}
    +C(\log\lambda)^\frac{1}{2}<S_{\lambda V}\ell^{1-\tau/2},\qquad
    \exp(-(\log \lambda)^\frac{1}{3d})<\exp(-\ell^{\sigma/2})
  \end{equation*}
  (recall that $ \sigma\ll \tau \ll 1 $). Recalling that $
  \ell_0\le \log\lambda_0 $,
  the conclusion follows by
  \cref{lem:condLDTf} and induction.
\end{proof}

\begin{remark}\label{rem:Lbridge}
  (a) The previous proposition shows that for $ \lambda\ge
  \lambda_0\gg 1 $  and $ |E|\le \lambda\norm{V}_\infty+4 $,  \cref{thm:DirLDT} holds with $ N_0=(\log
  \lambda)^{C(a,b)} $, and \cref{prop:uniform} holds with $
  C_0=C(a,b,\rho)\log\lambda $. Therefore, for such $ \lambda $  and $
  E $ we can take $ B_0= (\log \lambda)^{C(a,b,\rho)} $. By inspection of the
  previous proofs one can see that for $ |E|>\lambda \norm{V}_\infty+4
  $ we can take $ B_0= (\log \lambda+\log|E|)^{C(a,b,\rho)}$, but we
  will not use this fact.

  \medskip\noindent (b) The positivity of the Lyapunov exponent for 
  $\lambda\ge \lambda_0\gg1$ is well-known (see \cite{GolSch01},
  \cite{Bou05a}, \cite{DuaKle16}). We only included the proof because
  it is an easy consequence of the lemmas we needed for the other statements.
\end{remark}

Next we establish a version of the covering form of (LDT) and of the result
on finite scale localization from \cref{prop:stabilization},
starting from the potential.  We will need these results in
\cref{sec:A-to-DE} to connect the assumptions on the potential to the
initial conditions required by our inductive schemes from
\cref{sec:bulk} and \cref{sec:edges}.

\begin{lemma}\label{lem:efextension2b}
  Let $x_0\in \tor^d$, $ [a,b]\subset \Z $, $ a<b $. There exists
  $ \lambda_0(V)=\exp((T_{V})^C) $, $ C=C(\rho) $, such that the
  following hold for $\lambda\ge\lambda_0$ and $ |E_0|\le \lambda \norm{V}_\infty+4 $.  Assume
  \begin{gather*}
    |V(x_0+n\omega)-\lambda^{-1}E_0|\ge \exp(-K),\quad \text{for any $n\in [a,b]$},
  \end{gather*}
  with some $ K\ge (\log\lambda)^{1/3} $.
  Then for any $|x - x_0|< \exp(-2K)$, $
  \lambda^{-1}|E-E_0|<\frac{1}{2}\exp(-K) $,
  \begin{gather*}
	\tag{a} \dist (\spec H_{[a,b]}(x),E_0)\ge \frac{1}{2} \lambda\exp(-K),\\
    \tag{b} \big | \cG_{[a,b]} (x,E;j,k)\big |
    \le \exp\left(-(|j -k|+1)\log\lambda+C(b-a)K\right),
  \end{gather*}
  where $ C $ is an absolute constant.                                   
\end{lemma}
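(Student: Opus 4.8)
The plan is to prove both parts by an elementary diagonal–dominance argument: at large coupling the diagonal of $H_{[a,b]}(x)-E$ dominates its $\pm1$ off-diagonal, so a Neumann series for the resolvent converges and produces the entrywise bound in~(b); part~(a) will then be immediate, since the computation will in fact exhibit every $E$ in the indicated ball as lying in the resolvent set of $H_{[a,b]}(x)$.

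First I would transfer the non-resonance hypothesis from $x_0$ to the nearby $x$. Write $D=D_{x,E}$ for the diagonal matrix with entries $\lambda V(x+n\omega)-E$, $n\in[a,b]$, and $T$ for the off-diagonal part of $H_{[a,b]}(x)-E$ (nearest-neighbour, all entries $-1$), so that $H_{[a,b]}(x)-E=D+T$ and $\norm{T}\le 2$. By Cauchy estimates, as in~\eqref{eq:stability-x}, $\norm{\partial_{x_i}V}_{L^\infty(\T^d)}\le C_\rho\norm{V}_\infty$, hence for $|x-x_0|<\exp(-2K)$ and every $n\in[a,b]$ we have $|V(x+n\omega)-V(x_0+n\omega)|\le C_\rho\norm{V}_\infty\exp(-2K)$. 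The key point is that, since $K\ge(\log\lambda)^{1/3}$ and $\lambda\ge\lambda_0=\exp((T_V)^{C})$ with $C=C(\rho)$ chosen large, this error is at most $\tfrac1{100}\exp(-K)$: indeed $\exp(-K)\le\exp(-(\log\lambda)^{1/3})$, and combined with $\log\norm{V}_\infty\le T_V$ this gain swallows both $C_\rho$ and $\norm{V}_\infty$. Therefore $|\lambda V(x+n\omega)-E_0|\ge\tfrac{99}{100}\lambda\exp(-K)$ for all $n\in[a,b]$, and combining with $|E-E_0|<\tfrac12\lambda\exp(-K)$ gives the working estimate $|\lambda V(x+n\omega)-E|\ge\tfrac14\lambda\exp(-K)$ for all $n$. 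In particular $D$ is invertible, with $\norm{D^{-1}}\le 4\lambda^{-1}\exp(K)$ and the same bound on each of its diagonal entries.

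Next I would expand the resolvent. Since $H_{[a,b]}(x)-E=D(I+D^{-1}T)$ and $\norm{D^{-1}T}\le 8\lambda^{-1}\exp(K)<\tfrac12$ for $\lambda\ge\lambda_0$ (here one uses that $\lambda\exp(-K)$ is large, which is the regime in which the conclusions carry content), $I+D^{-1}T$ is invertible and $\cG_{[a,b]}(x,E)=\sum_{m\ge0}(-D^{-1}T)^{m}D^{-1}$. Because $T$ connects only nearest neighbours, $\big((D^{-1}T)^{m}D^{-1}\big)_{jk}$ is a sum over unit-step walks of length $m$ from $j$ to $k$; this is empty unless $m\ge|j-k|$, there are at most $2^{m}$ such walks, and each contributes at most $(4\lambda^{-1}\exp(K))^{m+1}$ in absolute value. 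Summing the resulting geometric series over $m$ of the correct parity (ratio $<\tfrac12$) gives $\big|\cG_{[a,b]}(x,E;j,k)\big|\le (8\lambda^{-1}\exp(K))^{|j-k|+1}$, i.e. $\log\big|\cG_{[a,b]}(x,E;j,k)\big|\le -(|j-k|+1)\log\lambda+(|j-k|+1)(K+\log 8)$. Using $|j-k|+1\le 2(b-a)$ (legitimate since $a<b$) and $K+\log 8\le 2K$ (since $K\ge(\log\lambda)^{1/3}\gg 1$), this is at most $-(|j-k|+1)\log\lambda+C(b-a)K$ with $C$ absolute, which is~(b).

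For part~(a) I would observe that the argument above applies verbatim to \emph{any} $E$ with $\lambda^{-1}|E-E_0|<\tfrac12\exp(-K)$ (for the fixed $x$ with $|x-x_0|<\exp(-2K)$) and in each case produces a bounded $(H_{[a,b]}(x)-E)^{-1}$. Hence the open ball of radius $\tfrac12\lambda\exp(-K)$ about $E_0$ is disjoint from $\spec H_{[a,b]}(x)$, i.e. $\dist(\spec H_{[a,b]}(x),E_0)\ge\tfrac12\lambda\exp(-K)$, which is~(a). The argument is entirely elementary; the one place calling for genuine care — and the only real obstacle — is the first step: one must verify that the Cauchy-estimate error $C_\rho\norm{V}_\infty\exp(-2K)$ is dominated by $\exp(-K)$ using \emph{only} the hypotheses $\lambda\ge\exp((T_V)^{C})$ and $K\ge(\log\lambda)^{1/3}$, and it is precisely this matching that dictates the shape $\lambda_0=\exp((T_V)^{C})$, $C=C(\rho)$, and the exponent $\tfrac13$ in the hypothesis on $K$. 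The remaining bookkeeping — collapsing the error terms in the exponent of the second step into the prescribed form $C(b-a)K$ with absolute $C$ — is routine.
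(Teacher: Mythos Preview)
Your proof is correct and takes a genuinely different route from the paper. The paper first establishes the two-sided estimate $|\log|\lambda V(x+n\omega)-E|-\log\lambda|\lesssim K$, then applies the avalanche principle to the one-step transfer matrices to obtain $|\log|f_{[a',b']}(x,E)|-(b'-a'+1)\log\lambda|\lesssim (b'-a'+1)K$ on every subinterval $[a',b']\subset[a,b]$; part~(a) follows since $f_{[a,b]}\neq 0$, and part~(b) from Cramer's rule $\cG_{[a,b]}(x,E;j,k)=f_{[a,j-1]}f_{[k+1,b]}/f_{[a,b]}$. Your diagonal-dominance/Neumann-series argument is more elementary: it needs only the \emph{lower} bound on the diagonal, avoids the avalanche principle entirely, and yields the entrywise Green's function bound directly via path counting. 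The paper's approach, while heavier, is consistent with its overall methodology and produces the determinant estimates as a byproduct. Both arguments implicitly require $\lambda\exp(-K)\gg 1$ (equivalently $K<c\log\lambda$ for some small $c$), which is not stated in the lemma but holds in every application in the paper (where $K=(\log\lambda)^{\epsilon}$ with $\epsilon<1$); you correctly flagged this in your parenthetical.
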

\begin{proof}
  For any $|x - x_0|< \exp(-2K)$,
  $\lambda^{-1}|E - E_0|\le \frac{1}{2}\exp(-K)$,
  \begin{gather*}
    |V(x+n\omega)-\lambda^{-1}E|\ge \frac{1}{4} \exp(-K),\quad j\in [a,b]
  \end{gather*}
  ($ \lambda_0 $ depends on $ \rho $ because we used a Cauchy
  estimate). Then
  \begin{equation*}
	|\log|\lambda V(x+n\omega)-E|-\log \lambda|\lesssim K,\quad n\in [a,b]
  \end{equation*}
  (note that $
  |V(x+n\omega)-\lambda^{-1}E|\le \exp((\log \lambda)^{1/3})\le\exp(K)
  $, for large enough $\lambda $)
  and this implies
  \begin{gather*}
    \big|\log |f_\ell (x+(n-1)\omega,E)|-\ell \log\lambda\big|\lesssim K,
    \quad n\in[a,b-\ell],\ell=1,2,\\
    \big|\log \|M_\ell(x+(n-1)\omega,E)\| -\ell \log\lambda\big|\lesssim K,
    \quad n\in [a,b-\ell],\ell=1,2.
  \end{gather*}
  Applying the avalanche principle (as in the proof of
  \cref{lem:LLrate1}) we have
  \begin{multline*}
	    \log|f_{[a,b]}(x,E)|=\log\norm{M_2(x+(a-1)\omega,E)\begin{bmatrix}
        1 & 0\\
        0 & 0
      \end{bmatrix}}+
    \sum_{n=a}^{b-a-2} \log \| M_{2} (x+n\omega,E) \|\\+\log\norm{\begin{bmatrix}
        1 & 0\\
        0 & 0
      \end{bmatrix}M_2(x+(b-a-1)\omega,E)}
    - \sum_{n=a}^{b-a-1} \log \| M_1 (x+n\omega,E) \| + O(\lambda^{-\frac{1}{2}}).
  \end{multline*}
  It then follows that
  \begin{equation}\nn
    |\log |f_{[a,b]}(x,E)|-(b-a+1)\log\lambda|\lesssim (b-a+1)K,
  \end{equation}
  In particular,  $E\notin \spec
  H_{[a,b]}(x)$.  This implies (a). Analogous estimates hold on any
  subinterval of $ [a,b] $. Using these estimates and 
  Cramer's rule for the resolvent we get (for $ j\le k $) 
  \begin{multline*}
    \log \big | \cG_{[a,b]}(x,E;j, k) \big |  = \log\big | f_{[a,j-1]}
    (x,E)\big |+\log \big | f_{[k+1,b]} \bigl(x,
    E\bigr)\big|- \log\big | f_{[a,b]}(x,E)\big |\\
    \le [(j-a)+(b-k)](\log\lambda +CK)
    -(b-a+1)((j-a)(\log\lambda-CK))\\
    \le (j-k-1)\log\lambda+C'(b-a)K.
  \end{multline*}
  This implies (b).
\end{proof}

\begin{cor}\label{cor:covering-perturb}
  Let $x_0\in \tor^d$, $ S\subset \mathbb{R}$, $ [a,b]\subset \Z $, $ a<b $. There exists
  $ \lambda_0(V)=\exp((T_{V})^C) $, $ C=C(\rho) $, such that the
  following hold for $\lambda\ge\lambda_0$. If
  \begin{gather*}
    \dist(V(x_0+n\omega,\lambda^{-1}S)\ge \exp(-K),\quad \text{for any $n\in [a,b]$},
  \end{gather*}
  with some $ K\ge (\log\lambda)^{1/3} $,
  then for any $|x - x_0|< \exp(-2K)$,
  \begin{equation*}
    \dist (\spec H_{[a,b]}(x),S)\ge \frac{1}{2} \lambda\exp(-K).
  \end{equation*}
\end{cor}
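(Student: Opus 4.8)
The plan is to derive the corollary directly from part~(a) of \cref{lem:efextension2b}, applied one energy at a time. Since $\dist(\spec H_{[a,b]}(x),S)=\inf_{E_0\in S}\dist(\spec H_{[a,b]}(x),E_0)$, it suffices to produce, for each fixed $E_0\in S$, the bound $\dist(\spec H_{[a,b]}(x),E_0)\ge\frac12\lambda\exp(-K)$ valid for all $x$ with $|x-x_0|<\exp(-2K)$, and then take the infimum over $E_0$. I would fix $\lambda_0$ in the corollary to be (at least) the threshold $\exp((T_V)^C)$ supplied by \cref{lem:efextension2b}; since this forces $K\ge(\log\lambda)^{1/3}$ to be large, the various ``for $\lambda$ large'' reductions below are legitimate.

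Fix $E_0\in S$. Reading the hypothesis $\dist(V(x_0+n\omega),\lambda^{-1}S)\ge\exp(-K)$ at the single point $E_0\in S$ gives $|V(x_0+n\omega)-\lambda^{-1}E_0|\ge\exp(-K)$ for every $n\in[a,b]$. If $|E_0|\le\lambda\norm{V}_\infty+4$, this is exactly the hypothesis of \cref{lem:efextension2b} with this $E_0$, and conclusion~(a) of that lemma is precisely the desired estimate on the ball $|x-x_0|<\exp(-2K)$. If instead $|E_0|>\lambda\norm{V}_\infty+4$, the estimate is obtained directly: from the a priori localization $\spec H_{[a,b]}(x)\subset[\lambda\min_n V(x+n\omega)-2,\ \lambda\max_n V(x+n\omega)+2]$, from the separation $|E_0-\lambda V(x_0+n\omega)|\ge\lambda\exp(-K)$ just recorded, and from the Lipschitz bound $|\lambda V(x+n\omega)-\lambda V(x_0+n\omega)|\le C_\rho\lambda\norm{V}_\infty\exp(-2K)$ of \cref{eq:stability-x}, one finds that $E_0$ stays at distance $\ge\frac12\lambda\exp(-K)$ from $\spec H_{[a,b]}(x)$ once $\lambda$ is large relative to $V$, the perturbation $C_\rho\lambda\norm{V}_\infty\exp(-2K)$ and the additive $2$ coming from the Laplacian being absorbed (using also $|E_0|-\lambda\norm{V}_\infty>4$). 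Taking the infimum over $E_0\in S$ completes the argument.

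I do not expect a genuine obstacle here: the analytic heart of the matter---the avalanche-principle expansion of $\log|f_{[a,b]}|$ and the Cramer-rule estimate for $\cG_{[a,b]}$---is already carried out in \cref{lem:efextension2b}, and what remains is the bookkeeping above. The only points that require a little care are the treatment of the unbounded range $|E_0|>\lambda\norm{V}_\infty+4$, where one leans on the boundedness of $\spec H_{[a,b]}(x)$ rather than on \cref{lem:efextension2b}, and ensuring that the $\lambda_0$ chosen in the corollary dominates the one inherited from \cref{lem:efextension2b}.
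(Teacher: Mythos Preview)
Your proposal is correct and matches the paper's proof essentially line for line: the paper also reduces to \cref{lem:efextension2b}(a) applied at each $E_0\in S$ with $|E_0|\le\lambda\norm{V}_\infty+4$, and dismisses the complementary range $|E_0|>\lambda\norm{V}_\infty+4$ as trivial. You in fact supply more justification for that ``trivial'' range than the paper does.
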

\begin{proof}
  This follows by applying \cref{lem:efextension2b} (a) for each
  $ E_0\in S $ with $ |E_0|\le \lambda\norm{V}_\infty+4 $. Note that
  for $ |E_0|>\lambda\norm{V}_\infty+4 $, \cref{lem:efextension2b} (a)
  holds trivially.
\end{proof}

In the results of this section we could have used $
(\log\lambda)^\epsilon $, $ \epsilon\in(0,1) $, instead of $
(\log\lambda)^{1/2} $. So far, working in such generality wasn't
needed. However, we will need this setting for the applications of
the next lemma.

\begin{lemma}\label{lem:efextension2}
  Let $x_0\in \tor^d$, $ a<0<b $, $ \epsilon\in (0,1) $, and assume
  \begin{equation*}
    |V(x_0+n\omega)-V(x_0)|\ge \exp(-(\log\lambda)^{\epsilon}),\quad \text{for any } n\in[a,b]\setminus \{ 0 \}.
  \end{equation*}
  There exists
  $ \lambda_0(V)=\exp((T_{V})^C) $, $ C=C(\rho,\epsilon) $, such that the
  following hold for $\lambda\ge\lambda_0$.
  There exist $ E_k^{[a,b]},\psi_k^{[a,b]} $ such that for any
  $|x-x_0|<\exp(-3(\log\lambda)^{\epsilon})$ the following estimates hold:
  \begin{gather*}
	\tag{1}
    |\lambda^{-1}E_k^{[a,b]}(x)-V(x)|\le 2\lambda^{-1}\\
    \tag{2}
    |\psi_k^{[a,b]}(x,n)|<\exp(-(\log \lambda) |n|/2),\quad |n|>0,\\
    \tag{3}
    |\psi_k^{[a,b]}(x,0)-1|<\exp(-(\log\lambda)/2),\\
    \tag{4}
    \lambda^{-1}|E_j^{[a,b]}(x)-E_k^{[a,b]}(x)|\ge
    \frac{1}{8}\exp(-(\log\lambda)^{\epsilon}),\quad j\neq k.
  \end{gather*}
  Furthermore, if
  \begin{equation}\label{eq:Vn-V}
	V(x_0+n\omega)-V(x_0)\ge \exp(-(\log\lambda)^{\epsilon}),\quad \text{
      for any } n\in [a,b]\setminus \{ 0 \},
  \end{equation}
  then
  \begin{equation*}
	\tag{4'}     \lambda^{-1}(E_j^{[a,b]}(x)-E_k^{[a,b]}(x))\ge
    \frac{1}{8}\exp(-(\log\lambda)^{\epsilon}),\quad j\neq k.
  \end{equation*}
\end{lemma}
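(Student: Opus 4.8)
The plan is to view $H_{[a,b]}(x)$ as a bounded perturbation of the decoupled operator $H_{\Lambda'}(x)\oplus[\lambda V(x)]_{\{0\}}$, where $\Lambda':=[a,b]\setminus\{0\}=[a,-1]\cup[1,b]$, and to read off the eigenpair that lives at the isolated site $n=0$. Because $\lambda_0$ must not depend on $a,b$, the exponential decay of the eigenfunction has to be produced \emph{locally} — one cannot simply quote the resolvent bound of \cref{lem:efextension2b}, whose exponent loses a factor $(b-a)(\log\lambda)^\epsilon$. Throughout, ``for $\lambda$ large'' will mean $\log\lambda\ge(T_V)^C$ with $C=C(\rho,\epsilon)$, which makes $(\log\lambda)^\epsilon$, $\log\norm{V}_\infty$, $\log\uiota^{-1}$ and the Cauchy constant $C_\rho$ all negligible against $\log\lambda$. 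First I would transfer the hypothesis from $x_0$ to $x$ with a Cauchy estimate: $|V(x+n\omega)-V(x_0+n\omega)|$ and $|V(x)-V(x_0)|$ are $\le C_\rho\norm{V}_\infty\exp(-3(\log\lambda)^\epsilon)\le\tfrac{1}{4}\exp(-(\log\lambda)^\epsilon)$, so for every $n\in\Lambda'$ one has $|V(x+n\omega)-V(x)|\ge\tfrac{1}{2}\exp(-(\log\lambda)^\epsilon)$, and under \cref{eq:Vn-V} even $V(x+n\omega)-V(x)\ge\tfrac{3}{4}\exp(-(\log\lambda)^\epsilon)>0$. Writing $H_{\Lambda'}(x)=D'+\Delta'$ with $D'$ diagonal and $\norm{\Delta'}\le 2$, the diagonal entries of $D'$ are then $\tfrac{\lambda}{2}\exp(-(\log\lambda)^\epsilon)$-separated from $\lambda V(x)$, so $\dist(\spec H_{\Lambda'}(x),\lambda V(x))\ge\tfrac{\lambda}{3}\exp(-(\log\lambda)^\epsilon)$, with $\spec H_{\Lambda'}(x)$ lying entirely above $\lambda V(x)$ in the one-sided case.

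Items (1), (4) and (4$'$) I would get from self-adjointness together with Weyl's inequality. Since $\norm{(H_{[a,b]}(x)-\lambda V(x))\delta_0}=\sqrt 2$, there is an eigenvalue $E_k:=E_k^{[a,b]}(x)$ of $H_{[a,b]}(x)$ with $|E_k-\lambda V(x)|\le\sqrt 2$, and because $\sqrt 2<2$ this is item (1). Comparing $H_{[a,b]}(x)$ with $A:=H_{\Lambda'}(x)\oplus[\lambda V(x)]$, whose difference is the pair of bonds joining $0$ to $\pm1$ (supported on $\mathrm{span}\{\delta_{-1},\delta_0,\delta_1\}$, of norm $\sqrt 2$), Weyl's inequality together with the separation from the first paragraph shows that $E_k$ is the \emph{only} eigenvalue of $H_{[a,b]}(x)$ within $\sqrt 2$ of $\lambda V(x)$, while every other $E_j^{[a,b]}(x)$ obeys $|E_j^{[a,b]}(x)-\lambda V(x)|\ge\tfrac{\lambda}{4}\exp(-(\log\lambda)^\epsilon)$; hence $\lambda^{-1}|E_j^{[a,b]}(x)-E_k|\ge\tfrac{1}{8}\exp(-(\log\lambda)^\epsilon)$, which is item (4). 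Under \cref{eq:Vn-V} the same comparison puts all these $E_j^{[a,b]}(x)$ above $\lambda V(x)$, so $E_j^{[a,b]}(x)-E_k\ge\tfrac{\lambda}{8}\exp(-(\log\lambda)^\epsilon)>0$, which is item (4$'$).

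For the decay of $\psi_k:=\psi_k^{[a,b]}(x,\cdot)$ (items (2) and (3)) I would iterate Poisson's formula \cref{eq:poissonC} on one-point boxes. For $m\in\Lambda'$ one has $|\lambda V(x+m\omega)-E_k|\ge\tfrac{\lambda}{2}\exp(-(\log\lambda)^\epsilon)-\sqrt 2\ge\tfrac{\lambda}{3}\exp(-(\log\lambda)^\epsilon)$, so $E_k\notin\spec H_{\{m\}}(x)$ and Poisson's formula on $[m,m]$ gives $|\psi_k(m)|\le\mu(|\psi_k(m-1)|+|\psi_k(m+1)|)$ with $\mu:=3\lambda^{-1}\exp((\log\lambda)^\epsilon)$. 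With the Dirichlet conditions $\psi_k(a-1)=\psi_k(b+1)=0$, a short maximum-principle argument on this recursion (if $|\psi_k(m)|>2\mu|\psi_k(m-1)|$ for some $m\in[1,b]$ then the same holds at all larger indices, forcing $\psi_k(b+1)\neq0$; symmetrically on $[a,-1]$) yields $|\psi_k(m)|\le(2\mu)^{|m|}|\psi_k(0)|\le(2\mu)^{|m|}$ for $m\in\Lambda'$; since $2\mu<\exp(-(\log\lambda)/2)$ for $\lambda$ large, this is item (2). Then $|\psi_k(0)|^2=1-\sum_{m\neq0}|\psi_k(m)|^2\ge1-\exp(-(\log\lambda)/2)$, and choosing the global phase of $\psi_k$ so that $\psi_k(0)>0$ yields item (3). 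The one genuinely non-mechanical point is the one flagged at the start: with no control on the interval length, the off-diagonal decay must come from iterating the one-site Poisson inequality rather than from any global Green's function estimate; everything else is keeping track of the constants packaged into $\lambda_0=\exp((T_V)^C)$, $C=C(\rho,\epsilon)$.
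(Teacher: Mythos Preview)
Your argument is correct and takes a genuinely different, more elementary route than the paper. For the eigenvalue separation (items (4) and (4$'$)) the paper argues by contradiction: if two eigenvalues were close, both eigenvectors would localize at $0$ and hence be close to each other, which is impossible; for (4$'$) it invokes \cref{lem:efextension2b}(a) to rule out an eigenvalue below $E_k$. You instead compare $H_{[a,b]}(x)$ with the decoupled operator $H_{\Lambda'}(x)\oplus[\lambda V(x)]$ via Weyl's inequality, which gives all three items at once without any eigenvector analysis. For the decay (item (2)) the paper does \emph{not} apply \cref{lem:efextension2b}(b) on the full interval (which, as you note, would lose $(b-a)(\log\lambda)^\epsilon$); it applies it on the adaptive window $[1,2n]\cap[a,b]$, whose length is $\le 2n$, so the loss $Cn(\log\lambda)^\epsilon$ is absorbed by the gain $n\log\lambda$. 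Your one-site Poisson iteration with the discrete maximum principle achieves the same thing while bypassing the avalanche principle hidden inside \cref{lem:efextension2b}. One small point you leave implicit: the index $k$ must be shown independent of $x$ on the ball $|x-x_0|<\exp(-3(\log\lambda)^\epsilon)$; this follows immediately from your separation (4) and continuity of the eigenvalues, but the paper states it explicitly.
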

\begin{proof}
  The proof is very similar to the one of \cref{prop:stabilization}.
  We have
  \begin{equation*}
    \|(\lambda^{-1}H_{[a,b]}(x)-V(x))\delta_0\|\le \sqrt{2}\lambda^{-1},
  \end{equation*}
  where $\delta_0$ stands for the standard unit vector with mass
  concentrated at $0$. By \cref{lem:eigenvector-stability} there
  exists $ k=k(x) $ such that (1) holds. At the end we will argue that
  $ k(x)=k(x_0) $. Note that 
  \begin{equation*}
    \lambda^{-1}|E_k^{[a,b]}(x)-E_0|\ll \exp(- 2(\log\lambda)^{\epsilon}),\quad E_0=\lambda V(x_0).
  \end{equation*}
  Estimate (2) now follows from Poisson's formula and \cref{lem:efextension2b}
  (b) (applied, for $ n>0 $, on $ [1,2n]\cap[a,b] $).
  Since $ \psi_k^{[a,b]} $ is normalized, estimate (3) follows
  from (2) (obviously, we choose $ \psi_k^{[a,b]} $ such that $
  \psi_k^{[a,b]}(x,0)\ge 0 $). To prove $(4)$ assume
  to the contrary that there exist $j\neq k$ and $ x $ such that
  \[
    \lambda^{-1}|E_j^{[a,b]}(x)-E_k^{[a,b]}(x)|<\frac{1}{8}\exp(-2(\log\lambda)^{\epsilon}).
  \]
  Then
  \begin{equation*}
    \lambda^{-1}|E_j^{[a,b]}(x)-E_0|< \exp(- 2(\log\lambda)^{\epsilon}),\quad E_0=\lambda V(x_0).
  \end{equation*}
  and just as above we get
  \[
    |\psi_j^{[a,b]}(x,n)|<\exp(-(\log\lambda)|n|/2),\quad |n|>0,
    \qquad |\psi_j^{[a,b]}(x,0)-1|<\exp(-(\log\lambda)/2).
  \]
  Therefore $
  \norm{\psi_j^{[a,b]}(x,\cdot)-\psi_k^{[a,b]}(x,\cdot)}\ll 1 $,
  contradicting the fact that
  \begin{equation*}
    \norm{\psi_j^{[a,b]}(x,\cdot)-\psi_k^{[a,b]}(x,\cdot)}^2=2.
  \end{equation*}

  Now we argue that $ k(x)=k(x_0) $. Since
  \begin{equation*}
	|\lambda^{-1}E_{k(x_0)}^{[a,b]}(x_0)-V(x_0)|\le 2\lambda^{-1},
  \end{equation*}
  we have
  \begin{equation*}
	|\lambda^{-1}E_{k(x_0)}^{[a,b]}(x)-V(x)|\ll \exp(-2(\log\lambda)^{\epsilon}),
  \end{equation*}
  and the conclusion follows using (1) and (4).

  Finally, suppose that \cref{eq:Vn-V} holds. Clearly, estimates
  (1)-(4) still hold. Suppose to the contrary that there exist $
  j\neq k $ and $ x $ such that (4') fails. By (4) we must have
  \begin{equation*}
	\lambda^{-1}E_j^{[a,b]}(x)<\lambda^{-1}E_k^{[a,b]}(x)-\frac{1}{8}(\log \lambda)^{\epsilon}.
  \end{equation*}
  By (1),
  \begin{equation*}
	\lambda^{-1}E_j^{[a,b]}(x)<V(x)-\frac{1}{4}(\log \lambda)^{\epsilon}.
  \end{equation*}
  Note that due to \cref{eq:Vn-V},
  \begin{equation*}
	V(x+n\omega)-V(x)\ge \frac{1}{2}(\log\lambda)^{\epsilon},
  \end{equation*}
  for $ |x-x_0|<\exp(-3(\log\lambda)^{\epsilon}) $. It follows that
  \begin{equation*}
	|V(x+n\omega)-\lambda^{-1}E_j^{[a,b]}(x)|\ge \frac{1}{4}(\log\lambda)^{\epsilon},\quad n\in [a,b],
  \end{equation*}
  and by \cref{lem:efextension2b}, $ E_j^{[a,b]}(x)\notin \spec
  H_{[a,b]}(x) $. This contradiction shows that (4') holds.
\end{proof}

\begin{cor}\label{cor:close-ev-lambda} 
  Using the assumptions and notation of \cref{lem:efextension2} the
  following hold. For simplicity let $ E^{[a,b]},\psi^{[a,b]} $ be the
  eigenpair from \cref{lem:efextension2}. If $ N\ge 1 $, $
  [-N,N]\subset [a,b] $, then for any $ |x-x_0|<\exp(-3(\log\lambda)^{\epsilon}) $,
  \begin{equation*}
	|E^{[a,b]}(x)-E^{[-N,N]}(x)|\lesssim \exp(-(\log\lambda)N/2).
  \end{equation*}
\end{cor}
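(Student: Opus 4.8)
The plan is to deduce this from a direct perturbation-theoretic comparison of the two localized eigenpairs, in the spirit of \cref{lem:eigenvector-stability}, exploiting that both localized eigenvectors are close to the unit vector $\delta_0$. First I would observe that since $[-N,N]\subset[a,b]$ and $-N<0<N$ (as $N\ge 1$), the hypotheses of \cref{lem:efextension2} hold verbatim for the interval $[-N,N]$ with the same $\lambda_0(V)=\exp((T_V)^C)$; hence one may let $E^{[-N,N]}(x),\psi^{[-N,N]}(x,\cdot)$ denote the eigenpair it produces, which satisfies estimates (1)--(3) on $[-N,N]$, and recall the sign normalization $\psi^{[-N,N]}(x,0),\psi^{[a,b]}(x,0)\ge 0$. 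Throughout, $x$ is fixed with $|x-x_0|<\exp(-3(\log\lambda)^{\epsilon})$.

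The main steps I would carry out, in order, are: (i) extend $\psi^{[-N,N]}(x,\cdot)$ by zero to $[a,b]$, obtaining a unit vector $\tilde\psi$, and compute the residual $r:=(H_{[a,b]}(x)-E^{[-N,N]}(x))\tilde\psi$; a direct check using the Dirichlet conditions shows that $r$ is supported on $\{-N-1,N+1\}\cap[a,b]$ with $|r(\pm(N+1))|=|\psi^{[-N,N]}(x,\pm N)|$ at those of these sites that lie in $[a,b]$, so estimate (2) on $[-N,N]$ gives $\|r\|\le|\psi^{[-N,N]}(x,N)|+|\psi^{[-N,N]}(x,-N)|<2\exp(-(\log\lambda)N/2)$ (and $r=0$, with the claim trivial, when $[-N,N]=[a,b]$); (ii) bound the overlap $\langle\tilde\psi,\psi^{[a,b]}(x,\cdot)\rangle$ from below: by estimates (2), (3) on both intervals together with the sign convention, the $n=0$ term contributes $\ge(1-\exp(-(\log\lambda)/2))^2$ while the terms with $1\le|n|\le N$ contribute in absolute value at most $\sum_{1\le|n|\le N}\exp(-(\log\lambda)|n|)\le 4\lambda^{-1}$, so $\langle\tilde\psi,\psi^{[a,b]}(x,\cdot)\rangle\ge\tfrac12$ for $\lambda$ large; (iii) pair $r$ against $\psi^{[a,b]}(x,\cdot)$, using self-adjointness of $H_{[a,b]}(x)$ and that $\psi^{[a,b]}(x,\cdot)$ is its eigenvector with eigenvalue $E^{[a,b]}(x)$, to obtain
\[
  |E^{[a,b]}(x)-E^{[-N,N]}(x)|\cdot\bigl|\langle\tilde\psi,\psi^{[a,b]}(x,\cdot)\rangle\bigr|=\bigl|\langle r,\psi^{[a,b]}(x,\cdot)\rangle\bigr|\le\|r\|,
\]
and then combine with (i) and (ii) to conclude $|E^{[a,b]}(x)-E^{[-N,N]}(x)|\le 4\exp(-(\log\lambda)N/2)\lesssim\exp(-(\log\lambda)N/2)$.

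I do not expect a serious obstacle here: the mechanism is exactly the standard fact that an approximate eigenvector carrying a macroscopic component along a genuine eigenvector forces the corresponding eigenvalue to be close (a quantitative form of \cref{lem:eigenvector-stability}(a), here with the much stronger overlap $\ge\tfrac12$ in place of $(2N)^{-1/2}$, which is what lets us identify the relevant eigenvalue as $E^{[a,b]}(x)$ rather than merely some eigenvalue of $H_{[a,b]}(x)$). The only points requiring care will be the bookkeeping of the boundary terms in the residual $r$, including the degenerate cases $a=-N$ or $b=N$, and checking that the single constant $\lambda_0(V)=\exp((T_V)^C)$ supplied by \cref{lem:efextension2} serves simultaneously for $[a,b]$ and $[-N,N]$.
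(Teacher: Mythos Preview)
Your proof is correct but goes in the opposite direction from the paper's. The paper restricts $\psi^{[a,b]}(x,\cdot)$ to $[-N,N]$, bounds the residual $\|(H_{[-N,N]}(x)-E^{[a,b]}(x))\psi^{[a,b]}(x,\cdot)\|\lesssim\exp(-(\log\lambda)N/2)$ via estimate (2), and then invokes \cref{lem:eigenvector-stability} together with (1) and (4) of \cref{lem:efextension2} on $[-N,N]$ to identify the nearby eigenvalue as $E^{[-N,N]}(x)$. You instead extend $\psi^{[-N,N]}(x,\cdot)$ by zero to $[a,b]$ and pair the residual directly against the already-known eigenvector $\psi^{[a,b]}(x,\cdot)$, exploiting that (2) and (3) force both eigenvectors to be close to $\delta_0$ so that the overlap is $\ge\tfrac12$. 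Your route is slightly more self-contained: the pairing identity immediately singles out the specific eigenvalue $E^{[a,b]}(x)$, so you never need to appeal to the separation estimate (4) for identification. The paper's route, on the other hand, fits the template used repeatedly elsewhere (e.g.\ in \cref{prop:stabilization}), so it is more uniform with the rest of the argument.
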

\begin{proof}
  Using (2) from \cref{lem:efextension2}, we have
  \begin{equation*}
	\norm{(H_{[-N,N]}(x)-E^{[a,b]}(x))\psi^{[a,b]}(x,\cdot)}\lesssim  \exp(-(\log\lambda)N/2).
  \end{equation*}
  The conclusion follows from \cref{lem:eigenvector-stability}, and
  (1) and (4) from the previous lemma. 
\end{proof}

\section{Cartan Type Estimates Along Level Sets near a Non-Degenerate Extremum Point}\label{sec:Cartan-Morse}

The goal of this section is to prove the next proposition that we will
use to handle the edges of the spectrum in \cref{sec:edges}. We
let $\mathfrak{H}(f)$ stand for the Hessian of a function $ f $. When
the function is clear from the context, we will simply write $ \fH
$. Recall that  $ \norm{\cdot} $ denotes the Euclidean norm, and $
|\cdot| $ denotes the sup-norm.

\begin{prop}\label{prop:levelsetshifts}
  Let $f(x)$ be a  real-analytic function defined on $ \{ x\in \R^n:
  |x|<r_0 \} $, $r_0<1$, which extends analytically to the polydisk $
  \cP:=\{ z\in \C^n: |z|<r_0 \} $. Assume that
  \begin{gather*}
    f(0)=0,\quad \nabla f(0)=0,\\
    \mathfrak{H}(0)\ge \nu_0I,\quad 0<\nu_0<1.
  \end{gather*}
  Let
  $M(k)=\max_{|\alpha|=k}\sup_{\cP}|\partial^\alpha  f|$.  Set
  \begin{gather*}
    \nu_1:=c(n)\nu_{0}(1+M(2)+M(3))^{-1},\quad    \rho=r_0 \nu_1^{10},
  \end{gather*}
  with $c(n)$ a sufficiently small constant.
  Let $0<\|x_0\|<\rho$, $E_0=f(x_0)$, $ r=\nu_1 \norm{x_0} $. Then there exists a
  real-analytic map $ x(y,E) $, $ (y,E)\in \R^{n-1}\times \R $, $
  |y|<r $, $ |E-E_0|<r^2 $, such that
  \begin{equation*}
	f(x(y,E))=E,\quad x(0,E_0)=x_0
  \end{equation*}
  and satisfying the following properties.

  \smallskip\noindent
  (I) The map $ x(y,E) $ extends analytically to $ \{ (w,E)\in
  \C^{n-1}\times \C: |w|<r,|E-E_0|<r^2\} $ and satisfies
  \begin{equation*}
	\norm{x(w,E)-x_0}<\frac{\norm{x_0}}{2}.
  \end{equation*} 

  \smallskip\noindent
  (II)   For any $|E-E_0|<r^2$, any vector $h\in \mathbb{R}^n$ with
  $0<\|h\|<\rho$, and any $H\gg 1 $, we have
  \begin{equation*}
    \mes \{y\in \R^{n-1}, |y|<r: \log |f(x(y,E)+h)-E|\le H_0H\}
    \le (\nu_1^{-2}r)^{n-1}\exp(-H^{\frac{1}{n-1}}),
  \end{equation*}
  with $     H_0=C(n)\log(\norm{h}\norm{x_0}) $.

  \smallskip\noindent
  (III) Let $h_0\in \mathbb{R}^n$ be an arbitrary unit vector.
  For any $|E-E_0|<r^2 $, and any $ H\gg 1 $, we have
  \begin{equation*}
    \mes \{y\in \R^{n-1},|y|<r: \log \big|\langle \nabla f(x(y,E)),h_0 \rangle\big|\le H_1H\}
    \le  (\nu_1^{-2}r)^{n-1}\exp(-H^{\frac{1}{n-1}}),
  \end{equation*}
  with $ H_1=C(n)\log(\nu_1\|x_0\|) $.
\end{prop}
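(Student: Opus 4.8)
\emph{Overview and construction.} The plan is to build the parametrization $x(y,E)$ by a quantitative analytic implicit function theorem, deduce (I) from the quantitative bounds, and obtain (II) and (III) by applying the multivariable Cartan estimate \cref{lem:high_cart} together with the measure bound \cref{lem:Cartan-measure} to the analytic functions $w\mapsto f(x(w,E)+h)-E$ and $w\mapsto\langle\nabla f(x(w,E)),h_0\rangle$ on a $w$-polydisk. For the construction: since $\nabla f(0)=0$ and $\mathfrak{H}(0)\ge\nu_0I$, Taylor's formula gives $\mathfrak{H}(x)\ge\tfrac{\nu_0}{2}I$ and $\tfrac{\nu_0}{2}\|x\|\le\|\nabla f(x)\|\le 2M(2)\|x\|$ for $\|x\|\lesssim\rho$; all the smallness requirements below reduce to the fact that $\rho=r_0\nu_1^{10}$ is far smaller than $\nu_0/M(3)$, $\nu_0^2/(M(2)M(3))$, etc. Let $x_1=x(0,E)$ be the point with $f(x_1)=E$ on the segment through $x_0$ in the direction $\nabla f(x_0)$; then $\|x_1-x_0\|\lesssim r^2/(\nu_0\|x_0\|)\ll r$, $\|\nabla f(x_1)\|\simeq\nu_0\|x_0\|$, and $x_1=x_0$ when $E=E_0$. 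Pick an orthonormal basis $e_1,\dots,e_n$ with $e_n=\nabla f(x_1)/\|\nabla f(x_1)\|$. The equation $f\bigl(x_1+\sum_{j=1}^{n-1}w_je_j+te_n\bigr)=E$ has $\partial_t$-derivative $\|\nabla f(x_1)\|\gtrsim\nu_0\|x_0\|$ at the base point and second derivatives $\lesssim M(2)$, so the analytic IFT yields $\psi(w,E)$, analytic on $\{|w|<R\}$ with $R\simeq\nu_0\|x_0\|/M(2)\gg r$; since $e_j\perp\nabla f(x_1)$ the $w$-linear part of $f(x_1+\sum w_je_j)$ vanishes, so the contraction estimate gives $|\psi(w,E)|\lesssim M(2)|w|^2/(\nu_0\|x_0\|)$, whence $|\psi|\ll\|x_0\|$ on $\{|w|<r\}$. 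Put $x(w,E)=x_1+\sum_{j=1}^{n-1}w_je_j+\psi(w,E)e_n$; then $f(x(w,E))\equiv E$, $x(0,E_0)=x_0$, and $\|x(w,E)-x_0\|\le\|x_1-x_0\|+|w|+|\psi|<\|x_0\|/2$ for all $|w|<r$, real or complex, which is (I). The choice $e_n=\nabla f(x_1)/\|\nabla f(x_1)\|$ also gives $\partial_{w_j}\psi(0,E)=0$ and $\partial_{w_i}\partial_{w_j}\psi(0,E)=-\langle\mathfrak{H}(x_1)e_i,e_j\rangle/\|\nabla f(x_1)\|$, which we use below.

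\emph{Reduction of (II) and (III) to Cartan.} Fix $E$ and let $g$ denote $f(x(\cdot,E)+h)-E$ for (II), resp.\ $\langle\nabla f(x(\cdot,E)),h_0\rangle$ for (III); $g$ is analytic on $\{|w|<R\}$ and crudely $\log\sup_{|w|<R}|g|\lesssim_n\log M(2)$. Since $\|h\|,\|x_0\|<\rho$ forces $|\log(\|h\|\|x_0\|)|$ and $|\log r|=|\log(\nu_1\|x_0\|)|$ to dominate $\log M(2)$, this bound is $\lesssim_n|H_0|$ resp.\ $\lesssim_n|H_1|$. Cover $\{|y|<r\}\cap\R^{n-1}$ by $O_n(1)$ balls of radius $r/2$. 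Around each center $y_k$ we will exhibit $w_k^*$ with $|w_k^*-y_k|\le r/4$ and $\log|g(w_k^*)|\ge m$, with $m\ge C(n)\log(\|h\|\|x_0\|)$ (case (II)), resp.\ $m\ge C(n)\log(\nu_1\|x_0\|)$ (case (III)). Because $R\gg r$ there is room to apply \cref{lem:high_cart} on the polydisk of radius $R/2$ centered at $w_k^*$; rescaling to the unit polydisk, with $M-m\lesssim_n|H_0|$ (resp.\ $|H_1|$), the estimate $\log|g|>M-C_{n-1}H(M-m)$ holds off a Cartan set on a ball of radius $\simeq R\gg r$ that contains the $k$-th piece, and choosing the dimensional constant $C(n)$ in $H_0$ (resp.\ $H_1$) large enough makes $M-C_{n-1}H(M-m)\ge H_0H$ (resp.\ $\ge H_1H$) for $H\gg1$. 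By \cref{lem:Cartan-measure} each Cartan set meets $\R^{n-1}$ in measure $\lesssim_nR^{n-1}\exp(-H^{1/(n-1)})$; summing over the $O_n(1)$ pieces and using $\nu_1^{-2}r\gg_nR$ absorbs the constants into $(\nu_1^{-2}r)^{n-1}$, giving (II) and (III).

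\emph{The lower bounds.} By translation it suffices to produce $w_k^*$ at $y_k=0$, i.e.\ to show $\sup_{|w|\le r/4}|g|\ge(\|h\|\|x_0\|)^{C(n)}$ for (II), resp.\ $\ge r^{C(n)}$ for (III); such a point with $|w_k^*|\le r/4$ then exists because Cauchy's estimate gives $\sup_{|w|\le\varrho}|g|\gtrsim_n|\partial^\alpha g(0)|\,\varrho^{|\alpha|}$ for every $\alpha$, so one large low-order derivative at $0$ forces a large value on $\{|w|\le r/4\}$. Using $\partial_{w_j}x(0,E)=e_j$ and differentiating $f(x(w,E))\equiv E$ twice, one computes, for (II),
\[
\nabla_wg(0)=P_{e_n^\perp}\bigl(\nabla f(x_1+h)-\nabla f(x_1)\bigr),\qquad
\mathfrak{H}_wg(0)=\bigl[\mathfrak{H}(x_1+h)-\mathfrak{H}(x_1)\bigr]_{\mathrm{tan}}-\frac{\langle\nabla f(x_1+h)-\nabla f(x_1),e_n\rangle}{\|\nabla f(x_1)\|}\,B,
\]
where $B=(\langle\mathfrak{H}(x_1)e_i,e_j\rangle)_{i,j<n}\ge\tfrac{\nu_0}{2}I_{n-1}$. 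Writing $\nabla f(x_1+h)-\nabla f(x_1)=\bar{\mathfrak{H}}h$ with $\bar{\mathfrak{H}}\ge\tfrac{\nu_0}{2}I$, at least one of $\|P_{e_n^\perp}\bar{\mathfrak{H}}h\|\ge\tfrac{\nu_0}{4}\|h\|$ or $|\langle\bar{\mathfrak{H}}h,e_n\rangle|\gtrsim\nu_0\|h\|$ holds: in the first case $\|\nabla_wg(0)\|\gtrsim\nu_0\|h\|$, so $\sup_{|w|\le r/4}|g|\gtrsim_n\nu_0\|h\|r$; in the second the $M(3)\|h\|$-term is negligible against the $B$-term (again since $\rho\ll\nu_0^2/(M(2)M(3))$), so $\|\mathfrak{H}_wg(0)\|\gtrsim\nu_0^2\|h\|/(M(2)\|x_0\|)$ and $\sup_{|w|\le r/4}|g|\gtrsim_n\nu_0^2\|h\|r^2/(M(2)\|x_0\|)$. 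Both lower bounds exceed $(\|h\|\|x_0\|)^{C(n)}$ by $r=\nu_1\|x_0\|$, $M(2)\le c(n)\nu_0/\nu_1$, and $\|h\|\|x_0\|<\rho^2<\nu_1^{20}$. For (III) the parallel computation gives $\nabla_wg(0)=P_{e_n^\perp}\mathfrak{H}(x_1)h_0$ and $\mathfrak{H}_wg(0)_{ij}=\langle\nabla^3f(x_1)[e_i,e_j],h_0\rangle-\frac{\langle\mathfrak{H}(x_1)h_0,e_n\rangle}{\|\nabla f(x_1)\|}B_{ij}$; since $\|\mathfrak{H}(x_1)h_0\|\ge\tfrac{\nu_0}{2}$, either $\|\nabla_wg(0)\|\gtrsim\nu_0$ (so $\sup\gtrsim_n\nu_0r$) or $|\langle\mathfrak{H}(x_1)h_0,e_n\rangle|\gtrsim\nu_0$ (so $\|\mathfrak{H}_wg(0)\|\gtrsim\nu_0^2/(M(2)\|x_0\|)$ and $\sup\gtrsim_n\nu_0^2r^2/(M(2)\|x_0\|)$), and either way one obtains $\ge r^{C(n)}$. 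The same computation at $y_k$ in place of $x_1$ (replacing $e_j$ by $\partial_{w_j}x(y_k,E)$, still a near-orthonormal basis of the tangent plane to $\{f=E\}$ at $x(y_k,E)$, and using $\langle\nabla f(x(y_k,E)),e_n\rangle\simeq\nu_0\|x_0\|$) produces the shifted $w_k^*$.

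\emph{Main obstacle.} The delicate point is exactly the uniform lower bound of the last paragraph. The naive value $g(0)$ — namely $f(x_1+h)-f(x_1)$ for (II) and $\langle\nabla f(x_1),h_0\rangle$ for (III) — can be anomalously small: for (II) when $h$ is nearly radial with $h\approx-2x_0$, for (III) when $h_0$ is nearly tangent to $\{f=E\}$ at $x_1$. One therefore extracts the bound from the first or second $w$-derivative of $g$ at $0$ instead, and the non-degeneracy $\mathfrak{H}(0)\ge\nu_0I$ is what guarantees one of these is forced to be large — the cancellation of the "naive" quadratic part of $g$ reflects that the level sets of $f$ near the extremum are approximately concentric. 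The remaining quantitative burden is to verify that the resulting sizes — small powers of $\nu_0,\nu_1,1/M(2)$ times $r$ or $\|h\|\|x_0\|$ — still dominate the target power $(\|h\|\|x_0\|)^{C(n)}$ (resp.\ $r^{C(n)}$); this, together with the covering bookkeeping, is where the generous exponent $10$ in $\rho=r_0\nu_1^{10}$ is consumed.
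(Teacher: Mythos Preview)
Your argument is correct, and it takes a genuinely different route from the paper for the key lower bounds in (II) and (III). Both you and the paper build the parametrization $x(y,E)$ via the quantitative analytic implicit function theorem in the gradient direction (the paper's \cref{lem:parametrization1}), with the quadratic bound on $\psi$ ensuring (I). The divergence is in how one produces, for each fixed $E$ and $h$ (resp.\ $h_0$), a point at which $|g|$ is not too small so that \cref{lem:high_cart} can be applied.

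The paper proceeds geometrically. For (II) it runs a three-case analysis: either $|f(x_0+h)-f(x_0)|$ is already large; or $\nabla f(x_0+h)$ and $\nabla f(x_0)$ are sufficiently non-collinear, in which case \cref{lem:deviation1} produces a nearby level-set point where the linear term in Taylor's formula dominates; or both fail, in which case \cref{lem:deviation1opposite3} pins $h$ near the single ``bad direction'' $-2\mathfrak{H}(x_0)^{-1}\nabla f(x_0)$, and \cref{lem:Morse10} shows this bad direction moves as one slides along the level set, so the previous cases apply at a nearby $x_0'$. For (III) there is a parallel argument locating a real $y_0$ with $|\langle\nabla f(x(y_0,E)),h_0\rangle|\gtrsim\nu_1\|x_0\|$.

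You replace this case analysis by a direct second-order Taylor computation of $g$ at $w=0$: writing $\nabla f(x_1+h)-\nabla f(x_1)=\bar{\mathfrak H}h$ (resp.\ using $\mathfrak H(x_1)h_0$) and decomposing orthogonally against $e_n$, either $\nabla_w g(0)$ is large or the normal component forces $\mathfrak H_w g(0)\approx -cB$ with $|c|\gtrsim\nu_0\|h\|/\|\nabla f(x_1)\|$ to be large, the $M(3)$ error being negligible by $\|x_0\|<\rho\ll\nu_0^2/(M(2)M(3))$. A single large derivative of order $\le 2$ then yields, via Cauchy's inequalities, a point $w^*$ (possibly complex, which is fine for \cref{lem:high_cart}) with $|g(w^*)|\ge(\|h\|\|x_0\|)^{C(n)}$ (resp.\ $r^{C(n)}$). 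This is cleaner and avoids \cref{lem:quadratic-domination}--\cref{lem:Morse10} entirely; in effect your Hessian computation packages the content of \cref{lem:Morse10} (variation of the bad direction along the level set) into the coefficient $c$ of $B$. The paper's approach, on the other hand, yields a \emph{real} good point with the concrete bound $\|h\|^8\|x_0\|$, and separates the $E=E_0$ and general-$E$ cases, which makes the geometry more transparent but at the cost of the extra lemmas. Two minor remarks on your write-up: the covering of $\{|y|<r\}$ by $O_n(1)$ balls is unnecessary, since $R\gg r$ means one application of \cref{lem:high_cart} on a polydisk of radius $\simeq R$ centered at $w^*$ already covers all of $\{|y|<r\}$; and you should note that $e_1,\dots,e_{n-1},e_n$ depend analytically on $E$ (since $\|\nabla f(x_1(E))\|^2$ is analytic and bounded away from zero), which is needed for the joint analyticity of $x(w,E)$.
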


Part (I) of the proposition is a version of the implicit function
theorem. For parts (II) and (III) we will apply Cartan's estimate to $
f $ along its level sets. To apply it we need a reference point with a ``nice'' lower bound estimate.
So, it is important to accurately book-keep the size of the
neighborhood where one can apply the implicit function theorem
for it limits the search for the point in question.
The same applies to all auxiliary estimates in the proof.
For that matter we need to work out a
version of the implicit function theorem, explicit enough for our
purposes (see \cref{lem:parametrization1}).

\begin{lemma}\label{lem:impl2}
  Let $ f(z,w) $ be an analytic function defined on the polydisk
  \begin{equation*}
    \mathcal{P}=\{(z,w)\in \C\times \C^n:|z|,|w|<\rho_0\}.
  \end{equation*}
  Let $ M_1=\sup |\partial_z f| $,  $M(2)= \max_{|\alpha|=2}\sup\big|\partial^\alpha f\big|$.
  Assume that $f(0,0)=0$, $\mu_{0}:=| \partial_z f(0,0)| >0$.  Let
  \begin{equation*}
    \rho_1\le  \min(\rho_0/2,c(n)\mu_0M(2)^{-1}),
    \quad r_i=c(n)\rho_1\min(1,\mu_0/|\partial_{w_i}f(0,0)|),
  \end{equation*}
  with $c(n)$ a sufficiently small constant. Then for any $ w $, $|w_i|<r_i$, the equation
  \begin{gather*}
    f(z,w)=0
  \end{gather*}
  has a unique solution $|z(w)|<\rho_1$ which is an analytic function
  of $ w $.
\end{lemma}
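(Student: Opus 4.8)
The plan is to prove the lemma by a quantitative Rouché argument, comparing $z\mapsto f(z,w)$ with its linearization $g(z):=\partial_z f(0,0)\,z$ on the circle $|z|=\rho_1$, and then to deduce analyticity of the root from a Cauchy-type integral formula. Fix $w$ with $|w_i|<r_i$ for all $i$; since we may assume $c(n)\le 1$, we have $r_i\le c(n)\rho_1\le\rho_1$, so $|w_i|<\rho_1\le\rho_0/2$ and every point $(z,tw)$ with $|z|\le\rho_1$, $t\in[0,1]$ lies in $\mathcal{P}$, where the bound $M(2)$ applies. On $|z|=\rho_1$ I would split
\[
  |f(z,w)-g(z)|\le |f(z,w)-f(z,0)| + |f(z,0)-g(z)| .
\]

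For the second term, Taylor's theorem with $f(0,0)=0$ and $|\partial_z^2 f|\le M(2)$ gives $|f(z,0)-g(z)|\le\tfrac12 M(2)\rho_1^2\le\tfrac12 c(n)\mu_0\rho_1$, using $\rho_1\le c(n)\mu_0 M(2)^{-1}$. For the first term, write $f(z,w)-f(z,0)=\int_0^1\sum_i w_i\,\partial_{w_i}f(z,tw)\,dt$ and split $\partial_{w_i}f(z,tw)=\partial_{w_i}f(0,0)+\big[\partial_{w_i}f(z,tw)-\partial_{w_i}f(0,0)\big]$; the bracket is $\lesssim_n M(2)\rho_1$ by the mean value inequality in several variables, since the gradient of $\partial_{w_i}f$ consists of second derivatives of $f$ and $\|(z,tw)\|\lesssim_n\rho_1$. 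Hence $|f(z,w)-f(z,0)|\le\sum_i|w_i|\,|\partial_{w_i}f(0,0)|+C(n)M(2)\rho_1\sum_i|w_i|$. The definition of $r_i$ is designed exactly so that $|w_i|\,|\partial_{w_i}f(0,0)|<c(n)\mu_0\rho_1$ in both branches of the $\min$ (if $|\partial_{w_i}f(0,0)|\ge\mu_0$ this comes from the factor $\mu_0/|\partial_{w_i}f(0,0)|$; otherwise $|w_i|<c(n)\rho_1$ and $|\partial_{w_i}f(0,0)|<\mu_0$), while $M(2)\rho_1|w_i|<c(n)M(2)\rho_1^2\le c(n)^2\mu_0\rho_1$. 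Summing over $i$ and adding the bound for the second term, one gets $|f(z,w)-g(z)|\le C(n)c(n)\,\mu_0\rho_1$ on $|z|=\rho_1$, which is $<\mu_0\rho_1=\min_{|z|=\rho_1}|g(z)|$ once $c(n)$ is chosen small enough depending only on $n$.

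By Rouché's theorem $f(\cdot,w)$ then has exactly one zero, counted with multiplicity, in $\{|z|<\rho_1\}$, the same count as $g$; being of multiplicity one it is a simple zero, so there is a unique point $z(w)$ with $|z(w)|<\rho_1$ and $f(z(w),w)=0$. The same estimate gives $|f(z,w)|\ge\mu_0\rho_1-C(n)c(n)\mu_0\rho_1>0$ for $|z|=\rho_1$ and every $w$ with $|w_i|<r_i$, so
\[
  z(w)=\frac{1}{2\pi i}\oint_{|z|=\rho_1}\frac{z\,\partial_z f(z,w)}{f(z,w)}\,dz
\]
(the residue at the simple zero $z(w)$ equals $z(w)$); the integrand is holomorphic in $w$ for each $z$ on the contour and jointly continuous, so differentiating under the integral shows $z(w)$ is analytic in $w$, which completes the proof. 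I expect the only real work to be the bookkeeping in the Rouché estimate — carrying out the two-case analysis hidden in the $\min$ defining $r_i$, and using the several-variables mean value inequality to trade the uncontrolled quantity $\sup_{\mathcal P}|\partial_{w_i}f|$ for $|\partial_{w_i}f(0,0)|$ plus an $M(2)$-bounded error — after which one fixes $c(n)$ at the end so that all the accumulated $C(n)c(n)$ and $c(n)^2$ terms sum to less than $1$. Note that $M_1$ does not enter this argument; it is recorded in the statement only for later use.
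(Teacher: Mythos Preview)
Your proof is correct and follows essentially the same approach as the paper. The paper writes the Taylor estimate as a direct lower bound $|f(z,w)|\ge \mu_0\rho_1-|\langle\nabla_w f(0,0),w\rangle|-C(n)M(2)\|(z,w)\|^2\ge \mu_0\rho_1/2$ and then uses continuity of the winding number from $w=0$, whereas you compare with $g(z)=\partial_z f(0,0)z$ via Rouch\'e; these are equivalent, and both finish with the same Cauchy integral formula for $z(w)$.
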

\begin{proof}
  Take arbitrary $ w $, $|w_i|<r_i$, and $ z $, $ |z|=\rho_1 $. Then
  by Taylor's formula and the definition of $ \rho_1,r_i $,
  \begin{equation}\label{eq:f-rho}
    \big|f(z,w)\big|\ge |\partial_z f(0,0)||z|-|\langle \nabla_w f(0,0),w  \rangle|-C(n)M(2)\norm{(z,w)}^2
    \ge \mu_0 \rho_1/2.
  \end{equation}
  In particular we also have
  \begin{equation*}
	|f(z,0)|\ge |\partial_z f(0,0)||z|-C(n)M(2)|z|^2>0,
  \end{equation*}
  for $ 0<|z|\le \rho_1$. 
  So, $ f(z,0) $ has a simple root at $z=0$ and no other roots in
  the disk $|z|<\rho_1$,
  hence 
  \begin{gather*}
    \frac{1}{2\pi i}\oint_{|z|=\rho_1}\frac{\partial_z f(z,0)}{f(z,0)}dz=1.
  \end{gather*}
  By continuity,
  \begin{gather*}
    \frac{1}{2\pi i}\oint_{|z|=\rho_1}\frac{\partial_z f(z,w)}{f(z,w)}dz=1,
  \end{gather*}
  for $|w_i|< r_i $.  This means $z\to f(z,w)$ has one
  simple root $z(w)$ in the disk $\{|z|<\rho_1\}$ and by
  the residue theorem
  \begin{equation*}
    z(w)=\frac{1}{2\pi i}
    \oint_{|z|=\rho_1}z\frac{\partial_z f(z,w)}{f(z,w)}dz.
  \end{equation*}
  Clearly, the function on the right-hand side is analytic in
  $w$  for $|w_i|<r_i$.
\end{proof}

For the proof of part (II) of \cref{prop:levelsetshifts} it will be
crucial that  the size in the direction of $ y $ of the polydisk where the implicit function is defined is
of magnitude $\simeq \|\nabla f\|$ and in particular is much bigger than
$\simeq \|\nabla f\|^2$  (assuming $ \norm{\nabla f}<1 $; see
\cref{lem:Morse10}). This is one reason why in \cref{lem:parametrization1}
we consider implicit functions in the direction of the gradient. The
second reason is the fact that this way one gets some quadratic control
over the implicit function (see \cref{eg:g-bound}). 

\begin{definition}\label{def:normal-coordinates}
  Given a function $ f $ differentiable at $ x_0\in\R^n $, with $
  \mu_{x_0}:=\norm{\nabla f(x_0)}> 0 $, we let $
  \mathfrak{n}_{x_0}=\mu_{x_0}^{-1}\nabla f(x_0) $.
  Let $\mathfrak{e}_{x_0,j}$, $1\le j\le n-1$ be an orthonormal basis
  in $\{\mathfrak{n}_{x_0}\}^\bot$.
  Given $(\xi,y)\in \mathbb{R}\times \mathbb{R}^{n-1}$ we denote
  \[
    \varphi(\xi,y;x_0):=x_0+\xi\mathfrak{n}_{x_0}+\sum_jy_j\mathfrak{e}_{x_0,j}.
  \]
\end{definition}


The set-up of the lemmas to follow is tailored around that of \cref{prop:levelsetshifts}.
\begin{lemma}\label{lem:parametrization1}
  Let $f(z)$ be an analytic function defined on
  $\cP=\{z\in \mathbb{C}^n:|z-x_0|<\rho_0\}$, $ x_0\in \R^n $.
  Let
  $M(k)=\max_{|\alpha|=k}\sup|\partial^\alpha  f|$.
  Assume $\mu_{x_0}:=\|\nabla f(x_0)\|>0$. Let $ E_0=f(x_0) $.
  Let
  \begin{equation*}
	\rho_1\le c(n)\min(\rho_0,\mu_{x_0}M(2)^{-1}),\quad
    r=c(n)\rho_1,\quad r'=c(n)\rho_1\min(1,\mu_{x_0}),
  \end{equation*}
  with $ c(n) $ a sufficiently small constant. Then for any $
  (w,E)\in \C^{n-1}\times \C  $, $ |w|<r $, $ |E-E_0|<r' $, the equation
  \begin{equation*}
	f(\varphi(\xi,w;x_0))=E
  \end{equation*}
  has a unique solution $ \xi=g(w,E) $ in $ |\xi|<\rho_1 $ which is an
  analytic function of $ w,E $. Furthermore, the following statements
  hold.
  \begin{enumerate}[(a),leftmargin=2em]
  \item For any $  (w,E)\in \C^{n-1}\times \C  $, $ |w|<r $, $
    |E-E_0|<r' $ we have
    \begin{equation}\label{eg:g-bound}
      |g(w,E)|\le 2\mu_{x_0}^{-1}(|E-E_0|+C(n) M(2)|w|^2).
    \end{equation}
  \item For any $ x_0'\in \R^n $, $ \norm{x_0'-x_0}<r $, such that $
    f(x_0')=E $, $ |E-E_0|<r' $, there exists
    $ y\in \R^n $, $ \norm{y}\le \norm{x_0'-x_0} $ such that $
    x_0'=\varphi(g(y,E),y;x_0) $.
  \end{enumerate}
\end{lemma}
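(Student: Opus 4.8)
The plan is to deduce the existence, uniqueness and analyticity of $g$ from \cref{lem:impl2}, and then to obtain (a) from a first-order Taylor expansion of $f$ at $x_0$ and (b) from an orthogonal decomposition of $x_0'-x_0$. For the reduction I would put $\tilde E:=E-E_0$ and set
\[
  G(\xi,w,\tilde E):=f(\varphi(\xi,w;x_0))-E_0-\tilde E,
\]
viewed as an analytic function of $\xi\in\C$ and of $(w,\tilde E)\in\C^{n-1}\times\C$. Since $\mathfrak{n}_{x_0}$ and the $\mathfrak{e}_{x_0,j}$ are orthonormal, the affine map $(\xi,w)\mapsto\varphi(\xi,w;x_0)$ carries $\{|\xi|,|w|<c(n)\rho_0\}$ into $\cP$, so $G$ is defined on a polydisk of polyradius $\simeq_n\rho_0$, and—again because $\varphi$ is affine with unit-vector coefficients—every second-order derivative of $G$ is bounded by $C(n)M(2)$. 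Using $\varphi(\xi,w;x_0)=x_0+\xi\mathfrak{n}_{x_0}+\sum_jw_j\mathfrak{e}_{x_0,j}$ together with $\langle\nabla f(x_0),\mathfrak{n}_{x_0}\rangle=\mu_{x_0}$ and $\mathfrak{e}_{x_0,j}\perp\nabla f(x_0)$, one checks $G(0,0,0)=0$ and
\[
  \partial_\xi G(0,0,0)=\mu_{x_0}>0,\qquad \partial_{w_i}G(0,0,0)=0\ (1\le i\le n-1),\qquad \partial_{\tilde E}G(0,0,0)=-1.
\]
Then I would apply \cref{lem:impl2} to $G$, with $z=\xi$ and $w$-variable $(w_1,\dots,w_{n-1},\tilde E)$: the hypothesis on $\rho_1$ there follows from our assumption $\rho_1\le c(n)\min(\rho_0,\mu_{x_0}M(2)^{-1})$ once $c(n)$ is small, the admissible radius in each of the first $n-1$ slots comes out as $c(n)\rho_1$ (because $\partial_{w_i}G(0,0,0)=0$), and in the $\tilde E$-slot as $c(n)\rho_1\min(1,\mu_{x_0})$. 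These correspond exactly to $|w|<r$ and $|E-E_0|<r'$, and \cref{lem:impl2} delivers the unique analytic solution $\xi=g(w,E)$ with $|\xi|<\rho_1$.

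For part (a) I would substitute $\xi=g(w,E)$ into $f(\varphi(\xi,w;x_0))=E$ and Taylor-expand $f$ at $x_0$ along $v:=\xi\mathfrak{n}_{x_0}+\sum_jw_j\mathfrak{e}_{x_0,j}$. The linear term equals $\langle\nabla f(x_0),v\rangle=\mu_{x_0}\xi$ by the same orthogonality relations, and the quadratic remainder is at most $C(n)M(2)\|v\|^2\le C(n)M(2)(|\xi|^2+|w|^2)$, so
\[
  \mu_{x_0}|\xi|\le|E-E_0|+C(n)M(2)\bigl(|\xi|^2+|w|^2\bigr).
\]
Since $|\xi|<\rho_1\le c(n)\mu_{x_0}M(2)^{-1}$, the term $C(n)M(2)|\xi|^2$ is $\le\tfrac{1}{2}\mu_{x_0}|\xi|$ provided $c(n)$ is small, and absorbing it into the left-hand side yields \cref{eg:g-bound}.

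For part (b), given $x_0'\in\R^n$ with $\|x_0'-x_0\|<r$, $f(x_0')=E$, $|E-E_0|<r'$, I would write $x_0'-x_0$ in the orthonormal frame $\{\mathfrak{n}_{x_0},\mathfrak{e}_{x_0,1},\dots,\mathfrak{e}_{x_0,n-1}\}$ of $\R^n$, that is $x_0'=\varphi(\xi_0,y;x_0)$ with $\xi_0=\langle x_0'-x_0,\mathfrak{n}_{x_0}\rangle$ and $y_j=\langle x_0'-x_0,\mathfrak{e}_{x_0,j}\rangle$. Then $\|y\|\le\|x_0'-x_0\|$, since $y$ records the orthogonal projection of $x_0'-x_0$ onto $\{\mathfrak{n}_{x_0}\}^\perp$; hence $|y|\le\|y\|<r$ and $|\xi_0|\le\|x_0'-x_0\|<\rho_1$, so $(y,E)$ lies in the domain of $g$. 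As $f(\varphi(\xi_0,y;x_0))=f(x_0')=E$, the real number $\xi_0$ solves $f(\varphi(\cdot,y;x_0))=E$ in $\{|\xi|<\rho_1\}$, so the uniqueness clause of \cref{lem:impl2} forces $\xi_0=g(y,E)$, whence $x_0'=\varphi(g(y,E),y;x_0)$.

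The step I expect to need the most care is the reduction to \cref{lem:impl2}: one has to verify that passing to normal coordinates neither shrinks the domain below polyradius $\simeq_n\rho_0$ nor inflates $M(2)$ by more than a dimensional factor, and—this is the essential point that makes the gradient direction the right choice—that the orthogonality $\mathfrak{e}_{x_0,j}\perp\nabla f(x_0)$ is precisely what lets the admissible radius in the $w$-directions be $\simeq_n\rho_1$ rather than only $\simeq_n\rho_1\mu_{x_0}$, a feature used afterwards in the proof of \cref{prop:levelsetshifts}(II).
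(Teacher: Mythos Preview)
Your proposal is correct and follows essentially the same approach as the paper: both reduce the existence/uniqueness to \cref{lem:impl2} applied to $f(\varphi(\xi,w;x_0))-E$ after computing $\partial_\xi=\mu_{x_0}$, $\partial_{w_i}=0$, $\partial_E=-1$ at the base point; both prove (a) by the first-order Taylor expansion $f(\varphi(\xi,w;x_0))-f(x_0)=\mu_{x_0}\xi+R$ with $|R|\le C(n)M(2)(|\xi|^2+|w|^2)$ and absorb the $|\xi|^2$ term using $|\xi|<\rho_1\le c(n)\mu_{x_0}M(2)^{-1}$; and both prove (b) by decomposing $x_0'-x_0$ in the orthonormal frame $\{\mathfrak n_{x_0},\mathfrak e_{x_0,j}\}$ and invoking uniqueness. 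Your shift to $\tilde E=E-E_0$ is cosmetic, and your remark about why the gradient direction yields $r\simeq_n\rho_1$ rather than $\rho_1\mu_{x_0}$ in the $w$-slots is exactly the point the paper makes just before \cref{def:normal-coordinates}.
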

\begin{proof}
  The existence and uniqueness of the solution $ \xi=g(w,E) $ follows
  from \cref{lem:impl2} applied to $ F(\xi,w,E)=f(\varphi(\xi,w;x_0))-E $ on $ \cP'= \{ (\xi,w,E) :
  |\xi|,|w|,|E-E_0|<c\rho_0 \} $, with $ c $ small enough so that $
  |\varphi(\xi,w;x_0)-x_0|<\rho_0/2 $, for $ |\xi|,|w|<c\rho_0 $. Note that
  \begin{equation*}
	F(0,0,E_0)=0,\quad \partial_{\xi}F(0,0,E_0)=\mu_{x_0},\quad
    \partial_{w_i}F(0,0,E_0)=0,\quad
    \quad \partial_E F(0,0,E_0)=-1,
  \end{equation*}
  We just need to prove the claims (a),(b).

  \smallskip\noindent
  (a) Note that $ \langle \nabla f(x_0),\varphi(\xi,w;x_0)-x_0
  \rangle=\mu_{x_0}\xi $. Using Taylor's formula we have
  \begin{equation*}
	f(\varphi(\xi,w;x_0))-f(x_0)=\mu_{x_0}\xi+R(\xi,w),
  \end{equation*}
  with
  \begin{equation*}
	|R(\xi,w)|\le C(n)M(2)(|\xi|^2+|w|^2).
  \end{equation*}
  By setting $ \xi=g(w,E) $ we get
  \begin{multline*}
	|g(w,E)|=\mu_{x_0}^{-1}|E-E_0-R(g(w,E),w)|
    \le \mu_{x_0}^{-1}(|E-E_0|+C(n)M(2)(|g(w,E)|^2+|w|^2))\\
    \le \mu_{x_0}^{-1}(|E-E_0|+C(n)M(2) (\rho_1|g(w,E)|+|w|^2))\\
    \le \frac{1}{2}|g(w,E)|+\mu_{x_0}^{-1}(|E-E_0|+C(n)M(2)|w|^2),
  \end{multline*}
  provided $ \rho_1 $ is small enough, and
  \cref{eg:g-bound} follows.

  \smallskip\noindent
  (b) Let $ (\xi,y)\in \R^{n-1}\times \R $ be such that $
  x_0'=\varphi(\xi,y;x_0) $. We have $ |\xi|,|y|\le \norm{x_0'-x_0} $.
  Since $ f(\varphi(\xi,y;x_0))=E $, $ |\xi|<r<\rho_1 $, and $ |y|<r $,
  uniqueness implies that $ \xi=g(y,E) $.
\end{proof}

\begin{remark}\label{rem:real-valued}
  In \cref{lem:impl2} and \cref{lem:parametrization1}, if the function
  $ f $ is real-valued on $ \R^n $, then the implicit functions are
  also real-valued on $ \R^n $. Indeed, by the usual implicit function
  theorem, the implicit functions will be real valued on some small
  real polydisk, and by analyticity they will be real-valued on their
  whole real domain.
\end{remark}

Part (I) of \cref{prop:levelsetshifts} will follow by letting $
x(y,E)=\varphi(g(y,E),y;x_0) $, with $ g $ as in the previous
lemma. For part (II) it will be enough to prove the result with $
E=E_0 $, so we focus on this case. To simplify notation we let $ g(y):=g(y,E_0) $.
Part (II) will follow from Cartan's estimate as soon as we find a
point $ |y|\ll r $ such that
\begin{equation*}
  |f(x(y,E_0)+h)-E_0|=|f(\varphi(g(y),y;x_0)+h)-f(x_0)|
  \ge \epsilon,
\end{equation*}
with a certain $ \epsilon=\epsilon(\norm{h},\norm{x_0}) $. If $
|f(x_0+h)-f(x_0)|\ge \epsilon $, then we can simply choose $ y=0
$. We single out a simple case when this happens.

\begin{lemma}\label{lem:quadratic-domination}
  Let $f(x)$ be a smooth real function defined on $ \{ x\in \R^n :
  |x-x_0|<\rho_0 \} $. Let $M(k)=\max_{|\alpha|=k}\sup|\partial^\alpha
  f|$.
  Assume $\mathfrak{H}(x_0)\ge \nu_{x_0} I>0$
   and set
  \begin{gather*}
    \nu_1:=c(n)\nu_{x_0}(1+M(3))^{-1}.
  \end{gather*}
  with $ c(n) $ a sufficiently small constant. If $
  \nu_1^{-1}\norm{\nabla f(x_0)}\le \norm{h}< \min(\nu_1,\rho_0) $,
  then
  \begin{equation*}
	|f(x_0+h)-f(x_0)|\ge \frac{1}{4}\nu_{x_0}\norm{h}^2.
  \end{equation*}
\end{lemma}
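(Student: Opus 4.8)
The plan is to use a second-order Taylor expansion of $f$ at $x_0$: the positive-definiteness of $\mathfrak{H}(x_0)$ produces the desired quadratic lower bound, while the two hypotheses on $\norm{h}$ guarantee that the gradient term and the cubic remainder are both dominated by it. Since $\norm{h}<\min(\nu_1,\rho_0)\le \rho_0$, the segment $[x_0,x_0+h]$ lies in the domain of $f$, so Taylor's formula with remainder gives
\[
  f(x_0+h)-f(x_0)=\langle \nabla f(x_0),h\rangle+\tfrac12\langle \mathfrak{H}(x_0)h,h\rangle+R(h),
\]
where the third-order remainder satisfies $|R(h)|\le C(n)M(3)\norm{h}^3$ (here $C(n)$ only counts the number of multi-indices of length $3$, and we bound the third derivatives on the segment by $M(3)$).

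First I would estimate the three terms separately. From $\mathfrak{H}(x_0)\ge \nu_{x_0}I$ we get $\tfrac12\langle \mathfrak{H}(x_0)h,h\rangle\ge \tfrac12\nu_{x_0}\norm{h}^2$. From the hypothesis $\nu_1^{-1}\norm{\nabla f(x_0)}\le \norm{h}$ we get $\norm{\nabla f(x_0)}\le \nu_1\norm{h}$, hence $|\langle \nabla f(x_0),h\rangle|\le \nu_1\norm{h}^2$ by Cauchy--Schwarz. From $\norm{h}<\nu_1$ we get $|R(h)|\le C(n)M(3)\nu_1\norm{h}^2$.

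Then I would absorb the two error terms into the Hessian term using the definition $\nu_1=c(n)\nu_{x_0}(1+M(3))^{-1}$, which gives both $\nu_1\le c(n)\nu_{x_0}$ and $M(3)\nu_1\le c(n)\nu_{x_0}$. Choosing $c(n)$ small enough, say $c(n)\le \bigl(8(1+C(n))\bigr)^{-1}$, forces both $\nu_1\norm{h}^2$ and $C(n)M(3)\nu_1\norm{h}^2$ to be at most $\tfrac18\nu_{x_0}\norm{h}^2$, and therefore
\[
  f(x_0+h)-f(x_0)\ge \tfrac12\nu_{x_0}\norm{h}^2-\tfrac18\nu_{x_0}\norm{h}^2-\tfrac18\nu_{x_0}\norm{h}^2=\tfrac14\nu_{x_0}\norm{h}^2,
\]
which in particular is positive, so the left-hand side equals its absolute value and we obtain the claimed estimate. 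There is essentially no obstacle in this argument; the only point requiring care is the bookkeeping of $c(n)$, which is precisely why $\nu_1$ carries the factor $(1+M(3))^{-1}$ that tames the cubic remainder against the fixed Hessian gap $\nu_{x_0}$.
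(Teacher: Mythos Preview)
Your proof is correct and follows essentially the same approach as the paper: a second-order Taylor expansion at $x_0$, with the Hessian term providing the main lower bound and the gradient and cubic remainder controlled via the two hypotheses on $\norm{h}$ together with the definition of $\nu_1$. Your presentation is slightly more explicit about the choice of $c(n)$ and about why the absolute value is harmless, but the argument is identical in substance.
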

\begin{proof}
  Using Taylor's formula and the assumptions on $ h $,
  \begin{multline*}
	|f(x_0+h)-f(x_0)|\ge \frac{1}{2}\left| \langle \fH(x_0)h,h \rangle \right|
    - \left| \langle \nabla f(x_0),h \rangle \right|-C(n)M(3)\norm{h}^3\\
    \ge \frac{1}{2}\nu_{x_0}\norm{h}^2-\nu_1\norm{h}^2-C(n)M(3)\nu_1\norm{h}^2
    \ge \frac{1}{4}\nu_{x_0}\norm{h}^2.
  \end{multline*}
\end{proof}

\begin{figure}[h]
  \centering
  \includegraphics{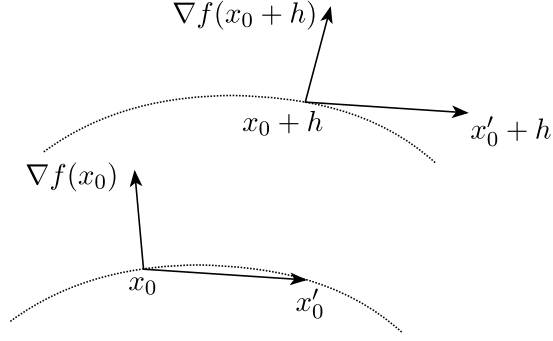}
    \caption{If $ \nabla f(x_0+h) $ is not collinear with $\nabla f(x_0)
    $, then the projection of $ x_0'-x_0 $ onto $ \nabla f(x_0+h) $ is
    relatively large. }\label{fig:non-collinear}
\end{figure}

Suppose that $ |f(x_0+h)-f(x_0)|<\epsilon $. Then we want to find $
x_0'=\varphi(g(y),y;x_0) $, $ f(x_0')=f(x_0) $, such that $
|f(x_0'+h)-f(x_0')|\ge \epsilon $.
To this end it is enough to find $ x_0' $ such that $
|f(x_0'+h)-f(x_0+h)|\ge 2\epsilon $. By Taylor's formula 
\begin{equation*}
  f(x_0'+h)-f(x_0+h)=\langle \nabla f(x_0+h),x_0'-x_0 \rangle+O(|x_0'-x_0|^2).
\end{equation*}
The linear term will dominate the quadratic term if the projection of
$ x_0'-x_0 $ onto $ \nabla f(x_0+h) $ is large relative to $ x_0'-x_0
$.
By \cref{eg:g-bound}, the projection of $ x_0'-x_0 $ onto $ \nabla
f(x_0) $ is relatively small, so the projection onto $ \{ \nabla
f(x_0) \}^\perp $ is relatively large. This means that if $ \nabla
f(x_0) $ and $ \nabla f(x_0+h) $ are not too close to being collinear,
the projection of $ x_0'-x_0 $ onto $ \nabla f(x_0+h) $ will be
relatively large (see Figure \ref{fig:non-collinear}), and
we should be able to find a lower bound on $ |f(x_0'+h)-f(x_0+h)| $ via
the linear term of the Taylor expansion. A quantitative version of
this observation is given in the next lemma.

\begin{lemma}\label{lem:deviation1} Using the notation and assumptions
  of Lemma~\ref{lem:parametrization1}
  the following hold.
  Let $ h\in \R^n $, $ |h|<\rho_0/2 $, $ x_1=x_0+h $, 
  $\mu_{x_1}:=\|\nabla f(x_1)\|$. Assume
  \begin{equation*}
	\langle\nabla f(x_1),\nabla f(x_0)\rangle^2\le
  (1-\delta_0^2)\|\nabla f(x_1)\|^2\|\nabla f(x_0)\|^2,\qquad 0<\delta_0\le 1.
  \end{equation*}
  Let  
  \begin{equation*}
    \rho\le  c(n)\min(r,\mu M(2)^{-1}\delta_0^2)\ll r,\quad \mu=\min(\mu_{x_0},\mu_{x_1})
  \end{equation*}
  where $c(n)$ is a sufficiently small constant and $ r $ as in
  \cref{lem:parametrization1}.  Then there exists $\norm{x_0'-x_0}\le
  2\rho $, $ x_0'=\varphi(g(y),y;x_0) $, $ \norm{y}\le \rho $,
  such that 
  \begin{equation*}
    |f(x_0'+h)-f(x_0+h)|\ge \frac{1}{2}\mu_{x_1}\delta_0^2 \rho.
  \end{equation*}
\end{lemma}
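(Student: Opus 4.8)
The plan is to Taylor-expand $f$ about $x_1=x_0+h$ and to choose $x_0'$ on the level set $\{f=E_0\}$, of the prescribed form $x_0'=\varphi(g(y,E_0),y;x_0)$, so that the linear term $\langle\nabla f(x_1),x_0'-x_0\rangle$ is comparable to $\mu_{x_1}\delta_0^2\rho$ while $\|x_0'-x_0\|\lesssim\rho$, making the quadratic remainder (of size $\lesssim M(2)\rho^2$) negligible by the smallness of $\rho$. The non-collinearity hypothesis enters exactly once, to keep the transverse part of $\nabla f(x_1)$ from being small: write $\nabla f(x_1)=\alpha\,\mathfrak{n}_{x_0}+v$ with $\alpha=\langle\nabla f(x_1),\mathfrak{n}_{x_0}\rangle$ and $v$ the orthogonal projection of $\nabla f(x_1)$ onto $\{\nabla f(x_0)\}^\perp$; since $\langle\nabla f(x_1),\nabla f(x_0)\rangle=\mu_{x_0}\alpha$, the hypothesis reads $\alpha^2\le(1-\delta_0^2)\mu_{x_1}^2$, hence $\|v\|^2=\mu_{x_1}^2-\alpha^2\ge\delta_0^2\mu_{x_1}^2$. (If $\mu_{x_1}=0$ the conclusion is trivial with $x_0'=x_0$, $y=0$, so assume $\mu_{x_1}>0$, whence $v\neq0$.)

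Next I would construct $x_0'$. Put $\hat v=v/\|v\|\in\{\mathfrak{n}_{x_0}\}^\perp$ and let $y\in\R^{n-1}$ be the coordinate vector of $\rho\hat v$ in the basis $(\mathfrak{e}_{x_0,j})$, so $\sum_j y_j\mathfrak{e}_{x_0,j}=\rho\hat v$ and $\|y\|=\rho$; since $\rho\ll r$ we are inside the domain of \cref{lem:parametrization1}, so $g(y):=g(y,E_0)$ is defined and, $f$ being real-valued, real by \cref{rem:real-valued}. Set $x_0'=\varphi(g(y),y;x_0)=x_0+g(y)\,\mathfrak{n}_{x_0}+\rho\hat v$, so $f(x_0')=E_0=f(x_0)$ automatically. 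Now \eqref{eg:g-bound} with $E=E_0$ gives $|g(y)|\le 2C(n)\mu_{x_0}^{-1}M(2)\rho^2$, and the smallness $\rho\le c(n)\mu M(2)^{-1}\delta_0^2\le c(n)\mu_{x_0}M(2)^{-1}\delta_0^2$ forces $|g(y)|\le\tfrac14\delta_0^2\rho$ once $c(n)$ is small; in particular $\|x_0'-x_0\|^2=g(y)^2+\rho^2\le2\rho^2$, so $\|x_0'-x_0\|\le2\rho$, and together with $|h|<\rho_0/2$ and $\rho\ll\rho_0$ the segment from $x_1$ to $x_0'+h$ stays in $\{|z-x_0|<\rho_0\}$.

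Then I would carry out the estimate. Because $\hat v\perp\mathfrak{n}_{x_0}$ one has $\langle\nabla f(x_1),\hat v\rangle=\|v\|$, so $\langle\nabla f(x_1),x_0'-x_0\rangle=g(y)\alpha+\rho\|v\|$ with $|g(y)\alpha|\le\mu_{x_1}|g(y)|\le\tfrac14\delta_0^2\rho\,\mu_{x_1}\le\tfrac14\rho\|v\|$; hence $|\langle\nabla f(x_1),x_0'-x_0\rangle|\ge\tfrac34\rho\|v\|\ge\tfrac34\rho\delta_0\mu_{x_1}\ge\tfrac34\rho\delta_0^2\mu_{x_1}$. Taylor's formula about $x_1$ gives $f(x_0'+h)-f(x_0+h)=\langle\nabla f(x_1),x_0'-x_0\rangle+R$ with $|R|\le C(n)M(2)\|x_0'-x_0\|^2\le2C(n)M(2)\rho^2$, which by $\rho\le c(n)\mu M(2)^{-1}\delta_0^2\le c(n)\mu_{x_1}M(2)^{-1}\delta_0^2$ is $\le\tfrac14\rho\delta_0^2\mu_{x_1}$ for $c(n)$ small. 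Combining, $|f(x_0'+h)-f(x_0+h)|\ge\tfrac34\rho\delta_0^2\mu_{x_1}-\tfrac14\rho\delta_0^2\mu_{x_1}=\tfrac12\mu_{x_1}\delta_0^2\rho$, as claimed, with $\|y\|=\rho$ and $\|x_0'-x_0\|\le2\rho$.

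The Taylor bookkeeping is routine; the only real point, and the step to get right, is the calibration of $c(n)$: $\rho$ must be small relative to $\mu\,M(2)^{-1}\delta_0^2$ in a way that simultaneously pushes the normal correction $|g(y)|$ below $\tfrac14\delta_0^2\rho$ (via \eqref{eg:g-bound}) and pushes the quadratic remainder below $\tfrac14\delta_0^2\rho\,\mu_{x_1}$, while $\rho\ll r$ keeps us inside the domain where \cref{lem:parametrization1} applies. All of this is comfortably inside the hypothesis $\rho\le c(n)\min(r,\mu M(2)^{-1}\delta_0^2)\ll r$ once $c(n)$ absorbs the finitely many dimensional constants. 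The only preliminary remarks needed are that $f$ is real-valued (so $x_0'\in\R^n$ by \cref{rem:real-valued}) and that $\nabla f(x_1)=0$ is trivial.
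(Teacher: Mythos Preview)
Your proof is correct and follows essentially the same approach as the paper's: decompose $\nabla f(x_1)$ along $\mathfrak{n}_{x_0}$ and its orthogonal complement, choose $y$ so that $\sum_j y_j\mathfrak{e}_{x_0,j}$ points in the direction of the transverse component, and balance the linear Taylor term against the quadratic remainder and the $g(y)$ contribution using the smallness of $\rho$. The only cosmetic difference is that the paper takes $\mathfrak{q}(y;x_0)=\rho\,\mathfrak{p}(\mathfrak{n}_{x_1};x_0)$ (the projection itself, giving $\|y\|\le\rho$), whereas you normalize to $\rho\hat v$ (giving $\|y\|=\rho$ and a slightly sharper intermediate bound $\tfrac34\rho\delta_0\mu_{x_1}$ before weakening to $\delta_0^2$); this changes nothing of substance.
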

\begin{proof}
  The case $ \mu_{x_1}=0 $ is trivial, so we assume $ \mu_{x_1}>0 $.
  Given $\fn\in \R^n, y\in \R^{n-1} $ and using the notation of \cref{def:normal-coordinates} let
  \begin{equation}\label{eq:fp-fq}
    \fp(\fn;x_0)=\sum_j \langle \fe_{x_0,j},\fn \rangle \fe_{x_0,j},\qquad
	\mathfrak{q}(y;x_0)=\sum_j y_j \mathfrak{e}_{x_0,j}.
  \end{equation}
  Let $ \mathfrak{n}_{x_1}=\mu_{x_1}^{-1}\nabla f(x_1) $.
  We choose $ y\in \R^{n-1} $ such that $
  \mathfrak{q}(y;x_0)=\rho\mathfrak{p}(\mathfrak{n}_{x_1};x_0) $,
  with $ \rho $ as in the statement. Note that
  \begin{equation*}
	1\ge \norm{\fp(\fn_{x_1};x_0)}^2=\norm{\fn_{x_1}}^2
    -\langle \fn_{x_1},\fn_{x_0} \rangle^2\ge 1-(1-\delta_0^2)=\delta_0^2.
  \end{equation*}
  It follows that
  \begin{equation*}
	\norm{y}=\norm{\fq(y;x_0)}=\rho\norm{\fp(\fn_{x_1};x_0)}\le \rho
  \end{equation*}
  and
  \begin{equation*}
	\langle \nabla f(x_1),\fq(y;x_0) \rangle=\mu_{x_1} \langle \fn_{x_1}, \fq(y;x_0)\rangle
    =\mu_{x_1}\langle \fp(\fn_{x_1};x_0),\fq(y;x_0) \rangle=\mu_{x_1}\rho\norm{\fp(\fn_{x_1};x_0)}^2
    \ge \mu_{x_1}\delta_0^2\rho.
  \end{equation*}
  Let $ x_0'=\varphi(g(y),y;x_0) $ with $ y $ as above.
  Then
  \begin{equation*}
	\norm{x_0'-x_0}\le |g(y)|+\norm{y}\le \mu_{x_0}^{-1}C(n)M(2)\norm{y}^2+\norm{y}\le 2\norm{y}\le 2\rho,
  \end{equation*}
  provided $ \rho $ is small enough. Note that we used \cref{eg:g-bound}.
  By Taylor's formula
  \begin{multline}
	f(x_0'+h)-f(x_0+h)= \langle \nabla f(x_1),x_0'-x_0 \rangle+R(x_0'-x_0)\\
    = \langle \nabla f(x_1),g(y) \mathfrak{n}_{x_0} \rangle
    + \langle \nabla f(x_1),\mathfrak{q}(y;x_0) \rangle
    +R(x_0'-x_0),\label{eq:Taylor-h}
  \end{multline}
  with
  \begin{equation*}
	|R(x_0'-x_0)|\le C(n)M(2)\|x_0'-x_0\|^2
    \le 4C(n)M(2)\rho^2\le \frac{1}{4}\mu_{x_1}\delta_0^2 \rho.
  \end{equation*}
  We also have
  \begin{equation*}
	|\langle \nabla f(x_1),g(y)\fn_{x_0} \rangle|\le \mu_{x_1} \mu_{x_0}^{-1}C(n)M(2)\rho^2
    \le \frac{1}{4}\mu_{x_1}\delta_0^2 \rho.
  \end{equation*}
  The conclusion follows by combining the estimates we obtained for
  the terms on the left hand side of \cref{eq:Taylor-h}.  
\end{proof}

Now we have to deal with the situation when $
|f(x_0+h)-f(x_0)|<\epsilon $, and $ \nabla f(x_0) $ and $ \nabla
f(x_0+h) $ are close to being collinear. We show that for small enough
$ h $ this can only happen  if  $ h $  is very close to a particular ``bad''
direction. 

\begin{lemma}\label{lem:deviation1opposite3}
  Let $f(x)$ be a smooth real function defined on $ \{ x\in \R^n :
  |x-x_0|<\rho_0 \} $. Let $M(k)=\max_{|\alpha|=k}\sup|\partial^\alpha
  f|$.
  Assume $\mathfrak{H}(x_0)\ge \nu_{x_0} I$, $ 0<\nu_{x_0}<1 $,
  and set
  \begin{gather*}
    \nu_1:=c(n)\nu_{x_0}(1+M(2)+M(3))^{-1}
  \end{gather*}
  with $c(n)$ a sufficiently small constant.  Let $ 0<\|h\|<\min
  (\rho_0,\nu_1^6) $.
  Assume that the following conditions hold
  \begin{gather}
    |f(x_0+h)-f(x_0)|\le \|h\|^{3},\label{eq:close-shift}\\
    \|\nabla f(x_0+h)-\lambda\nabla f(x_0)\|\le \|h\|^2, \label{eq:close-shift-gradient}
  \end{gather}
  with some $ \lambda\in \R $.
  Then
  \begin{equation}\label{eq:bad-direction}
    \|h+2\mathfrak{H}(x_0)^{-1}\nabla f(x_0)\|\le \nu_1^{-8}\|\nabla f(x_0)\|^2.
  \end{equation}
\end{lemma}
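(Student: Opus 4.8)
The plan is to compare $f$ near $x_0$ with its quadratic Taylor model at $x_0$ and to exploit the positive definiteness $\mathfrak{H}(x_0)\ge\nu_{x_0}I$. Write $\mu:=\norm{\nabla f(x_0)}$ and $A:=\mathfrak{H}(x_0)$; since $A\ge\nu_{x_0}I>0$ it is invertible with $\norm{A^{-1}}\le\nu_{x_0}^{-1}$, and since the entries of $A$ are bounded by $M(2)$ one also has $A\le C(n)M(2)I$, hence $A^{-1}\ge(C(n)M(2))^{-1}I$. For $\norm{h}<\rho_0$ one has the Taylor expansions
\[
f(x_0+h)-f(x_0)=\langle\nabla f(x_0),h\rangle+\tfrac12\langle Ah,h\rangle+R_3,\qquad
\nabla f(x_0+h)=\nabla f(x_0)+Ah+R_2,
\]
with $|R_3|\le C(n)M(3)\norm{h}^3$ and $\norm{R_2}\le C(n)M(3)\norm{h}^2$. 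The first step is to bound the size of $h$: since $\langle Ah,h\rangle\ge\nu_{x_0}\norm{h}^2$, condition \eqref{eq:close-shift} gives $\tfrac12\nu_{x_0}\norm{h}^2\le\mu\norm{h}+\norm{h}^3+C(n)M(3)\norm{h}^3$, and because $\norm{h}<\nu_1^6$ with $\nu_1=c(n)\nu_{x_0}(1+M(2)+M(3))^{-1}$ the cubic terms are $\le\tfrac14\nu_{x_0}\norm{h}^2$, so that $\norm{h}\le 4\nu_{x_0}^{-1}\mu$; in particular $\mu>0$.

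Next I would analyze the gradient condition. From \eqref{eq:close-shift-gradient} and the Taylor expansion of $\nabla f$, with $t:=\lambda-1$,
\[
Ah=t\,\nabla f(x_0)+E_2,\qquad \norm{E_2}\le C(n)(1+M(3))\norm{h}^2,
\]
hence $h=t\,A^{-1}\nabla f(x_0)+A^{-1}E_2$ where $\norm{A^{-1}E_2}\le\nu_{x_0}^{-1}C(n)(1+M(3))\norm{h}^2\le\tfrac12\norm{h}$ (again using $\norm{h}<\nu_1^6$). This forces $t\neq0$, and from $|t|\,\norm{A^{-1}\nabla f(x_0)}\ge\norm{h}-\norm{A^{-1}E_2}\ge\tfrac12\norm{h}$ together with $\norm{A^{-1}\nabla f(x_0)}\le\nu_{x_0}^{-1}\mu$ one gets the crucial lower bound $|t|\ge\nu_{x_0}\norm{h}/(2\mu)$; likewise $|t|\mu=\norm{t\nabla f(x_0)}\le\norm{Ah}+\norm{E_2}\le C(n)(1+M(2)+M(3))\norm{h}$.

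The heart of the argument is to substitute $h=t\,A^{-1}\nabla f(x_0)+A^{-1}E_2$ into the Taylor expansion of $f$. Setting $Q:=\langle\nabla f(x_0),A^{-1}\nabla f(x_0)\rangle$, so that $(C(n)M(2))^{-1}\mu^2\le Q\le\nu_{x_0}^{-1}\mu^2$, a direct expansion gives
\[
f(x_0+h)-f(x_0)=\bigl(t+\tfrac{t^2}{2}\bigr)Q+\mathrm{err},\qquad
|\mathrm{err}|\le C(n)(1+M(2)+M(3))^2\nu_{x_0}^{-2}\mu\norm{h}^2,
\]
the error bound using the estimates on $\norm{E_2}$, $\norm{A^{-1}E_2}$ and $|t|$ just obtained (and $\norm{h}\le4\nu_{x_0}^{-1}\mu$ to convert the cubic error terms). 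Combining with \eqref{eq:close-shift} yields $\bigl|t+\tfrac{t^2}{2}\bigr|\,Q\le C(n)(1+M(2)+M(3))^2\nu_{x_0}^{-2}\mu\norm{h}^2$; dividing through by $Q|t|$ and inserting $|t|\ge\nu_{x_0}\norm{h}/(2\mu)$ and $Q\ge(C(n)M(2))^{-1}\mu^2$ gives
\[
\bigl|1+\tfrac{t}{2}\bigr|\le C(n)(1+M(2)+M(3))^3\nu_{x_0}^{-3}\norm{h}\le C(n)(1+M(2)+M(3))^3\nu_{x_0}^{-4}\mu .
\]
Geometrically this says that $t$ is forced close to $-2$, i.e.\ that $h$ lies near the ``bad'' direction $-2A^{-1}\nabla f(x_0)$: on the line $\{\tau A^{-1}\nabla f(x_0):\tau\in\R\}$, on which the quadratic model of $\nabla f$ stays collinear with $\nabla f(x_0)$, the quadratic model of $f-f(x_0)$ equals $(\tau+\tau^2/2)Q$, which vanishes precisely at $\tau=0$ and $\tau=-2$, and $\tau=0$ is excluded because Step~1 shows $\norm{h}\simeq|t|\mu$ is not too small.

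Finally, from $h+2A^{-1}\nabla f(x_0)=(t+2)A^{-1}\nabla f(x_0)+A^{-1}E_2$ and $\norm{A^{-1}}\le\nu_{x_0}^{-1}$,
\[
\norm{h+2A^{-1}\nabla f(x_0)}\le |t+2|\,\nu_{x_0}^{-1}\mu+\nu_{x_0}^{-1}C(n)(1+M(3))\norm{h}^2\le C(n)(1+M(2)+M(3))^3\nu_{x_0}^{-5}\mu^2 ,
\]
using the previous display for the first term and $\norm{h}\le4\nu_{x_0}^{-1}\mu$ for the second; since $0<\nu_{x_0}<1$ and $\nu_1=c(n)\nu_{x_0}(1+M(2)+M(3))^{-1}$, this is $\le\nu_1^{-8}\mu^2=\nu_1^{-8}\norm{\nabla f(x_0)}^2$ once $c(n)$ is taken small enough, which is \eqref{eq:bad-direction} (the crude exponent $8$ leaves ample room). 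I expect the one genuinely delicate point to be the bookkeeping of the error term in the scalar identity of the third paragraph: after dividing by $|t|$, which may be as small as $\simeq\nu_{x_0}\norm{h}/\mu$, one must still be left with a bound on $\bigl|1+\tfrac{t}{2}\bigr|$ that is \emph{linear} in $\norm{h}$, hence — via $\norm{h}\le4\nu_{x_0}^{-1}\mu$ — of size $\lesssim_n\mu$; everything else is routine Taylor estimation and constant-chasing.
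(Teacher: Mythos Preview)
Your argument is correct and follows essentially the same route as the paper: bound $\norm{h}\lesssim\nu_{x_0}^{-1}\mu$ from \eqref{eq:close-shift} via positivity of the Hessian, write $h=tA^{-1}\nabla f(x_0)+A^{-1}E_2$ from \eqref{eq:close-shift-gradient}, substitute into the scalar Taylor identity to obtain $(t+t^2/2)Q\approx 0$, rule out $t\approx 0$, and conclude $t\approx -2$. The only organizational difference is in ruling out $t\approx 0$: you establish the lower bound $|t|\ge\nu_{x_0}\norm{h}/(2\mu)$ a priori and divide by $|t|$, whereas the paper factors $t(t+2)=(\lambda-1)(\lambda+1)$, uses $\max(|\lambda-1|,|\lambda+1|)\ge 1$ to bound $\min(|\lambda-1|,|\lambda+1|)\le\epsilon$, and then eliminates the case $|\lambda-1|\le\epsilon$ by showing it would force $\norm{h}\le\nu_1^{-6}\norm{h}^2$, contradicting $\norm{h}<\nu_1^6$. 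Your direct division is marginally cleaner bookkeeping, but the two arguments are the same in substance.
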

\begin{proof}
  Note that \cref{eq:close-shift} together with
  \cref{lem:quadratic-domination} imply $ \norm{h}\le
  \nu_1^{-1}\norm{\nabla f(x_0)} $. In particular, this implies $
  \norm{\nabla f(x_0)}>0 $.
  
  Combining \cref{eq:close-shift-gradient} with Taylor's formula we get
  \begin{equation*}
    \|(\lambda-1)\nabla f(x_0)-\mathfrak{H}(x_0)h\|\le C(n)(1+M(3))\|h\|^2.
  \end{equation*}
  Therefore
  \begin{multline*}
    \|(\lambda-1)\mathfrak{H}(x_0)^{-1}\nabla f(x_0)-h\|
    \le \|\mathfrak{H}(x_0)^{-1}\|  C(n)(1+M(3))\|h\|^2\\
    \le \nu_{x_0}^{-1}C(n)(1+M(3))\|h\|^2\le \nu_1^{-1}\norm{h}^2.
  \end{multline*}
  Combining \cref{eq:close-shift} with Taylor's formula we get
  \begin{equation*}
	\Big|\langle\nabla f(x_0),h\rangle+\frac{1}{2} \langle \mathfrak{H}(x_0)h,h\rangle
    \Big|\le  \|h\|^3+C(n)M(3)\|h\|^3
    \le \nu_1^{-1} \norm{h}^3.
  \end{equation*}
  Let $ v=(\lambda-1)\mathfrak{H}(x_0)^{-1}\nabla f(x_0)-h $. Combining the previous two estimates yields
  \begin{multline*}
	\Big|(\lambda-1)\langle\nabla f(x_0),\mathfrak{H}(x_0)^{-1}\nabla f(x_0)\rangle
    +\frac{1}{2} (\lambda-1)^2\langle\nabla f(x_0),\mathfrak{H}(x_0)^{-1}\nabla f(x_0)\rangle
    \Big|\\
    \kern-7em \le \nu_1^{-1}\|h\|^3+\norm{\nabla f(x_0)}\norm{v}
    +\frac{1}{2}\norm{\fH(x_0)}(\norm{v}^2+2\norm{v}\norm{v+h})\\
    \le \nu_1^{-1}\norm{h}^3+\norm{\nabla f(x_0)}\nu^{-1}\norm{h}^2
    +C(n)M(2)(\nu_1^{-2}\norm{h}^4+\nu_1^{-1}\norm{h}^3)\\
    \le \nu_1^{-1}\norm{\nabla f(x_0)}\norm{h}^2+\nu_1^{-2}\norm{h}^3.
  \end{multline*}
  Since $ \langle\nabla f(x_0),\mathfrak{H}(x_0)^{-1}\nabla
  f(x_0)\rangle\ge \norm{\fH(x_0)}^{-1} \norm{\nabla f(x_0)}^2\ge
  \nu_1 \norm{\nabla f(x_0)}^2 $, it follows that
  \[
    |(\lambda-1)(\lambda+1)|\le\epsilon:=\nu_1^{-2}\norm{\nabla f(x_0)}^{-1}\norm{h}^2
    +\nu_1^{-3}\norm{\nabla f(x_0)}^{-2}\norm{h}^3 .
  \]
  Since $ \max(|\lambda-1|,|\lambda+1|)\ge 1 $, we have
  \[
    \min(|\lambda-1|,|\lambda+1|)\le \epsilon.
  \]
  If $|\lambda-1|\le \epsilon$, then
  \begin{multline*}
    \|h\|\le \|h-(\lambda-1)\mathfrak{H}(x_0)^{-1}\nabla f(x_0)\|
    +\|(\lambda-1)\mathfrak{H}(x_0)^{-1}\nabla f(x_0)\|\\
    \le \nu_1^{-1}\norm{h}^2+\nu_1^{-1}\epsilon\norm{\nabla f(x_0)}
    =\nu_1^{-1}\norm{h}^2+\nu_1^{-3}\norm{h}^2+\nu_1^{-4}\norm{\nabla f(x_0)}^{-1}\norm{h}^3
    \le  \nu_1^{-6}\norm{h}^2
  \end{multline*}
  (recall that $ \norm{h}\le \nu_1^{-1}\norm{\nabla f(x_0)} $). This
  is not compatible with the assumption that $ 0<\norm{h}<\nu_1^6
  $. So, we must have $|\lambda+1|\le \epsilon$ and therefore
  \begin{multline*}
    \|h+2\mathfrak{H}(x_0)^{-1}\nabla f(x_0)\|
    \le \|h-(\lambda-1)\mathfrak{H}(x_0)^{-1}\nabla f(x_0)\|
    +\|(\lambda+1)\mathfrak{H}(x_0)^{-1}\nabla f(x_0)\|\\
    \le \nu_1^{-1}\norm{h}^2+\nu_1^{-1}\epsilon\norm{\nabla f(x_0)}
    \le \nu_1^{-6}\norm{h}^2
    \le     \nu_1^{-8}\|\nabla f(x_0)\|^2.
  \end{multline*}
\end{proof}

Finally, we show that \cref{eq:bad-direction} cannot hold over the
entire piece of the $ f(x_0) $-level set parametrized in \cref{lem:parametrization1}.
\begin{lemma}\label{lem:Morse10}
  Let $f(x)$ be a smooth real function defined on $ \{ x\in \R^n :
  |x-x_0|<\rho_0 \} $, $ \rho_0<1 $. Let $M(k)=\max_{|\alpha|=k}\sup|\partial^\alpha
  f|$.
  Assume $\mathfrak{H}(x_0)\ge \nu_{x_0} I$, $ 0<\nu_{x_0}<1 $, and
  $ 0<\norm{\nabla f(x_0)}< \rho_0\nu_1^{9}/20 $ with 
  \begin{gather*}
    \nu_1:=c(n)\nu_{x_0}(1+M(2)+M(3))^{-1}
  \end{gather*}
  with $c(n)$ a sufficiently small constant. Then there exists $\norm{x_0'-x_0}\ll
  r$, with $ r $ as in \cref{lem:parametrization1}, $
  x_0'=\varphi(g(y),y;x_0) $, $ \norm{y}\ll r $,
  such that
  \begin{equation*}
    \norm{\fH(x_0')^{-1}\nabla f(x_0')-\fH(x_0)^{-1}\nabla f(x_0)}
      > \nu_1^{-8}(\norm{\nabla f(x_0)}^2+\norm{\nabla f(x_0')}^2).
  \end{equation*}
\end{lemma}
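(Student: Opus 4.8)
The plan is to produce $x_0'$ by sliding a controlled distance along the level set $\{f=E_0\}$, $E_0:=f(x_0)$, and then to exploit the fact that near $x_0$ the map $x\mapsto \mathfrak H(x)^{-1}\nabla f(x)$ agrees to first order with the affine map $x\mapsto x-x_0+\mathfrak H(x_0)^{-1}\nabla f(x_0)$ (since $\nabla f(x)\approx\nabla f(x_0)+\mathfrak H(x_0)(x-x_0)$ and $\mathfrak H(x)\approx\mathfrak H(x_0)$), so that a genuine displacement changes $\mathfrak H(x)^{-1}\nabla f(x)$ by roughly the same amount. The key quantitative input is the normalization of \cref{lem:parametrization1}: one may slide a distance of order $\|\nabla f(x_0)\|M(2)^{-1}$ along the level set, which under the smallness hypothesis $0<\mu_{x_0}:=\|\nabla f(x_0)\|<\rho_0\nu_1^9/20$ turns out to dominate the error threshold $\nu_1^{-8}(\|\nabla f(x_0)\|^2+\|\nabla f(x_0')\|^2)$ in the conclusion.

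First I would invoke \cref{lem:parametrization1} at $x_0$, taking $\rho_1$ at its maximal value $c(n)\min(\rho_0,\mu_{x_0}M(2)^{-1})=c(n)\mu_{x_0}M(2)^{-1}$ (the minimum is attained at the second term because $\mu_{x_0}$ is tiny relative to $\rho_0M(2)$, by the definition of $\nu_1$), so that $r\simeq_n\mu_{x_0}M(2)^{-1}$ and one obtains the analytic solution $\xi=g(y):=g(y,E_0)$ of $f(\varphi(\xi,y;x_0))=E_0$ on $|y|<r$. This gives the level-set chart $x(y)=\varphi(g(y),y;x_0)$ with $x(0)=x_0$ and $x(y)-x_0=g(y)\,\mathfrak n_{x_0}+\sum_jy_j\mathfrak e_{x_0,j}$, so the component orthogonal to $\nabla f(x_0)$ has norm $\|y\|$, while $|g(y)|\le C(n)\mu_{x_0}^{-1}M(2)\|y\|^2$ by part (a) of \cref{lem:parametrization1}. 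I would then fix any unit vector $e\perp\nabla f(x_0)$, set $y_*=\epsilon_0 r\,e$ with $\epsilon_0$ a small constant, and put $x_0':=x(y_*)$. Since $r\le c(n)^2\mu_{x_0}M(2)^{-1}$ one checks $|g(y_*)|\le C(n)\epsilon_0^2 c(n)^2 r\ll\|y_*\|$, hence $f(x_0')=E_0$, $\|y_*\|,\|x_0'-x_0\|\ll r$, and $\|x_0'-x_0\|\ge\|y_*\|=\epsilon_0 r$.

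Next comes the estimate of $\mathfrak H(x_0')^{-1}\nabla f(x_0')-\mathfrak H(x_0)^{-1}\nabla f(x_0)$. Because $\|x_0'-x_0\|\lesssim_n\mu_{x_0}M(2)^{-1}$ is much smaller than $\nu_{x_0}M(3)^{-1}$ (again via $\mu_{x_0}<\rho_0\nu_1^9/20$ and the definition of $\nu_1$), the Hessian varies by at most $C(n)M(3)\|x_0'-x_0\|<\nu_{x_0}/2$ over the segment, so $\mathfrak H(x_0')\ge(\nu_{x_0}/2)I$ is invertible with $\|\mathfrak H(x_0)^{-1}\|,\|\mathfrak H(x_0')^{-1}\|\le2\nu_{x_0}^{-1}$. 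Taylor's formula for $\nabla f$ together with the resolvent identity for $\mathfrak H(x_0')^{-1}-\mathfrak H(x_0)^{-1}$ then give
\[
\mathfrak H(x_0')^{-1}\nabla f(x_0')=\mathfrak H(x_0)^{-1}\nabla f(x_0)+(x_0'-x_0)+\mathcal E,
\]
with $\|\mathcal E\|\lesssim_n\nu_{x_0}^{-2}M(3)\,\mu_{x_0}\,\|x_0'-x_0\|+\nu_{x_0}^{-1}M(3)\|x_0'-x_0\|^2\ll\|y_*\|$ by the same smallness input. Hence $\|\mathfrak H(x_0')^{-1}\nabla f(x_0')-\mathfrak H(x_0)^{-1}\nabla f(x_0)\|\ge\|x_0'-x_0\|-\|\mathcal E\|\ge\tfrac12\|y_*\|=\tfrac12\epsilon_0 r$. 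Since also $\|\nabla f(x_0')\|\le\mu_{x_0}+C(n)M(2)\|x_0'-x_0\|\le2\mu_{x_0}$, the right-hand side of the claimed inequality is at most $5\nu_1^{-8}\mu_{x_0}^2$, so it only remains to check $\tfrac12\epsilon_0 r>5\nu_1^{-8}\mu_{x_0}^2$; using $r\gtrsim_n\mu_{x_0}M(2)^{-1}$ and $\mu_{x_0}<\rho_0\nu_1^9/20$ this reduces to $\rho_0\,\nu_1\,M(2)\ll\epsilon_0 c(n)^2$, which holds because $\nu_1 M(2)\le c(n)\nu_{x_0}\le c(n)$, provided the constant $c(n)$ in the definition of $\nu_1$ is chosen small enough relative to the constant of \cref{lem:parametrization1}. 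This finishes the argument.

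The genuinely delicate point — the only one beyond routine Taylor expansion and matrix perturbation — is this last numerical comparison: the displacement available along the level set is only of order $r\simeq\mu_{x_0}M(2)^{-1}$, whereas the error threshold is of order $\nu_1^{-8}\mu_{x_0}^2$, so the estimate survives precisely because the hypothesis controls $\mu_{x_0}$ by the \emph{ninth} power of $\nu_1$ while the conclusion loses only an \emph{eighth} power, the surviving factor $\nu_1\le c(n)\nu_{x_0}(1+M(2)+M(3))^{-1}$ absorbing the $M(2)^{-1}$ and all fixed constants. Propagating the $M(k)$- and $c(n)$-dependencies consistently through every step is the main bookkeeping burden.
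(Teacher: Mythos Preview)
Your proof is correct and follows essentially the same route as the paper's: slide along the level set by a distance of order $\nu_1\mu_{x_0}\simeq r$, use Taylor's formula for $\nabla f$ (expanded at $x_0'$) together with the resolvent identity to show that $\mathfrak H(\cdot)^{-1}\nabla f(\cdot)$ moves by $(x_0'-x_0)+\mathcal E$ with $\|\mathcal E\|\ll\|x_0'-x_0\|$, and then compare the resulting lower bound to $\nu_1^{-8}(\mu_{x_0}^2+\mu_{x_0'}^2)\le 5\nu_1^{-8}\mu_{x_0}^2$ using $\mu_{x_0}<\rho_0\nu_1^9/20$. The only cosmetic difference is that the paper fixes the displacement as $\|y\|=\nu_1\,\mu_{x_0}$ from the outset, whereas you take $\|y_*\|=\epsilon_0 r$ and then shrink the constant $c(n)$ in $\nu_1$ relative to the constant of \cref{lem:parametrization1}; both choices give a displacement of the same order and lead to the same final inequality.
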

\begin{proof}
  Choose $ y\in \R^{n-1} $ such that $ \norm{y}=\nu_1\norm{\nabla
    f(x_0)} $ and let $ x_0'=\varphi(g(y),y;x_0) $. Using \cref{eg:g-bound} we have
  \begin{equation*}
	\norm{x_0'-x_0}\le |g(y)|+\norm{y}\le \mu_{x_0}^{-1}C(n)M(2)\norm{y}^2+\norm{y}
    \le 2\nu_1\norm{\nabla f(x_0)}\ll r,
  \end{equation*}
  provided $ \nu_1 $ is small enough.
  Then
  \begin{equation*}
	\norm{\fH(x_0')-\fH(x_0)}\le C(n)M(3)\norm{x_0'-x_0}\le \norm{\nabla f(x_0)}\le \frac{\nu_{x_0}}{2}
  \end{equation*}
  (recall that $ \norm{\nabla f(x_0)}<\rho_0\nu_1^{10} $, $ \rho_0<1 $)
  and therefore $ \fH(x_0')\ge \frac{\nu_{x_0}}{2}I $ and $
  \norm{\fH(x_0')^{-1}}\le 2\nu_{x_0}^{-1} $.
  We have
  \begin{multline*}
	\norm{\fH(x_0')^{-1}\nabla f(x_0')-\fH(x_0)^{-1}\nabla f(x_0)}\\
    \ge \norm{\fH(x_0')^{-1}(\nabla f(x_0)-\nabla f(x_0'))}
    -\norm{(\fH(x_0')^{-1}-\fH(x_0)^{-1})\nabla f(x_0)}.
  \end{multline*}
  On one hand using Taylor's formula applied to the gradient we get
  \begin{multline*}
	\norm{\fH(x_0')^{-1}(\nabla f(x_0)-\nabla f(x_0'))}
    \ge \norm{x_0'-x_0}-\norm{\fH(x_0')^{-1}}C(n)M(3)\norm{x_0'-x_0}^2\\
    \ge \norm{x_0'-x_0}-\nu_1^{-1}\norm{x_0'-x_0}^2\ge \frac{\norm{x_0'-x_0}}{2}
    \ge \frac{\nu_1\norm{\nabla f(x_0)}}{2}.
  \end{multline*}
  On the other hand
  \begin{multline*}
	\norm{(\fH(x_0')^{-1}-\fH(x_0)^{-1})\nabla f(x_0)}\\
    \le\norm{\fH(x_0')^{-1}}\norm{\fH(x_0)^{-1}}\norm{(\fH(x_0')-\fH(x_0)}\norm{\nabla f(x_0)}
    \le \nu_1^{-1}\norm{\nabla f(x_0)}^2.
  \end{multline*}
  Therefore
  \begin{multline*}
	\norm{\fH(x_0')^{-1}\nabla f(x_0')-\fH(x_0)^{-1}\nabla f(x_0)}\\
    \ge \frac{\nu_1\norm{\nabla f(x_0)}}{2}-\nu_1^{-1}\norm{\nabla f(x_0)}^2
    \ge \frac{\nu_1\norm{\nabla f(x_0)}}{4}
    > 5\nu_1^{-8} \norm{\nabla f(x_0)}^2.
  \end{multline*}
  Since
  \begin{equation*}
	\norm{\nabla f(x_0)-\nabla f(x_0')}\le C(n)M(2)\norm{x_0'-x_0}\le 2C(n)M(2)\nu_1\norm{\nabla f(x_0)}
    \le \norm{\nabla f(x_0)},
  \end{equation*}
  we get that
  \begin{equation*}
	\nu_1^{-8}(\norm{\nabla f(x_0')}^2+\norm{\nabla f(x_0)}^2)\le 5\nu_1^{-8}\norm{\nabla f(x_0)}^2,
  \end{equation*}
  and the conclusion follows.
\end{proof}

We will use the following simple consequence of Taylor's formula. We
leave the proof as a simple exercise.
\begin{lemma}\label{lem:Morse7}  Let $f(x)$ be a smooth real function defined on
  $\{x\in \mathbb{R}^n:|x|<r_0\}$. Assume that
  \begin{gather*}
    f(0)=0,\quad \nabla f(0)=0,\\
    \mathfrak{H}(0)\ge \nu_0I,\quad \nu_0>0.
  \end{gather*}
  Let
  $M(k)=\max_{|\alpha|=k}\sup_x|\partial^\alpha f|$.
  Then for $|x|<\min(r_0,c(n)\nu_0M(3)^{-1})$, with $c(n)$ a
  sufficiently small constant, we have
  \begin{gather*}
    \frac{\nu_0}{2}\|x\|^2\le f(x)\le (C(n)M(2)+1)\|x\|^2,\\
    \frac{\nu_0}{2}\|x\|\le \|\nabla f(x)\|\le (C(n)M(2)+1)\|x\|,\\
    \mathfrak{H}(x)\ge \frac{\nu_0}{2}I.
  \end{gather*}
\end{lemma}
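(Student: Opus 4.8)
The plan is to derive all three estimates from Taylor's formula about the origin, with the cubic remainder controlled by $M(3)$, carrying out the steps in the order: Hessian, gradient, value of $f$. The small constant $c(n)$ will be fixed once at the end, small enough that the smallness hypothesis of each of the three steps holds simultaneously; throughout I will use the trivial observation that $\nu_0\le\norm{\fH(0)}\le C(n)M(2)$, which is what lets the two upper bounds be written in the stated form $(C(n)M(2)+1)\norm{x}$.

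First I would estimate the Hessian. For each pair $i,j$ one writes $\partial_i\partial_j f(x)-\partial_i\partial_j f(0)=\int_0^1\sum_k\partial_k\partial_i\partial_j f(tx)\,x_k\,dt$ and bounds every third derivative by $M(3)$, which gives $\norm{\fH(x)-\fH(0)}\le C(n)M(3)\norm{x}$. Hence, once $\norm{x}<c(n)\nu_0 M(3)^{-1}$ with $c(n)$ sufficiently small, this difference is at most $\nu_0/2$, so $\fH(x)\ge\fH(0)-\tfrac{\nu_0}{2}I\ge\tfrac{\nu_0}{2}I$, which is the third assertion. Applying the same bound along the whole segment $[0,x]$ gives $\fH(tx)\ge\tfrac{\nu_0}{2}I$ and $\norm{\fH(tx)}\le\norm{\fH(0)}+\tfrac{\nu_0}{2}\le C(n)M(2)$ for every $t\in[0,1]$, and these two facts drive the remaining steps.

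Next I would handle the gradient: since $\nabla f(0)=0$ we have $\nabla f(x)=\int_0^1\fH(tx)x\,dt$, so the lower bound follows from $\norm{\nabla f(x)}\,\norm{x}\ge\langle\nabla f(x),x\rangle=\int_0^1\langle\fH(tx)x,x\rangle\,dt\ge\tfrac{\nu_0}{2}\norm{x}^2$, and the upper bound from $\norm{\nabla f(x)}\le\int_0^1\norm{\fH(tx)}\,\norm{x}\,dt\le(C(n)M(2)+1)\norm{x}$. Finally, for $f$ itself, using $f(0)=0$ and $\nabla f(0)=0$, Taylor's formula gives $f(x)=\int_0^1(1-t)\langle\fH(tx)x,x\rangle\,dt$, and the two bounds on $\langle\fH(tx)x,x\rangle$ obtained above yield the quadratic upper and lower bounds on $f(x)$, the lower constant being recovered by shrinking $c(n)$ so that $\fH(tx)$ stays as close as one likes to $\fH(0)\ge\nu_0 I$ on the segment. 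I do not expect any genuine obstacle: the computation is a routine first-order Taylor estimate for $\fH$ and $\nabla f$ together with a second-order one for $f$. The only point requiring a little care is to fix $c(n)$ uniformly over the three steps and to keep $M(2)$ and $M(3)$ in their correct roles — the modulus of continuity of $\fH$ being governed by $M(3)$, the sizes of $\fH$, $\nabla f$, and $f$ by $M(2)$.
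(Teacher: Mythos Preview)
Your approach is correct and is exactly what the paper intends: the authors write ``We will use the following simple consequence of Taylor's formula. We leave the proof as a simple exercise'' and give no argument of their own, so there is nothing to compare beyond confirming that Taylor's formula does the job, which your outline does cleanly.

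One small remark: with your integral representation $f(x)=\int_0^1(1-t)\langle\fH(tx)x,x\rangle\,dt$ and the bound $\fH(tx)\ge\tfrac{\nu_0}{2}I$, you get $f(x)\ge\tfrac{\nu_0}{4}\norm{x}^2$; shrinking $c(n)$ gives $\fH(tx)\ge(1-\epsilon)\nu_0 I$ and hence $f(x)\ge\tfrac{(1-\epsilon)\nu_0}{2}\norm{x}^2$, which is as close to $\tfrac{\nu_0}{2}$ as one likes but never equal. This is a harmless imprecision in the lemma's stated constant rather than a gap in your argument, and nothing downstream in the paper depends on the distinction.
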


Now we prove Proposition~\ref{prop:levelsetshifts}.
\begin{proof}[Proof of Proposition~\ref{prop:levelsetshifts}]
  Let $ 0<\norm{x_0}<\rho $, $ E_0=f(x_0) $. Using
  \cref{lem:Morse7} we have
  \begin{equation}\label{eq:grad-x0}
	\frac{\nu_0}{2}\norm{x_0}\le \norm{\nabla f(x_0)}\le (C(n)M(2)+1)\norm{x_0}\le \frac{r_0}{2}\nu_1^{9}
    \ll 1.
  \end{equation}
  Let $ \mu_{x_0}=\norm{\nabla f(x_0)} $, 
  \begin{equation*}
	\tilde\rho_1=\tilde c(n)\min(r_0,\mu_{x_0}M(2)^{-1}),\quad
    \tilde r=\tilde c(n)\tilde \rho_1,\quad
    \tilde r'=\tilde c(n)\rho_1\min(1,\mu_{x_0}),
  \end{equation*}
  with $ \tilde c(n) $ standing for the $ c(n) $ constant from \cref{lem:parametrization1}.
  By \cref{lem:parametrization1}, for any $
  (w,E)\in \C^{n-1}\times \C  $, $ |w|<\tilde r $, $ |E-E_0|<\tilde r' $, the equation
  \begin{equation*}
	f(\varphi(\xi,w;x_0))=E
  \end{equation*}
  has a unique solution $ \xi=g(w,E) $ in $ |\xi|<\tilde \rho_1 $ which is an
  analytic function of $ w,E $. Note that by the smallness of $ x_0 $ we have
  \begin{gather}
	\tilde\rho_1=\tilde c(n)\mu_{x_0}M(2)^{-1},\quad
    \tilde r=\tilde c(n)^2\mu_{x_0}M(2)^{-1},\label{eq:r-tilde}
    \quad \tilde r'=\tilde c(n)^2 \mu_{x_0}^2 M(2)^{-1},\\
    r\ll \tilde r,\quad  r^2\ll \tilde r'\label{eq:r-r-tilde}
  \end{gather}
  (we used the fact that $ M(2)\ge c(n)\nu_0 $).    
  By \cref{eg:g-bound},
  \begin{multline}\label{eq:x-bound}
	\norm{\varphi(g(w,E),w;x_0)-x_0}\le |g(w,E)|+\norm{w}\\
    \le 2\mu_{x_0}^{-1}(r^2+C(n)M(2)r^2)+r\sqrt{n-1}
    <\frac{1}{2}\nu_1^{-1}r=\frac{1}{2}\norm{x_0},
  \end{multline}
  for any $ |w|<r $, $ |E-E_0|<r^2 $.
  Now part (I) follows by setting $ x(w,E)=\varphi(g(w,E),w;x_0) $.

  We first prove (II) with $ E=E_0 $. Let $ 0<\norm{h}<\rho $. We claim that there exists $ y_0
  $, $ \|y_0\|\ll \tilde r $, such that
  \begin{equation*}
	|f(x(y_0,E_0)+h)-E_0|=|f(\varphi(g(y_0),y_0;x_0)+h)-f(x_0)|\ge \norm{h}^8\norm{x_0}.
  \end{equation*}
  From the claim (also  note that $
  |f(x(w,E)+h)-E|\ll 1 $), \cref{lem:high_cart}, and \cref{lem:Cartan-measure} it follows that for $ H\gg 1
  $ we have
  \begin{multline*}
	\mes \{ y\in \R^{n-1}, |y|<r : \log|f(x(y,E_0)+h)-E_0|\ge C(n) H\log(\norm{h}\norm{x_0}) \}\\
    \le C(n)\tilde r^{n-1}\exp(-H^{\frac{1}{n-1}})
    \le (\nu_1^{-2}r)^{n-1}\exp(-H^{\frac{1}{n-1}})
  \end{multline*}
  as stated in \cref{prop:levelsetshifts}
  (recall \cref{eq:grad-x0},
  \cref{eq:r-tilde}, \cref{eq:r-r-tilde}). Now we check the claim.
  Let $ x_1=x_0+h $, $ \mu_{x_1}=\norm{\nabla f(x_1)} $. If
  $ |f(x_1)-f(x_0)|>\norm{h}^8\norm{x_0} $, the claim holds with $ y_0=0
  $. Suppose
  \begin{equation*}
    |f(x_1)-f(x_0)|\le \norm{h}^8\norm{x_0}\qquad\text{and}\qquad
    \norm{\nabla f(x_1)-\lambda \nabla f(x_0)}>\norm{h}^2,\quad
    \lambda
    =\frac{\langle \nabla f(x_0),\nabla f(x_1) \rangle}{\langle \nabla f(x_0),\nabla f(x_0) \rangle}.
  \end{equation*}
  Then a direct computation yields
  \begin{equation*}
	\langle\nabla f(x_1),\nabla f(x_0)\rangle^2\le
    (1-\delta_0^2)\|\nabla f(x_1)\|^2\|\nabla f(x_0)\|^2,\quad
    \delta_0=\frac{\norm{h}^4}{\mu_{x_1}^2}.
  \end{equation*}
  Note that
  \begin{equation*}
	\norm{\nabla f(x_1)}\ge \norm{\nabla f(x_1)-\lambda \nabla f(x_0)}>\norm{h}^2.
  \end{equation*}
  We choose a small enough constant $ c(n) $ such that
  \cref{lem:deviation1} applies with
  \begin{equation*}
	\tilde \rho= c(n)\mu M(2)^{-1}\delta_0^2,\quad \mu=\min(\mu_{x_0},\mu_{x_1})
  \end{equation*}
  instead of $ \rho $, $ \rho_0=r_0/2 $, $ \tilde r $ instead of $ r $, and $
  \delta_0 $ as above. 
  Applying \cref{lem:deviation1} we get that there exists $ y $, $
  \|y\|\le \tilde \rho \ll \tilde r $, such that
  \begin{equation*}
	|f(\varphi(g(y),y;x_0)+h)-f(x_1)|\ge \frac{1}{2}\mu_{x_1}\delta_0^2 \tilde \rho
    \ge c(n)M(2)^{-1}\mu_{x_1}\mu \delta_0^4  
    \ge 2\norm{h}^{8} \norm{x_0}.
  \end{equation*} 
  We used \cref{eq:grad-x0} and the fact that
  \begin{equation*}
	\norm{\nabla f(x_1)}\le (C(n)M(2)+1)\norm{x_0+h}\le r_0\nu_1^9\ll 1.
  \end{equation*}
  Since $ |f(x_1)-f(x_0)|\le \norm{h}^8\norm{x_0} $, the claim follows with $ y_0=y $.

  We are left with the case when
  \begin{equation*}
	|f(x_1)-f(x_0)|\le \norm{h}^8\norm{x_0}\quad \text{and} \quad
    \norm{\nabla f(x_1)-\lambda \nabla f(x_0)}\le \norm{h}^2. 
  \end{equation*}
  Note that by \cref{lem:Morse7}, $ \fH(x_0')\ge \frac{\nu_0}{2}I $
  for any $ \norm{x_0'-x_0}<\tilde r $. Choosing
  sufficiently small constants  $ c(n) $ we can apply
  \cref{lem:deviation1opposite3} and \cref{lem:Morse10} with the same
  $ \nu_1 $ as in \cref{prop:levelsetshifts}. Furthermore, we can
  apply \cref{lem:deviation1opposite3} with any $
  \norm{x_0'-x_0}<\tilde r $ instead of $ x_0 $.  
  \cref{lem:deviation1opposite3} and \cref{lem:Morse10} imply that
  there exists
  \begin{equation*}
	\norm{x_0'-x_0}\ll \tilde r,\quad f(x_0')=f(x_0),
    \quad x_0'=\varphi(g(y'),y';x_0),\ \norm{y'}\ll \tilde r,
  \end{equation*}
  such that
  \begin{equation*}
	\norm{h+2\fH(x_0')^{-1}\nabla f(x_0')}>\nu_1^{-8}\norm{\nabla f(x_0')}^2.
  \end{equation*}
  \cref{lem:deviation1opposite3} (with $ x_0' $ instead of $ x_0 $)
  implies that
  \begin{equation*}
	|f(x_1')-f(x_0')|> \norm{h}^3\quad \text{or} \quad
    \norm{\nabla f(x_1')-\lambda' \nabla f(x_0')}> \norm{h}^2,
  \end{equation*}
  with
  \begin{equation*}
	x_1'=x_0'+h,\quad
    \lambda'
    =\frac{\langle \nabla f(x_0'),\nabla f(x_1') \rangle}{\langle \nabla f(x_0'),\nabla f(x_0') \rangle}
  \end{equation*}
  If $ |f(x_1')-f(x_0')|> \norm{h}^3 $, the claim holds with $ y_0=y'
  $. If $ \norm{\nabla f(x_1')-\lambda' \nabla f(x_0')}> \norm{h}^2 $,
  the reasoning above, based on \cref{lem:deviation1}, implies that
  there exists $ \norm{x_0''-x_0'}\le 2\tilde \rho'\ll \tilde r $,
  \begin{equation*}
	\tilde \rho'=c(n)\mu'M(2)^{-1}(\delta_0')^2,
    \quad \mu'=\min(\mu_{x_0'},\mu_{x_1'},\mu_{x_0}),
    \quad (\delta_0')^2=\frac{\norm{h}^4}{\mu_{x_1'}^2},
  \end{equation*}
  such that $ f(x_0'')=f(x_0')=f(x_0) $ and
  \begin{equation*}
	|f(x_0''+h)-f(x_0'+h)|\ge \frac{1}{2}\mu_{x_1'}(\delta_0')^2\tilde \rho'\ge 2\norm{h}^8\norm{x_0}.
  \end{equation*}
  Note that we added $ \mu_{x_0} $ to the definition of $ \mu' $ to
  ensure $ \tilde \rho'\ll \tilde r $, and we used the fact that $
  \norm{x_0'}\ge \norm{x_0}/2 $. 
  We now have that either
  \begin{equation*}
	|f(x_0'+h)-f(x_0')|> \norm{h}^8\norm{x_0}\quad\text{or}\quad|f(x_0''+h)-f(x_0'')|>\norm{h}^8\norm{x_0}.
  \end{equation*}
  Since $ \norm{x_0''-x_0}\ll \tilde r $, \cref{lem:parametrization1}
  implies that there exists $ y'' $, $ \norm{y''}\ll \tilde r $, such
  that $ x_0''=\varphi(g(y''),y'';x_0) $. Therefore the claim holds
  with either $ y_0=y' $ or $ y_0=y'' $.

  Next we consider part (II) with $ |E-E_0|<r^2 $. Let $
  x_0'=\varphi(g(0,E),0;x_0) $. Repeating the above argument with $
  x_0' $ instead of $ x_0 $ we get that there exists $ y_0' $,
  \begin{equation*}
	\|y_0'\|\ll \tilde c(n)^2\mu_{x_0'}M(2)^{-1}
  \end{equation*}
  (recall that $ \tilde r= \tilde c(n)^2\mu_{x_0}M(2)^{-1}$) such that
  \begin{equation*}
	|f(\varphi(g(y_0';x_0'),y_0';x_0')+h)-E|\ge \norm{h}^8\norm{x_0'}.
  \end{equation*}
  We used $ g(y;x_0') $ to denote the analogue of $ g(y) $ obtained by
  applying \cref{lem:parametrization1} with $ x_0' $ replacing $ x_0
  $. By \cref{eq:x-bound} we have $ \norm{x_0'}\ge \norm{x_0}/2 $. Let
  $ x_0''=\varphi(g(y_0';x_0'),y_0';x_0') $. Note that $
  f(x_0'')=f(x_0')=E $. We have
  \begin{equation*}
	\norm{x_0''-x_0}\le \norm{x_0'-x_0}+|g(y_0';x_0')|+\norm{y_0'}.
  \end{equation*}
  Using \cref{eg:g-bound} we get
  \begin{equation*}
	\norm{x_0'-x_0}=|g(0,E)|\le 2\mu_{x_0}^{-1}|E-E_0|
    \le 2\left( \nu_0\norm{x_0}/2 \right)^{-1}r^2\le r\ll \tilde r
  \end{equation*}
  and
  \begin{equation*}
	|g(y_0';x_0')|\le \mu_{x_0'}^{-1}C(n)M(2)\norm{y_0'}^2
    \le \mu_{x_0'}^{-1}C(n)M(2)\tilde c(n)^2\mu_{x_0'}M(2)^{-1}\norm{y_0'}
    \le \norm{y_0'},
  \end{equation*}
  provided $ \tilde c(n) $ is made small enough.
  Since
  \begin{equation*}
	|\mu_{x_0'}-\mu_{x_0}|
    \le C(n)M(2)\norm{x_0'-x_0}\le 
    C(n)M(2)\left( \nu_0\norm{x_0}/2 \right)^{-1}r^2
    \le \nu_0\norm{x_0}/2\le \mu_{x_0},
  \end{equation*}
  we have
  \begin{equation*}
	\norm{y_0'}\ll \tilde c(n)^2\mu_{x_0'}M(2)^{-1}\le 2\tilde r.
  \end{equation*}
  Therefore we have
  \begin{equation*}
	\norm{x_0''-x_0}\ll \tilde r.
  \end{equation*}
  By \cref{lem:parametrization1} there exists $ y_0 $, $ \norm{y_0}\ll
  \tilde r$,  such that $ x_0''=\varphi(g(y_0,E),y_0;x_0) $. Since
  \begin{equation*}
	|f(\varphi(g(y_0,E),y_0;x_0)+h)-E|\ge \norm{h}^8\norm{x_0}/2,
  \end{equation*}
  the conclusion follows as above from Cartan's estimate.

  Next we prove (III) with $ E=E_0 $. We will argue that there exists
  $ y_0 $, $ \norm{y_0}\ll \tilde r $, such that
  \begin{equation}\label{eq:directional-claim}
	\log|\langle f(x(y_0,E_0)),h_0 \rangle|\gtrsim \nu_1\norm{x_0}.
  \end{equation}
  Recall that $ x(y,E_0)=\varphi(g(y),y;x_0) $. If $ |\langle
  \nabla f(x_0),h_0 \rangle|\ge \norm{x_0}^2 $, we take $ y_0=0 $. We
  just need to deal with the case
  \begin{equation}\label{eq:directional-case}
	|\langle \nabla f(x_0),h_0 \rangle|<\norm{x_0}^2.
  \end{equation}
  Let $ x_0'=\varphi(g(y),y;x_0) $, with $ y $ to be specified later. By Taylor's formula
  \begin{multline*}
	|\langle \nabla f(x_0'),h_0 \rangle- \langle \nabla f(x_0),h_0 \rangle|
    \ge |\langle \fH(x_0)(x_0'-x_0),h_0) \rangle|-C(n)M(3)\norm{x_0'-x_0}^2\\
    = |\langle (x_0'-x_0),\fH(x_0)h_0) \rangle|-C(n)M(3)\norm{x_0'-x_0}^2.
  \end{multline*}
  Using the notation from \cref{eq:fp-fq} we write
  \begin{equation*}
	\fH(x_0)h_0=\alpha_0\fn_{x_0}+\fp(\fH(x_0)h_0;x_0)
  \end{equation*}
  and we choose $ y $ such that $ \fq(y;x_0)=\rho \fp(\fH(x_0)h_0;x_0)
  $, $ \rho=\nu_1^2\norm{x_0} $. Note that $ \norm{y}\le r\ll \tilde r
  $ and
  \begin{equation*}
	\langle (x_0'-x_0),\fH(x_0)h_0) \rangle
    =\alpha_0 g(y)+\langle \fq(y;x_0),\fp(\fH(x_0)h_0;x_0) \rangle
    =\alpha_0 g(y)+\rho \norm{\fp(\fH(x_0)h_0;x_0)}^2.
  \end{equation*}
  Using \cref{eg:g-bound} it follows that
  \begin{multline*}
	|\langle \nabla f(x_0'),h_0 \rangle- \langle \nabla f(x_0),h_0 \rangle|
    \ge \rho \norm{\fp(\fH(x_0)h_0;x_0)}^2-|\alpha_0 g(y)|-C(n)M(3)(|g(y)|^2+\norm{y}^2)\\
    \ge \frac{\rho}{2}\norm{\fp(\fH(x_0)h_0;x_0)}^2
  \end{multline*}
  (note that $ |\alpha_0|\le \norm{\fH(x_0)h_0}\le C(n)M(2) $).
  We claim that $ \norm{\fp(\fH(x_0)h_0;x_0)}\ge \norm{x_0} $. We
  argue by contradiction. Assume that
  \begin{equation*}
    \norm{\fH(x_0)h_0-\alpha_0\fn_{x_0}}=\norm{\fp(\fH(x_0)h_0;x_0)}<\norm{x_0}.
  \end{equation*}
  By Taylor's formula (recall that $ \nabla f(0)=0 $)
  \begin{equation*}
	\norm{\nabla f(x_0)-\fH(x_0)x_0}\le C(n)M(3)\norm{x_0}^2.
  \end{equation*}
  So, using \cref{eq:grad-x0} we have
  \begin{equation*}
	\norm{\fn_{x_0}-\mu_{x_0}^{-1}\fH(x_0)x_0}\le \mu_{x_0}^{-1}C(n)M(3)\norm{x_0}^2
    \le \nu_1^{-1}\norm{x_0},
  \end{equation*}
  and using \cref{eq:directional-case} we have
  \begin{equation*}
	|\langle \fH(x_0)h_0,x_0 \rangle|= |\langle \fH(x_0)x_0,h_0 \rangle|\le (C(n)M(3)+1)\norm{x_0}^2
    \le \nu_1^{-1}\norm{x_0}^2.
  \end{equation*}
  Now we have
  \begin{equation*}
	\norm{\fH(x_0)h_0-\alpha_0\mu_{x_0}^{-1}\fH(x_0)x_0}\le (1+\alpha_0\nu_1^{-1})\norm{x_0}
    \le \nu_1^{-2}\norm{x_0}.
  \end{equation*}
  and therefore
  \begin{multline}\label{eq:directional-estimate-1}
	|\langle \alpha_0\mu_{x_0}^{-1}\fH(x_0)x_0,x_0 \rangle|
    \le |\langle \fH(x_0)h_0-\alpha_0\mu_{x_0}^{-1}\fH(x_0)x_0,x_0 \rangle|
    +|\langle \fH(x_0)h_0,x_0 \rangle|\\
    \le \nu_1^{-2}\norm{x_0}^2+\nu_1^{-1}\norm{x_0}^2\le 2\nu_1^{-2}\norm{x_0}^2.
  \end{multline}
  On the other hand
  \begin{equation}\label{eq:directional-estimate-2}
	|\langle \alpha_0\mu_{x_0}^{-1}\fH(x_0)x_0,x_0 \rangle|
    \ge |\alpha_0|\mu_{x_0}^{-1}\frac{\nu_0}{2}\norm{x_0}^2\ge \nu_1^3 \norm{x_0}.
  \end{equation}
  We used \cref{lem:Morse7}, \cref{eq:grad-x0}, and the fact that
  \begin{equation*}
	|\alpha_0|^2=\norm{\fH(x_0)h_0}^2-\norm{\fp(\fH(x_0)h_0;x_0)}^2
    \ge (\nu_0/2)^{-2}-\norm{x_0}^2\ge \nu_0^{-2}
  \end{equation*}
  (recall that $ \norm{x_0}<\nu_1^{11}\ll \nu_0 $). The estimates
  \cref{eq:directional-estimate-1} and
  \cref{eq:directional-estimate-2} are incompatible due to the
  smallness of $ x_0 $. Therefore we have $
  \norm{\fp(\fH(x_0)h_0;x_0)}\ge \norm{x_0} $ and
  \begin{equation*}
	|\langle \nabla f(x_0'),h_0 \rangle- \langle \nabla f(x_0),h_0 \rangle|
    \gtrsim \rho\norm{x_0}^2=\nu_1^2\norm{x_0}^3.
  \end{equation*}
  This shows that \cref{eq:directional-claim} must hold either with $
  y_0=0 $ or $ y_0=y $.   From \cref{eq:directional-claim} (also  note that $
  \norm{\nabla f(x(w,E))}\ll 1 $),
  \cref{lem:high_cart}, and \cref{lem:Cartan-measure} it follows that for $ H\gg 1
  $ we have
  \begin{multline*}
	\mes \{ y\in \R^{n-1}, |y|<r : \log|\langle \nabla f(x(y,E_0)),h_0 \rangle|
    \ge C(n) H\log(\nu_1\norm{x_0}) \}\\
    \le C(n)\tilde r^{n-1}\exp(-H^{\frac{1}{n-1}})
    \le (\nu_1^{-2}r)^{n-1}\exp(-H^{\frac{1}{n-1}})
  \end{multline*}
  as stated in \cref{prop:levelsetshifts}. The case $ |E-E_0|<r^2 $
  follows from the case $ E=E_0 $ analogously to  the proof of (II).
\end{proof}

\section{Inductive Scheme for the Bulk of the Spectrum}\label{sec:bulk}

In this section we assume the same non-perturbative setting as in \cref{sec:basic-tools}.
We introduce five conditions such that once they hold at a large
enough initial scale they can be propagated to arbitrarily large
scales (see \cref{thm:D} below) and lead to the formation of an interval in
the spectrum, away from the edges (see \cref{thm:B} in \cref{sec:main-thm}).

For the statement of the conditions we need several
exponents. Let $ \sigma\ll \tau\ll 1 $ be as in (LDT). Set
$\delta=(\sigma')^{C_0}$, $\beta=(\sigma')^{C_1}$, $
\mu=(\sigma')^{C_2} $ with $ 0< \sigma'\le \sigma $, and
$ C_0,C_1,C_2>1 $,  satisfying the following relations:
\begin{equation*}
  C_1+1<C_2<C_0<2C_1.
\end{equation*}
Then we have
\begin{equation}\label{eq:relations}
  \beta^2\ll \delta\ll \mu \ll \beta\sigma\ll \beta\ll \sigma,
\end{equation}
with the constants implied by $ \ll $ being as large as we wish,
provided we take $ \sigma'\le c(C_0,C_1,C_2)\sigma $ small enough. 
The specific choice of the exponents $\delta,\beta,\mu$ is not
important. However, to carry out the induction with our set-up we will
need that \cref{eq:relations} holds.

Let  $ \gamma>0 $. Given an integer $ s\ge 0 $, let
\begin{equation*}
  E_s\in \R,\quad  N_s\in \N,\quad r_s:=\exp(-N_s^{\delta}).
\end{equation*}
The inductive conditions are as follows.

\medskip
\noindent\cond{A}
There exist integers $|N'_s-N_s|,|N''_s-N_s|<N_s^{1/2}$,
a map $ x_s:\Pi_s\to \R^d $,
\begin{equation*}
  \Pi_s=\cI_{s}\times (E_s-r_s,E_s+r_s),\quad \cI_{s}=\phi_s+(-r_s,r_s)^{d-1},
\end{equation*}
and $k_s $ such that
for any $(\phi,E)\in \Pi_{s}$ we have
  \begin{gather}\label{eq:mapx1}
   E_{k_s}^{[-N'_{s},N''_{s}]}(x_{s}(\phi,E))=E,\\
   \label{eq:mapx2} \big|E_j^{[-N'_{s},N''_s]}(x_{s}(\phi,E))-E\big|>
   \exp(-N^{\delta}_s),\quad j\neq k_s.
  \end{gather}
  To simplify notation we suppress $k_s$ and use
 $E^{[-N'_{s},N''_{s}]},\psi^{[-N'_s,N''_s]}$ instead.

\medskip 
\noindent\cond{B} The map $x_{s}(\phi,E)$ extends analytically on the domain
\begin{equation}\label{eq:mapx5}
  \mathcal{P}_{s}=\{(\phi,E)\in \mathbb{C}^d:\dist((\phi,E),\Pi_s)<r_s\}
\end{equation}
(the distance is with respect to the sup-norm)
and
\begin{equation}\label{eq:mapx6}
  x_s(\cP_s)\subset \T_{\rho/2}^d.
\end{equation}

\medskip
\noindent\cond{C} For each $(\phi,E)\in \Pi_{s}$, 
\begin{equation}\label{eq:mapx3}
  |\psi^{[-N'_s,N''_s]}(x_{s}(\phi,E),n)|\le \exp(-\gamma | n|/10), \quad |n|\ge N_s/4.
\end{equation}

\medskip
\noindent\cond{D}
Define
\begin{equation}\label{eq:fT_0}
  \mathfrak{T}_{s}=\{n\omega:0\le |n|\le 3N_s/2\}.
\end{equation}
Take an arbitrary $h\in \mathbb{R}^d$ with $ \dist (h,
\mathfrak{T}_{s})\ge \exp (-N_s^\mu) $. Then for any $ E\in
(E_s-r_s,E_s+r_s) $,
\begin{multline*}
  \mes \Big\{\phi\in \cI_{s}:\max_{|n'|,|n''|< N_s^{1/2}}
  \dist (\spec  H_{[-N_s+n',N_s+n'']}(x_{s}(\phi,E)+h),E)<\exp(-N_s^\beta/2)\Big\}\\
  <\exp (-N_s^{2\delta})
\end{multline*}

\medskip
\noindent\cond{E} Take an arbitrary unit vector $h_0\in \mathbb{R}^d$.
Then for any $E\in(E_s-r_s,E_s+r_s)$,
\[
  \mes \{\phi\in \cI_{s}:\log |\langle\nabla E^{[-N'_s,N''_s]}(x_{s}(\phi,E)),h_0\rangle|
  <-N_s^{\mu}/2\}<\exp (-N_s^{2\delta}).
\]

\begin{remark}\label{rem:A-E-comments}
  (a) From the proof of \cref{prop:inductive4} below it will become
  clear that in \refcond{A} it would be enough to have separation of
  eigenvalues by $ \exp(-N_s^\beta) $. However, it will also be clear
  that even if we have separation by $ \exp(-N_0^\beta) $, for $ s=0 $, we
  will still get separation by $ \exp(-N_s^\delta) $, for $ s\ge 1 $. 
  
  \smallskip\noindent
  (b) The fact that condition \refcond{B} also increases the domain of $ x_s $  in $
  \R^d $ is not accidental. This buffer around the original domain is convenient for
  Cauchy estimates and for avoiding problems with ``over-shooting'' the
  domain of $ x_s $ in the $ E $ variable.

  \smallskip\noindent (c) The particular choices of the
  $ \exp(-N_s^\beta/2) $ cutoff in \refcond{D} and of the
  $ -N_s^\mu/2 $ cutoff in \refcond{E} are made out of technical convenience.
  Specifically, the first choice allows us to have \cref{lem:sa} with a
  $ \exp(-N_s^\beta) $ cutoff, and the second choice spares us one
  application of Cartan's estimate in \cref{lem:inductive9}.

  \smallskip\noindent
  (d) For the measure estimate from \refcond{D} to be possible we need
  that the intervals $ h+[-N_s+n',N_s+n''] $ do not overlap the
  localization centre from \refcond{C}. This is the reason for the
  choice of $ \fT_s $.

  \smallskip\noindent
  (e) The reason for working with non-symmetric intervals $
  [-N_s',N_s''] $, as well as for the set being used in \refcond{D} is
  explained in \cref{rem:non-symmetric} below.
\end{remark}

To simplify notation, the dependence of the constants in this section
on the choice of the exponents $ \delta,\beta,\mu $ will be kept
implicit as part of the dependence on the parameters $ a,b $ of the
Diophantine condition.

\begin{thmd}\label{thm:D}
  Assume the notation of the inductive conditions. Let $ E_0\in \R $, and assume
  $ L(E)>\gamma>0 $ for $ E\in(E_0-2r_0,E_0+2r_0) $. Let
  $ N_0\ge 1 $, $ N_s=\lfloor N_{s-1}^A\rfloor $, $ A=\beta^{-1} $,
  $ s\ge 1 $. If $ N_0\ge (B_0+ S_V+ \gamma^{-1})^C $, $
  C=C(a,b,\rho) $, and conditions \refcond{A}-\refcond{E}
  hold with $ s=0 $, then for any $s\ge 1 $
  and $ E_s\in (E_{s-1}-r_{s-1},E_{s-1}+r_{s-1}) $ the conditions
  \refcond{A}-\refcond{E} also hold
  with $ \cI_{s}\Subset \cI_{s-1} $. Furthermore, for any $ (\phi,E)\in \Pi_{s} $,
  \begin{gather}
    \label{inductive50E}
    |x_{s}(\phi,E)-x_{s-1}(\phi,E)|<\exp(-\gamma N_{s-1}/30),\\
    \label{inductive51EE}
    \|\psi^{[-N'_s,N''_s]}(x_{s}(\phi,E),\cdot)-\psi^{[-N'_{s-1},N''_{s-1}]}(x_{s-1}(\phi,E),\cdot)\|<
    \exp(-\gamma N_{s-1}/40).
  \end{gather}
\end{thmd}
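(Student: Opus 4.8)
The plan is to argue by induction on $s$; the case $s=1$ uses only the hypotheses at $s=0$, and every subsequent step is verbatim the same, so I would fix $s$, assume \refcond{A}--\refcond{E} at scale $s-1$, fix $E_s\in(E_{s-1}-r_{s-1},E_{s-1}+r_{s-1})$, and produce everything at scale $s$. The first and decisive move is the choice of the cube $\cI_s\Subset\cI_{s-1}$ of side $2r_s$. The obstruction here is that each scale-$(s-1)$ exceptional set that must be avoided has measure only $\lesssim\exp(-N_{s-1}^{2\delta})$, which is far larger than $|\cI_s|=(2r_s)^{d-1}=(2\exp(-N_s^{\delta}))^{d-1}$ (recall $N_s=\lfloor N_{s-1}^{1/\beta}\rfloor$ with $\beta<1/2$), so no ``pick a good point'' argument can work directly. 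Instead I would use that, after replacing $V$ and $x_{s-1}$ by polynomial approximants of degree $N_{s-1}^{O(1)}$ as in \cref{eq:V-tilde}, all these sets become semialgebraic of degree $N_{s-1}^{O(1)}$; by \cref{lem:sa-covering} each lies in $\exp(N_{s-1}^{c\delta})$ balls of radius $\exp(-N_{s-1}^{c'\delta})$, and \cref{eq:relations} is calibrated precisely so that the union of all of them, thickened by $r_s\sqrt{d-1}$, still has measure $\ll|\cI_{s-1}|$; hence a suitable $\cI_s$ exists. The exceptional sets to include are: for each arithmetic shift $m\omega$ with $3N_{s-1}/2<|m|\lesssim N_s$, the scale-$(s-1)$ exceptional set of \refcond{D} with $h=m\omega$ (there are $\lesssim N_s$ of these, and this is exactly where a naive union bound would fail); the scale-$(s-1)$ exceptional set of \refcond{E} in a fixed coordinate direction; and the analogues of these along a fine energy net in $(E_s-2r_s,E_s+2r_s)$, using Lipschitz dependence on $E$.

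Once $\cI_s$ is fixed, \refcond{A}--\refcond{C} and the estimates \cref{inductive50E}, \cref{inductive51EE} should follow from the stabilization machinery. For $(\phi,E)\in\Pi_s$: since $\phi\in\cI_s$ and \cref{eq:vecdiophant} gives $\dist(m\omega,\mathfrak{T}_{s-1})\ge a(2N_s)^{-b}>\exp(-N_{s-1}^{\mu})$ for $3N_{s-1}/2<|m|\lesssim N_s$, \refcond{D} at scale $s-1$ with $h=m\omega$ together with $\spec H_{m+[a,b]}(x)=\spec H_{[a,b]}(x+m\omega)$ produces intervals $J_m$ with $m\in J_m$, $\dist(m,\partial J_m)\ge N_{s-1}-N_{s-1}^{1/2}$, $|J_m|\le10N_{s-1}$ and $\dist(\spec H_{J_m}(x_{s-1}(\phi,E)),E)\ge\exp(-N_{s-1}^{\beta}/2)$. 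These are exactly the data of \cref{prop:localization} and \cref{prop:stabilization} with $N_{s-1},N_s,x_{s-1}(\phi,E),E$ in the roles of $N_0,N,x_0,E_0$, while hypotheses (i)--(iii) of \cref{prop:stabilization} are read off from \refcond{A} and \refcond{C} at scale $s-1$. Then \cref{prop:stabilization} supplies an eigenpair $E_k^{[-N'_s,N''_s]},\psi_k^{[-N'_s,N''_s]}$ at every $x$ with $|x-x_{s-1}(\phi,E)|<\exp(-2N_{s-1}^{\beta})$, with gap $>\tfrac18\exp(-N_{s-1}^{\beta})$ --- which already exceeds $\exp(-N_s^{\delta})$ since $\delta\gg\beta^2$, so \cref{eq:mapx2} is automatic --- with decay $\exp(-\gamma|n|/10)$ for $|n|\ge3N_{s-1}/4\le N_s/4$ (this is \refcond{C}), and within $\exp(-\gamma N_{s-1}/20)$ of $E_{k_{s-1}}^{[-N'_{s-1},N''_{s-1}]}(x)$. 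Because $\cI_s$ also misses the scale-$(s-1)$ \refcond{E} set, $\|\nabla E_{k_{s-1}}^{[-N'_{s-1},N''_{s-1}]}(x_{s-1}(\phi,E))\|\ge\exp(-N_{s-1}^{\mu}/2)$, and by Feynman--Hellmann together with \cref{inductive51EE} the same holds, up to $\exp(-\gamma N_{s-1}/40)$, for $\nabla E_k^{[-N'_s,N''_s]}$; the analytic implicit function theorem then solves $E_k^{[-N'_s,N''_s]}(x)=E$ along a gradient direction, defining $x_s(\phi,E)$ with $|x_s-x_{s-1}|\lesssim\exp(-\gamma N_{s-1}/20+N_{s-1}^{\mu})<\exp(-\gamma N_{s-1}/30)$, which is \cref{inductive50E} and completes \refcond{A}. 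Condition \refcond{B} (analyticity on $\cP_s$ and the inclusion \cref{eq:mapx6}) should follow from the analytic implicit function theorem and \refcond{B} at scale $s-1$, the domains strictly shrinking; and \cref{inductive51EE} from \cref{prop:stabilization}(4) plus \cref{lem:eigenvector-stability} for the $x_s$--$x_{s-1}$ discrepancy, using the gap $\exp(-N_{s-1}^{\delta})$.

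It remains to propagate \refcond{D} and \refcond{E}, and this is where I expect the main obstacle. For \refcond{E}, fixing a unit $h_0$ and $E$, the map $\phi\mapsto\langle\nabla E^{[-N'_s,N''_s]}(x_s(\phi,E)),h_0\rangle$ is analytic on $\cI_s^{\C}$ and bounded by $C_\rho\norm{V}_\infty$; if its modulus is below $\exp(-N_s^{\mu}/2)$ at some $\phi$, then by Feynman--Hellmann, \cref{inductive50E}--\cref{inductive51EE} and $N_{s-1}^{\mu}\ll N_s^{\mu}$ that $\phi$ lies in the scale-$(s-1)$ \refcond{E} set, whose complement meets $\cI_s$ by construction, yielding a reference point with value $\ge\exp(-N_{s-1}^{\mu})$; \cref{lem:high_cart} with $M=O(1)$, $m=-N_{s-1}^{\mu}$, $H=N_s^{2\delta(d-1)}$ then gives the measure bound $\exp(-N_s^{2\delta})$ of \refcond{E} (the resulting threshold beats $-N_s^{\mu}/2$ because $\mu\gg\delta$). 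The genuine difficulty is \refcond{D}. Fix $h$ with $\dist(h,\mathfrak{T}_s)\ge\exp(-N_s^{\mu})$, $E$, and $|n'|,|n''|<N_s^{1/2}$. The Diophantine condition forces at most one index $\ell_0$ with $|\ell_0|\lesssim N_s$ and $\dist(h,\ell_0\omega)<\exp(-N_{s-1}^{\mu})$; for every $k\in[-N_s+n',N_s+n'']$ whose size-$\sim N_{s-1}$ admissible window is not swallowed by the length-$3N_{s-1}$ block of bad $m$ near $-\ell_0$, I would pick $m_k$ in that window with $\dist(h+m_k\omega,\mathfrak{T}_{s-1})\ge\exp(-N_{s-1}^{\mu})$ and set $J_k=m_k+[-N_{s-1}+p',N_{s-1}+p'']$, so that \refcond{D} at scale $s-1$ (valid at $\phi\in\cI_s$), transported by $\spec H_{m_k+\cdot}=\spec H_{\cdot}(\cdot+m_k\omega)$ and from $x_{s-1}$ to $x_s$ at cost $\exp(-\gamma N_{s-1}/30)$, gives $\dist(\spec H_{J_k}(x_s(\phi,E)+h),E)\ge\tfrac12\exp(-N_{s-1}^{\beta}/2)$. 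For the $O(N_{s-1})$ exceptional indices $k$ near $-\ell_0$ a genuine scale-$(s-1)$ resonance occurs --- there $x_s(\phi,E)+h+k\omega\approx x_{s-1}(\phi,E)$, so $H_{[-N'_{s-1},N''_{s-1}]}$ has an eigenvalue within $C_\rho\norm{V}_\infty\exp(-N_{s-1}^{\mu})$ of $E$ --- and one has to exploit instead the localized eigenfunction from \refcond{A}, \refcond{C} at scale $s-1$ and the elimination built into the choice of $\mathfrak{T}_s$ and the non-symmetric windows (cf.\ \cref{rem:A-E-comments}(d),(e)); the set of $\phi\in\cI_s$ for which this fails, i.e.\ for which $H_{[-N_s+n',N_s+n'']}(x_s(\phi,E)+h)$ actually acquires an eigenvalue within $\exp(-N_s^{\beta}/2)$ of $E$, is semialgebraic of degree $N_{s-1}^{O(1)}$ and, via the scale-$(s-1)$ \refcond{D} bound and a further application of \cref{lem:high_cart}, has measure $<\exp(-N_s^{2\delta})$. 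Off this set, \cref{lem:Greencoverap1} (with $K=N_{s-1}^{\beta}/2$, admissible because $\beta\ll\sigma$) turns the family $\{J_k\}$ into $\dist(\spec H_{[-N_s+n',N_s+n'']}(x_s(\phi,E)+h),E)\ge\tfrac12\exp(-N_{s-1}^{\beta}/2)>\exp(-N_s^{\beta}/2)$, which is \refcond{D}; the union over the $\lesssim N_s$ pairs $(n',n'')$ is absorbed by the semialgebraic gain. In short, the hardest point is the double-resonance analysis behind \refcond{D}: a naive union bound over the $\sim N_s$ arithmetic shifts needed to tile $[-N_s,N_s]$ loses a power, and near the unavoidable near-collision index $\ell_0$ one cannot invoke scale-$(s-1)$ non-resonance at all, so the new ``elimination for all shifts'' must be combined with the semialgebraic covering.
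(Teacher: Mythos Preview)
Your overall induction-and-semialgebraic-covering skeleton is right, but two of the steps differ from the paper in ways that matter, and the verification of \refcond{D} contains a genuine gap.

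\textbf{Constructing $x_s$.} You solve $E^{[-N_s',N_s'']}(x)=E$ along a gradient direction, which forces you to put the \refcond{E} exceptional set into the choice of $\cI_s$ and leaves unclear what the $(d-1)$-dimensional parameter $\phi$ is at the new scale. The paper does something simpler and sturdier: since $E^{[-N_s',N_s'']}(x_{s-1}(\phi,\eta))$ is within $\exp(-\gamma N_{s-1}/20)$ of $E^{[-N_{s-1}',N_{s-1}'']}(x_{s-1}(\phi,\eta))=\eta$, one has $\partial_\eta E^{[-N_s',N_s'']}(x_{s-1}(\phi,\eta))\ge 1/2$, so the analytic implicit function theorem in the \emph{energy} variable gives $\eta=\eta(\phi,E)$ and one sets $x_s(\phi,E):=x_{s-1}(\phi,\eta(\phi,E))$. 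This inherits the $\phi$-coordinate and the analytic domain from $x_{s-1}$ for free; no gradient information is needed here.

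\textbf{The gap in \refcond{D}.} Your sentence ``\refcond{D} at scale $s-1$ (valid at $\phi\in\cI_s$)'' is the problem: $\cI_s$ was chosen to avoid the scale-$(s-1)$ bad sets only for the \emph{arithmetic} shifts $m\omega$; for a general $h$ the shifts $h+m\omega$ are non-arithmetic and the corresponding bad sets $\cB_{s-1,E,h+m\omega}$ were never excised. A direct union over $m$ of sets of measure $\exp(-N_{s-1}^{2\delta})$ yields $N_s\exp(-N_{s-1}^{2\delta})$, which is \emph{not} $\le\exp(-N_s^{2\delta})$ (indeed $N_s^{2\delta}\simeq N_{s-1}^{2\delta/\beta}\gg N_{s-1}^{2\delta}$). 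The paper's fix is to upgrade the measure bound \emph{per $m$} via Cartan: for each $m$ with $|m+m_1|>3N_{s-1}/2$ one finds a single reference $\phi_{0,m}$ (from the scale-$(s-1)$ \refcond{D} bound) at which $\log|f_{J_m}(x_{s-1}(\phi_{0,m},E)+h,E)|$ is large by the spectral form of LDT, and then \cref{lem:high_cart} gives a bad set of measure $<\exp(-N_{s-1}^{\tau/8(d-1)})$. Since $\delta\ll\beta\tau$ this beats $\exp(-N_s^{2\delta})$ even after the union over $\lesssim N_s$ values of $m$. For the one resonant block $m=-m_1$ (when $\|h_1\|<\exp(-N_{s-1}^\mu)$) the mechanism is \emph{condition \refcond{E}}, not merely localization: Taylor at $x_{s-1}(\phi,E)$ gives $E^{[-N_{s-1}',N_{s-1}'']}(x_{s-1}(\phi,E)+h_1)-E=\langle\nabla E^{[-N_{s-1}',N_{s-1}'']},h_1\rangle+O(\|h_1\|^2)$, and \refcond{E} with $h_0=h_1/\|h_1\|$ supplies a reference $\hat\phi$ where the linear term dominates, after which Cartan handles this block too. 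Finally, the ``union over $\lesssim N_s$ pairs $(n',n'')$'' is a misreading of \refcond{D}: the $\max$ inside means $\phi$ is good as soon as \emph{some} pair works, and the covering interval $I$ built from the $J_m$'s furnishes exactly one such pair; no union is taken there.
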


\begin{remark}\label{rem:choice-of-A}
  \cref{thm:D} also holds with any $ A\ge \beta^{-1} $, but the
  relations \cref{eq:relations} would need to be adjusted. The reason
  for needing $ A\ge \beta^{-1} $ will become clear at the end of the
  proof of \cref{prop:inductive8} below (see \cref{rem:non-symmetric}).
\end{remark}

We split the proof into several auxiliary statements. Ultimately the
theorem will follow by referring to these statements.  We will check
the theorem for the case $ s=1 $. The inductive conditions and the
auxiliary statements are designed so that the general inductive step
follows from this particular one by simply changing indices. In what
follows we fix $ E_0 $, $ N_0 $, such that the assumptions
of \cref{thm:D} are satisfied. We also fix
$ E_1\in (E_0-r_0,E_0+r_0) $ and let $ N_1,A $ be as in the statement.

For simplicity, in all of the following statements we assume tacitly
that $ N_0 $ is large enough. More precisely we assume $ N_0\ge (B_0+
S_V+ \gamma^{-1})^C $, with $ C=C(a,b,\rho) $ large enough. In
particular this allows us to invoke any of the results from
\cref{sec:basic-tools}. It will be clear from the proofs that any
further largeness constraints on $ N_0 $ can be accounted for by
increasing $ C $. Of course, it is then important that we only have
finitely many additional constraints. To this end we note that the
additional constraints are independent of $ s $.

Our first goal is to identify $ [-N_1',N_1''] $ and $
E_{k_1}^{[-N_1',N_1'']} $. In what follows we let $ \cB_{0,E,h} $ be
the set from the measure estimate in condition \refcond{D}, with $ s=0 $.

\begin{lemma}\label{lem:sa}
  Let $ h $ as in
  \refcond{D}, with $ s=0 $.  Set
  \begin{equation*}
    \cB_{0,E,h}'=\Big\{\phi\in \cI_{0}:\max_{|n'|,|n''|< N_0^{1/2}}
    \dist (\spec  H_{[-N_0+n',N_0+n'']}(x_{0}(\phi,E)+h),E)<\exp(-N_0^\beta)\Big\}.
  \end{equation*}
  Then for any
  $ E\in (E_0-r_0,E_0+r_0) $, the set $ \cB_{0,E,h}' $ is contained in
  a semialgebraic set of degree less than $ N_0^{20} $ and with measure less
  than $ \exp(-N_0^{2\delta}) $.
\end{lemma}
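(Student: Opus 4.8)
\emph{Proof plan.} The measure bound is immediate: since $\exp(-N_0^\beta)<\exp(-N_0^\beta/2)$ one has $\cB_{0,E,h}'\subseteq\cB_{0,E,h}$, the set from \refcond{D} with $s=0$, so $\mes(\cB_{0,E,h}')\le\mes(\cB_{0,E,h})<\exp(-N_0^{2\delta})$. The substance of the lemma is to exhibit a semialgebraic $\cS$ with $\cB_{0,E,h}'\subseteq\cS\subseteq\cB_{0,E,h}$ of degree $<N_0^{20}$; the measure bound for $\cS$ is then inherited from $\cB_{0,E,h}$. The plan is to polynomialize the analytic data with exponentially small error and then rewrite the condition ``$\dist(\spec(\cdot),E)<\varepsilon$'' as a finite Boolean combination of polynomial inequalities.

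Fix $E\in(E_0-r_0,E_0+r_0)$ and, for $n\in[-2N_0,2N_0]$, put $\psi_n(\phi):=V(x_0(\phi,E)+h+n\omega)$, an analytic function of $\phi$. From the buffer in \refcond{B}, together with the sup-norm geometry of $\cP_0$ — and using that, $E$ lying in the open interval $(E_0-r_0,E_0+r_0)$, the $E$-coordinate contributes nothing to $\dist(\cdot,\Pi_0)$ — one checks that $\psi_n$ extends analytically to the polydisk $\prod_i\cD(\phi_i^*,r_0/2)$ for every $\phi^*\in\cI_0$, and there $|\psi_n|\le\norm{V}_\infty$ because $x_0(\cP_0)\subseteq\T^d_{\rho/2}\subseteq\T^d_{3\rho/4}$. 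Moreover $x_0$ maps $\Pi_0$ into the real torus, so $\psi_n$ is real on $\cI_0$ and all operators below are self-adjoint. I then partition $\cI_0$ into $C(d)$ boxes $Q$ of side at most $r_0/(40d)$; on each $Q$, truncating the Taylor expansion of $\psi_n$ about the center of $Q$ at degree $D\lesssim N_0$ gives a real polynomial $\hat\psi_n^{(Q)}$ with $\sup_Q|\psi_n-\hat\psi_n^{(Q)}|\le\exp(-N_0)$. Let $\hat A^{(Q)}_{n',n''}(\phi)$ be the tridiagonal matrix with off-diagonal entries $-1$ and diagonal entries $\hat\psi_n^{(Q)}(\phi)$, $n\in[-N_0+n',N_0+n'']$; it is real symmetric for real $\phi\in Q$, its entries are polynomials in $\phi$ of degree $\le D$, and
\[
  \norm{\hat A^{(Q)}_{n',n''}(\phi)-H_{[-N_0+n',N_0+n'']}(x_0(\phi,E)+h)}\le\exp(-N_0)=:e_0\qquad(\phi\in Q).
\]

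Now the key elementary fact: for a Hermitian matrix $A$ and $\varepsilon>0$, $\dist(\spec A,E)<\varepsilon$ iff the Hermitian matrix $B:=(A-E)^2-\varepsilon^2I$ has a negative eigenvalue, which happens iff $e_k(B)<0$ for some $1\le k\le\dim B$, where $e_k(B)$ is the sum of the $k\times k$ principal minors of $B$ (indeed $B$ has no negative eigenvalue iff all the coefficients $e_k(B)$ in $\det(tI+B)=\sum_{k\ge0}e_k(B)\,t^{\dim B-k}$, $e_0=1$, are $\ge0$). Applying this with $A=\hat A^{(Q)}_{n',n''}(\phi)$ and $\varepsilon=2\exp(-N_0^\beta)$, I set
\[
  \cS:=\bigcup_Q\Bigl(Q\cap\!\!\bigcap_{|n'|,|n''|<N_0^{1/2}}\!\!\bigl\{\phi:\ e_k\bigl((\hat A^{(Q)}_{n',n''}(\phi)-E)^2-\varepsilon^2I\bigr)<0\text{ for some }k\bigr\}\Bigr).
\]
Using $e_0$ and Weyl's inequality one checks $\cB_{0,E,h}'\subseteq\cS\subseteq\cB_{0,E,h}$: a point of $\cB_{0,E,h}'$ has $\dist(\spec\hat A^{(Q)}_{n',n''}(\phi),E)<\exp(-N_0^\beta)+e_0<\varepsilon$ for all $n',n''$ (with $Q\ni\phi$), while a point of $\cS$ has $\dist(\spec H_{[-N_0+n',N_0+n'']}(x_0(\phi,E)+h),E)<\varepsilon+e_0<\exp(-N_0^\beta/2)$ for all $n',n''$ once $N_0$ is large. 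Finally $\cS$ is semialgebraic of controlled degree: each $e_k(\cdots)$ with $k\le3N_0$ is a polynomial in $\phi$ of degree $\le2kD\lesssim N_0^2$, and there are $\lesssim C(d)N_0^2$ of them (over the $C(d)$ boxes $Q$, the $<4N_0$ pairs $(n',n'')$ and the $<3N_0$ indices $k$), plus finitely many linear polynomials cutting out the boxes; hence $\deg\cS\lesssim C(d)N_0^4<N_0^{20}$ for $N_0$ large, which is ensured by the standing hypothesis $N_0\ge(B_0+S_V+\gamma^{-1})^C$.

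The only step that is not routine is the polynomialization: one must extract, from the buffer in \refcond{B}, a fixed fraction of $r_0$ as a radius of analyticity for the composites $\psi_n$ uniformly over $\cI_0$, since that is what keeps the Taylor degrees $D$ down to $O(N_0)$; everything downstream (the spectral criterion, Weyl's inequality, the degree count for the determinants) is bookkeeping.
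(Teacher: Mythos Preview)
Your argument is correct and complete. Both the sandwich $\cB_{0,E,h}'\subseteq\cS\subseteq\cB_{0,E,h}$ and the degree bound go through, and your positivity criterion for $(A-E)^2-\varepsilon^2I$ via the elementary symmetric polynomials of its eigenvalues is a clean semialgebraic encoding of the spectral distance condition.

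The paper proceeds differently in two respects. First, rather than truncating the \emph{composite} functions $\psi_n(\phi)=V(x_0(\phi,E)+h+n\omega)$ on a partition of $\cI_0$, it separately replaces $V$ by the polynomial $\tilde V$ set up in \cref{eq:V-tilde} and $x_0(\cdot,E)$ by a Taylor polynomial $\tilde x_0(\cdot,E)$, and then composes; this yields a single global polynomial model $\tilde H_{[a,b]}(\tilde x_0(\phi,E)+h)$ with the error bound \cref{eq:sa-approximation}, at the cost of a somewhat larger entrywise degree (roughly $N_0^8$ rather than your $O(N_0)$). Second, for the spectral condition the paper uses the Hilbert--Schmidt norm of the resolvent, defining
\[
\tilde\cB_{0,E,h}=\Bigl\{\phi\in\cI_0:\ \max_{|n'|,|n''|<N_0^{1/2}}\nnormhs{(\tilde H_{[-N_0+n',N_0+n'']}(\tilde x_0(\phi,E)+h)-E)^{-1}}>\exp(3N_0^\beta/4)\Bigr\},
\]
which via Cramer's rule is a single polynomial inequality per pair $(n',n'')$, and checks $\cB_{0,E,h}'\subset\tilde\cB_{0,E,h}\subset\cB_{0,E,h}$. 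Your route gives a sharper degree count (order $N_0^4$ rather than $N_0^{10}$) and avoids the separate $\tilde V$ machinery, while the paper's route is more modular (it reuses the global approximations from Section~\ref{sec:basic-tools}) and avoids the partition into boxes. Both comfortably fit under the $N_0^{20}$ bound, and either is perfectly adequate for the downstream application in \cref{lem:inductive1}.
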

\begin{proof}
  Fix $ E\in (E_0-r_0,E_0+r_0) $. By truncating the Taylor series of $
  x_0(\cdot,E) $ we obtain a polynomial $ \tilde x_0(\cdot,E) $ of
  degree less than $ C(d)N_0^4 $ such that
  \begin{equation*}
	\sup_{\phi\in \cI_0}|x_0(\phi,E)-\tilde x_0(\phi,E)|\le \exp(-N_0^2)
  \end{equation*}
  To estimate the remainder of the Taylor series we used 
  condition \refcond{B} and Cauchy estimates (also recall
  \cref{rem:A-E-comments} (a)). Note that for any $ [a,b]\subset
  \Z$, $ \phi\in \cI_0 $,
  \begin{equation*}
	\norm{H_{[a,b]}(x_0(\phi,E))-H_{[a,b]}(\tilde x_0(\phi,E))}\le C_\rho \norm{V}_\infty
    |x_0(\phi,E)-\tilde x_0(\phi,E)|\le \exp(-N_0^2/2).
  \end{equation*}
  Let $ \tilde V, \tilde H $ be as in \cref{eq:V-tilde},
  \cref{eq:H-tilde} (with $ N_0 $ instead of $ N $). We have
  \begin{equation}\label{eq:sa-approximation}
	\norm{H_{[a,b]}(x_0(\phi,E))-\tilde H_{[a,b]}(\tilde x_0(\phi,E))}\le \exp(-N_0^2/4)
  \end{equation}
  for any $ [a,b]\subset \Z $.
  Let
  \begin{equation*}
    \tilde \cB_{0,E,h}=\Big\{\phi\in \cI_{0}:\max_{|n'|,|n''|< N_0^{\frac{1}{2}}}
     \normhs{(\tilde H_{[-N_0+n',N_0+n'']}(\tilde x_{0}(\phi,E)+h)-E)^{-1}}>\exp(-3N_0^\beta/4)\Big\},
  \end{equation*}
  where $ \normhs{\cdot} $ stands for the Hilbert-Schmidt norm. Then $
  \tilde \cB_{0,E,h} $ is semialgebraic of degree less than $
  N_0^{20} $ and using \cref{eq:sa-approximation} we have
  \begin{equation*}
	\cB_{0,E,h}'\subset \tilde \cB_{0,E,h}\subset \cB_{0,E,h},
  \end{equation*}
  thus concluding the proof.
\end{proof}

\begin{lemma}\label{lem:inductive1}
  For any $ E\in(E_0-r_0,E_0+r_0) $
  there exists a semialgebraic
  set $\cB_{0,E,N_1}$,
  \begin{equation*}
	\deg(\cB_{0,E,N_1})\lesssim N_1N_0^{20},\quad \mes (\cB_{0,E,N_1})<\exp (-N_0^{2\delta}/2),
  \end{equation*}
  such that for any $\phi\in \cI_0\setminus\cB_{0,E,N_1}$ and any
  $3N_0/2<|m|\le N_1$, there exist
  $|n'(\phi,m)|, |n''(\phi,m)|< N_0^{1/2}$ such that with
  $J_m=m+[-N_0+n'(\phi,m),N_0+n''(\phi,m)]$
  \begin{equation*}
    \dist (\spec  H_{J_m}(x_{0}(\phi,E)),E)
    \ge \exp(-N_0^\beta).
  \end{equation*}
\end{lemma}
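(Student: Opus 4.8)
The plan is to derive \cref{lem:inductive1} from \cref{lem:sa} by running the latter with the arithmetic shifts $h=m\omega$, one for each $m$ with $3N_0/2<|m|\le N_1$, and then taking a union over these finitely many $m$. The translation identity \cref{eq:basic-identity}, $\spec H_{m+[a,b]}(x)=\spec H_{[a,b]}(x+m\omega)$, is what makes a shift by $m\omega$ relevant for controlling $\spec H_{J_m}$ with $J_m$ an interval centred at $m$.

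First I would verify that each $h=m\omega$ with $3N_0/2<|m|\le N_1$ is an admissible shift for \refcond{D} (hence for \cref{lem:sa}), i.e.\ that $\dist(m\omega,\mathfrak{T}_0)\ge\exp(-N_0^\mu)$. For any $n$ with $0\le|n|\le 3N_0/2$ one has $\dist(m\omega,n\omega)=\norm{(m-n)\omega}$ with $0<|m-n|\le N_1+3N_0/2\le 2N_1$, so the Diophantine condition \cref{eq:vecdiophant} gives $\norm{(m-n)\omega}\ge a(2N_1)^{-b}$. Since $N_1=\lfloor N_0^A\rfloor$ with $A=\beta^{-1}$ fixed and $\mu>0$, the bound $a(2N_1)^{-b}\ge a\,(2N_0^A)^{-b}\ge\exp(-N_0^\mu)$ holds once $N_0$ is large enough; this threshold depends only on $a,b,\beta,\mu$ and not on $s$, so it is harmless for the induction.

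Next, for each such $m$ I would apply \cref{lem:sa} with $h=m\omega$ to obtain a semialgebraic set $\cB'_{0,E,m\omega}\subset\cI_0$ of degree $<N_0^{20}$ and measure $<\exp(-N_0^{2\delta})$ such that every $\phi\in\cI_0\setminus\cB'_{0,E,m\omega}$ admits $|n'|,|n''|<N_0^{1/2}$ with $\dist(\spec H_{[-N_0+n',N_0+n'']}(x_0(\phi,E)+m\omega),E)\ge\exp(-N_0^\beta)$. Applying \cref{eq:basic-identity} rewrites the left-hand side as $\dist(\spec H_{J_m}(x_0(\phi,E)),E)$ with $J_m=m+[-N_0+n',N_0+n'']$, which is precisely the asserted inequality upon setting $n'(\phi,m)=n'$, $n''(\phi,m)=n''$.

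Finally I would set $\cB_{0,E,N_1}=\bigcup_{3N_0/2<|m|\le N_1}\cB'_{0,E,m\omega}$. There are at most $2N_1$ terms; a union of $\le 2N_1$ semialgebraic sets of degree $<N_0^{20}$ has degree $\lesssim N_1N_0^{20}$, while the total measure is at most $2N_1\exp(-N_0^{2\delta})$, which is $<\exp(-N_0^{2\delta}/2)$ because $2N_1\le 2N_0^A$ and $\delta>0$ force $2N_0^A<\exp(N_0^{2\delta}/2)$ for $N_0$ large. For $\phi\in\cI_0\setminus\cB_{0,E,N_1}$ and any $m$ with $3N_0/2<|m|\le N_1$ we have $\phi\notin\cB'_{0,E,m\omega}$, which yields the conclusion. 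I do not expect a genuine obstacle here; the only point requiring care is the bookkeeping of the largeness thresholds on $N_0$ — the Diophantine lower bound against $\exp(-N_0^\mu)$ and the inequality $2N_1\exp(-N_0^{2\delta})<\exp(-N_0^{2\delta}/2)$ — together with the observation, stressed in the paper, that these are finitely many $s$-independent constraints and hence compatible with the induction.
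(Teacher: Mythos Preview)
Your proposal is correct and follows essentially the same approach as the paper: verify the Diophantine separation $\dist(m\omega,\mathfrak{T}_0)\ge\exp(-N_0^\mu)$ for each $3N_0/2<|m|\le N_1$, apply \cref{lem:sa} with $h=m\omega$, take the union of the resulting semialgebraic sets over $m$, and read off the conclusion via \cref{eq:basic-identity}. The only cosmetic difference is that the paper names the semialgebraic containing set $\tilde\cB_{0,E,m\omega}$ (and defines $\cB_{0,E,N_1}$ as the union of these), whereas you use the symbol $\cB'_{0,E,m\omega}$ directly for that set; since \cref{lem:sa} gives $\cB'_{0,E,h}\subset\tilde\cB_{0,E,h}$, the complement of the semialgebraic set still enjoys the required spectral separation, so your phrasing is fine.
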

\begin{proof}
  Take arbitrary $3N_0/2<|m|\le N_1$. Then $ 0<|m-n|< 3N_1 $ for any $
  n\in \fT_0 $ (recall \cref{eq:fT_0}) and due to the Diophantine condition we have
  \[
    \dist (m\omega, \mathfrak{T}_{0})> a(3N_1)^{-b}\ge a(CN_0^A)^b>\exp (-N_0^\mu).
  \]
  Hence, for any $3N_0/2<|m|\le N_1$
  condition \refcond{D} applies with $h=m\omega$.  We let
  $ \cB_{0,E,N_1}:=\bigcup_{m}\tilde \cB_{0,E,m\omega} $, where
  $ \tilde \cB_{0,E,m\omega}$ are the semialgebraic sets from the
  statement of \cref{lem:sa}.  Then $ \cB_{0,E,N_1} $ is semialgebraic
  of degree $ \lesssim N_1 N_0^{20} $ and we have
  \begin{gather*}
    \mes (\cB_{0,E,N_1}) \lesssim N_1\exp (-N_0^{2\delta})<
    \exp (-\frac{1}{2}N_0^{2\delta}).
  \end{gather*}
  Take  $\phi\in \cI_0\setminus \cB_{0,E,N_1}$. Since $ \phi\in
  \cI_0\setminus \tilde \cB_{0,E,m\omega} $, the conclusion follows
  from the definition of $ \cB_{0,E,m\omega} $ (recall \cref{eq:basic-identity}).
\end{proof}

The next lemma is not needed at the moment, but it motivates one of
the choices we make in the statement of \cref{prop:inductive4}
\begin{lemma}\label{lem:eigenvalue-analyticity}
  (a) The function $ E^{[-N_0',N_0'']} $ is analytic on $ \{ z\in \C^d
  : |z-x_0(\phi,E)|<\exp(-2N_0^\delta)\} $, for any $ (\phi,E)\in
  \Pi_0 $.

  \smallskip \noindent (b) The function $
  E^{[-N_0',N_0'']}(x_0(\phi,E)) $ is analytic on
  \begin{equation*}
	\cP_0'=\{ (\phi,E)\in \C^d: \dist((\phi,E),\Pi_0)<r_0^4 \}.
  \end{equation*}
\end{lemma}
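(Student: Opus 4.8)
The plan is to treat $E^{[-N_0',N_0'']}$ (that is, $E_{k_0}^{[-N_0',N_0'']}$, with the index suppressed as in \refcond{A}) as a \emph{simple} eigenvalue of an analytic matrix family and to propagate its simplicity to a complex neighborhood via standard analytic perturbation theory, keeping careful track of the scales; recall $r_0=\exp(-N_0^\delta)$.

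For part (a), fix $(\phi,E)\in\Pi_0$ and put $x_0:=x_0(\phi,E)\in\T^d$. By \refcond{A} the Hermitian matrix $H_{[-N_0',N_0'']}(x_0)$ has $E^{[-N_0',N_0'']}(x_0)=E$, while every other eigenvalue lies at distance $>r_0$ from $E$. The map $z\mapsto H_{[-N_0',N_0'']}(z)$ is analytic for $|z-x_0|<\rho/2$ (using \refcond{B}, so $x_0\in\T^d_{\rho/2}$), and since only the diagonal entries $V(z+n\omega)$ vary, a Cauchy estimate gives $\|H_{[-N_0',N_0'']}(z)-H_{[-N_0',N_0'']}(x_0)\|\le C_\rho\norm{V}_\infty|z-x_0|$ (as in \cref{eq:stability-x}). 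Hence for $|z-x_0|<\exp(-2N_0^\delta)=r_0^2$ one has $\|H_{[-N_0',N_0'']}(z)-H_{[-N_0',N_0'']}(x_0)\|<r_0/4$, which is guaranteed by the standing hypothesis $N_0\ge (B_0+S_V+\gamma^{-1})^C$. Let $\Gamma=\{\zeta:|\zeta-E|=r_0/2\}$; since $H_{[-N_0',N_0'']}(x_0)$ is Hermitian, the eigenvalues of $H_{[-N_0',N_0'']}(z)$ move by at most the norm of the perturbation, so $\Gamma$ stays in the resolvent set of $H_{[-N_0',N_0'']}(z)$ and encloses exactly one eigenvalue. The Riesz projection
\begin{equation*}
  P(z)=\frac{1}{2\pi i}\oint_\Gamma\big(\zeta-H_{[-N_0',N_0'']}(z)\big)^{-1}\,d\zeta
\end{equation*}
is then analytic in $z$, $\tr P(z)$ is a continuous integer, hence $\equiv\tr P(x_0)=1$, and $E^{[-N_0',N_0'']}(z):=\tr\!\big(H_{[-N_0',N_0'']}(z)P(z)\big)$ is the analytic continuation of the $k_0$-th eigenvalue to $\{|z-x_0|<\exp(-2N_0^\delta)\}$. (On overlaps of two such balls the continuations agree, as they coincide with the genuine eigenvalue on a nonempty real open set.)

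For part (b), I will first bound $|\nabla x_0|\lesssim_d r_0^{-1}$ on the $r_0/2$-neighborhood of $\Pi_0$. Indeed, by \refcond{B} the map $x_0$ is analytic on $\cP_0=\{\dist((\phi,E),\Pi_0)<r_0\}$ with $|\Im x_{0,k}|<\rho/2\le\tfrac12$ for each component; applying, along complex lines through a point of the $r_0/2$-neighborhood (where the ball of radius $r_0/2$ still lies in $\cP_0$), the one-variable Cauchy estimate for holomorphic functions of bounded imaginary part yields this bound. Now fix $(\phi,E)\in\cP_0'$ and a nearest real point $(\phi_0,E_0)\in\Pi_0$, so $|(\phi,E)-(\phi_0,E_0)|<r_0^4$; integrating the gradient bound along the segment (which stays in the convex $r_0/2$-neighborhood) gives
\begin{equation*}
  |x_0(\phi,E)-x_0(\phi_0,E_0)|\lesssim_d r_0^{-1}\cdot r_0^4=r_0^3\ll r_0^2=\exp(-2N_0^\delta).
\end{equation*}
By part (a) applied at $(\phi_0,E_0)$, the function $E^{[-N_0',N_0'']}$ is analytic on $\{|z-x_0(\phi_0,E_0)|<\exp(-2N_0^\delta)\}$, which contains $x_0(\phi,E)$; since $x_0$ is analytic on $\cP_0'$, the composition $E^{[-N_0',N_0'']}(x_0(\phi,E))$ is analytic at $(\phi,E)$, and $(\phi,E)\in\cP_0'$ was arbitrary.

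The argument is routine analytic perturbation theory; the only points needing care are quantitative: (i) the Lipschitz constant $C_\rho\norm{V}_\infty$ of $z\mapsto H_{[-N_0',N_0'']}(z)$ must be dominated by $r_0^{-1}$, so that the $r_0^2$-ball perturbation stays small compared to the spectral gap $r_0$ from \refcond{A}; and (ii) the $r_0^{-1}$-sized derivative of $x_0$ (which is all that \refcond{B} alone forces) must be absorbed by the gap between the radii $r_0^4$ and $r_0$ of the domains $\cP_0'$ and $\cP_0$. Both hold because $N_0$ is large relative to $S_V=\log(3+\norm{V}_\infty)$; no genuine difficulty arises.
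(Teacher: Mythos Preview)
Your proof is correct and follows essentially the same approach as the paper: part (a) is ``separation of eigenvalues in \refcond{A} combined with basic perturbation theory'' (which you make explicit via the Riesz projection), and part (b) follows from (a) once one has the derivative bound $|\nabla x_0|\lesssim r_0^{-1}$ from condition \refcond{B} and Cauchy-type estimates, exactly as the paper does. You spell out more than the paper does; in particular, your observation that only $\Im x_0$ is a priori bounded (since $x_0$ takes values in $\T^d_{\rho/2}$) and that one therefore needs the harmonic gradient estimate rather than the naive Cauchy bound is a point the paper leaves implicit under the phrase ``Cauchy estimates''.
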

\begin{proof}
  Statement (a) follows from the separation of eigenvalues in \refcond{A} and basic
  perturbation theory. Statement (b) follows from (a) by noticing that
  \begin{equation*}
    |x_{0}(\phi+\zeta,E+\eta)-x_{0}(\phi,E)|\le C_\rho\exp(N_0^{\delta})(|\zeta|+|\eta|)<\exp(-2N_0^\delta)
  \end{equation*}
  for any $(\zeta,\eta)\in \C^d $ with
  $|\zeta|,|\eta|<\exp(-4N_0^\delta)$ (we used \refcond{B} and Cauchy estimates).
\end{proof}

\begin{prop}\label{prop:inductive4}
  There exists $ \phi_1\in \T^d $, $ |\phi_1-\phi_0|\ll r_0^4 $,  and
  $|N_1'-N_1|,|N_1''-N_1|\lesssim N_0$ such that the following hold.
  
  \medskip
  \noindent(i)
  $ \cI_1'\subset \cI_0 \setminus \cB_{0,E_1,N_1} $,
  $ \cI_1'=\phi_1+(-r_1',r_1')^{d-1}$, $ r_1'=\exp(-3N_0^\beta) $,
  with $ \cB_{0,E_1,N_1} $ as in \cref{lem:inductive1}.

  \medskip
  \noindent(ii)    
  There exists  $ k_1 $ such that for any
  $\phi\in \mathcal{I}'_{1}$, $y\in\mathbb{R}^d$,
  $|y|<r_1'$, $E\in \mathbb{R}$,
  $|E-E_1|<r_1'$,
  \begin{gather}\label{inductive22}
    \big|E_{k_1}^{[-N_1',N_1'']}(x_{0}(\phi,E)+y)-E^{[-N'_{0},N''_0]}(x_{0}(\phi,E)+y)\big|<
    \exp(-\gamma N_0/20),\\
    \big|E_j^{[-N_1',N_1'']}(x_{0}(\phi,E)+y)-E_{k_1}^{[-N_1',N_1'']}(x_{0}(\phi,E)+y)\big|>
    \frac{1}{8}\exp(-N^\beta_0),\quad j\neq {k_1},\label{inductive25}\\    
    |\psi_{k_1}^{[-N_1',N_1'']}(x_{0}(\phi,E)+y,n)|<\exp(-\gamma |n|/10),\quad |n|\ge 3N_0/4,
    \label{inductive24}\\
    \label{inductive23}
    \|\psi_{k_1}^{[-N_1',N_1'']}(x_{0}(\phi,E)+y,\cdot)-\psi^{[-N'_0,N''_0]}(x_{0}(\phi,E)+y,\cdot)\|
    <\exp(-\gamma N_0/20).
  \end{gather} 
\end{prop}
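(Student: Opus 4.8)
The plan is to apply the finite-scale localization machinery from \cref{sec:basic-tools} (\cref{prop:localization} and \cref{prop:stabilization}) with base scale $N_0$ and target scale $N_1$, using the reference phase $x_0(\phi,E_1)$ for a carefully chosen $\phi=\phi_0$, and then to propagate the conclusion to nearby $(\phi,E)$ and shifts $y$ by the stability of the construction. First I would fix $E=E_1$ and use \cref{lem:inductive1}: away from the bad set $\cB_{0,E_1,N_1}$ (which has degree $\lesssim N_1N_0^{20}$ and measure $<\exp(-N_0^{2\delta}/2)$), for every $3N_0/2<|m|\le N_1$ we have intervals $J_m=m+[-N_0+n'(\phi,m),N_0+n''(\phi,m)]$ with $\dist(\spec H_{J_m}(x_0(\phi,E_1)),E_1)\ge \exp(-N_0^\beta)$. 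Since $\beta<\sigma/2$ by \cref{eq:relations} and $|J_m|\le 10N_0$, this is exactly the hypothesis of \cref{prop:localization} (with $E_0:=E_1$, $\beta$ in the role of $\beta$, and $N_0$ as the base scale). I must first secure a single good $\phi_1$: since $\mes(\cB_{0,E_1,N_1})<\exp(-N_0^{2\delta}/2)$ while $\cI_0$ has sidelength $2r_0=2\exp(-N_0^\delta)$, the set $\cI_0\setminus\cB_{0,E_1,N_1}$ has positive measure, so I can pick $\phi_1$ with $|\phi_1-\phi_0|\ll r_0^4$ and then take $\cI_1'=\phi_1+(-r_1',r_1')^{d-1}$ with $r_1'=\exp(-3N_0^\beta)\ll r_0$ so that $\cI_1'\subset\cI_0\setminus\cB_{0,E_1,N_1}$; this uses that $\exp(-3N_0^\beta)$ is much smaller than $r_0=\exp(-N_0^\delta)$ (since $\delta\ll\beta$ gives $N_0^\delta\ll N_0^\beta$, wait — here we need $r_1'<r_0$, i.e., $3N_0^\beta>N_0^\delta$, which holds since $\beta>\delta$). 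Actually the relevant point is that $\cB_{0,E_1,N_1}$ is closed-ish and small, so a small cube around a well-chosen $\phi_1$ avoids it; I would phrase this via the semialgebraic covering lemma \cref{lem:sa-covering} to control the geometry if needed, but a direct measure argument suffices to extract $\phi_1$ and a neighborhood.

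Next I would set up the interval $[-N_1',N_1'']$ and apply \cref{prop:stabilization}. With $x_0=x_0(\phi,E_1)$ as the reference point, conditions (i)--(iii) of \cref{prop:stabilization} are precisely \refcond{A} and \refcond{C} at scale $s=0$: (i) is \cref{eq:mapx1} ($E^{[-N_0',N_0'']}(x_0(\phi,E_1))$ is within $r_0=\exp(-N_0^\delta)$ of $E_1$, hence within $\exp(-2N_0^\beta)$ after we note... — actually here I need $r_0<\exp(-2N_0^\beta)$, i.e. $N_0^\delta>2N_0^\beta$, which is FALSE; the correct reading is that $\Pi_0$ is defined with $r_0=\exp(-N_0^\delta)$ and $\delta\ll\beta$, so $r_0=\exp(-N_0^\delta)$ is LARGER than $\exp(-N_0^\beta)$). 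Let me restate: the point is that the base-scale separation in \refcond{A} is by $\exp(-N_0^\delta)$, and \cref{prop:stabilization} is designed (see \cref{rem:A-E-comments}(a)) to take base separation $\exp(-N_0^\beta)$ and produce separation $\exp(-N_0^\beta)$; by the remark, even starting from $\exp(-N_0^\delta)$-separation one still gets $\exp(-N_0^\beta)$-separation at the new scale because $\delta<\beta$ makes $\exp(-N_0^\delta)$ the weaker hypothesis — so the mechanism applies with the exponent $\beta$ as the working cutoff. Condition (iii) is \cref{eq:mapx3}, evaluated at $|n|\ge N_0/4$ applied at the endpoints $\pm N_0'$, $\pm N_0''$ which satisfy $|N_0'-N_0|,|N_0''-N_0|<N_0^{1/2}$, hence $|\pm N_0'|,|\pm N_0''|\ge N_0/4$. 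Thus \cref{prop:stabilization} delivers $k_1$, $[-N_1',N_1'']$ with $|N_1'-N_1|,|N_1''-N_1|<N_1^{1/2}$ (but I also need $\lesssim N_0$; this holds since $[-N_1',N_1'']=[-3N_0/2,3N_0/2]\cup\bigcup J_m$ and the outermost $J_m$ for $|m|\le N_1$ extends to $N_1+O(N_0)$, so actually $N_1',N_1''=N_1+O(N_0)$ — I should double-check the construction in \cref{prop:localization} defines $[-N',N'']$ this way, and indeed $|N_1'-N_1|\lesssim N_0$), and estimates (1)--(4) of \cref{prop:stabilization}, which are exactly \cref{inductive22}--\cref{inductive23} at the point $x_0(\phi,E_1)$.

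Finally I would extend from $(\phi,E_1)$ with $\phi\in\cI_1'$ to all $(\phi,E)$ with $|E-E_1|<r_1'$ and all shifts $y$, $|y|<r_1'$. The key observation is that $|x_0(\phi,E)+y - x_0(\phi,E_1)|\le C_\rho\norm{V}_\infty|x_0(\phi,E)-x_0(\phi,E_1)| + |y|$; by \refcond{B} and Cauchy estimates on $\cP_0$, $x_0(\cdot,\cdot)$ is Lipschitz with constant $\lesssim \exp(N_0^\delta)$ on the $r_0$-interior, so $|x_0(\phi,E)-x_0(\phi,E_1)|\lesssim \exp(N_0^\delta)|E-E_1|<\exp(N_0^\delta-3N_0^\beta)\ll\exp(-2N_0^\beta)$ (using $\delta\ll\beta$), and likewise $|y|<r_1'=\exp(-3N_0^\beta)<\exp(-2N_0^\beta)$. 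Therefore $x_0(\phi,E)+y$ lies within $\exp(-2N_0^\beta)$ of the reference point $x_0(\phi,E_1)$, which is within the radius $\exp(-2N_0^\beta)$ allowed by \cref{prop:localization} and \cref{prop:stabilization} for the perturbation $|x-x_0|$. Hence all four estimates (1)--(4) of \cref{prop:stabilization} persist at $x_0(\phi,E)+y$, giving \cref{inductive22}--\cref{inductive23} uniformly. The main obstacle I anticipate is the careful bookkeeping of the many scales ($N_0^\delta,N_0^\beta,N_0^\mu$, the endpoint fluctuations $N_0^{1/2}$, and the target fluctuations $\lesssim N_0$) to make sure all the smallness requirements line up — in particular verifying that the $\exp(-2N_0^\beta)$-ball around the reference phase both contains all the relevant perturbed phases $x_0(\phi,E)+y$ AND is small enough for the localization/stabilization lemmas to apply with the stated constants; the choice $r_1'=\exp(-3N_0^\beta)$ is precisely calibrated for this, and the relations \cref{eq:relations} (especially $\delta\ll\beta$) are what make it work. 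A secondary point is confirming the claim in \cref{lem:eigenvalue-analyticity}(a)--(b) is not actually needed here (the proposition only asserts estimates, not analyticity), though the analyticity domain $\exp(-2N_0^\delta)$ there is a sanity check that the eigenvalue $E_{k_1}^{[-N_1',N_1'']}$ is well-defined near the reference phase.
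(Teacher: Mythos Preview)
Your approach matches the paper's: use \cref{lem:inductive1} for the intervals $J_m$, define $[-N_1',N_1'']$ as their union together with $[-3N_0/2,3N_0/2]$, and invoke \cref{prop:stabilization}. One genuine gap, though: you write ``a direct measure argument suffices to extract $\phi_1$ and a neighborhood.'' It does not. Small measure alone does not guarantee that the complement of $\cB_{0,E_1,N_1}$ contains a cube of sidelength $2r_1'$ near $\phi_0$ --- a measurable set of arbitrarily small measure can still intersect every such cube. This is exactly why \cref{lem:inductive1} records that $\cB_{0,E_1,N_1}$ is \emph{semialgebraic} of degree $\lesssim N_1N_0^{20}$, and the paper then invokes \cref{lem:sa-covering}: the bad set is covered by at most $(N_1N_0^{20})^C\epsilon^{-(d-2)}$ balls of radius $\epsilon\sim\exp(-N_0^{2\delta}/(2(d-1)))$, and since $r_1'=\exp(-3N_0^\beta)\ll\epsilon$ (recall $\delta\ll\beta$), the $r_1'$-thickening of these balls still has measure $\ll(r_0^4)^{d-1}$. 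So the semialgebraic covering lemma is essential for part~(i), not optional.

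A minor organizational point: the intervals $J_m$ in the paper are fixed at the single phase $\phi_1$ (so that $[-N_1',N_1'']$ is one fixed interval, independent of $\phi$), and \cref{prop:stabilization} is applied once with reference phase $x_0(\phi_1,E_1)$. The perturbation estimate you correctly derive,
\[
|x_0(\phi,E)+y - x_0(\phi_1,E_1)| \le \exp(CN_0^\delta)(|\phi-\phi_1|+|E-E_1|)+|y| < \exp(-2N_0^\beta),
\]
then puts every $x_0(\phi,E)+y$ in the allowed ball around the reference phase, yielding \cref{inductive22}--\cref{inductive23} in one application rather than first fixing $\phi$ and then perturbing in $E,y$.
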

\begin{proof}
  Using the information we have on $ \cB_{0,E_1,N_1} $ and
  \cref{lem:sa-covering}, it follows that there exists $ \phi_1 $,
  $|\phi_1-\phi_0|\ll r_0^4$ (in fact, we could replace $ r_0^4 $ by
  $ r_0^C $, with any fixed $ C\ge 1 $), such that
  $ \cI_1'\subset \cI_0\setminus \cB_{0,E_1,N_1} $ (recall that $
  \beta\gg \delta $).  Take the
  intervals $J_m=m+[-N_0+n'(\phi_1,m),N_0+n''(\phi_1,m)]$ from
  Lemma~\ref{lem:inductive1}. Define
  \begin{equation}\label{eq:N1-interval}
    [-N_1',N_1'']=[-3N_0/2,3N_0/2]\cup\bigcup_{3N_0/2<|m|\le N_1} J_m.	
  \end{equation}
  Due to \cref{lem:inductive1},
  \begin{gather*}
    \dist (\spec  H_{J_m}(x_{0}(\phi_1,E_1)),E_1)
    \ge \exp(-N_0^\beta).
  \end{gather*}
  Using condition \refcond{B} and Cauchy estimates we have that for $ \phi\in\cI_1' $,
  $ |y|<\exp(-3N_0^\beta) $, $ |E-E_1|<\exp(-3N_0^\beta) $,
  \begin{equation*}
	\left| x_0(\phi,E)+y-x_0(\phi_1,E_1) \right|\le \exp(CN_0^{\delta})(|\phi-\phi_1|+|E-E_1|)+|y|
    <\exp(-2N_0^\beta).
  \end{equation*}
  The conclusion follows by invoking
  \cref{prop:stabilization} (recall that $ \beta\ll \sigma $) with $x_0=x_{0}(\phi,E_1)$,
  $ E_0=E_1 $.
\end{proof}

For the rest of this section we adopt the notation of
\cref{prop:inductive4}. To simplify the notation, we suppress $k_1$
from the notation and use $E^{[-N_1',N_1'']},\psi^{[-N_1',N_1'']}$
instead. Next we want to prove the existence of the parametrization $ x_1 $.

\begin{lemma}\label{lem:inductive5}
  (a)  The function $ E^{[-N_1',N_1'']} $ is analytic on
  $ \{ z\in \C^d : |z-x_0(\phi,E)|<\exp(-2N_0^\beta)\} $, for any
  $ (\phi,E)\in \Pi_1' $.

  \smallskip\noindent (b) 
  The function $E^{[-N_1',N_1'']}(x_{0}(\phi,E))$ is analytic on
  \begin{gather*}
    \mathcal{P}'_{1}=\{(\phi,E)\in \mathbb{C}^d:\dist((\phi,E),\Pi_1')<r_1'\},
  \end{gather*}
  with $ \Pi_1'=\cI_1'\times (E_1-r_1',E_1+r_1') $.
  Furthermore, for any $ (\phi,E)\in \frac{1}{50}\mathcal{P}_1'$,
  \begin{gather}
    \big|E^{[-N_1',N_1'']}(x_{0}(\phi,E))-E\big|<
    \exp(-c_0\gamma N_0),\label{inductive29}\\
    \big|\partial_E E^{[-N_1',N_1'']}(x_{0}(\phi,E))-1\big|<
    \exp(-c_0\gamma N_0/2).\label{inductive49}
  \end{gather}
  with $c_0=c_0(d)$.
\end{lemma}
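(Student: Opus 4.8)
The plan is to imitate the proof of \cref{lem:eigenvalue-analyticity}, feeding in the conclusions of \cref{prop:inductive4} in place of \refcond{A}.

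For part~(a) I would fix a real $(\phi,E)\in\Pi_1'$. By \cref{inductive25} (with $y=0$) the eigenvalue $E^{[-N_1',N_1'']}(x_0(\phi,E))=E_{k_1}^{[-N_1',N_1'']}(x_0(\phi,E))$ of the Hermitian matrix $H_{[-N_1',N_1'']}(x_0(\phi,E))$ is simple with spectral gap at least $\tfrac18\exp(-N_0^\beta)$ to the rest of its spectrum. For $|z-x_0(\phi,E)|<\exp(-2N_0^\beta)$, a Cauchy estimate for $V$ on $\T^d_\rho$ gives $\norm{H_{[-N_1',N_1'']}(z)-H_{[-N_1',N_1'']}(x_0(\phi,E))}\le C_\rho\norm{V}_\infty\exp(-2N_0^\beta)$, which is $\ll\exp(-N_0^\beta)$ because $N_0^\beta\gg S_V$ once $N_0\ge (B_0+S_V+\gamma^{-1})^C$ with $C=C(a,b,\rho)$ large. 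Thus the perturbation is far below the gap, and the Riesz-projection (contour integral) representation of the perturbed simple eigenvalue realizes $E^{[-N_1',N_1'']}$ as an analytic function on $\{z:|z-x_0(\phi,E)|<\exp(-2N_0^\beta)\}$; on overlaps these local extensions coincide, both being equal to the genuine eigenvalue $E_{k_1}^{[-N_1',N_1'']}$ on the nonempty real part of the overlap (the real balls are centred at real points), which yields (a).

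For part~(b), analyticity of $E^{[-N_1',N_1'']}(x_0(\cdot,\cdot))$ on $\cP_1'$ follows exactly as in \cref{lem:eigenvalue-analyticity}(b): by \refcond{B} the map $x_0$ is analytic on $\cP_0\supset\cP_1'$ with $x_0(\cP_0)\subset\T^d_{\rho/2}$ and $\dist(\cP_1',\partial\cP_0)\gtrsim r_0$, so Cauchy estimates give $|\nabla x_0|\lesssim r_0^{-1}=\exp(N_0^\delta)$ on $\cP_1'$; hence for $(\phi,E)\in\cP_1'$ and a nearest real point $(\phi_r,E_r)\in\Pi_1'$ one has $|x_0(\phi,E)-x_0(\phi_r,E_r)|\lesssim\exp(N_0^\delta)r_1'<\exp(-2N_0^\beta)$ (using $\delta\ll\beta$), so $x_0(\phi,E)$ lands in the domain of analyticity provided by (a). To get the estimates I would set $G(\phi,E):=E^{[-N_1',N_1'']}(x_0(\phi,E))-E$, analytic on $\cP_1'$. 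On the real locus $\Pi_1'$, \cref{inductive22} (with $y=0$) together with \cref{eq:mapx1} (which says $E^{[-N_0',N_0'']}(x_0(\phi,E))=E$) give $|G|<\exp(-\gamma N_0/20)$, while on all of $\cP_1'$ the crude bound $|G|\le M_0:=2\norm{V}_\infty+12$ holds since the perturbed eigenvalue stays within $\tfrac1{16}\exp(-N_0^\beta)$ of a genuine eigenvalue of an operator of norm $\le 2+\norm{V}_\infty$. Rescaling a polydisk of polyradius $r_1'$ centred at $(\phi_1,E_1)$ and contained in $\cP_1'$ to the unit polydisk and applying \cref{cor:high_cart} to $G/M_0$ — whose bound on the real locus is $\log\bigl(M_0^{-1}\exp(-\gamma N_0/20)\bigr)$, with $\log M_0=O(S_V)\ll\gamma N_0$ by the choice of $N_0$ — yields $|G|<\exp(-c_0\gamma N_0)$ on $\tfrac1{50}\cP_1'$ with $c_0=c_0(d)$, which is \cref{inductive29}. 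Then \cref{inductive49} follows from \cref{inductive29} by a Cauchy estimate in the $E$ variable: $|\partial_E G|\lesssim\exp(-c_0\gamma N_0)/r_1'=\exp(-c_0\gamma N_0+3N_0^\beta)<\exp(-c_0\gamma N_0/2)$, where I used $\gamma N_0\ge N_0^{1-1/C}\gg N_0^\beta$ (from $N_0\ge\gamma^{-C}$).

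The step I expect to be the main obstacle is not a single estimate but the bookkeeping of radii: one must check that the ball of radius $\exp(-2N_0^\beta)$ on which (a) asserts analyticity still lies where the separation estimate \cref{inductive25} is valid and where the Lipschitz bounds for $V$ and for $x_0$ apply, and that $\tfrac1{50}\cP_1'$ is contained in the $\tfrac1{24}$-polydisk required by \cref{cor:high_cart}. This is precisely why \cref{prop:inductive4} was stated with the buffer $|y|<r_1'$ and why the eigenvalue-separation conclusion of \cref{prop:stabilization} was established on a ball of radius $\exp(-2N_0^\beta)$ rather than merely on $\Pi_1'$, and it is the place where the relations $\delta\ll\beta$ and $N_0\ge(B_0+S_V+\gamma^{-1})^C$ must be used with care.
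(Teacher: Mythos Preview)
Your proposal is correct and follows essentially the same approach as the paper: the paper's proof simply says the analyticity statements follow as in \cref{lem:eigenvalue-analyticity}, that \cref{inductive29} holds on the real locus by \cref{prop:inductive4} and \cref{eq:mapx1}, that \cref{cor:high_cart} extends it to complex $(\phi,E)$, and that \cref{inductive49} then follows by Cauchy estimates. Your write-up spells out these steps in more detail (the Riesz-projection argument for (a), the normalization by $M_0$ before invoking \cref{cor:high_cart}, and the explicit Cauchy-estimate computation for \cref{inductive49}), but the strategy is identical.
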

\begin{proof}
  The analyticity statements follow as in \cref{lem:eigenvalue-analyticity}. By \cref{prop:inductive4}, the
  estimate \eqref{inductive29} holds for real $(\phi,E)\in
  \frac{1}{2}\cP_1'\cap \R^d $
  with $ c_0=1/20 $ (recall \eqref{eq:mapx1}).  With the help of
  \cref{cor:high_cart} one concludes that the estimate is 
  also valid for complex $\phi,E$, with some $c_0(d)<1/20 $. 
  The estimate \eqref{inductive49} follows from Cauchy estimates combined with \eqref{inductive29}.
\end{proof}

\begin{prop}\label{prop:inductive6nhh}
  Let
  \begin{equation*}
	\cP_1''=\{ (\phi,E)\in \C^d : |\phi-\phi_1|,|E-E_1|<\exp(-C_0N_0^\beta) \},
  \end{equation*}
  with $ C_0=C_0(d)\gg 1 $.
  There exists a map  $x_1: \Pi_1''\to \R^d$, $
  \Pi_1'':=\cP_1''\cap \R^d $, that extends analytically on
  $\mathcal{P}_{1}''$, such that
  \begin{gather}
    \label{inductive44s}
    E^{[-N'_1,N''_1]}(x_{1}(\phi,E))=E,\quad (\phi,E)\in\mathcal{P}_{1}'',\\
    \label{inductive44t}
    x_1(\cP_1'')\subset \T_{\rho/2}^d.
  \end{gather}
  Furthermore, for any $ (\phi,E)\in \Pi_1'' $,
  \begin{equation}    \label{inductive50}
	|x_{1}(\phi,E)-x_0(\phi,E)|<\exp(-\gamma N_0/30).
  \end{equation}
  and for any $ (\phi,E)\in \cP_1'' $,
  \begin{equation*}
    |x_{1}(\phi,E)-x_0(\phi,E)|<\exp(-c_0\gamma N_0),\quad c_0=c_0(d).
  \end{equation*}
\end{prop}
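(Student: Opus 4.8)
The plan is to build $x_1$ by correcting only the energy slot of the map $x_0$ supplied by \cref{prop:inductive4}: I would look for $x_1$ in the form $x_1(\phi,E)=x_0(\phi,g(\phi,E))$, where $g$ is chosen so that $E^{[-N_1',N_1'']}(x_0(\phi,g(\phi,E)))=E$. Writing $\Phi(\phi,\zeta):=E^{[-N_1',N_1'']}(x_0(\phi,\zeta))$, which by \cref{lem:inductive5} is analytic on $\cP_1'$ and by \cref{inductive29}, \cref{inductive49} satisfies $|\Phi(\phi,\zeta)-\zeta|<\exp(-c_0\gamma N_0)$ and $|\partial_\zeta\Phi(\phi,\zeta)-1|<\exp(-c_0\gamma N_0/2)$ on $\frac1{50}\cP_1'$, the task reduces to inverting the near-identity map $\zeta\mapsto\Phi(\phi,\zeta)$ analytically and jointly in $(\phi,E)$.

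First I would invert $\Phi$ in $\zeta$. Over the polydisk $\cP_1''=\{(\phi,E):|\phi-\phi_1|,|E-E_1|<\exp(-C_0N_0^\beta)\}$, with $C_0=C_0(d)\gg1$ chosen large enough that $\cP_1''$ — together with a $\zeta$-disk of radius $\simeq\exp(-c_0\gamma N_0)$ about $E$ — lies inside $\frac1{50}\cP_1'$ (here I use $\beta<1$, so $\exp(-C_0N_0^\beta)$ is far smaller than the radius $\simeq\exp(-3N_0^\beta)$ of $\frac1{50}\cP_1'$), the map $T_{\phi,E}(\zeta)=\zeta-(\Phi(\phi,\zeta)-E)$ has derivative of modulus $<\exp(-c_0\gamma N_0/2)$ and stabilizes $\{|\zeta-E|<2\exp(-c_0\gamma N_0)\}$; a contraction/Newton argument then yields an analytic fixed point $g(\phi,E)$ with $\Phi(\phi,g(\phi,E))=E$ and $|g(\phi,E)-E|\le2|\Phi(\phi,E)-E|$ (equivalently one may invoke \cref{lem:impl2} applied to $F(\zeta;\phi,E)=\Phi(\phi,\zeta)-E$, after recentering at a zero). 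For complex $(\phi,E)\in\cP_1''$ this gives $|g(\phi,E)-E|<2\exp(-c_0\gamma N_0)$ via \cref{inductive29}; for real $(\phi,E)$ I would instead use \cref{inductive22} with $y=0$ and the defining relation $E^{[-N_0',N_0'']}(x_0(\phi,E))=E$ from \refcond{A} at scale $0$ (cf. \cref{eq:mapx1}) to get the sharper $|\Phi(\phi,E)-E|<\exp(-\gamma N_0/20)$, hence $|g(\phi,E)-E|<2\exp(-\gamma N_0/20)$. Real-analyticity of $E^{[-N_1',N_1'']}$ and $x_0$ makes $g$ real on $\Pi_1''=\cP_1''\cap\R^d$ (cf. \cref{rem:real-valued}).

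Then I would set $x_1(\phi,E):=x_0(\phi,g(\phi,E))$. One checks $(\phi,g(\phi,E))\in\cP_0$ (the domain from \refcond{B} with $s=0$): $\phi$ is within $\exp(-C_0N_0^\beta)\ll r_1'$ of $\phi_1\in\cI_1'\Subset\cI_0$, and $g(\phi,E)$ is within $\exp(-C_0N_0^\beta)+2\exp(-c_0\gamma N_0)\ll r_0=\exp(-N_0^\delta)$ of $E_1\in(E_0-r_0,E_0+r_0)$ (here I use $\delta\ll\beta$). Hence $x_1$ is analytic on $\cP_1''$; $x_1(\cP_1'')\subset x_0(\cP_0)\subset\T_{\rho/2}^d$ is \cref{inductive44t}, and $E^{[-N_1',N_1'']}(x_1(\phi,E))=\Phi(\phi,g(\phi,E))=E$ is \cref{inductive44s}. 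For the closeness bounds, a Cauchy estimate on $\cP_0$ (using $\sup_{\cP_0}|x_0|\lesssim1$ and that our evaluation points stay at distance $\ge r_0/2$ from $\partial\cP_0$) gives $|\partial_\zeta x_0|\lesssim r_0^{-1}=\exp(N_0^\delta)$, whence
\[
  |x_1(\phi,E)-x_0(\phi,E)|=|x_0(\phi,g(\phi,E))-x_0(\phi,E)|\lesssim\exp(N_0^\delta)\,|g(\phi,E)-E|.
\]
On $\Pi_1''$ this is $\lesssim\exp(N_0^\delta)\exp(-\gamma N_0/20)<\exp(-\gamma N_0/30)$ for $N_0\ge(B_0+S_V+\gamma^{-1})^C$ large (and $\delta\ll1$), giving \cref{inductive50}; on $\cP_1''$ it is $\lesssim\exp(N_0^\delta)\exp(-c_0\gamma N_0)<\exp(-c_0\gamma N_0)$ after shrinking the dimensional constant $c_0=c_0(d)$, which is the last assertion.

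The main obstacle is not the inversion of $\Phi$ (routine given the near-identity estimate \cref{inductive49}) but the careful bookkeeping of the nested domains: verifying that the target polydisk $\cP_1''$ of radius $\exp(-C_0N_0^\beta)$ genuinely sits inside the region $\frac1{50}\cP_1'$ where \cref{lem:inductive5} supplies the near-identity estimates — this is exactly what forces $C_0=C_0(d)\gg1$ — that the composed argument $(\phi,g(\phi,E))$ never leaves $\cP_0$, and that the crude loss $\exp(N_0^\delta)$ in the Cauchy estimate is absorbed by the exponential gain once $N_0$ is large, which rests on the scale hierarchy $\delta\ll\beta\ll1$ fixed in \cref{eq:relations}.
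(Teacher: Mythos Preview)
Your proposal is correct and follows essentially the same approach as the paper: solve $E^{[-N_1',N_1'']}(x_0(\phi,\eta))=E$ for $\eta=g(\phi,E)$ using the near-identity estimates \cref{inductive29}, \cref{inductive49}, then set $x_1(\phi,E)=x_0(\phi,g(\phi,E))$ and control $|x_1-x_0|$ via $|g(\phi,E)-E|$ and a Cauchy bound on $\partial_\zeta x_0$. The only cosmetic difference is that the paper first locates a real solution $\eta_1$ at $(\phi_1,E_1)$ by the intermediate value theorem before invoking \cref{lem:impl2}, and then obtains the complex bound on $|x_1-x_0|$ by a separate application of \cref{cor:high_cart}, whereas you run a contraction argument directly on the complex polydisk and read off the complex bound from \cref{inductive29}; both routes use the same ingredients.
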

\begin{proof}
  By \cref{prop:inductive4} one has 
  \begin{equation}\label{eq:inductive42}
    \big|E^{[-N'_1,N''_1]}(x_{0}(\phi,E))-E\big|<
    \exp(-\gamma N_0/20)
  \end{equation}
  for any $\phi\in \mathcal{I}'_{1}$ and any real
  $|E-E_1|<\exp(-3N_0^{\beta})$. Given real
  $|E-E_1|<\frac{1}{2}\exp(-3N_0^{\beta})$, set
  $E_\pm=E\pm 2\exp(-\gamma N_0/20)$.  Since
  $|E_\pm-E_1|< \exp(-3N_0^{\beta})$, using \cref{eq:inductive42} we
  have
  \begin{gather*}
  E^{[-N'_1,N''_1]}(x_{0}(\phi,E_-))<E<E^{[-N'_1,N''_1]}(x_{0}(\phi,E_+)).
  \end{gather*}
  It follows that
 \begin{gather}\label{inductive44}
  E^{[-N'_1,N''_1]}(x_{0}(\phi,\eta))=E
  \end{gather}
  has a solution $\eta\in (E_-,E_+)$. Let $ \eta_1 $ be the solution
  corresponding to $ \phi=\phi_1 $, $ E=E_1 $. Recall that due to \eqref{inductive49} in
  Lemma~\ref{lem:inductive5} one has
  \[
  \partial_\eta E^{[-N'_1,N''_1]}(x_{0}(\phi,\eta))\ge 1/2.
  \]
  Therefore, due to the implicit function theorem for analytic
  functions, see \cref{lem:impl2}, for
  \begin{equation*}
	|\phi-\phi_1|,|E-E_1|<\exp(-2CN_0^\beta),\ C=C(d)>3,
  \end{equation*}
  there exists a unique analytic solution $ \eta(\phi,E) $, $
  |\eta(\phi,E)-\eta_1|<\exp(-CN_0^\beta) $ of \cref{inductive44}.
  Then \eqref{inductive44s} and \eqref{inductive44t} hold by setting $ x_1(\phi,E)=x_0(\phi,\eta(\phi,E)) $.
  By
  uniqueness, for real $ \phi,E $,   $ \eta(\phi,E)\in (E_-,E_+) $,
  and therefore
  \[
    |\eta(\phi,E)-E|<2\exp(-\gamma N_0/20)
  \]
  and \eqref{inductive50} follows. The last estimate is a consequence
  of \cref{cor:high_cart}
  (note that we take $ C_0<2C $).
\end{proof}

\begin{cor}\label{cor:inductive5}
  Using the notation of \cref{prop:inductive6nhh}, for any $
  (\phi,E)\in \Pi_{1}'' $,
  \begin{gather}
    \label{inductive65I}
    \big|E-E_j^{[-N'_1,N''_1]}(x_{1}(\phi,E))\big|>
    {1\over 8}\exp(-N^{\beta}_{0})>\exp(-N_1^\delta),\quad j\ne k_1,\\
    \label{inductive64I}
    |\psi^{[-N'_1,N''_1]}(x_{1}(\phi,E),n)|<\exp(-\gamma |n|/10),\quad |n|\ge 3N_{0}/4,\\
    \label{inductive60E}
    \|\psi^{[-N'_1,N''_1]}(x_{1}(\phi,E),\cdot)-\psi^{[-N'_{0},N''_{0}]}(x_{1}(\phi,E),\cdot)\|<
    \exp(-\gamma N_{0}/20),\\
    \|\psi^{[-N'_1,N''_1]}(x_{1}(\phi,E),\cdot)-\psi^{[-N'_{0},N''_{0}]}(x_{0}(\phi,E),\cdot)\|<
    \exp(-\gamma N_{0}/40).
\end{gather}
\end{cor}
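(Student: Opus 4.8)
The plan is to obtain all four bounds from \cref{prop:inductive4}(ii) and \cref{prop:inductive6nhh}, using that for real $(\phi,E)\in\Pi_1''$ the point $x_1(\phi,E)$ differs from $x_0(\phi,E)$ only by the small shift $y:=x_1(\phi,E)-x_0(\phi,E)$. First I would check that the domains match: since $C_0>3$ we have $\phi\in\cI_1''\subset\cI_1'$ and $|E-E_1|<\exp(-C_0N_0^\beta)<r_1'$, while \eqref{inductive50} gives $|y|<\exp(-\gamma N_0/30)<r_1'$ once $N_0$ is large (here $\gamma N_0/30>3N_0^\beta$ because $\beta<1$ and $N_0\ge(B_0+S_V+\gamma^{-1})^C$). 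Hence \cref{prop:inductive4}(ii) applies with this $\phi$, $E$ and $y$, and $x_0(\phi,E)+y=x_1(\phi,E)$.

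Granting this, \eqref{inductive64I} is exactly \eqref{inductive24} for this $y$, and \eqref{inductive60E} is exactly \eqref{inductive23} for this $y$ (the base point $x_0(\phi,E)+y$ on the right of \eqref{inductive23} being $x_1(\phi,E)$). For \eqref{inductive65I} I would substitute the defining identity $E^{[-N_1',N_1'']}(x_1(\phi,E))=E$ from \eqref{inductive44s} into the separation estimate \eqref{inductive25}, obtaining $|E-E_j^{[-N_1',N_1'']}(x_1(\phi,E))|>\tfrac18\exp(-N_0^\beta)$ for $j\ne k_1$; the remaining inequality $\tfrac18\exp(-N_0^\beta)>\exp(-N_1^\delta)$ then follows from $N_1\ge\tfrac12N_0^{1/\beta}$ together with $\beta^2\ll\delta$ (whence $\delta/\beta\gg\beta$), for $N_0$ large.

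The fourth estimate needs a short separate argument, since it compares $\psi^{[-N_1',N_1'']}$ and $\psi^{[-N_0',N_0'']}$ at the two \emph{different} base points $x_1(\phi,E)$ and $x_0(\phi,E)$, while \eqref{inductive60E} only controls the difference at $x_1(\phi,E)$. By the triangle inequality it suffices to bound $\|\psi^{[-N_0',N_0'']}(x_1(\phi,E),\cdot)-\psi^{[-N_0',N_0'']}(x_0(\phi,E),\cdot)\|$. Here the eigenvalue $E^{[-N_0',N_0'']}(x_0(\phi,E))$ is isolated in $\spec H_{[-N_0',N_0'']}(x_0(\phi,E))$ with gap $\gtrsim r_0$ — this is condition \refcond{A}, transported from $\Pi_0$ to the real points of $\Pi_1''$ via \cref{eq:stability-x}, which is legitimate since $\dist((\phi,E),\Pi_0)<\exp(-C_0N_0^\beta)\ll r_0$. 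Then \cref{eq:stability-x} gives
\[
  \norm{\bigl(H_{[-N_0',N_0'']}(x_1(\phi,E))-E^{[-N_0',N_0'']}(x_0(\phi,E))\bigr)\psi^{[-N_0',N_0'']}(x_0(\phi,E),\cdot)}
  \le C_\rho\norm{V}_\infty\exp(-\gamma N_0/30),
\]
so \cref{lem:eigenvector-stability}(b), applied with $\eta\simeq r_0$, produces a normalized eigenvector of $H_{[-N_0',N_0'']}(x_1(\phi,E))$ with eigenvalue within $\exp(-\gamma N_0/30)$ of $E^{[-N_0',N_0'']}(x_0(\phi,E))$ — necessarily $\pm\psi^{[-N_0',N_0'']}(x_1(\phi,E),\cdot)$ — lying within $\sqrt2\,r_0^{-1}C_\rho\norm{V}_\infty\exp(-\gamma N_0/30)<\tfrac12\exp(-\gamma N_0/40)$ of $\psi^{[-N_0',N_0'']}(x_0(\phi,E),\cdot)$, the factor $r_0^{-1}=\exp(N_0^\delta)$ being harmless against $\gamma N_0$ once $N_0\ge(B_0+S_V+\gamma^{-1})^C$ with $C$ large. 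Adding \eqref{inductive60E} yields $\exp(-\gamma N_0/20)+\tfrac12\exp(-\gamma N_0/40)<\exp(-\gamma N_0/40)$, as required.

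The only real work is bookkeeping: confirming that the real points of $\Pi_1''$ lie inside $\cI_1'\times(E_1-r_1',E_1+r_1')$ and within $\exp(-C_0N_0^\beta)$ of $\Pi_0$, so that \cref{prop:inductive4}(ii) and \refcond{A} genuinely apply; and checking the finitely many exponent comparisons ($\gamma N_0/30>3N_0^\beta$; $\tfrac18\exp(-N_0^\beta)>\exp(-N_1^\delta)$ via $\beta^2\ll\delta$; and the last displayed numerical inequality) hold beyond the threshold on $N_0$ already in force in \cref{thm:D}. None of these steps is substantial; morally the statement is just a change of base point in \cref{prop:inductive4}.
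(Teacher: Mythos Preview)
Your proof is correct and matches the paper's approach for the first three estimates: set $y=x_1(\phi,E)-x_0(\phi,E)$, check $|y|<r_1'$ via \eqref{inductive50}, and read off \eqref{inductive25}, \eqref{inductive24}, \eqref{inductive23} at the shifted point, with the extra inequality in \eqref{inductive65I} coming from $N_1\simeq N_0^{1/\beta}$ and $\beta^2\ll\delta$.

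For the fourth estimate the paper takes a slightly different (but equivalent) route: instead of perturbing $\psi^{[-N_0',N_0'']}(x_0(\phi,E),\cdot)$ into an eigenvector of $H_{[-N_0',N_0'']}(x_1(\phi,E))$ and then triangulating with \eqref{inductive60E}, the paper directly treats the restriction of $\psi^{[-N_1',N_1'']}(x_1(\phi,E),\cdot)$ as an approximate eigenvector of $H_{[-N_0',N_0'']}(x_0(\phi,E))$, picking up both the Hamiltonian perturbation $C_\rho\|V\|_\infty|x_0-x_1|$ and the boundary terms $\lesssim\exp(-\gamma(N_0-N_0^{1/2})/10)$ from the localization \eqref{inductive64I}, and then applies \cref{lem:eigenvector-stability}(b) once with the separation from \refcond{A}. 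Your version avoids the boundary terms at the cost of one extra triangle inequality; both are fine, and both gloss over the same harmless sign ambiguity in \cref{lem:eigenvector-stability}.
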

\begin{proof} All statements, except the last one follow from
  \cref{inductive50} and 
  \cref{prop:inductive4} with $ y=x_1(\phi,E)-x_0(\phi,E) $. In the
  first estimate we used $N_1\simeq N_0^{\beta^{-1}}$ and 
  $\beta^2\ll \delta$. The last estimate follows from 
  \begin{multline*}
	\norm{(H_{[-N_0',N_0'']}(x_0(\phi,E))-E^{[-N_1',N_1'']}(x_1(\phi,E)))\psi^{[-N_1',N_1'']}(x_1(\phi,E))}\\
    \le
    \norm{(H_{[-N_0',N_0'']}(x_0(\phi,E))-H_{[-N_0',N_0'']}(x_1(\phi,E)))\psi^{[-N_1',N_1'']}(x_1(\phi,E))}\\
    +
    \norm{(H_{[-N_0',N_0'']}(x_1(\phi,E))-E^{[-N_1',N_1'']}(x_1(\phi,E)))\psi^{[-N_1',N_1'']}(x_1(\phi,E))}\\
    \le C_\rho\norm{V}_\infty |x_0(\phi,E)-x_1(\phi,E)|+2\exp(-\gamma (N_0-N_0^{1/2}/10)
    <\exp(-\gamma N_0/35),
  \end{multline*}
  the separation of eigenvalues, and \cref{lem:eigenvector-stability}.
\end{proof}

Next we check condition \refcond{D} with $ s=1 $.
Let
\begin{equation*}
  \cI_0'= \{ \phi\in\R^{d-1}: |\phi-\phi_0|<r_0^4 \}
\end{equation*}
(recall \cref{lem:eigenvalue-analyticity}).

\begin{lemma}\label{lem:inductive9}
  Let $ h\in \R^d $, $ \exp (-N_1^\mu)\le\|h\|<\exp (-N_0^\mu) $, and
  $ E\in (E_1-r_1,E_1+r_1) $.
  Then for any $\nu>0$,
  \begin{equation*}
    \mes\{\phi\in \cI_0'/10:
    \log |E^{[-N'_0,N''_0]}(x_{0}(\phi,E)+h)-E\big|\le
    -N_1^{\mu+\nu}\}\\
    <\exp(-c(d) (N_0^\delta+N_1^{\nu/(d-1)})).
  \end{equation*}
\end{lemma}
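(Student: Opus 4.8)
The plan is to apply Cartan's estimate, \cref{lem:high_cart} in $n=d-1$ variables, to the analytic function $\phi\mapsto F(\phi):=E^{[-N_0',N_0'']}(x_0(\phi,E)+h)-E$, with $E$ and $h$ held fixed. First I would record the analyticity and size data. By condition \refcond{B} and Cauchy estimates, $x_0(\cdot,E)$ is analytic with $|\partial_\phi x_0|\lesssim\exp(N_0^\delta)$ on a complex polydisk of radius $\simeq r_0^4$ about $\phi_0$; since $\norm{h}<\exp(-N_0^\mu)\ll\exp(-2N_0^\delta)$ (using $\delta\ll\mu$), for such $\phi$ the point $x_0(\phi,E)+h$ lies within $\exp(-2N_0^\delta)$ of the real point $x_0(\phi_0,E)$, hence in the domain where the branch $E^{[-N_0',N_0'']}$ is analytic by \cref{lem:eigenvalue-analyticity}(a); on that domain $|E^{[-N_0',N_0'']}|\le 2+\norm{V}_\infty$, so $\log|F|\le M$ with $M\lesssim S_V$, and by Cauchy $\nnorm{\fH(E^{[-N_0',N_0'']})}\lesssim\norm{V}_\infty\exp(4N_0^\delta)$ there.

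Next I would produce a reference point with a good lower bound. Apply condition \refcond{E} with the unit vector $h_0=h/\norm{h}$: its exceptional set has measure $<\exp(-N_0^{2\delta})$, far below the volume $\simeq\exp(-4(d-1)N_0^\delta)$ of $\cI_0'/10$, so there is a real $\phi^*\in\cI_0'/10$ with $|\langle\nabla E^{[-N_0',N_0'']}(x_0(\phi^*,E)),h_0\rangle|\ge\exp(-N_0^\mu/2)$. Since $E^{[-N_0',N_0'']}(x_0(\phi^*,E))=E$ by \refcond{A}, Taylor's formula gives
\[
  F(\phi^*)=\norm{h}\,\langle\nabla E^{[-N_0',N_0'']}(x_0(\phi^*,E)),h_0\rangle+R,\qquad
  |R|\lesssim\norm{V}_\infty\exp(4N_0^\delta)\norm{h}^2.
\]
As $\norm{h}<\exp(-N_0^\mu)$, $\delta\ll\mu$, and $N_0$ is large (so $\norm{V}_\infty\exp(4N_0^\delta)\norm{h}\exp(N_0^\mu/2)\le\exp(-N_0^\mu/4)\ll1$), the remainder is dominated by the linear term; with $\norm{h}\ge\exp(-N_1^\mu)$ and $N_1\gg N_0$ this yields $|F(\phi^*)|\ge\tfrac12\norm{h}\exp(-N_0^\mu/2)\ge\exp(-2N_1^\mu)$. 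The same computation applies verbatim with any real $\phi\in\cI_0'/10$ outside the \refcond{E}-exceptional set in place of $\phi^*$.

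Then I would split into two cases. If $N_1^{\nu/(d-1)}\le N_0^{2\delta}$, the pointwise bound just obtained gives $|F(\phi)|\ge\exp(-2N_1^\mu)>\exp(-N_1^{\mu+\nu})$ for every real $\phi\in\cI_0'/10$ outside the \refcond{E}-exceptional set, and the latter has measure $<\exp(-N_0^{2\delta})\le\exp(-c(d)(N_0^\delta+N_1^{\nu/(d-1)}))$; this settles the lemma. If $N_1^{\nu/(d-1)}>N_0^{2\delta}$, I would re-center a polydisk $\cP$ of radius $\simeq r_0^4$ at $\phi^*$ and apply the rescaled \cref{lem:high_cart} with $H=c(d)N_1^\nu$: writing $m=-2N_1^\mu$, so $M-m\lesssim N_1^\mu$, outside a set $\cB$ with $\mes(\cB)\le C(d)\exp(-c(d)^{1/(d-1)}N_1^{\nu/(d-1)})$ one gets $\log|F(\phi)|>M-C_{d-1}H(M-m)\ge-N_1^{\mu+\nu}$ provided $c(d)$ is small; since $N_1^{\nu/(d-1)}\ge N_0^\delta$ in this regime, $\mes(\cB)\le\exp(-c(d)(N_0^\delta+N_1^{\nu/(d-1)}))$ after relabelling $c(d)$. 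As $\cI_0'/10\subset\tfrac16\cP$, this gives the conclusion.

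The step I expect to be the crux is the reference point: one must ensure the linear term $\norm{h}\langle\nabla E^{[-N_0',N_0'']},h_0\rangle$ beats the quadratic remainder uniformly in $h$ with $\norm{h}$ as small as $\exp(-N_1^\mu)$. This is precisely where condition \refcond{E} — a gradient non-flatness statement, as opposed to the spectral statement \refcond{D}, which only controls $\norm{h}\ge\exp(-N_0^\mu)$ — does the work; the hierarchy $\delta\ll\mu\ll\beta$ together with $N_1=\floor{N_0^{1/\beta}}$ is used throughout to render the Hessian factor $\exp(4N_0^\delta)$ harmless and to make the case split go through cleanly.
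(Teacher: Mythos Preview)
Your proof is correct and follows the paper's approach: establish a reference lower bound $|F(\hat\phi)|\ge\exp(-2N_1^\mu)$ via condition \refcond{E} and Taylor expansion, then apply Cartan's estimate on a polydisk of radius $\simeq r_0^4$ with $H\simeq N_1^\nu$. Two minor points: the paper applies \refcond{E} at $E_1$ (since $E\in(E_1-r_1,E_1+r_1)$ may lie just outside $(E_0-r_0,E_0+r_0)$, where \refcond{E} is stated) and then perturbs to $E$ using $N_1^\delta\gg N_0^\mu$; and your case split is unnecessary, because the rescaling factor $(r_0^4)^{d-1}=\exp(-4(d-1)N_0^\delta)$ in Cartan's measure bound already supplies the $N_0^\delta$ contribution to the exponent, so a single application of \cref{lem:high_cart} with $H=cN_1^\nu$ gives both terms at once.
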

\begin{proof}
  By Taylor's formula, 
  \begin{equation}\label{eq:Taylor-E}
    E^{[-N'_0,N''_0]}(x_{0}(\phi,E)+h)-E^{[-N'_0,N''_0]}(x_{0}(\phi,E))=
    \langle \nabla E^{[-N_0',N_0'']}(x_0(\phi,E),h \rangle+O(\exp(CN_0^\delta) \norm{h}^2).
  \end{equation}
  We used the fact that by  Cauchy
  estimates (recall \cref{lem:eigenvalue-analyticity}),
  \[ \left|\frac{d^2}{dh^2} E^{[-N'_0,N''_0]}(x_{0}(\phi,E)+h)\right|\le \exp(CN_0^\delta).
  \]
  Due to condition \refcond{E} we
  can find $|\hat \phi_0-\phi_0|\ll r_0^4$ such that
  \begin{equation*}
    |\langle\nabla E^{[-N'_0,N''_0]}(x_{0}(\hat \phi_0,E_1)),h_0\rangle|\ge \exp(-N_0^{\mu}/2),
    \qquad h_0:=\frac{h}{\norm{h}}.
  \end{equation*}
  Since
  \begin{multline*}
	|\nabla E^{[-N_0',N_0'']}(x_0(\hat \phi_0,E_1))-\nabla E^{[-N_0',N_0'']}(x_0(\hat \phi_0,E))|\\
    \le \exp(CN_0^\delta) |x_0(\hat \phi_0,E_1)-x_0(\hat \phi_0,E)|
    \le \exp(C'N_0^\delta)|E-E_1|,
  \end{multline*}
  we have
  \begin{equation*}
    |\langle\nabla E^{[-N'_0,N''_0]}(x_{0}(\hat \phi_0,E)),h_0\rangle|\gtrsim \exp(-N_0^{\mu}/2),
  \end{equation*}
  for any $ E\in (E_1-r_1,E_1+r_1) $ (note that $ N_1^\delta\gg N_0^\mu
  $; recall that $ \delta\gg \beta^2\gg \beta\mu $).

  Plugging the above in \cref{eq:Taylor-E},
  \begin{gather*}
    |E^{[-N'_0,N''_0]}(x_{0}(\hat\phi_0,E)+h)-E\big|\gtrsim \|h\|\exp(-N_0^{\mu}/2)\ge
    \exp (-2N_1^\mu)
  \end{gather*}
  (we used $ \exp(CN_0^\delta)\norm{h}\le \exp(CN_0^\delta-N_0^\mu)\ll \exp(-N_0^\mu/2) $).
  The conclusion follows by applying Cartan's estimate 
  to $E^{[-N'_0,N''_0]}(x_{0}(\phi,E)+h)-E$ on the polydisk
  $|\phi-\hat \phi_0|< r_0^4$, with $ H=c\exp(N_1^\nu) $, $ c\ll 1 $.
\end{proof}

\begin{prop}\label{prop:inductive8}
  Let $ h\in \R^d $ such that
  $\dist (h, \mathfrak{T}_{1})\ge \exp (-N_1^\mu)$ (recall
  \cref{eq:fT_0}) and
  \begin{equation*}
    \cB_{1,E,h}''=\Big\{\phi\in \cI_{1}'':\max_{|n'|,|n''|< N_1^{1/2}}
    \dist (\spec  H_{[-N_1+n',N_1+n'']}(x_{1}(\phi,E)+h),E)<\exp(-N_1^\beta/2)\Big\}.
  \end{equation*}
  Then for any $ E\in (E_1-r_1,E_1+r_1) $,
  $\mes (\cB''_{1,E,h}) <\exp(-N_1^{2\delta})$.
\end{prop}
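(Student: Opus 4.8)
The plan is to cover the interval $[-N_1+n',N_1+n'']$ by pieces on which $E$ stays a controlled distance from the local Dirichlet spectrum, and then feed this into the covering form of (LDT), \cref{lem:Greencoverap1}, with reference phase $x_1(\phi,E)+h$. Fix $E\in(E_1-r_1,E_1+r_1)$ and $|n'|,|n''|<N_1^{1/2}$. First I would use the Diophantine condition \cref{eq:vecdiophant} together with $\dist(h,\mathfrak{T}_1)\ge\exp(-N_1^\mu)$ to see that there is \emph{at most one} integer $k_*$, $|k_*|\lesssim N_1$, with $\norm{h+k_*\omega}<\exp(-N_0^\mu)$ (two such would force $\norm{(k_*-k_*')\omega}<2\exp(-N_0^\mu)$, impossible since $N_0^\mu\gg b\log N_1$), and that if it exists then $\norm{h+k_*\omega}\in[\exp(-N_1^\mu),\exp(-N_0^\mu))$; for every site $m$ with $|m+k_*|>3N_0/2$ (or every $m$ if no such $k_*$) one has $\dist(h+m\omega,\mathfrak{T}_0)\ge\exp(-N_0^\mu)$.

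For the regular sites $m$ I would take $J_m=m+[-N_0+n'(\phi,m),N_0+n''(\phi,m)]$ exactly as in \cref{lem:inductive1}, invoking condition \refcond{D} at scale $0$ with the shift $h+m\omega$ and energy $E$ (legitimate since $(E_1-r_1,E_1+r_1)\subset(E_0-r_0,E_0+r_0)$), passing from $x_1(\phi,E)$ to $x_0(\phi,E)$ at the negligible cost $\exp(-\gamma N_0/30)\ll\exp(-N_0^\beta)$ by \cref{inductive50}, and recording the semialgebraic description of the exceptional $\phi$-sets via \cref{lem:sa}. The resonant window (the $O(N_0)$ sites with $|m+k_*|\le 3N_0/2$, present only when $k_*$ exists) I would swallow by one longer interval $J^*$ of length $\simeq N_0$ centered near $-k_*$; since $\spec H_{J^*}(x_1(\phi,E)+h)=\spec H_{J^*+k_*}(x_1(\phi,E)+h_{k_*})$ with $h_{k_*}:=h+k_*\omega$, and $\norm{h_{k_*}}<\exp(-N_0^\mu)\ll\exp(-2N_0^\delta)$, the scale-$N_0$ eigenvalue picture near $E$ from conditions \refcond{A}, \refcond{C} is stable under this shift (\cref{lem:eigenvector-stability}): the distance estimate for $J^*$ itself follows from another application of \cref{lem:Greencoverap1} at scale $N_0$ (regular sub-sites again by \refcond{D} at scale $0$, the innermost $O(N_0)$ sites by localization plus stability), where the only eigenvalue of the innermost operator within $\tfrac14\exp(-N_0^\delta)$ of $E$ is the perturbed localized one, close to $E^{[-N_0',N_0'']}(x_0(\phi,E)+h_{k_*})$, which \cref{lem:inductive9} with $\nu$ chosen so that $\nu/(d-1)>2\delta$ and $\mu+\nu<\beta$ keeps at distance $\ge\exp(-N_1^{\mu+\nu})$ from $E$ off a $\phi$-set of measure $<\exp(-N_1^{2\delta})$.

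With the $J_m$ and $J^*$ assembled, \cref{lem:Greencoverap1} applies with $K=N_1^{\mu+\nu}$ (this dominates the regular scale $N_0^\beta$ because $\beta^2\ll\mu$, and $N_1^{\mu+\nu}\ll N_0^{\sigma/2}$ because $\mu+\nu\ll\beta\sigma$), and since $N_1^\beta=N_0\gg N_1^{\mu+\nu}$ one gets $\tfrac12\exp(-K)\ge\exp(-N_1^\beta/2)$. Hence for every $\phi\in\cI_1''$ outside the union of the regular-site exceptional sets and the resonant-core exceptional set, $\dist(\spec H_{[-N_1+n',N_1+n'']}(x_1(\phi,E)+h),E)\ge\exp(-N_1^\beta/2)$; taking the further union over the $\lesssim N_1$ pairs $(n',n'')$ costs nothing essential.

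The remaining, and I expect principal, difficulty is to bound the measure of that union inside the small cube $\cI_1''$ by $\exp(-N_1^{2\delta})$. The resonant-core part already satisfies this by the choice of $\nu$, but condition \refcond{D} at scale $0$ only furnishes $\exp(-N_0^{2\delta})$ per regular site, which exceeds $\mes(\cI_1'')$, so a naive restriction fails. Here one must exploit the semialgebraic structure of \cref{lem:sa}: the union over the $\lesssim N_1^3$ relevant triples $(m,n',n'')$ is semialgebraic of degree $\lesssim N_1^3N_0^{20}$, one first produces a good reference point in the larger cube $\cI_0'$ (where the exceptional set, of measure $<\exp(-N_0^{2\delta})$, is an exponentially small fraction of $\mes(\cI_0')\simeq\exp(-CN_0^\delta)$), and then transfers the estimate into $\cI_1''$ by a Cartan-type covering argument (\cref{lem:sa-covering}, \cref{lem:Cartan-measure}) in the spirit of the proof of \cref{lem:inductive9}, the chain \cref{eq:relations} being exactly what keeps the resulting loss below $\exp(-N_1^{2\delta})$. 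Finally there is the routine bookkeeping: the edge intervals in the covering, the ranges $|n'|,|n''|<N_1^{1/2}$, the largeness threshold $N_0\ge(B_0+S_V+\gamma^{-1})^C$, and checking that the scales $N_0^\beta$, $N_0^\mu$, $N_0^\delta$, $N_1^\mu$, $N_1^{\mu+\nu}$, $N_1^{2\delta}$ line up as used.
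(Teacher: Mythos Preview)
Your overall architecture matches the paper's: split into regular sites $|m+m_1|>3N_0/2$ handled via condition \refcond{D} at scale $0$, and a resonant core at $m=-m_1$ handled via \cref{lem:inductive9}; then assemble a single interval $I$ and apply a covering form of (LDT). Your treatment of the resonant core is essentially identical to the paper's. Two points, however, deserve correction.

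First, and most importantly, your handling of the regular sites contains a gap. You correctly diagnose that the raw bound $\exp(-N_0^{2\delta})$ from \refcond{D} is too weak, and you propose to repair this with ``a Cartan-type covering argument'' using \cref{lem:sa-covering} and \cref{lem:Cartan-measure} applied to the semialgebraic union. But neither of those lemmas does what you need: \cref{lem:sa-covering} produces \emph{gaps} in a semialgebraic set (this is how $\phi_1$ was found in \cref{prop:inductive4}), it does not transfer a measure bound into a smaller cube; and \cref{lem:Cartan-measure} requires a Cartan set coming from an analytic function, which a semialgebraic set is not. The paper's fix is more direct and does not use the semialgebraic structure at all: for each regular $m$, use \refcond{D} only to locate a single reference point $\phi_{0,m}\in\cI_0$, apply the spectral form of (LDT) to get $\log|f_{J_m}(x_0(\phi_{0,m},E)+h,E)|>|J_m|L-|J_m|^{1-\tau/2}$ at that point (here $J_m$ is the interval associated to $\phi_{0,m}$, fixed independently of $\phi$), and then apply Cartan's estimate \cref{lem:high_cart} to the analytic function $\phi\mapsto f_{J_m}(x_0(\phi,E)+h,E)$ on $\cI_0'$. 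This upgrades the per-site bound from $\exp(-N_0^{2\delta})$ to $\exp(-N_0^{\tau/8(d-1)})$, and since $\delta\ll\beta\tau$ one has $N_1\exp(-N_0^{\tau/8(d-1)})\ll\exp(-N_1^{2\delta})$. This is the missing idea; once you apply Cartan to the determinant rather than trying to massage the semialgebraic set, the measure estimate falls out immediately and the bad set is small already inside $\cI_0'/10\supset\cI_1''$.

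Second, a minor point: there is no need to union over $(n',n'')$. The interval $I$ you assemble is a \emph{single} interval of the form $[-N_1+n',N_1+n'']$ with $|n'|,|n''|\lesssim N_0<N_1^{1/2}$; the definition of $\cB_{1,E,h}''$ uses a maximum over such pairs, so exhibiting one good pair suffices to show $\phi\notin\cB_{1,E,h}''$.
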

\begin{proof}
  Let $|m_1|\le 3N_1/2$, $h_1\in \mathbb{R}^d$ such that
  \begin{equation*}
    \dist (h, \mathfrak{T}_{1})=\|h_1\|,\qquad
    h_1=h-m_1\omega \,(\mod \mathbb{Z}^d).
  \end{equation*}
  Note that for any $ m_1\in [-N_1,N_1] $ we have 
  \begin{equation}\label{eq:dist-m-T0}
    \dist (h+m\omega, \mathfrak{T}_{0})=\dist (h,-m\omega+\mathfrak{T}_{0})
    \ge \dist (h, \mathfrak{T}_{1})=\norm{h_1},
  \end{equation}
  since $-m+[-3N_0/2,3N_0/2]\subset [-3N_1/2,3N_1/2]$.
  At the same time, if $|m+m_1|>3N_0/2$, using the Diophantine
  condition we get
  \begin{equation}\label{eq:dist-m-T0-alt}
    \|h+m\omega-n\omega\|=\|h_1+(m+m_1-n)\omega\|
    \ge\|(m+m_1-n)\omega\|-\|h_1\|\ge a(CN_1)^{-b}-\norm{h_1},
  \end{equation}
  for any $ n\in\fT_0 $.

  We consider two cases: $ \norm{h_1}\ge \exp(-N_0^\mu) $ and $
  \norm{h_1}<\exp(-N_0^\mu) $. In either case, by the above, we have
  $ \dist(h+m\omega,\fT_0)\ge \exp(-N_0^\mu) $ for all $ m\in
  [-N_1,N_1] $ with $ |m+m_1|>3N_0/2 $.
  So, for such $ m $, 
  condition \refcond{D} implies that for each $\phi\in \mathcal{I}_{0}
  \setminus \cB_{0,E_1,h+m\omega}$ there exists $|n'(\phi,m,h)|,
  |n''(\phi,m,h)|< N_0^{1/2}$
  such that with $J_m(\phi)=m+[-N_0+n'(\phi,m,h),N_0+n''(\phi,m,h)]$,
  \begin{equation*}
    \dist (\spec  H_{J_m(\phi)}(x_{0}(\phi,E_1)+h),E_1)
    \ge \exp(-N_0^\beta/2)
  \end{equation*}
  and therefore
  \begin{equation*}
	\dist (\spec  H_{J_m(\phi)}(x_{0}(\phi,E)+h),E)
    \ge \exp(-N_0^\beta/4)\ge \exp(-N_0^{\sigma/2})
  \end{equation*}
  for any $ E\in(E_1-r_1,E_1+r_1) $ (note that $ N_1^\delta\gg N_0^\beta
  $; recall that $ \delta\gg \beta^2 $).
  In particular, since
  $\mes (\cB_{0,E_1,h+m\omega})<\exp(-N_0^{2\delta})$, there exists $ \phi_{0,m}\in \mathcal{I}_{0}
  \setminus \cB_{0,E_1,h+m\omega} $, $|\phi_{0,m}-\phi_0|\ll r_0^4$.
  Let $ J_m:=J_m(\phi_{0,m}) $.  Due to the
  spectral form of (LDT), 
  \[
    \log |f_{J_m}(x_{0}(\phi_{0,m},E)+h),E)|>|J_m|L_{|J_m|}(E)-|J_m|^{1-\tau/2}.
  \]
  Using the uniform upper estimate (see \cref{cor:logupper})
  we can apply Cartan's estimate  to get
  \begin{equation}\label{eq:Bm'}
    \mes\{\phi\in \cI_0'/10:
    \log |f_{J_m}(x_{0}(\phi,E)+h,E)|<|J_m|L(E)-|J_m|^{1-\tau/4}\}
    <\exp(-N^{\tau/8(d-1)}_0)
  \end{equation}
  (in fact, the estimate holds for $ \phi\in \cI_0/10 $).
  Denote by $\cB_{0,E,m}'$ the set in the above estimate and let
  \begin{equation*}
    \cB'_{0,E,N_1}= \bigcup_{-N_1\le m\le N_1, |m+m_1|>3N_0/2}\cB'_{0,E,m}.
  \end{equation*}
  Since $ \delta\ll \beta\sigma\ll \beta\tau $, we have
  \begin{gather*}
    \mes (\cB'_{0,E,N_1})
    \lesssim N_1\exp(-N^{\tau/8(d-1)}_0)<
    \exp (-N_0^{\tau/8(d-1)}/2)\ll \exp(-N_1^{2\delta}).
  \end{gather*}

  We now have to deal with $ |m+m_1|\le 3N_0/2 $. It will be enough to
  focus on $ m=-m_1 $. We assume $ m_1\in [-N_1,N_1] $ so that
  \cref{eq:dist-m-T0} holds. If $ \norm{h_1}\ge \exp(-N_0^\mu) $,
  then by \cref{eq:dist-m-T0}, $ \dist(h+m_1\omega,\fT_0)\ge
  \exp(-N_0^\mu) $ and by the above reasoning there exists an interval
  $ J_{-m_1} $ such that \cref{eq:Bm'} holds with
  $ m=-m_1 $. In this case we let $ \cB_{0,E,-m_1}' $ be the set from
  \cref{eq:Bm'}. Suppose that $ \norm{h_1}<\exp(-N_0^\mu) $.
  Let $J_{-m_1}:=-m_1+[-N_0',N''_0]$. We have
  \begin{gather*}
    \spec H_{J_{-m_1}}(x+h)
    =\spec H_{[-N'_0,N''_{0}]}(x+h_1).
  \end{gather*}
  Let
  \begin{equation*}
	\cB_{0,E,-m_1}':=\{\phi\in \cI_0'/10:
    \big|E^{[-N'_0,N''_0]}(x_{0}(\phi,E)+h_1)-E\big|\le
    \exp(-N_1^{\mu+\nu})\},
  \end{equation*}
  with $ \nu=3(d-1)\delta $.
  By Lemma~\ref{lem:inductive9},
  \begin{equation*}
    \mes( \cB_{0,E,-m_1}')<\exp(-c(N_0^\delta+N_1^{\nu/(d-1)}))\ll \exp(-N_1^{2\delta}).
  \end{equation*}
  Since
  \begin{gather*}
    E^{[-N'_0,N''_0]}(x_{0}(\phi,E))=E,\\
    \big|E_j^{[-N'_0,N''_0]}(x_{0}(\phi,E)+h_1)-E_j^{[-N'_0,N''_0]}(x_{0}(\phi,E))\big|
    \le C_\rho \|V\|_\infty \|h_1\| \ll \exp(-N_0^\delta),
  \end{gather*}
  the separation of eigenvalues in condition \refcond{A} implies
  \begin{equation*}
    \dist (\spec H_{J_{-m_1}}(x_0(\phi,E)+h),E)> \exp(-N_1^{\mu+\nu}),
  \end{equation*}
  for any $\phi\in \frac{1}{10}\cI_0'\setminus \cB_{0,E,-m_1}'$.  Note
  that
  $|J_{-m_1}|^{\sigma/2}\simeq N_0^{\sigma/2}\gg
  N_1^{\mu+\nu}\gg N_0^\delta $ since 
  $\delta,\mu\ll \beta\sigma$.  Therefore we can apply the spectral form of
  (LDT) to get
  \begin{equation*}
    \log|f_{J_{-m_1}}(x_0(\phi,E)+h)|> |J_{-m_1}|L(E)-|J_{-m_1}|^{1-\tau/2},
  \end{equation*}
  for $ \phi \in \frac{1}{10}\cI_0'\setminus \cB_{0,E,-m_1}' $. So, in
  either case we identified an interval $ J_{-m_1} $ and got a similar conclusion.

  Let
  \begin{equation}\label{eq:I}
	I:=\begin{cases}
      J_{-m_1}\cup (\bigcup_{-N_1\le m\le N_1,|m+m_1|>3N_0/2} J_m) &, m_1\in [-N_1,N_1]\\
      \bigcup_{-N_1+2N_0\le m\le N_1-2N_0} J_m &, m_1\notin [-N_1,N_1].
    \end{cases}
  \end{equation}
  Note that $ J_{-m_1} $ overlaps  with the union of the other
  intervals and $ |m+m_1|>3N_0/2 $ for all $ m $'s in the last union.
  By the above, we can use the covering form of (LDT) from
  \cref{lem:Greencoverap0} to get that
  \begin{equation*}
    \dist( \spec H_{I}(x_{0}(\phi,E))+h),E)   \ge\exp( -2 \max_m |J_m|^{1-\tau/4})\\
    >\exp(-4N_0^{1-\tau/4}),
  \end{equation*}
  for any
  $ \phi\in \frac{1}{10}\cI_0'\setminus (\cB_{0,E,N_1}'\cup
  \cB_{0,E,-m_1}') $. Due to
  \eqref{inductive50},
  \begin{equation}\label{eq:I-estimate}
    \dist( \spec H_{I}(x_{1}(\phi,E))+h),E)\gtrsim\exp(-4N^{1-\tau/4}_0)\gg \exp(-N_1^\beta/2),
  \end{equation}
  for $ \phi\in \cI_1''\setminus (\cB_{0,E,N_1}'\cup
  \cB_{0,E,-m_1}') $. Therefore $ \cB_{1,E,h}''\subset \cB_{0,E,N_1}'\cup
  \cB_{0,E,-m_1}' $ and the conclusion holds.
\end{proof}

\begin{remark}\label{rem:non-symmetric}
  (a) Taking the maximum in the definition of the set $ \cB_{0,E,h} $ from
  condition \refcond{D} is a convenient way of capturing the fact that
  while we do not know precisely the interval $ I $ for which
  \cref{eq:I-estimate} holds, we do know that it is ``close'' to $
  [-N_1,N_1] $.

  \smallskip \noindent (b) If in the definition of $ \cB_{0,E,h} $ we
  would use symmetric intervals, then we could also choose $ I $ to be
  symmetric. However, even so, $ [-N_1',N_1''] $ need not be symmetric
  because we don't have enough control over the sizes of the intervals
  $ J_m $ in \cref{eq:N1-interval} (for example we cannot say that $
  J_m $ and $ J_{-m} $ have the same size).

  \smallskip\noindent (c) The reason for wanting $ A\ge \beta^{-1} $,
  as noted in \cref{rem:choice-of-A}, is the estimate \cref{eq:I-estimate}.
\end{remark}

Now we just need to check condition \refcond{E} with
$ s=1 $.

\begin{lemma}\label{lem:inductive10}
  Let $h_0\in \R^d$ be a unit vector. Then
  \begin{equation*}
	\left| \nabla E^{[-N_1',N_1'']}(x_1(\phi,E))-\nabla E^{[-N_0',N_0'']}(x_0(\phi,E)) \right|
    < \exp(-c_0\gamma N_0),\quad c_0=c_0(d),
  \end{equation*}
  for any $ (\phi,E)\in \Pi_1'' $.
\end{lemma}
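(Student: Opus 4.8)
The plan is to show that the two gradients are close as a consequence of the eigenfunctions being close, which is already encoded in \cref{cor:inductive5}. The starting point is the Feynman--Hellmann formula: for a simple eigenvalue $E^{[a,b]}(x)$ of the self-adjoint operator $H_{[a,b]}(x)$ with normalized eigenvector $\psi^{[a,b]}(x,\cdot)$, one has
\begin{equation*}
  \partial_{x_i} E^{[a,b]}(x) = \langle \psi^{[a,b]}(x,\cdot), (\partial_{x_i} H_{[a,b]}(x)) \psi^{[a,b]}(x,\cdot)\rangle,
\end{equation*}
and $\partial_{x_i} H_{[a,b]}(x)$ is the diagonal operator with entries $\partial_{x_i} V(x+n\omega)$, $n\in[a,b]$, which is bounded in norm by $C_\rho\norm{V}_\infty$ via Cauchy estimates. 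Hence
\begin{equation*}
  \langle \nabla E^{[a,b]}(x),h_0\rangle = \sum_{n\in[a,b]} \langle \nabla V(x+n\omega),h_0\rangle\, |\psi^{[a,b]}(x,n)|^2 .
\end{equation*}

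First I would apply this identity at $x=x_1(\phi,E)$ with $[a,b]=[-N_1',N_1'']$ and at $x=x_0(\phi,E)$ with $[a,b]=[-N_0',N_0'']$ (using the respective selected eigenvalues $k_1$, $k_0$, whose indices are suppressed). The difference splits naturally into two pieces: one coming from the change of base point $x_1(\phi,E)$ versus $x_0(\phi,E)$, and one coming from the change of eigenvector. For the first piece, \cref{inductive50} in \cref{prop:inductive6nhh} gives $|x_1(\phi,E)-x_0(\phi,E)|<\exp(-\gamma N_0/30)$, so by Cauchy estimates on $\nabla V$ the summand $\langle\nabla V(\cdot+n\omega),h_0\rangle$ changes by at most $C_\rho\norm{V}_\infty\exp(-\gamma N_0/30)$; summing over $n$ (a polynomial in $N_1\simeq N_0^{1/\beta}$ many terms) still leaves something $\lesssim\exp(-c_0\gamma N_0)$ after adjusting $c_0$. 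For the second piece I would write, schematically,
\begin{equation*}
  |\psi_1(n)|^2 - |\psi_0(n)|^2 = (\psi_1(n)-\psi_0(n))\overline{\psi_1(n)} + \overline{(\psi_1(n)-\psi_0(n))}\psi_0(n),
\end{equation*}
so the contribution to the gradient difference is bounded in absolute value by $C_\rho\norm{V}_\infty(\norm{\psi_1}+\norm{\psi_0})\norm{\psi_1-\psi_0}$; the last norm of the eigenvectors (evaluated at the common base point, say $x_1(\phi,E)$) is controlled by the last estimate of \cref{cor:inductive5}, namely $\norm{\psi^{[-N_1',N_1'']}(x_1(\phi,E),\cdot)-\psi^{[-N_0',N_0'']}(x_1(\phi,E),\cdot)}<\exp(-\gamma N_0/20)$. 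Combining the two pieces and slightly decreasing $c_0$ to absorb the harmless $\norm{V}_\infty$ and $N_1$ factors yields the claimed bound $\exp(-c_0\gamma N_0)$ for real $(\phi,E)\in\Pi_1''$, and then one upgrades to complex $(\phi,E)\in\Pi_1''$ — or rather uses that the statement is already only asserted on $\Pi_1''=\cP_1''\cap\R^d$, so no complexification is needed here.

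One subtlety I would be careful about: the two eigenvector estimates in \cref{cor:inductive5} compare $\psi^{[-N_1',N_1'']}$ with $\psi^{[-N_0',N_0'']}$ at the \emph{same} base point (either $x_1(\phi,E)$ or $x_0(\phi,E)$), so I should organize the telescoping to keep the base point fixed when I invoke the eigenvector bound, and handle the base-point change entirely through the Cauchy-estimate term on $\nabla V$ and through \cref{eq:stability-x}-type bounds for the eigenvalues/eigenvectors. A second point is that $\psi^{[-N_0',N_0'']}$, when used in the $[-N_1',N_1'']$ Feynman--Hellmann formula, must be extended by zero outside $[-N_0',N_0'']$; since its tails are already exponentially small by \refcond{C} (or by \cref{inductive24}), this zero-extension introduces only an $\exp(-\gamma N_0/10)$-type error, which is absorbed. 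I expect the main obstacle to be purely bookkeeping: making sure every error term is genuinely of size $\exp(-c_0\gamma N_0)$ with a single $c_0=c_0(d)$ after all the (polynomially many in $N_1$) summations, and making sure the self-adjointness needed for Feynman--Hellmann is legitimately available, which it is since $(\phi,E)$ is real and hence $x_s(\phi,E)\in\T^d$ makes $H_{[a,b]}(x_s(\phi,E))$ self-adjoint.
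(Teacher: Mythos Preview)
Your Feynman--Hellmann argument is correct and yields the same bound, but it is a genuinely different route from the paper's. The paper never writes down Feynman--Hellmann here; instead it telescopes as
\[
\nabla E^{[-N_1',N_1'']}(x_1)-\nabla E^{[-N_0',N_0'']}(x_0)
=\bigl[\nabla E^{[-N_1',N_1'']}(x_1)-\nabla E^{[-N_0',N_0'']}(x_1)\bigr]
+\bigl[\nabla E^{[-N_0',N_0'']}(x_1)-\nabla E^{[-N_0',N_0'']}(x_0)\bigr],
\]
and treats both brackets \emph{complex-analytically}: the second by the Cauchy--Lipschitz bound on the analytic function $\nabla E^{[-N_0',N_0'']}$ (from \cref{lem:eigenvalue-analyticity}) together with \eqref{inductive50}; the first by taking the eigenvalue closeness \eqref{inductive22} on real arguments, upgrading it to a complex polydisk around $x_1$ via \cref{cor:high_cart}, and then applying a Cauchy estimate to pass from function bound to gradient bound. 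What the paper's approach buys is brevity and uniformity with the rest of the section (the same ``real estimate $\Rightarrow$ complex estimate via \cref{cor:high_cart} $\Rightarrow$ derivative estimate via Cauchy'' pattern already appears in \cref{lem:inductive5}). What your approach buys is that it is entirely real-variable and makes the mechanism transparent---the gradient closeness is literally the eigenvector closeness of \cref{cor:inductive5} fed through Feynman--Hellmann---with no need for \cref{cor:high_cart}. One small correction: the \emph{last} estimate of \cref{cor:inductive5} actually compares the eigenvectors at the \emph{different} base points $x_1(\phi,E)$ and $x_0(\phi,E)$, so you can invoke it directly and skip the separate base-point change you discuss in your ``subtlety'' paragraph; also note that because the weights $|\psi(n)|^2$ sum to $1$, no $N_1$-factor actually appears when you change $\nabla V(x_1+n\omega)$ to $\nabla V(x_0+n\omega)$.
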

\begin{proof}
  Using \cref{inductive50}, we have
  \begin{multline*}
	\left| \nabla E^{[-N_0',N_0'']}(x_1(\phi,E))-\nabla E^{[-N_0',N_0'']}(x_0(\phi,E)) \right|\\
    \le \exp(CN_0^\delta) |x_1(\phi,E)-x_0(\phi,E)|<\exp(-\gamma N_0/35).
  \end{multline*}
  On the other hand, using \cref{inductive22}, \cref{inductive50},
  \cref{cor:high_cart}, and Cauchy estimates,
  we have
  \begin{equation*}
    \left| \nabla E^{[-N_1',N_1'']}(x_1(\phi,E))-\nabla E^{[-N_0',N_0'']}(x_1(\phi,E)) \right|
    \le \exp(-c(d)\gamma N_0),
  \end{equation*}
  and the conclusion follows.
\end{proof}

\begin{prop}\label{prop:inductiveDcond}
  Let $h_0\in \R^d $ be a unit vector.
  Then for any $ E\in (E_1-r_1,E_1+r_1) $, 
  \[
    \mes \{\phi\in \mathcal{I}_{1}'':\log
    |\langle \nabla E^{[-N_1',N_1'']}(x_1(\phi,E)),h_0 \rangle|
    <-N_1^{\mu}/2\}<\exp (-N_1^{2\delta}).
  \]
\end{prop}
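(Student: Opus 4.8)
The plan is to deduce the estimate from condition \refcond{E} at scale $s=0$ by transferring it, via \cref{lem:inductive10}, from the function $\langle \nabla E^{[-N_1',N_1'']}(x_1(\phi,E)),h_0\rangle$ to the previous-scale function $\varphi(\phi):=\langle \nabla E^{[-N_0',N_0'']}(x_0(\phi,E)),h_0\rangle$, and then to feed $\varphi$ into Cartan's estimate. Indeed \cref{lem:inductive10} bounds the difference of the two gradients by $\exp(-c_0\gamma N_0)$; since $N_1^\mu=N_0^{\mu/\beta}\ll N_0$ (because $\mu\ll\beta$), this error is far smaller than the threshold $\exp(-N_1^\mu/2)$ appearing in \refcond{E}, so it will be enough to prove the measure bound with $\langle \nabla E^{[-N_1',N_1'']}(x_1(\phi,E)),h_0\rangle$ replaced by $\varphi$ and the threshold relaxed to, say, $-N_1^\mu/4$.

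First I would produce a reference point. Fix $E\in(E_1-r_1,E_1+r_1)\subset(E_0-r_0,E_0+r_0)$ and a unit vector $h_0$. By condition \refcond{E} with $s=0$ the set $\{\phi\in\cI_0:\log|\varphi(\phi)|<-N_0^\mu/2\}$ has measure $<\exp(-N_0^{2\delta})$; since the box $\{|\phi-\phi_0|<r_0^4/100\}$ has measure $\gtrsim\exp(-4(d-1)N_0^\delta)\gg\exp(-N_0^{2\delta})$ (using $2\delta>\delta$), there is $\hat\phi_0$ in it with $\log|\varphi(\hat\phi_0)|\ge -N_0^\mu/2$. Next I would check that $\varphi$ extends analytically to the polydisk $\cP=\{\phi\in\C^{d-1}:|\phi-\hat\phi_0|<r_0^4\}$ with $\sup_\cP\log|\varphi|\le M:=C N_0^\delta$: by \refcond{B} and Cauchy estimates $|x_0(\phi,E)-x_0(\hat\phi_0,E)|\le C_\rho\exp(N_0^\delta)r_0^4<\exp(-2N_0^\delta)/4$ on $\cP$, so (as $(\hat\phi_0,E)\in\Pi_0$) \cref{lem:eigenvalue-analyticity} makes $E^{[-N_0',N_0'']}$, hence $\nabla E^{[-N_0',N_0'']}$, analytic on an $\exp(-2N_0^\delta)$-neighborhood of $x_0(\hat\phi_0,E)$, and a Cauchy estimate bounds the gradient there by $\exp(2N_0^\delta)(3+\norm{V}_\infty)$. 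Setting $m:=-N_0^\mu/2$ one has $M-m\le N_0^\mu$ since $\delta\ll\mu$.

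Then I would apply \cref{lem:high_cart} to $\varphi$ on $\cP$ with the tuned parameter $H:=N_1^\mu/(4C_{d-1}N_0^\mu)$, where $C_{d-1}$ is the constant in that lemma; note $H=N_0^{\mu(1/\beta-1)}/(4C_{d-1})\gg1$ because $\mu\ll\beta$. This produces $\cB\in\Car_{d-1}(H^{1/(d-1)},K)$ with $\log|\varphi(\phi)|>M-C_{d-1}H(M-m)\ge -N_1^\mu/4$ for $\phi\in\tfrac16\cP\setminus\cB$, and by \cref{lem:Cartan-measure}, $\mes(\cB\cap\R^{d-1})\le C(d)\exp(-H^{1/(d-1)})<\exp(-N_1^{2\delta})$, the last inequality because $\mu(1-\beta)>2(d-1)\delta$ (again from $\delta\ll\mu$). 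Since $\hat\phi_0$ and $\phi_1$ are both within $\ll r_0^4$ of $\phi_0$ (by \cref{prop:inductive4}) while $\cI_1''$ has radius $\exp(-C_0N_0^\beta)\ll r_0^4$ (as $\beta\gg\delta$), we get $\cI_1''\subset\tfrac16\cP$. Combining with the first paragraph: if $\phi\in\cI_1''$ satisfies $\log|\langle \nabla E^{[-N_1',N_1'']}(x_1(\phi,E)),h_0\rangle|<-N_1^\mu/2$, then $|\varphi(\phi)|<2\exp(-N_1^\mu/2)<\exp(-N_1^\mu/4)$, hence $\phi\in\cB\cap\R^{d-1}$; thus the bad set in the statement has measure $<\exp(-N_1^{2\delta})$.

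The main obstacle is the scale mismatch: the bound $\exp(-N_0^{2\delta})$ furnished by \refcond{E} at scale $0$ is much larger than the volume of the target box $\cI_1''$ (which is $\simeq\exp(-(d-1)C_0N_0^\beta)$ with $\beta\gg\delta$), so one cannot hope to locate a good reference point inside $\cI_1''$ itself. The device that resolves this is to pick the reference point in the much larger box $\cI_0'$ and let Cartan's estimate propagate the lower bound all the way down to the scale of $\cI_1''$; the choice of $H$ is forced by the two competing demands $H^{1/(d-1)}\gg N_1^{2\delta}$ (to kill the exceptional set) and $HN_0^\mu\lesssim N_1^\mu$ (to keep the Cartan threshold above $-N_1^\mu$), and the gap $\mu\ll\beta$ is precisely what makes this range of admissible $H$ nonempty.
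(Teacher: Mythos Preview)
Your proposal is correct and follows essentially the same approach as the paper: locate a reference point $\hat\phi_0$ via condition \refcond{E} at scale $0$, apply Cartan's estimate on a polydisk of radius $\simeq r_0^4$ to push the threshold down to $-N_1^\mu/4$, and then invoke \cref{lem:inductive10} to pass from $\nabla E^{[-N_0',N_0'']}(x_0(\cdot,E))$ to $\nabla E^{[-N_1',N_1'']}(x_1(\cdot,E))$. The paper's proof is slightly terser (it writes the Cartan threshold as $-N_0^{\mu+\nu}$ with $\nu=3(d-1)\beta^{-1}\delta$ rather than tuning $H$ explicitly), but the mechanism is identical, and your closing paragraph correctly identifies the scale-mismatch issue that makes Cartan's estimate indispensable here.
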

\begin{proof}  
  Due to condition \refcond{E} 
  we can find $ \hat \phi_0 $, $|\hat \phi_0-\phi_0|\ll r_0^4$,  
  such that
  \begin{equation*}
    \log | \langle \nabla E^{[-N_0',N_0'']}(x_0(\hat \phi_0,E)),h_0 \rangle |\ge -N_0^\mu/2.
  \end{equation*}
  Applying Cartan's estimate  we get
  \begin{multline*}
    \mes\{\phi\in \cI_0'/10:
    \log | \langle \nabla E^{[-N_0',N_0'']}(x_0(\phi,E)),h_0 \rangle | < -N_0^{\mu+\nu}\}\\
    <\exp(-c(d)(N_0^\delta+N_0^{\nu/(d-1)}))< \exp(-N_1^{2\delta}),
  \end{multline*}
  where $ \nu=3(d-1)\beta^{-1}\delta $. Let $ \cB $ be the set on the
  left-hand side.  Note that $ \cI_1''\subset \cI_0'/10 $, since $ |\phi_1-\phi_0|\ll r_0^4 $.
  Since $ \mu+\nu\ll 1 $, \cref{lem:inductive10} implies
  \begin{equation*}
    \log | \langle \nabla E^{[-N_1',N_1'']}(x_1(\phi,E)),h_0 \rangle |\ge -2N_0^{\mu+\nu}
    \ge -N_1^{\mu}/2,
  \end{equation*}
  for any $ \phi\in \cI_1''\setminus \cB $ (recall that $ \delta\ll \mu $). This concludes the proof.
\end{proof}

We  briefly summarize how \cref{thm:D} follows from the previous statements.
\begin{proof}[Proof of \cref{thm:D}]
  The existence of $ \phi_1 $ was obtained in \cref{prop:inductive4}.
  Note that since $ \delta\gg \beta^2 $, we have
  $ \cP_1\Subset \cP_1'' $.  Conditions \refcond{A}-\refcond{C}, and
  the estimates \cref{inductive50E}, \cref{inductive51EE}, follow from
  \cref{prop:inductive6nhh} and \cref{cor:inductive5}. Condition
  \refcond{D} follows from \cref{prop:inductive8}. Condition
  \refcond{E} follows from \cref{prop:inductiveDcond}.
\end{proof}

\section{Inductive Scheme for the Edges of the Spectrum}\label{sec:edges}

As in the previous section we assume the  non-perturbative setting from \cref{sec:basic-tools}.
We introduce another set of conditions that will address the edges of
the spectrum. 

We assume the exponents $ \delta,\mu,\beta $ from the previous section
and we introduce a new exponent $ \fd $ such that $ \fd\ll \delta $.
Let  $ \gamma>0 $. Given an integer $ s\ge 0 $, let
\begin{equation*}
  \ux_s\in \T^d,\quad N_s\in \N,\quad r_s:=\exp(-N_s^{\fd}),
  \quad \uPi_s=\{ x\in \R^d: |x-\ux_s|<r_s \}. 
\end{equation*}
The inductive conditions for the lower edge are as follows.

\medskip\noindent
\condu{A} There exist integers $|N'_s-N_s|,|N''_s-N_s|<N_s^{1/2}$,
and $ \uE^{[-N'_s,N''_s]}=E_{k_s}^{[-N_s',N_s'']} $, such that
\begin{equation}\label{eq:edge-separation}
  E_{j}^{[-N_s',N_s'']}(x)-\uE^{[-N_s',N_s'']}(x)\ge \exp(-N_s^\fd),
\end{equation}
for any $ x\in \uPi_s $ and $ j\neq k_s $.

\medskip\noindent
\condu{B} For any $ x\in \uPi_s $,
\begin{equation*}
  |\upsi^{[-N_s',N_s'']}(x,n)|\le\exp(-\gamma|n|/10),\quad |n|\ge N_s/4.
\end{equation*}

\medskip\noindent \condu{C} The point $\underline{x}_s$, is a
non-degenerate minimum of the function
$\underline{E}^{[-N_{s}',N_{s}'']}$.  Specifically, with $ \nu_s=\exp(-N_s^\fd) $,
\begin{equation*}
  \nabla \underline{E}^{[-N_{s}',N_{s}'']}(\underline{x}_s)=0,\qquad
  \mathfrak{H}(\underline{E}^{[-N_{s}',N_{s}'']})(\underline{x}_s)\ge \nu_s I.
\end{equation*}

\medskip\noindent \condu{D} Let $ \uE_s=\uE^{[-N_s',N_s'']}(\ux_s)
$. Let $ \fT_s $ be as in \cref{eq:fT_0}. Take arbitrary
$h\in \mathbb{R}^d$ with
\[
\dist (h, \mathfrak{T}_{s})\ge \exp(-N_s^{2\fd}).
\]
There exist $|n'(h)|,|n''(h)|< N_s^{1/2}$ such that
\begin{equation*}
  \dist(\spec H_{[-N_s+n'(h),N_s+n''(h)]}(\ux_s+h),(-\infty,\uE_s])\ge \exp(-N_s^{4\fd}).
\end{equation*}

\medskip\noindent The conditions \condo{A}, \condo{B}, \condo{C}, \condo{D}, for the
upper edge are defined analogously, with obvious adjustments in notation.

\begin{thme}\label{thm:E}
  Assume the notation of the inductive conditions. Let
  $ \ux_0\in \T^d $, $ N_0\ge 1 $, assume that the conditions
  \refcondu{A}-\refcondu{D} hold with $ s = 0 $, and
  $ L(E)>\gamma>0 $ for $ E\in(\uE_0-2r_0,\uE_0+2r_0) $. Let
  $ N_s= N_{s-1}^5 $, $ s\ge 1 $. If
  $ N_0\ge (B_0+ S_V+ \gamma^{-1})^C $, $ C=C(a,b,\rho) $, then
  for any $s\ge 1 $ there exists $ \ux_s\in \T^d $
  such that the conditions
  \refcondu{A}-\refcondu{D} hold and we have
  \begin{equation}\label{eq:thmE-estimates}
    \begin{gathered}
      |\uE^{[-N_s',N_s'']}(x)-\uE^{[-N_{s-1}',N_{s-1}'']}(x)|<\exp(-\gamma N_{s-1}/20),\quad x\in \uPi_s,\\
      \norm{\upsi^{[-N_s',N_s'']}(x)-\upsi^{[-N_{s-1}',N_{s-1}'']}(x)}<\exp(-\gamma N_{s-1}/20),
      \quad x\in \uPi_s,\\
      |\ux_s-\ux_{s-1}|<\exp(-\gamma N_{s-1}/50),\quad 
      |\uE_s-\uE_{s-1}|<\exp(-\gamma N_{s-1}/60).
    \end{gathered}
  \end{equation}
  Furthermore, for any $ E_s\in \R $, $
  \exp(-N_s^{100\fd})\le |E_s-\uE_{s}|\le \exp(-N_{s}^{2\fd}) $, conditions \refcond{A}-\refcond{E}
  hold for $ \uE^{[-N_s',N_s'']} $. The analogous statements
  based on conditions \refcondo{A}-\refcondo{D} also hold.
\end{thme}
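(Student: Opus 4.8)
The plan is to run an induction on $ s $, establishing the passage from scale $ s-1 $ to scale $ s $; the inductive conditions and the auxiliary steps are arranged so that the general step reduces, after relabelling, to the case $ s=1 $, which is what I describe. The numerology $ N_1=N_0^5 $ (so that $ N_1^c=N_0^{5c} $ dominates every $ N_0^{c'} $ with $ c>c' $) together with the hierarchy $ \fd\ll\delta\ll\mu\ll\beta\ll\sigma $ is used silently to absorb error terms; I will not track the exponent bookkeeping here.

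\textbf{Propagating the edge conditions.} For $ 3N_0/2<|m|\le N_1 $ the Diophantine condition gives $ \dist(m\omega,\fT_0)\ge a(3N_1)^{-b}>\exp(-N_0^{2\fd}) $, so $ \refcondu{D} $ at scale $ 0 $ (with $ h=m\omega $, using $ \cref{eq:basic-identity} $) produces blocks $ J_m\ni m $ of length $ \simeq N_0 $ with $ \dist(\spec H_{J_m}(\ux_0),(-\infty,\uE_0])\ge\exp(-N_0^{4\fd}) $; put $ [-N_1',N_1'']=[-3N_0/2,3N_0/2]\cup\bigcup_m J_m $. Conditions $ \refcondu{A} $, $ \refcondu{B} $ at scale $ 0 $ and the blocks $ J_m $ supply all the hypotheses of \cref{prop:localization} and \cref{prop:stabilization} with $ \beta $ replaced by $ 4\fd $ and base point $ (\ux_0,\uE_0) $; this yields $ \uE^{[-N_1',N_1'']},\upsi^{[-N_1',N_1'']} $ with one-sided separation $ \ge\tfrac18\exp(-N_0^{4\fd}) $, the localization bound, and $ |\uE^{[-N_1',N_1'']}-\uE^{[-N_0',N_0'']}|,\ \norm{\upsi^{[-N_1',N_1'']}-\upsi^{[-N_0',N_0'']}}<\exp(-\gamma N_0/20) $ near $ \uPi_0 $. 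As $ \uE^{[-N_1',N_1'']} $ is a simple isolated eigenvalue, it is analytic on a ball of radius $ \simeq\exp(-N_0^{4\fd}) $ around $ \ux_0 $, so \cref{cor:high_cart} plus Cauchy estimates promote the last bound to $ C^2 $-closeness; with $ \refcondu{C} $ at scale $ 0 $ and the implicit function theorem this gives a unique non-degenerate minimum $ \ux_1 $ with $ |\ux_1-\ux_0|<\exp(-\gamma N_0/50) $, $ \fH(\ux_1)\ge\tfrac12\nu_0 I\ge\nu_1 I $, $ |\uE_1-\uE_0|<\exp(-\gamma N_0/60) $, hence $ \refcondu{A} $--$ \refcondu{C} $ and $ \cref{eq:thmE-estimates} $ at scale $ 1 $. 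For $ \refcondu{D} $ at scale $ 1 $: fix $ h $ with $ \dist(h,\fT_1)\ge\exp(-N_1^{2\fd}) $, write $ h=h_1+m_1\omega $, $ \|h_1\|=\dist(h,\fT_1) $, and use $ \refcondu{D} $ at scale $ 0 $ for blocks around sites $ m' $ spaced $ \simeq N_0/2 $ across $ [-N_1,N_1] $ — for all but the $ O(1) $ sites with $ m'+m_1 $ small (Diophantine), passing from $ \ux_0 $ to $ \ux_1 $ since $ |\ux_1-\ux_0|\ll\exp(-N_0^{4\fd}) $; for the resonant sites use $ \refcondu{D} $ with shift $ h_1 $ when $ \|h_1\|\ge\exp(-N_0^{2\fd}) $, and when $ \|h_1\|<\exp(-N_0^{2\fd}) $ (so $ \ux_1+h_1\in\uPi_0 $) the quadratic bound $ \uE^{[-N_0',N_0'']}(\ux_1+h_1)-\uE_0\gtrsim\nu_0\|h_1\|^2\gg\exp(-N_1^{4\fd}) $ of \cref{lem:Morse7}. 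Stitching with \cref{lem:Greencoverap1} ($ K\simeq N_0^{4\fd}\ll N_0^{\sigma/2} $) gives $ \dist(\spec H_{[-N_1+n',N_1+n'']}(\ux_1+h),(-\infty,\uE_1])\gg\exp(-N_1^{4\fd}) $ for suitable $ |n'|,|n''|<N_1^{1/2} $. The conditions $ \refcondo{A} $--$ \refcondo{D} $ are handled identically with minima replaced by maxima.

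\textbf{From the edge to the bulk.} Given $ E_s $ with $ \exp(-N_s^{100\fd})\le|E_s-\uE_s|\le\exp(-N_s^{2\fd}) $ (the relevant case being $ \eta:=E_s-\uE_s>0 $), apply \cref{prop:levelsetshifts} to $ f(x)=\uE^{[-N_s',N_s'']}(\ux_s+x)-\uE_s $, which by $ \refcondu{C} $ has $ f(0)=0,\ \nabla f(0)=0,\ \fH(0)\ge\nu_s I $ and is analytic on a ball of radius $ r_0\simeq\exp(-N_s^\fd) $. By \cref{lem:Morse7} the nearest point $ x_0 $ of $ \{f=\eta\} $ has $ \|x_0\|^2\simeq\eta $, so $ \|x_0\|<\rho $; set $ x_s(\phi,E)=\ux_s+x(\phi-\phi_s,E-\uE_s) $ with $ x(\cdot,\cdot) $ the map of \cref{prop:levelsetshifts}, defined for $ |y|<r=\nu_1\|x_0\| $, $ |E-f(x_0)|<r^2 $. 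Since $ \|x(w,E)\|\le\tfrac32\|x_0\|\ll r_s $ one has $ x_s(\phi,E)\in\uPi_s $, so $ \uE^{[-N_s']}(x_s(\phi,E))=E $ and $ |E_j^{[-N_s']}(x_s(\phi,E))-E|\ge\exp(-N_s^\fd)>\exp(-N_s^\delta) $ by $ \refcondu{A} $ (this is $ \refcond{A} $); part (I) of \cref{prop:levelsetshifts} together with $ x_s(\cP_s)\subset\T^d_{\rho/2} $ gives $ \refcond{B} $, and $ \refcondu{B} $ gives $ \refcond{C} $. For $ \refcond{D} $, fix $ h $ with $ \dist(h,\fT_s)\ge\exp(-N_s^\mu) $ and split as above: the non-resonant blocks, and the resonant blocks with $ \|h_1\| $ not too small, are handled uniformly in $ \phi $ by $ \refcondu{D} $ at scale $ s-1 $ or by \cref{lem:Morse7}, the point being that the displacement $ \|x_s(\phi,E)-\ux_{s-1}\|\lesssim\exp(-N_{s-1}^{10\fd}/3) $ is far smaller than the scale-$ (s-1) $ gap $ \exp(-N_{s-1}^{4\fd}) $ (here $ N_s=N_{s-1}^5 $ is essential); the genuinely new case $ \|h_1\| $ very small has $ x_s(\phi,E)+h_1 $ near $ \ux_s $, so $ \uE^{[-N_s']} $ is the bottom eigenvalue there and $ \dist(\spec H_{[-N_s']}(x_s(\phi,E)+h_1),E)=|f(x(\phi-\phi_s,E-\uE_s)+h_1)-(E-\uE_s)| $ is, by part (II) of \cref{prop:levelsetshifts} with $ H^{1/(d-1)}\simeq N_s^{2\delta} $, bounded below by $ \exp(-C(d)N_s^{\mu+2\delta(d-1)})\gg\exp(-N_s^\beta/2) $ off a $ \phi $-set of measure $ <\exp(-N_s^{2\delta}) $; stitching with \cref{lem:Greencoverap1} gives $ \refcond{D} $. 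Finally $ \refcond{E} $ follows from part (III): $ \nabla f(x(y,E))=\nabla\uE^{[-N_s']}(x_s(\phi,E)) $, $ H_1\ge-C(d)N_s^{100\fd} $, and $ H^{1/(d-1)}\simeq N_s^{2\delta} $ yield the measure bound $ \exp(-N_s^{2\delta}) $ with cutoff $ -N_s^\mu/2 $, using $ 100\fd,\ 2\delta(d-1)\ll\mu $.

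\textbf{Main obstacle.} The delicate point is the verification of $ \refcond{D} $ in the last step: $ \refcondu{D} $ only controls the spectrum at the single point $ \ux_s $, whereas $ x_s(\phi,E) $ is displaced from $ \ux_s $ by $ \simeq\|x_0\| $, which is \emph{larger} than the scale-$ s $ spectral gap $ \exp(-N_s^{4\fd}) $, so one cannot simply perturb. The remedy is to descend one scale — where $ N_s=N_{s-1}^5 $ renders the displacement negligible against the scale-$ (s-1) $ gap — and, for the single resonant sub-block where even that fails, to replace the pointwise bound by the Cartan-type measure estimate of \cref{prop:levelsetshifts}(II) (and (III) for $ \refcond{E} $) along the level set. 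Keeping the full chain of exponent inequalities, all forced by $ \cref{eq:relations} $ and $ \fd\ll\delta $, mutually consistent is the principal bookkeeping burden.
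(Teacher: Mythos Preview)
Your edge-propagation paragraph matches the paper's argument (\cref{prop:inductive4D}, \cref{prop:inductive5Dc}, \cref{lem:uE0-uE1}, \cref{prop:inducopositD}) and is correct.

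The edge-to-bulk paragraph has the right architecture, but there is a real gap in the verification of \refcond{D}, and it stems from a choice you made that looks innocuous. You apply \cref{prop:levelsetshifts} directly to $f=\uE^{[-N_s',N_s'']}(\ux_s+\cdot)-\uE_s$; the paper instead applies it to $\uE^{[-N_{s-1}',N_{s-1}'']}$ and only afterwards perturbs (exactly as in \cref{prop:inductive6nhh}) to produce the parametrization $x_s$ for $\uE^{[-N_s',N_s'']}$. This is not cosmetic. The radius $\rho$ of validity of part (II) of \cref{prop:levelsetshifts} scales like $\exp(-C'N^{\fd})$ where $N$ is the scale of the eigenvalue fed into the proposition: with your choice $\rho\simeq\exp(-C'N_s^{\fd})$, with the paper's $\rho\simeq\exp(-C'N_{s-1}^{\fd})$. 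Now, in the covering argument for \refcond{D} the blocks $J_m$ have size $\simeq N_{s-1}$, and the resonant block $J_{-m_1}$ is handled via \refcondu{D} at scale $s-1$ only when $\|h_1\|\ge\exp(-N_{s-1}^{2\fd})$; below that threshold one must invoke part (II), which needs $\|h_1\|<\rho$. Since $N_{s-1}^{2\fd}\ll N_{s-1}^{5\fd}=N_s^{\fd}$, your $\rho$ is \emph{much smaller} than $\exp(-N_{s-1}^{2\fd})$, so the whole range $\rho\le\|h_1\|<\exp(-N_{s-1}^{2\fd})$ is uncovered. Your appeal to \cref{lem:Morse7} does not close this: Morse controls $\uE^{[-N_{s-1}']}(\cdot)-\uE_{s-1}$, not the difference along the level set $\uE^{[-N_{s-1}']}(x_s(\phi,E)+h_1)-E$, and for $\|h_1\|$ comparable to $\|x_s(\phi,E)-\ux_{s-1}\|$ the discrepancy between these two quantities is exactly what one is trying to bound. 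The paper notes explicitly (in the line ``note that $\|h_1\|\le\exp(-N_0^{2\fd})<\rho$'') that its choice of scale makes the needed inequality hold.

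There is also a minor indexing slip: the resonant covering block has size $\simeq N_{s-1}$, so the relevant spectrum is that of $H_{[-N_{s-1}',N_{s-1}'']}$, not $H_{[-N_s',N_s'']}$ as you wrote; this is harmless once the previous point is fixed, since $\uE^{[-N_s']}$ and $\uE^{[-N_{s-1}']}$ differ by $\exp(-\gamma N_{s-1}/20)$.
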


As for \cref{thm:D}, we only check \cref{thm:E} for $ s=1 $, the
general case following by simply replacing the indices. Furthermore,
we only consider the statement with the conditions for the lower edge,
the other case being completely analogous. Throughout the
section we tacitly assume that $ N_0\ge (B_0+ S_V+
\gamma^{-1})^C $, with $ C=C(a,b,\rho) $ large enough. As in the
previous section, the dependence on the exponents $
\fd,\delta,\beta,\mu $  is left implicit.  We split the
proof of the first part of \cref{thm:E} into several auxiliary
statements. In what follows we fix $ \ux_0 $, $ N_0 $, such that the
assumptions of \cref{thm:E} hold, and $ N_1=N_0^5 $.

\begin{prop}\label{prop:inductive4D}
  There exist integers
  $|N'_1-N_1|,|N''_1-N_1|\lesssim N_0$, $ k_1 $, such that the following
  hold with $ \uE^{[-N_1',N_1'']}=E_{k_1}^{[-N_1',N_1'']} $ and for any
  $ |x-\ux_0|<\exp(-2N_0^{4\fd}) $:
  \begin{gather}\label{eq:newE1D}
    \big|\uE^{[-N'_1,N''_1]}(x)-\underline{E}^{[-N'_{0},N''_0]}(x)\big|
    <\exp(-\gamma N_0/20),\\
    \label{eq:uE1-separation}
    E_j^{[-N_1',N_1'']}(x)-\uE^{[-N_1',N_1'']}(x)>\frac{1}{8}\exp(-N_0^{4\fd}),\quad j\neq k_1,\\
    |\upsi^{[-N'_1,N''_1]}(x,n)|<\exp(-\gamma |n|/10),\quad |n|>3N_0/4,\\
    \|\upsi^{[-N'_1,N''_1]}(x,\cdot)-\upsi^{[-N'_0,N''_0]}(x,\cdot)\|<\exp(-\gamma N_0/20).
  \end{gather}
\end{prop}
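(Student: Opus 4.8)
The plan is to transcribe the proof of \cref{prop:inductive4} into the edge setting. Since there are now no auxiliary parameters $(\phi,E)$, everything takes place at the single point $\ux_0$ and the single energy $\uE_0$: condition \refcondu{D} plays the role that \refcond{D} (via \cref{lem:inductive1}) played in the bulk, and \cref{prop:stabilization} then upgrades a localized eigenpair from scale $N_0$ to scale $N_1$.

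First I would produce the intervals. For every $m$ with $3N_0/2<|m|\le N_1$ I would check that \refcondu{D} applies with $h=m\omega$: for $n\omega\in\fT_0$ one has $0<|m-n|<3N_1$, so \cref{eq:vecdiophant} gives $\dist(m\omega,\fT_0)\ge a(3N_1)^{-b}=a(3N_0^5)^{-b}$, which exceeds $\exp(-N_0^{2\fd})$ once $N_0$ is large because $N_0^{2\fd}\gg\log N_0$. Thus \refcondu{D} furnishes $|n'(m\omega)|,|n''(m\omega)|<N_0^{1/2}$ so that, writing $J_m:=m+[-N_0+n'(m\omega),N_0+n''(m\omega)]$ and using \cref{eq:basic-identity},
\[
  \dist\bigl(\spec H_{J_m}(\ux_0),(-\infty,\uE_0]\bigr)\ge\exp(-N_0^{4\fd}).
\]
I would then set $[-N_1',N_1'']:=[-3N_0/2,3N_0/2]\cup\bigcup_{3N_0/2<|m|\le N_1}J_m$. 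Each $J_m$ contains $m+[-(N_0-N_0^{1/2}),N_0-N_0^{1/2}]$, satisfies $\dist(m,\partial J_m)\ge N_0-N_0^{1/2}$ and $|J_m|\le 10N_0$; hence consecutive $J_m$ overlap heavily (as does $[-3N_0/2,3N_0/2]$ with the $J_m$ for $|m|$ near $3N_0/2$), the union is a genuine interval, and $|N_1'-N_1|,|N_1''-N_1|\lesssim N_0$.

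Next I would invoke \cref{prop:stabilization} (and the underlying \cref{prop:localization}) with $x_0=\ux_0$, $E_0=\uE_0$, $k_0$ the index $k_s$ of \refcondu{A} at $s=0$, and the exponent called $\beta$ there replaced by $4\fd$; this is legitimate since $4\fd\ll\beta\ll\sigma$, so $4\fd<\sigma/2$. The hypotheses come for free: (i) holds with equality because $\uE_0=\uE^{[-N_0',N_0'']}(\ux_0)$; (ii) and (ii') are the separation \cref{eq:edge-separation} of \refcondu{A} at $\ux_0$, noting $\exp(-N_0^\fd)>\exp(-N_0^{4\fd})$; (iii) follows from \refcondu{B} at $n=\pm N_0',\pm N_0''$ (each of modulus $\ge N_0-N_0^{1/2}\ge N_0/4$, giving a value $\le\exp(-\gamma(N_0-N_0^{1/2})/10)<\exp(-2N_0^{4\fd})$); and the extra hypothesis \cref{eq:Jm-below} is exactly the lower-edge bound for the $J_m$ from the previous paragraph. \cref{prop:stabilization} then produces $E_{k_1}^{[-N_1',N_1'']}$ and $\psi_{k_1}^{[-N_1',N_1'']}$, with $k_1$ depending only on $\ux_0$, such that for all $|x-\ux_0|<\exp(-2N_0^{4\fd})$ its conclusions $(1)$, $(2')$, $(3)$, $(4)$ hold. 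Setting $\uE^{[-N_1',N_1'']}:=E_{k_1}^{[-N_1',N_1'']}$ and $\upsi^{[-N_1',N_1'']}:=\psi_{k_1}^{[-N_1',N_1'']}$, conclusion $(1)$ is \cref{eq:newE1D} (since $E_{k_0}^{[-N_0',N_0'']}=\uE^{[-N_0',N_0'']}$), $(2')$ is \cref{eq:uE1-separation}, $(3)$ is the decay estimate for $\upsi^{[-N_1',N_1'']}$, and $(4)$ is $\|\upsi^{[-N_1',N_1'']}(x,\cdot)-\upsi^{[-N_0',N_0'']}(x,\cdot)\|<\exp(-\gamma N_0/20)$. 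The general step $s-1\to s$ is identical after relabeling.

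I do not expect a substantive obstacle: the analytic content is entirely contained in \cref{prop:stabilization}, which may be assumed. The only care needed is the exponent bookkeeping — confirming $4\fd\ll\sigma$ and that $\exp(-N_0^\fd)$ and $\exp(-N_0^{4\fd})$ land on the correct sides of the various cutoffs built into \cref{prop:localization} and \cref{prop:stabilization} — together with the elementary geometric verification that the $J_m$ glue into an interval $[-N_1',N_1'']$ within $O(N_0)$ of $[-N_1,N_1]$. The standing assumption $N_0\ge(B_0+S_V+\gamma^{-1})^C$ is what licenses the covering and spectral forms of (LDT) hidden inside \cref{prop:stabilization}.
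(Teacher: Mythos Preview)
Your proposal is correct and follows essentially the same route as the paper: verify via the Diophantine condition that \refcondu{D} applies with $h=m\omega$ for each $3N_0/2<|m|\le N_1$, assemble the interval $[-N_1',N_1'']$ from the resulting $J_m$, and then invoke \cref{prop:stabilization} with $x_0=\ux_0$, $E_0=\uE_0$, and the exponent $\beta$ replaced by $4\fd$. Your hypothesis-checking is in fact more explicit than the paper's own proof.
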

\begin{proof}
  Take arbitrary $3N_0/2<|m|\le N_1$. Using the Diophantine condition
  we have
  \begin{equation*}
	\dist(m\omega,\fT_0)\ge a (CN_0)^{-b}\ge\exp(-N_0^{2\fd}).
  \end{equation*}
  Then by condition \refcondu{D} with
  $h=m\omega$, there exist $|n'(m)|,|n''(m)|< N_0^{1/2}$ such that
  with $ J_m=m+[-N_0+n'(m),N_0+n''(m)] $,
  \begin{equation*}
    \dist(\spec H_{J_m}(\ux_0),(-\infty,\uE_0])\ge
    \exp(-N_0^{4\mathfrak{d}})
  \end{equation*}
  (recall \cref{eq:basic-identity}).
  Define
  \[
    [-N'_1,N''_1]=[-3N_0/2,3N_0/2]\cup \bigcup_{3N_0/2<|m|\le N_1} J_m.
  \]
  Using \cref{eq:edge-separation} and \refcondu{B} we can apply
  \cref{prop:stabilization}  (with $x_0=\underline{x}_{0}$, $ E_0=\uE_0
  $, $ \beta=4\fd $) and all the estimates follow.
\end{proof}

For the rest of this section $ \uE^{[-N_1',N_1'']} $ will stand for
the eigenvalue from the previous proposition.
Let
\begin{gather*}
  \cP_0'=\{ z\in \C^d: |z-\ux_0|<r_0' \},\quad r_0'=\exp(-2N_0^{\fd}),\\
  \cP_1'=\{ z\in \C^d: |z-\ux_0|<r_1' \},\quad r_1'=\exp(-3N_0^{4\fd}).
\end{gather*}

\begin{lemma}\label{lem:uE-analyticity}
  The functions $\underline{E}^{[-N'_{0},N''_{0}]}$,
  $ \uE^{[-N_1',N_1'']} $ are analytic on $\cP_0'$, $ \cP_1' $, respectively, and
  \begin{equation*}
    \max_{|\alpha|=k}\sup_{\cP_0'}|\partial^\alpha \underline{E}^{[-N'_{0},N''_{0}]}|
    \le \exp(C(k)N_0^{\fd}),\quad 
    \max_{|\alpha|=k}\sup_{\cP_1'}|\partial^\alpha \underline{E}^{[-N'_{1},N''_{1}]}|
    \le \exp(C(k)N_0^{4\mathfrak{d}}).
  \end{equation*}
  Furthermore,
  \begin{equation*}
    \sup_{\cP_1'}\norm{\nabla \uE^{[-N_1',N_1'']}-\nabla \uE^{[-N_0',N_0'']}},
    \ \sup_{\cP_1'}\norm{\fH(\uE^{[-N_1',N_1'']})-\fH(\uE^{[-N_0',N_0'']})}<\exp(-c_0\gamma N_0),
  \end{equation*}
  with $ c_0=c_0(d) $.
\end{lemma}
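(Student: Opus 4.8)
The plan is to follow the template of \cref{lem:inductive5}, \cref{cor:inductive5} and \cref{lem:inductive10}, adapted to the edge set-up. First I would establish analyticity together with crude size bounds. By \refcondu{A} (the separation \cref{eq:edge-separation}), for every real $x$ with $|x-\ux_0|<r_0$ the value $\uE^{[-N_0',N_0'']}=E_{k_0}^{[-N_0',N_0'']}$ is a simple eigenvalue of the self-adjoint operator $H_{[-N_0',N_0'']}(x)$, separated from the remaining spectrum by $\exp(-N_0^{\fd})$. Since $\norm{H_{[-N_0',N_0'']}(z)-H_{[-N_0',N_0'']}(\ux_0)}\le C_\rho\norm{V}_\infty|z-\ux_0|$ by \cref{eq:stability-x} and $r_0'=\exp(-2N_0^{\fd})\ll\exp(-N_0^{\fd})$ for large $N_0$, basic perturbation theory, exactly as in the proof of \cref{lem:eigenvalue-analyticity}, shows that this eigenvalue persists simple and analytic on $\cP_0'$, with $|\uE^{[-N_0',N_0'']}(z)-\uE^{[-N_0',N_0'']}(\ux_0)|<1$, hence $|\uE^{[-N_0',N_0'']}(z)|\le\norm{V}_\infty+3$ there. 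The same argument applied to the separation \cref{eq:uE1-separation} from \cref{prop:inductive4D} shows that $\uE^{[-N_1',N_1'']}=E_{k_1}^{[-N_1',N_1'']}$ is analytic on $\{z:|z-\ux_0|<c\exp(-N_0^{4\fd})\}\supset\cP_1'$ and bounded by $\norm{V}_\infty+3$ there.

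The derivative bounds then follow from Cauchy's estimate: around any point of $\cP_0'$ the sup-norm polydisk of radius $\tfrac{c}{2}\exp(-N_0^{\fd})$ still lies in the analyticity region, so $|\partial^\alpha\uE^{[-N_0',N_0'']}(z)|\le\alpha!\,(\tfrac{2}{c})^{|\alpha|}(\norm{V}_\infty+3)\exp(|\alpha|N_0^{\fd})\le\exp(C(k)N_0^{\fd})$ for $|\alpha|=k$, after absorbing $\norm{V}_\infty$, $\rho$ and $d$ using $N_0\ge(B_0+S_V+\gamma^{-1})^C$; the identical computation with $\exp(-N_0^{4\fd})$ in place of $\exp(-N_0^{\fd})$ gives $|\partial^\alpha\uE^{[-N_1',N_1'']}(z)|\le\exp(C(k)N_0^{4\fd})$ on $\cP_1'$.

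For the closeness of gradients and Hessians I would set $g:=\uE^{[-N_1',N_1'']}-\uE^{[-N_0',N_0'']}$ and upgrade the real estimate \cref{eq:newE1D}, which gives $|g(x)|<\exp(-\gamma N_0/20)$ for real $|x-\ux_0|<\exp(-2N_0^{4\fd})$, to a complex neighborhood. By the previous step $g$ is analytic and bounded by $2(\norm{V}_\infty+3)$ on the polydisk $\{z:|z-\ux_0|<\exp(-2N_0^{4\fd})\}$, which sits inside the analyticity region since $\exp(-2N_0^{4\fd})\ll c\exp(-N_0^{4\fd})$. Rescaling this polydisk to the unit polydisk and applying \cref{cor:high_cart} to $g$ (with $m=\log(\exp(-\gamma N_0/20)/(2(\norm{V}_\infty+3)))\le-\gamma N_0/40$) yields $|g(z)|<\exp(-c_0\gamma N_0)$ for $|z-\ux_0|<\exp(-2N_0^{4\fd})/24$. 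Since $\cP_1'=\{|z-\ux_0|<\exp(-3N_0^{4\fd})\}$ is contained with room to spare in this smaller polydisk, Cauchy's estimate applied to $g$ on $\cP_1'$ gives $\sup_{\cP_1'}\norm{\nabla g}$ and $\sup_{\cP_1'}\norm{\fH(g)}$ bounded by $C\exp(4N_0^{4\fd})\exp(-c_0\gamma N_0)<\exp(-c_0'\gamma N_0)$, which is the assertion (with $c_0'=c_0'(d)$).

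The only point needing care is the bookkeeping of the nested radii $\exp(-3N_0^{4\fd})\ll\exp(-2N_0^{4\fd})\ll c\exp(-N_0^{4\fd})$ and of the exponents: one must check that $\cP_1'$ lies comfortably inside the region carrying the complex bound for $g$, and that the factors $\exp(O(N_0^{4\fd}))$ produced by Cauchy's estimate are negligible against $\exp(-c_0\gamma N_0)$. Both are automatic because $4\fd<1$, so $N_0^{4\fd}\ll N_0$, and because consecutive radii differ by factors tending to $0$; all remaining implicit constants are absorbed into the lower bound $N_0\ge(B_0+S_V+\gamma^{-1})^C$.
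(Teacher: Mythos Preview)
Your proposal is correct and follows essentially the same approach as the paper: analyticity via the eigenvalue separation plus basic perturbation theory, derivative bounds via Cauchy estimates (using that the analyticity region is strictly larger than $\cP_i'$), and the gradient/Hessian closeness by upgrading \cref{eq:newE1D} to a complex neighborhood via \cref{cor:high_cart} and then applying Cauchy's estimate. The paper's proof is simply a terser version of what you wrote, noting in particular that the choice $r_1'=\exp(-3N_0^{4\fd})$ (rather than $\exp(-2N_0^{4\fd})$) was made precisely to leave the room you exploit in your final Cauchy step.
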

\begin{proof}
  The analyticity of the functions follows from the separation of
  eigenvalues (see \cref{eq:edge-separation} and \cref{eq:uE1-separation})
  combined with basic  perturbation theory. The derivative estimates are
  just Cauchy estimates.
  They hold
  on $ \cP_i' $ because the functions are in fact analytic on $
  100\cP_i' $, $ i=0,1 $. 

  Using \cref{eq:newE1D} and \cref{cor:high_cart} we have
  \begin{equation*}
	\sup_{2\cP_1'}|\uE^{[-N_1',N_1'']}-\uE^{[-N_0',N_0'']}|
    <\exp(-c\gamma N_0),\quad c=c(d),
  \end{equation*}
  and the last estimates holds by Cauchy estimates (we chose $
  r_1'=\exp(-3N_0^{4\fd}) $ instead of $ \exp(-2N_0^{4\fd}) $ to
  ensure we have the above estimate).
\end{proof}

\begin{prop}\label{prop:inductive5Dc}
  There exists $ \ux_1 $, $|\underline{x}_1-\underline{x}_0|< \exp(-\gamma N_0/50)$,
  such that
  \begin{gather*}
    \underline{E}^{[-N'_{1},N''_{1}]}(\underline{x}_1)\le
    \underline{E}^{[-N'_{1},N''_{1}]}(x),\quad
    \text{ for any }|x-\underline{x}_0|< r_1',\\
    \nabla \underline{E}^{[-N'_{1},N''_{1}]}(\underline{x}_1)=0,\quad
    \mathfrak{H}(\underline{E}^{[-N'_{1},N''_{1}]})(\underline{x}_1)\ge {\nu_0\over 4} I.
  \end{gather*}
\end{prop}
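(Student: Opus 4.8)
The plan is to find the minimum of $\uE^{[-N_1',N_1'']}$ by a perturbation argument off the known non-degenerate minimum $\ux_0$ of $\uE^{[-N_0',N_0'']}$, using the closeness of the two eigenvalue functions and their derivatives from \cref{lem:uE-analyticity}. First I would record that by \refcondu{C}, $\nabla\uE^{[-N_0',N_0'']}(\ux_0)=0$ and $\fH(\uE^{[-N_0',N_0'']})(\ux_0)\ge \nu_0 I$ with $\nu_0=\exp(-N_0^\fd)$, and that on $\cP_1'$ we have
\[
  \norm{\nabla\uE^{[-N_1',N_1'']}-\nabla\uE^{[-N_0',N_0'']}},\ \norm{\fH(\uE^{[-N_1',N_1'']})-\fH(\uE^{[-N_0',N_0'']})}<\exp(-c_0\gamma N_0),
\]
which is far smaller than $\nu_0$ since $\fd\ll 1$. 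In particular, on all of $\cP_1'\cap\R^d$ we get $\fH(\uE^{[-N_1',N_1'']})\ge \tfrac12\nu_0 I > \tfrac{\nu_0}{4}I$, so $\uE^{[-N_1',N_1'']}$ is strictly convex on the (convex) real polydisk $\uPi_1'=\{|x-\ux_0|<r_1'\}$, and any interior critical point is automatically the unique minimum there and non-degenerate with the stated Hessian bound. So the whole task reduces to producing a critical point in the interior.

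To locate the critical point I would apply the quantitative implicit function theorem already available in the excerpt. Write $F(x):=\nabla\uE^{[-N_1',N_1'']}(x)$, an analytic $\C^d$-valued map on $\cP_1'$. We have $|F(\ux_0)|\le \norm{\nabla\uE^{[-N_1',N_1'']}(\ux_0)-\nabla\uE^{[-N_0',N_0'']}(\ux_0)}<\exp(-c_0\gamma N_0)$, and $DF(\ux_0)=\fH(\uE^{[-N_1',N_1'']})(\ux_0)$ is invertible with $\norm{DF(\ux_0)^{-1}}\le 4\nu_0^{-1}=4\exp(N_0^\fd)$, while the second derivatives of $F$ are bounded by $\exp(CN_0^{4\fd})$ on $\cP_1'$ by \cref{lem:uE-analyticity}. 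A Newton/contraction argument (equivalently, coordinate-by-coordinate application of \cref{lem:impl2} to the components of $F$, or just a direct fixed-point estimate for the map $x\mapsto x-DF(\ux_0)^{-1}F(x)$) then yields a unique zero $\ux_1$ of $F$ with
\[
  |\ux_1-\ux_0|\lesssim \norm{DF(\ux_0)^{-1}}\,|F(\ux_0)|\lesssim \exp(N_0^\fd-c_0\gamma N_0)<\exp(-\gamma N_0/50),
\]
the last inequality holding since $N_0^\fd\ll N_0$ and $N_0$ is large; this also certifies $\ux_1$ lies well inside $\uPi_1'$. Combined with the convexity observation of the first paragraph, $\ux_1$ is the unique minimizer of $\uE^{[-N_1',N_1'']}$ on $\uPi_1'$, $\nabla\uE^{[-N_1',N_1'']}(\ux_1)=0$, and $\fH(\uE^{[-N_1',N_1'']})(\ux_1)\ge \tfrac{\nu_0}{4}I$, which is exactly the conclusion of \cref{prop:inductive5Dc}.

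The only mild subtlety — and the step I would be most careful about — is bookkeeping the radii: one must check that the Newton iteration stays inside the domain $\cP_1'$ where the derivative bounds of \cref{lem:uE-analyticity} are valid, i.e. that $|\ux_1-\ux_0|$ together with the iteration's intermediate points never exceed $r_1'=\exp(-3N_0^{4\fd})$. Since the displacement is of order $\exp(-c_0\gamma N_0)$, which is superexponentially smaller than $\exp(-3N_0^{4\fd})$ once $N_0$ is large (as $4\fd<1$), there is enormous room, and all constraints are absorbed into the standing assumption $N_0\ge (B_0+S_V+\gamma^{-1})^C$. Everything else is a routine application of the stated lemmas; no genuinely new idea is needed here beyond transporting the non-degenerate minimum from scale $N_0$ to scale $N_1$ via the smallness estimates already in hand.
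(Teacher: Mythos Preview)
Your argument is correct and takes a somewhat different route from the paper. The paper does not use Newton iteration or global convexity; instead it argues directly from the real-valued closeness \cref{eq:newE1D} and the quadratic lower bound $\uE^{[-N_0',N_0'']}(x)-\uE_0\ge \tfrac{\nu_0}{2}\|x-\ux_0\|^2$ (via \cref{lem:Morse7}) that on the annulus $\exp(-\gamma N_0/50)\le|x-\ux_0|<r_1'$ one has $\uE^{[-N_1',N_1'']}(x)>\uE^{[-N_1',N_1'']}(\ux_0)$, so the minimum over the closed ball is attained strictly inside the small ball; the Hessian bound at $\ux_1$ is then read off from \cref{lem:uE-analyticity}. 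Your convexity-plus-Newton approach gives a bit more (uniqueness of the minimizer) at the cost of invoking the implicit function machinery.

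Two small points to tighten. First, the line ``on all of $\cP_1'\cap\R^d$ we get $\fH(\uE^{[-N_1',N_1'']})\ge \tfrac12\nu_0 I$'' does not follow from the Hessian closeness alone: you also need that $\fH(\uE^{[-N_0',N_0'']})$ stays within $\tfrac{\nu_0}{2}$ of $\fH(\uE^{[-N_0',N_0'']})(\ux_0)$ on the whole ball, which follows from the third-derivative bound in \cref{lem:uE-analyticity} times $r_1'=\exp(-3N_0^{4\fd})\ll\nu_0\exp(-CN_0^\fd)$; this is the content of \cref{lem:Morse7} and should be made explicit. Second, the displacement bound you obtain, $|\ux_1-\ux_0|\lesssim\nu_0^{-1}\exp(-c_0\gamma N_0)$, uses the \emph{complex} gradient estimate from \cref{lem:uE-analyticity}, where $c_0=c_0(d)$ comes from \cref{cor:high_cart} and could well be smaller than $1/50$; to recover the specific exponent $\gamma N_0/50$ stated in the proposition one should either use the real bound $\exp(-\gamma N_0/20)$ from \cref{eq:newE1D} more directly (as the paper does), or simply note that any $\exp(-c\gamma N_0)$ suffices for the downstream arguments, the value $1/50$ being a convenient bookkeeping choice rather than an essential threshold.
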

\begin{proof}
  By Taylor's formula (recall \cref{lem:Morse7} and \refcondu{C})
  \begin{equation*}
    \underline{E}^{[-N'_{0},N''_{0}]}(x)-\underline{E}_0\ge \frac{\nu_0}{2}|x-\underline{x}_0|^2,\quad
    \text{for }
    |x-\ux_0|<r_1'.
  \end{equation*}
  In particular,
  \begin{gather*}
    \underline{E}^{[-N'_{0},N''_{0}]}(x)\ge \underline{E}_0+ 3\exp(-\gamma N_0/20),\quad
    \text{for }\exp(-\gamma N_0/50)\le |x-\underline{x}_0|< r_1'.
  \end{gather*}
  Combining this with \eqref{eq:newE1D}
  we  get
  \begin{equation*}
    \underline{E}^{[-N'_{1},N''_{1}]}(x)\ge \underline{E}^{[-N'_{1},N''_{1}]}(\underline{x}_0)
    +\exp(-\gamma N_0/20),\quad
    \text{for }\exp(-\gamma N_0/50)\le  |x-\underline{x}_0|< r_1'.
  \end{equation*}
  This implies the existence of a point $ \ux_1 $, $ |\ux_1-\ux_0| $
  where $ \uE^{[-N_1',N_1'']} $ attains its minimum on
  $ |x-\ux_0|<r_1' $. The estimate on the Hessian follows from
  \cref{lem:uE-analyticity} and the fact that by Taylor's formula
  (again, recall \cref{lem:Morse7} and \refcondu{C}), we have
  $ \fH(\uE^{[-N_0',N_0'']})(\ux_1)\ge (\nu_0/2)I $.
\end{proof}

We fix an $ \ux_1 $ as in \cref{prop:inductive5Dc} (in fact, in can be
argued that such $ \ux_1 $ is unique).

\begin{lemma}\label{lem:uE0-uE1}
  We have $ |\uE_1-\uE_0|<\exp(-\gamma N_0/60) $.
\end{lemma}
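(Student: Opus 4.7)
The plan is to compare $\uE_1=\underline{E}^{[-N_1',N_1'']}(\ux_1)$ and $\uE_0=\underline{E}^{[-N_0',N_0'']}(\ux_0)$ by sandwiching $\uE_1$ between two quantities that are both within $\exp(-\gamma N_0/20)$ of $\uE_0$. The two ingredients are the proximity estimate \cref{eq:newE1D}, which is valid on the relevant polydisk since $|\ux_1-\ux_0|<\exp(-\gamma N_0/50)\ll\exp(-2N_0^{4\fd})$, together with the minimization/non-degeneracy properties of $\ux_0$ and $\ux_1$ given by \refcondu{C} and \cref{prop:inductive5Dc}.

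For the \emph{upper} bound on $\uE_1$: since $\ux_1$ minimizes $\underline{E}^{[-N_1',N_1'']}$ on $|x-\ux_0|<r_1'$, and $\ux_0$ lies in this disk, we have $\underline{E}^{[-N_1',N_1'']}(\ux_1)\le \underline{E}^{[-N_1',N_1'']}(\ux_0)$. Applying \cref{eq:newE1D} at $x=\ux_0$ then gives
\[
\uE_1\le \underline{E}^{[-N_0',N_0'']}(\ux_0)+\exp(-\gamma N_0/20)=\uE_0+\exp(-\gamma N_0/20).
\]
For the \emph{lower} bound: by \refcondu{C} the point $\ux_0$ is a (non-degenerate) minimum of $\underline{E}^{[-N_0',N_0'']}$, so $\underline{E}^{[-N_0',N_0'']}(\ux_1)\ge \uE_0$. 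Applying \cref{eq:newE1D} at $x=\ux_1$ yields
\[
\uE_1=\underline{E}^{[-N_1',N_1'']}(\ux_1)\ge \underline{E}^{[-N_0',N_0'']}(\ux_1)-\exp(-\gamma N_0/20)\ge \uE_0-\exp(-\gamma N_0/20).
\]

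Combining the two inequalities gives $|\uE_1-\uE_0|\le \exp(-\gamma N_0/20)<\exp(-\gamma N_0/60)$, which is the claim. There is no real obstacle here; the lemma is essentially a bookkeeping step that confirms the quantitative tracking of the bottom edge between consecutive scales. The only thing to verify is that $\ux_1$ lies in the domain where \cref{eq:newE1D} holds, which follows from $\exp(-\gamma N_0/50)\ll \exp(-2N_0^{4\fd})$ (true since $N_0$ is taken large depending on $\fd$). The non-degeneracy part of \refcondu{C} is not used in the comparison itself, only the stationarity/minimization that comes with it.
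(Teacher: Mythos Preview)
Your argument is correct and in fact slightly different from the paper's. The paper does not use the minimization properties of $\ux_0$ and $\ux_1$ at all: it simply bounds $|\uE^{[-N_0',N_0'']}(\ux_1)-\uE^{[-N_0',N_0'']}(\ux_0)|$ via the mean value theorem, using the derivative estimate $|\partial\uE^{[-N_0',N_0'']}|\le\exp(CN_0^{\fd})$ from \cref{lem:uE-analyticity} together with $|\ux_1-\ux_0|<\exp(-\gamma N_0/50)$ from \cref{prop:inductive5Dc}, to get a bound of $\exp(-\gamma N_0/55)$; then a triangle inequality with \cref{eq:newE1D} at $\ux_1$ finishes. Your sandwich via the two minima is a clean alternative that avoids the derivative bound entirely and gives the slightly better constant $\exp(-\gamma N_0/20)$. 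One small remark: in your lower bound you do use the non-degeneracy of $\ux_0$ implicitly, since \refcondu{C} only asserts $\nabla\uE^{[-N_0',N_0'']}(\ux_0)=0$ and $\fH\ge\nu_0 I$, and it is the positive Hessian (via Taylor, cf.\ \cref{lem:Morse7}) that gives $\uE^{[-N_0',N_0'']}(\ux_1)\ge\uE_0$ on the tiny ball $|\ux_1-\ux_0|<\exp(-\gamma N_0/50)$; so your final sentence slightly undersells what is actually needed.
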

\begin{proof}
  By the mean value theorem, \cref{lem:uE-analyticity}, and \cref{prop:inductive5Dc},
  \begin{equation*}
	|\uE^{[-N_0',N_0'']}(\ux_1)-\uE^{[-N_0',N_0'']}(\ux_0)|\le \exp(CN_0^\fd)|\ux_1-\ux_0|
    <\exp(-\gamma N_0/55).
  \end{equation*}
  Now the conclusion follows using \cref{eq:newE1D}.
\end{proof}

\begin{prop}\label{prop:inducopositD}
  The condition \refcondu{D} holds with $ s=1 $.
\end{prop}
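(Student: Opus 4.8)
The goal is to show that for any $h\in\R^d$ with $\dist(h,\fT_1)\ge\exp(-N_1^{2\fd})$ there exist $|n'(h)|,|n''(h)|<N_1^{1/2}$ such that $\dist(\spec H_{[-N_1+n'(h),N_1+n''(h)]}(\ux_1+h),(-\infty,\uE_1])\ge\exp(-N_1^{4\fd})$. The strategy mirrors the proof of \cref{prop:inductive8} from the bulk scheme, but with one simplification: here we are not making a measure estimate in a phase variable, only verifying the statement at the single point $\ux_1$, so we do not need semialgebraic sets or Cartan's estimate; we only need to chain together a controlled collection of intervals $J_m$ coming from \refcondu{D} at scale $N_0$ and then invoke the covering form of (LDT) together with \cref{lem:Greencoverap1} (or \cref{lem:Greencoverap0}). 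The first step is to decompose $h$ relative to $\fT_1$: write $\dist(h,\fT_1)=\norm{h_1}$ with $h_1\equiv h-m_1\omega\pmod{\Z^d}$ for some $|m_1|\le 3N_1/2$. Then for every $m\in[-N_1,N_1]$ with $|m+m_1|>3N_0/2$, using the Diophantine condition and $N_1=N_0^5$ exactly as in \cref{eq:dist-m-T0-alt}, one checks $\dist(h+m\omega,\fT_0)\ge a(CN_1)^{-b}-\norm{h_1}\ge\exp(-N_0^{2\fd})$ (here we use $\norm{h_1}\ge$ nothing yet, but the Diophantine term dominates once $\norm{h_1}$ is small, and when $\norm{h_1}$ is not small we get the bound directly from $\dist(h+m\omega,\fT_0)\ge\dist(h,\fT_1)=\norm{h_1}$ as in \cref{eq:dist-m-T0}); hence \refcondu{D} at scale $N_0$ applies with shift $h+m\omega$, producing offsets $|n'(m)|,|n''(m)|<N_0^{1/2}$ and an interval $J_m=m+[-N_0+n'(m),N_0+n''(m)]$ with $\dist(\spec H_{J_m}(\ux_0),(-\infty,\uE_0])\ge\exp(-N_0^{4\fd})$, and therefore by \cref{eq:stability-x} and $|\ux_1-\ux_0|<\exp(-\gamma N_0/50)$ (from \cref{prop:inductive5Dc}), also $\dist(\spec H_{J_m}(\ux_1),(-\infty,\uE_0])\ge\frac12\exp(-N_0^{4\fd})$, and then using $|\uE_1-\uE_0|<\exp(-\gamma N_0/60)$ (\cref{lem:uE0-uE1}) we get $\dist(\spec H_{J_m}(\ux_1),(-\infty,\uE_1])\ge\frac14\exp(-N_0^{4\fd})\gg\exp(-N_0^{\sigma/2})$.

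Next I handle the ``central'' indices $|m+m_1|\le 3N_0/2$, which is the only place where $\ux_1$ (as opposed to $\ux_0$) and the Hessian information enter in an essential way. It suffices to treat $m=-m_1$. If $\norm{h_1}\ge\exp(-N_0^{2\fd})$, then by \cref{eq:dist-m-T0} we have $\dist(h+m_1\omega,\fT_0)\ge\exp(-N_0^{2\fd})$ and the previous paragraph's argument gives a good interval $J_{-m_1}$. If instead $\norm{h_1}<\exp(-N_0^{2\fd})$, then $\spec H_{-m_1+[-N_0',N_0'']}(\ux_1+h)=\spec H_{[-N_0',N_0'']}(\ux_1+h_1)$, and the tiny shift $h_1$ perturbs eigenvalues by at most $C_\rho\norm{V}_\infty\norm{h_1}\ll\exp(-N_0^{2\fd})$. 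Here is where I invoke \refcondu{C} for scale $1$ as already established: by \cref{prop:inductive5Dc}, $\ux_1$ is a non-degenerate minimum of $\uE^{[-N_1',N_1'']}$ with $\fH\ge(\nu_0/4)I$; however for the \emph{scale-$N_0$} interval $[-N_0',N_0'']$ what I actually need is that $\uE^{[-N_0',N_0'']}(\ux_1)\le\uE^{[-N_0',N_0'']}(x)$ for nearby $x$ up to an error negligible against $\exp(-N_0^{4\fd})$ — but more directly, I can simply bound: by \refcondu{C} at scale $0$ and Taylor (\cref{lem:Morse7}), $\uE^{[-N_0',N_0'']}(\ux_1+h_1)\ge\uE^{[-N_0',N_0'']}(\ux_1)-\exp(CN_0^\fd)\norm{h_1}$, and combining with $\uE_1=\uE^{[-N_1',N_1'']}(\ux_1)$ and \cref{eq:newE1D}, $\uE^{[-N_0',N_0'']}(\ux_1+h_1)\ge\uE_1-\exp(-\gamma N_0/20)-\exp(CN_0^\fd-N_0^{2\fd})$; the eigenvalue separation \cref{eq:edge-separation} then gives that the \emph{only} eigenvalue of $H_{[-N_0',N_0'']}(\ux_1+h_1)$ that could fall into $(-\infty,\uE_1]$ is the one near $\uE^{[-N_0',N_0'']}(\ux_1+h_1)$, so $\dist(\spec H_{[-N_0',N_0'']}(\ux_1+h_1),(-\infty,\uE_1])\ge\exp(-N_0^{2\fd})$ after adjusting constants (I will need to check the error exponents carefully, using $\fd\ll\delta$ and the scale relations). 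This yields a good interval $J_{-m_1}=-m_1+[-N_0',N_0'']$ in this case as well.

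The final step glues the intervals: set $I=J_{-m_1}\cup\bigcup_{-N_1\le m\le N_1,\,|m+m_1|>3N_0/2}J_m$ when $|m_1|\le N_1$ (so that $J_{-m_1}$ overlaps the union and $|m+m_1|>3N_0/2$ for the rest, guaranteeing $\dist(\ux_1+h$-lattice, the localization centre$)$ stays away), and $I=\bigcup_{-N_1+2N_0\le m\le N_1-2N_0}J_m$ otherwise, exactly as in \cref{eq:I}. Every $J_m$ satisfies $\dist(m,\partial J_m)\gtrsim|J_m|/2$, $|J_m|\simeq N_0\ge(B_0+S_V+\gamma^{-1})^C$, and $\dist(\spec H_{J_m}(\ux_1+h),(-\infty,\uE_1])\ge\frac14\exp(-N_0^{4\fd})>\exp(-\frac12\min_m|J_m|^{\sigma/2})$ since $4\fd\ll\sigma$. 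Applying \cref{lem:Greencoverap1} with $S=(-\infty,\uE_1]$ (truncated to the spectral range $|E|\le\norm{V}_\infty+4$), $K=N_0^{4\fd}+O(1)$ — which indeed satisfies $K\ge(B_0+S_V+\gamma^{-1})^C$ and $K<\frac12\min_m|J_m|^{\sigma/2}$ — we conclude $\dist(\spec H_I(\ux_1+h),(-\infty,\uE_1])\ge\frac12\exp(-N_0^{4\fd})\ge\exp(-N_1^{4\fd})$ since $N_1^{4\fd}=N_0^{20\fd}\gg N_0^{4\fd}$. Choosing $n'(h),n''(h)$ so that $[-N_1+n'(h),N_1+n''(h)]=I$ (up to the overlap tolerance, and noting $|I\bigtriangleup[-N_1,N_1]|\lesssim N_0\ll N_1^{1/2}$) finishes the proof. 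The main obstacle is the bookkeeping in the central-block case: one must confirm that the non-degenerate-minimum information at scale $0$, transported via \cref{eq:newE1D} and \cref{lem:uE0-uE1} and the tiny shift $h_1$, still leaves a clean spectral gap of size $\gg\exp(-N_0^{2\fd})$ below $\uE_1$ — i.e. that all the accumulated error terms $\exp(-\gamma N_0/20)$, $\exp(CN_0^\fd)\norm{h_1}$, $\exp(-\gamma N_0/60)$ are genuinely dominated, which is where the hierarchy $\fd\ll\delta\ll\cdots$ and the polynomial relation $N_1=N_0^5$ must be used with care.
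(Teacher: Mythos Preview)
Your overall architecture matches the paper's proof (decompose $h$ via $\fT_1$, cover by scale-$N_0$ intervals $J_m$ coming from \refcondu{D}, handle the central block $m=-m_1$ separately, then apply \cref{lem:Greencoverap1}). However, the central-block step contains a real gap.

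When $\exp(-N_1^{2\fd})\le\|h_1\|<\exp(-N_0^{2\fd})$ you write the Lipschitz bound
\[
\uE^{[-N_0',N_0'']}(\ux_1+h_1)\ge\uE^{[-N_0',N_0'']}(\ux_1)-\exp(CN_0^\fd)\|h_1\|,
\]
and from this (together with \cref{eq:newE1D}) you conclude
$\dist(\spec H_{[-N_0',N_0'']}(\ux_1+h_1),(-\infty,\uE_1])\ge\exp(-N_0^{2\fd})$.
But your inequality only places the bottom eigenvalue somewhere in
$[\uE_1-\exp(CN_0^\fd-N_0^{2\fd}),\,\cdot\,]$, i.e.\ possibly \emph{below} $\uE_1$, so the distance to $(-\infty,\uE_1]$ may be zero. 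The separation \cref{eq:edge-separation} only controls the \emph{other} eigenvalues; it says nothing about $\uE^{[-N_0',N_0'']}(\ux_1+h_1)$ itself.

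What is missing is the quadratic Morse bound at the true minimum $\ux_0$ of $\uE^{[-N_0',N_0'']}$, together with the \emph{lower} bound $\|h_1\|\ge\exp(-N_1^{2\fd})$ (which you never invoke). From \refcondu{C} and \cref{lem:Morse7} one gets
\[
\uE^{[-N_0',N_0'']}(\ux_0+h_1)\ge\uE_0+\tfrac{\nu_0}{2}\|h_1\|^2\ge\uE_0+\exp(-3N_1^{2\fd}),
\]
so that, using \refcondu{A},
$\dist(\spec H_{J_{-m_1}}(\ux_0+h),(-\infty,\uE_0])\ge\exp(-3N_1^{2\fd})=\exp(-3N_0^{10\fd})$.
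This gap is much smaller than your claimed $\exp(-N_0^{2\fd})$; consequently, in the covering step you must take $K\simeq N_0^{11\fd}$ (this is exactly what the paper does), not $K=N_0^{4\fd}+O(1)$. With $K=N_0^{11\fd}$ one still has $K<\tfrac12\min_m|J_m|^{\sigma/2}$ since $\fd\ll\sigma$, and the final transport to $(\ux_1,\uE_1)$ via \cref{prop:inductive5Dc} and \cref{lem:uE0-uE1} then yields $\dist(\spec H_I(\ux_1+h),(-\infty,\uE_1])\ge\exp(-N_0^{12\fd})\gg\exp(-N_1^{4\fd})$.
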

\begin{proof}
  The proof is similar to that of \cref{prop:inductive8}. Let
  $|m_1|\le 3N_1/2$, $h_1\in \mathbb{R}^d$ such that
  \begin{equation*}
    \dist (h, \mathfrak{T}_{1})=\|h_1\|,\qquad
    h_1=h-m_1\omega \,(\mod \mathbb{Z}^d).
  \end{equation*}
  As in the proof of \cref{prop:inductive8} (recall
  \cref{eq:dist-m-T0},\cref{eq:dist-m-T0-alt}), we have
  \begin{equation}\label{eq:dist-h-T0}
    \begin{gathered}
      \dist(h+m\omega,\fT_0)\ge \norm{h_1},\quad\text{provided }|m|\le N_1,\\
      \dist(h+m\omega,\fT_0)\ge a(CN_1)^{-b}-\norm{h_1},\quad \text{provided }|m+m_1|>3N_0/2.
    \end{gathered}
  \end{equation}
  
  We consider two cases: $ \norm{h_1}\ge \exp(-N_0^{2\fd}) $ and $
  \exp(N_1^{2\fd})\le \norm{h_1}<\exp(-N_0^{\fd}) $. In either case, by the above, we have
  $ \dist(h+m\omega,\fT_0)\ge \exp(-N_0^{2\fd}) $ for all $ m\in
  [-N_1,N_1] $ with $ |m+m_1|>3N_0/2 $.
  So, for such $ m $, 
  condition \refcondu{D} (with $ h=m\omega $) implies that there exists an interval
  $ J_m=m+[-N_0+n'(h),N_0+n''(h)] $ such that
  \begin{equation}\label{eq:covering-Jm}
    \dist (\spec  H_{J_m}(\ux_0+h),(-\infty,\uE_0])
    \ge \exp(-N_0^{4\fd}).
  \end{equation}

  Our goal is to apply \cref{lem:Greencoverap1} (with $
  S=(-\infty,\uE_0] $). To this end we will 
  deal with $ |m+m_1|\le 3N_0/2 $ by 
  focusing on $ m=-m_1 $. We assume $ m_1\in [-N_1,N_1] $. If $ \norm{h_1}\ge \exp(-N_0^{2\fd}) $,
  then $ \dist(h+m_1\omega,\fT_0)\ge
  \exp(-N_0^{2\fd}) $ and by condition \refcondu{D} there exists an interval
  $ J_{-m_1} $ such that \cref{eq:covering-Jm} holds with
  $ m=-m_1 $. Suppose that $ \exp(-N_1^{2\fd})\le \norm{h_1}<\exp(-N_0^{2\fd}) $.
  Let $J_{-m_1}:=-m_1+[-N_0',N''_0]$. We have
  \begin{equation}\label{eq:J-m1}
    \spec H_{J_{-m_1}}(\ux_0+h)
    =\spec H_{[-N'_0,N''_{0}]}(\ux_0+h_1).
  \end{equation}
  By Taylor's formula (recall \cref{lem:Morse7} and \refcondu{C}),
  \begin{equation*}
	\uE^{[-N_0',N_0'']}(\ux_0+h_1)\ge \uE_0+\frac{\nu_0}{2}\norm{h_1}^2
    \ge \uE_0+\exp(-3N_1^{2\fd}).
  \end{equation*}
  Using \refcondu{A} it follows that
  \begin{equation*}
    \dist (\spec  H_{J_{-m_1}}(\ux_0+h),(-\infty,\uE_0])
    \ge \exp(-3N_1^{2\fd})>\exp(-N_0^{11\fd}).	
  \end{equation*}

  We now have what we need to invoke the covering form of (LDT). Let $
  I $ as in \cref{eq:I}. By the above, we can use \cref{lem:Greencoverap1} (with
  $ K=N_0^{11\fd} $; recall that $ \fd\ll \delta\ll \sigma$) to get that
  \begin{equation*}
    \dist( \spec H_{I}(\ux_{0}+h),(-\infty,\uE_0])   \ge\exp( -N_0^{12\fd})\gg \exp(-N_1^{4\fd}).
  \end{equation*}
  Using \cref{prop:inductive5Dc} and \cref{lem:uE0-uE1} we have
  \begin{equation*}
    \dist( \spec H_{I}(\ux_{1}+h),(-\infty,\uE_1])\ge\exp(-N_1^{4\fd})
  \end{equation*}
  and the conclusion follows.
\end{proof}

We now proceed to the proof of \cref{thm:E}.

\begin{proof}[{Proof of \cref{thm:E}}]
  The existence of $ \ux_1 $ and $ \uE^{[-N_1',N_1'']} $ is given by
  \cref{prop:inductive4D} and \cref{prop:inductive5Dc}. Note that due
  to \cref{prop:inductive5Dc}, $ \uPi_1\subset \{ |x-\ux_0|<r_1' \}  $
  (recall that $ r_1'=\exp(-3N_0^{4\fd}) $, $ N_1=N_0^{5} $). Now, for $ s=1 $,
  conditions \refcondu{A} and \refcondu{B} hold by
  \cref{prop:inductive4D}, condition \refcondu{C} holds by
  \cref{prop:inductive5Dc}, and condition \refcondu{D} holds by
  \cref{prop:inducopositD}. The estimates \cref{eq:thmE-estimates}
  (with $ s=1 $) hold by   \cref{prop:inductive4D},
  \cref{prop:inductive5Dc}, and \cref{lem:uE0-uE1}.

  Fix $ x $, $ \exp(-N_1^{200\fd})\le |x-\ux_0|\le \exp(-N_1^{\fd})
  $. We will check that the conditions \refcond{A}-\refcond{E}, with $ s=1 $,
  hold for $ \uE^{[-N_1',N_1'']} $ with $ E_1=\uE^{[-N_0',N_0'']}(x)
  $. The conclusion then holds by noticing that
  \begin{multline*}
	\{ \uE^{[-N_0',N_0'']}(x): \exp(-N_1^{200\fd})\le |x-\ux_0|\le \exp(-N_1^{\fd})\}\\
    \supset [\uE_0+\exp(-N_1^{100\fd})/2,\uE_0+2\exp(-N_1^{2\fd})]
    \supset [\uE_1+\exp(-N_1^{100\fd}),\uE_1+\exp(-N_1^{2\fd})]
  \end{multline*}
  (recall \cref{lem:Morse7} and \cref{lem:uE0-uE1}).

  We apply \cref{prop:levelsetshifts} to $ \uE^{[-N_0',N_0'']} $ on
  $ \cP_0' $. Using the notation of \cref{prop:levelsetshifts},
  condition \refcondu{C}, and \cref{lem:uE-analyticity}, we have
  \begin{equation*}
	\nu_1\simeq \exp(-CN_0^\fd),
    \quad \rho=r_0'\nu_1^{10}\simeq \exp(-C'N_0^{\fd}),\quad  r=\nu_1\norm{x-\ux_0}. 
  \end{equation*}
  Since $ 0<\norm{x-\ux_0}<\rho $, \cref{prop:levelsetshifts} 
  applies with $ x $ in the role of $ x_0 $ and we get the following:

  \smallskip
  \noindent (1) There exists a  map $ x_0
  :\Pi_0\to \R^d $,
  \begin{equation*}
	\Pi_0= \cI_0\times (E_1-r^2,E_1+r^2), \quad \cI_0=(-r,r)^{d-1},\quad  E_1=\uE^{[-N_0',N_0'']}(x), 
  \end{equation*}
  such that
  \begin{equation*}
	\uE^{[-N_0',N_0'']}(x_0(\phi,E))=E,
  \end{equation*}
  $ x_0(\phi,E) $ extends analytically to
  \begin{equation*}
	\cP_0=\{ (\phi,E)\in \C^d: |\phi|<r,\ |E-E_1|<r^2 \}, 
  \end{equation*}
  and
  \begin{equation}\label{eq:x0-ux0}
	\norm{x_0(\phi,E)-\ux_0}<\norm{x-\ux_0}/2\lesssim \exp(-N_0^{5\fd})
  \end{equation}
  In particular, from the last estimate it follows that $
  x_0(\cP_0)\subset \T_{\rho/2}^d $.

  \smallskip
  \noindent (2) For any $ |E-E_1|<r^2 $, any vector $ h\in \R^d $
  with $ 0<\norm{h}<\rho $, and any $ H\gg 1 $, we have
  \begin{equation}\label{eq:uE0-shift}
	\mes \{ \phi\in \cI_0 : \log|\uE^{[-N_0',N_0'']}(x_0(\phi,E))-E|\le H_0H \}< \exp(-H^{1/(d-1)}),
  \end{equation}
  with $ H_0=C(d)\log(\norm{h}\norm{x-\ux_0}) $ (note that
  $
  \nu_1^{-2}r=\nu_1^{-1}\norm{x-\ux_0}<\nu_1^{-1}\rho=r_0'\nu_1^{9}<1
  $).

  \smallskip
  \noindent (3) Let $ h_0 $ be an arbitrary unit vector. For any $
  |E-E_1|<r^2 $, and any $ H\gg 1 $, we have
  \begin{equation}\label{eq:uE0-grad}
	\mes \{ \phi\in \cI_0: \log|\langle \nabla \uE^{[-N_0',N_0'']}(x_0(\phi,E)),h_0 \rangle|\le H_1H \}
    < \exp(-H^{1/(d-1)}),
  \end{equation}
  with $ H_1=C(d)\log(\nu_1\norm{x-\ux_0}) $.

  By \cref{eq:newE1D} and \cref{eq:x0-ux0} we have
  \begin{equation*}
	|\uE^{[-N_1',N_1'']}(x_0(\phi,E))-\uE^{[-N_0',N_0'']}(x_0(\phi,E))|
    =|\uE^{[-N_1',N_1'']}(x_0(\phi,E))-E|<\exp(-\gamma N_0/20),
  \end{equation*}
  for $ (\phi,E)\in \Pi_0 $. Then, just as in
  \cref{prop:inductive6nhh}, we can find a map $ x_1: \Pi_1''\to \R^d
  $,
  \begin{equation*}
	\Pi_1''=\cP_1''\cap \R^d,\quad \cP_1''=\{ (\phi,E)\in \C^d: |\phi|<r^{C_0},|E-E_1|<r^{C_0} \},
    \ C_0=C_0(d)\gg 1,
  \end{equation*}
  that extends analytically to $ \cP_1'' $, $ x_1(\cP_1'')\subset
  \T_{\rho/2}^d $, and such that
  \begin{equation}\label{eq:u-x0-x1}
	|x_1(\phi,E)-x_0(\phi,E)|<\exp(-\gamma N_0/30),\quad (\phi,E)\in \Pi_1''.
  \end{equation}
  Since  $ r\gtrsim
  \exp(-N_1^{300\fd}) $, we have that $ \cP_1 $ as defined in
  condition \refcond{B} (with $ \phi_1=0 $), satisfies $ \cP_1\subset
  \cP_1'' $ (recall that $ \fd\ll \delta $). Note that $
  |x_1(\phi,E)-\ux_0|\ll \exp(-2N_0^{4\fd}) $. Now, by
  \cref{prop:inductive4D}, conditions
  \refcond{A}-\refcond{C} hold with the above choice of
  parametrization $ x_1 $.

  We proceed to check condition \refcond{D}. The argument is based on
  applying the covering form of (LDT), similarly to
  \cref{prop:inducopositD}.
  We assume everything from
  the proof of \cref{prop:inducopositD}, up to and including
  \cref{eq:covering-Jm}, except that we take the lower bound for $ \dist(h,\fT_1)
  $ to be $  \exp(-N_1^\mu) $. Fix $ |E-E_1|<r^{C_0} $. By
  \cref{eq:covering-Jm} and  \cref{eq:x0-ux0}, 
  \begin{multline}\label{eq:covering-Jm-x0}
	\dist(\spec J_m(x_0(\phi,E)+h),(-\infty,\uE^{[-N_0',N_0'']}(x_0(\phi,E))])\\
    =
    \dist(\spec J_m(x_0(\phi,E)+h),(-\infty,E])\gtrsim \exp(-N_0^{4\fd}),
  \end{multline}
  provided $ |m+m_1|>3N_0/2 $.

  Now we focus on $ m=-m_1 $. We assume $ m_1\in [-N_1,N_1] $.  If $ \norm{h_1}\ge \exp(-N_0^{2\fd}) $,
  then $ \dist(h+m_1\omega,\fT_0)\ge
  \exp(-N_0^{2\fd}) $ and as above, there exists an interval
  $ J_{-m_1} $ such that \cref{eq:covering-Jm-x0} holds with
  $ m=-m_1 $. Suppose that $ \exp(-N_1^{\mu})\le \norm{h_1}<\exp(-N_0^{2\fd}) $.
  Let $J_{-m_1}:=-m_1+[-N_0',N''_0]$ and recall \cref{eq:J-m1}.  From
  \cref{eq:uE0-shift} with $ H=N_1^{2(d-1)\delta} $ (note that $
  \norm{h_1}\le \exp(-N_0^{2\fd})< \rho $), it follows that
  \begin{equation}\label{eq:uE-cB1}
	\mes \{ \phi\in \cI_0: |\uE^{[-N_0',N_0'']}(x_0(\phi,E)+h_1)-E|
    \le \exp(-N_1^{2\mu}) \}<\exp(-N_1^{2\delta})
  \end{equation}
  (we used $ \fd\ll\delta\ll\mu $, $ H_0\gtrsim
  -(N_1^{200\fd}+N_1^\mu)\gtrsim -N_1^{\mu} $). Using \refcondu{A} it
  follows that
  \begin{equation*}
	\dist(\spec H_{J_{-m_1}}(x_0(\phi,E)+h),E)>\exp(-N_1^{2\mu}),
  \end{equation*}
  for any $ \phi\in \cI_0\setminus \cB_1' $, where $ \cB_1' $ is the
  set from \cref{eq:uE-cB1}.
  
  We now have what we need to invoke the covering form of (LDT). We
  let the interval $ I $ be as in the proof of \cref{prop:inducopositD}.
  By the above, we can use \cref{lem:Greencoverap1} (with
  $ K=N_1^{2\mu}=N_0^{10\mu}\ll N_0^{\sigma/2} $; recall that $ \mu\ll \sigma$) to get that
  \begin{equation*}
    \dist( \spec H_{I}(x_0(\phi,E)+h),E)\ge\exp( -2N_1^{2\mu})=\exp(-2N_0^{10\mu}),
  \end{equation*}
  for any $ \phi\in \cI_0\setminus \cB_1' $. Let $ \cI_1''=\Proj_\phi
  \Pi_1'' $. Then, using \cref{eq:u-x0-x1}, we get
  \begin{equation*}
    \dist( \spec H_{I}(x_1(\phi,E)+h),E)\ge\exp( -3N_1^{2\mu})>\exp(-N_1^{\beta}/2),
  \end{equation*}
  for any $ \phi\in \cI_1''\setminus \cB_1' $ (recall that $ \beta\gg
  \mu $). This implies that condition \refcond{D} holds.

  Finally, we check condition \refcond{E}. Fix $ |E-E_1|<r^{C_0} $ and
  $ h_0\in\R^d $ a unit vector. By
  \cref{eq:uE0-grad} with $ H=N_1^{2(d-1)\delta} $, 
  \begin{equation*}
	\mes \{ \phi\in \cI_0: \log|\langle \nabla \uE^{[-N_0',N_0'']}(x_0(\phi,E)),h_0 \rangle|<-N_1^{\mu}/4 \}
    <\exp(-N_1^{2\delta})
  \end{equation*}
  (we used $ HH_1\gtrsim -N_1^{200\fd}N_1^{2(d-1)\delta}\gg N_1^{\mu}
  $; recall that $ \mu\gg \delta\gg \fd $). Now condition \refcond{E}
  follows by using \cref{eq:u-x0-x1} and \cref{lem:uE-analyticity}.
\end{proof}

\section{From Conditions on Potential to Inductive Conditions}\label{sec:A-to-DE}

We start by assuming that $ V $ attains its absolute extrema at
exactly one non-degenerate critical point and show that for large
enough coupling we can satisfy the initial inductive conditions from
\cref{sec:edges}. This means that we are working with operators of the
form \cref{eq:H-lambda}. Having the assumption be about both absolute
extrema is just a matter of convenience, it will be clear that they
can be handled separately.

Let $ \ux $, $ \ox $,  be the points where the absolute minimum and maximum of $ V $ are
attained. Since $ \ux $, $ \ox $ are assumed to be non-degenerate critical
points they will be isolated from the other critical points. We give a quantitative version of this
observation. We use $\mathfrak{E}$ to denote the set of  critical points of $ V $. 

\begin{lemma}\label{lem:Morse7a}
  Given $ x_0,x_1\in \fE $, such that $ x_0 $ is non-degenerate, we have
  \begin{equation*}
    \norm{x_0-x_1}\ge c_\rho\norm{\fH(x_0)^{-1}}^{-1}
    (1+ \norm{V}_\infty)^{-1}.
  \end{equation*}
\end{lemma}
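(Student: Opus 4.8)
The plan is to use Taylor's formula for the gradient $\nabla V$ centered at the non-degenerate critical point $x_0$. Since $x_0,x_1\in\fE$ we have $\nabla V(x_0)=\nabla V(x_1)=0$, so expanding $\nabla V$ around $x_0$ gives
\begin{equation*}
  0=\nabla V(x_1)=\nabla V(x_0)+\fH(x_0)(x_1-x_0)+R,\qquad \norm{R}\le C_\rho\norm{V}_\infty\norm{x_1-x_0}^2,
\end{equation*}
where the bound on the quadratic remainder $R$ comes from Cauchy estimates on $\partial^\alpha V$, $|\alpha|=3$, which are controlled by $\norm{V}_\infty$ on the strip $\T^d_{3\rho/4}$ (with a $\rho$-dependent constant). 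Hence $\fH(x_0)(x_1-x_0)=-R$, and applying $\fH(x_0)^{-1}$ yields $\norm{x_1-x_0}\le \norm{\fH(x_0)^{-1}}\,C_\rho\norm{V}_\infty\norm{x_1-x_0}^2$.

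Next I would split into two cases. If $\norm{x_1-x_0}$ is already bounded below by a fixed constant depending only on $\rho$ (say $\norm{x_1-x_0}\ge \rho/100$), then the claimed bound is immediate after adjusting $c_\rho$, since $\norm{\fH(x_0)^{-1}}^{-1}(1+\norm{V}_\infty)^{-1}\le \norm{\fH(x_0)^{-1}}^{-1}\le \norm{\fH(x_0)}\le C_\rho\norm{V}_\infty$ is itself $\lesssim_\rho \norm{V}_\infty$, hmm — more carefully one just needs the lower bound to dominate, which it does for a suitable small $c_\rho$. In the complementary case $\norm{x_1-x_0}$ is small, but then I should note that Taylor's formula with remainder only applies when $x_1$ lies in the polydisk of radius $\sim\rho$ around $x_0$ where $V$ is analytic; for $x_1$ outside that polydisk the distance is $\gtrsim_\rho 1$ and we are again in the first case. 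So effectively the dichotomy is: either $\norm{x_1-x_0}<c\rho$ for a small absolute-ish constant, in which case the Taylor estimate above gives $1\le \norm{\fH(x_0)^{-1}}C_\rho\norm{V}_\infty\norm{x_1-x_0}$, i.e. $\norm{x_1-x_0}\ge (C_\rho\norm{V}_\infty)^{-1}\norm{\fH(x_0)^{-1}}^{-1}$; or $\norm{x_1-x_0}\ge c\rho$, in which case, since $\norm{\fH(x_0)^{-1}}^{-1}\le \norm{\fH(x_0)}\lesssim_\rho\norm{V}_\infty$, we get $\norm{\fH(x_0)^{-1}}^{-1}(1+\norm{V}_\infty)^{-1}\lesssim_\rho 1\lesssim_\rho\norm{x_1-x_0}$. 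Combining the two cases and absorbing constants into $c_\rho$ gives the stated inequality.

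The only mild subtlety, and the step I would be most careful about, is the bookkeeping of which constants depend on $\rho$ versus being absolute, and making sure the threshold separating the two cases is chosen small enough relative to $\rho$ so that Taylor's formula with the cubic remainder is valid on the relevant ball (recall $\rho\le 1$ and $V$ extends analytically to $\T^d_\rho$, with all estimates on $\partial^\alpha V$ available via Cauchy estimates on $\T^d_{3\rho/4}$). There is no real analytic obstacle here — it is a standard "isolated nondegenerate critical point" quantitative argument — so the write-up is short, essentially the display above plus the two-line case distinction.
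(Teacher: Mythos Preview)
Your proof is correct and follows essentially the same approach as the paper: Taylor-expand $\nabla V$ at $x_0$, bound the quadratic remainder by $C_\rho\norm{V}_\infty\norm{x-x_0}^2$ via Cauchy estimates, and use invertibility of $\fH(x_0)$. The paper packages this slightly more cleanly by showing directly that $\norm{\nabla V(x)}\ge \tfrac12\norm{\fH(x_0)^{-1}}^{-1}\norm{x-x_0}>0$ whenever $0<\norm{x-x_0}\le c_\rho\norm{\fH(x_0)^{-1}}^{-1}(1+\norm{V}_\infty)^{-1}$, which absorbs your case split (the threshold is automatically $\lesssim_\rho 1$ since $\norm{\fH(x_0)^{-1}}^{-1}\le\norm{\fH(x_0)}\lesssim_\rho\norm{V}_\infty$), but the content is identical.
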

\begin{proof}
  By Taylor's formula and Cauchy estimates,
  \begin{multline*}
	\norm{\nabla V(x)}
    =\norm{\nabla V(x)-\nabla V(x_0)}\ge \norm{\fH(x_0)(x-x_0)}-C_\rho \norm{V}_\infty \norm{x-x_0}^2\\
    \ge \frac{1}{2}\norm{\fH(x_0)^{-1}}^{-1}\norm{x-x_0},
  \end{multline*}
  provided $ \norm{x-x_0}\le c_\rho
  \norm{\fH(x_0)^{-1}}^{-1}(1+ \norm{V}_\infty)^{-1} $. The conclusion follows. 
\end{proof}

Note that $ \fE $ is compact and since $ \ux,\ox $ are isolated, $
\fE\setminus \{ \ux,\ox \} $ is also compact. Therefore there exists 
$ \iota=\iota(V)>0 $, such that
\begin{equation}\label{eq:iota}
  V(\underline{x})+\iota\le V(x)\le V(\overline{x})-\iota,
  \quad x\in \fE\setminus \{ \ux,\ox \}.
\end{equation}
Let
\begin{equation*}
  \nu:=\min(\norm{\fH(\ux)^{-1}}^{-1},\norm{\fH(\ox)^{-1}}^{-1})
\end{equation*}
Note that since $ \ux $, $ \ox $ are non-degenerate extrema, we have
\begin{equation*}
  \fH(\ux)\ge \nu I,\qquad \fH(\ox)\le -\nu I.
\end{equation*}

\begin{lemma}\label{lem:uV-shifts}
  Let $ r=c\nu(1+\norm{V}_\infty)^{-1} $, with $ c=c(\rho) $
  sufficiently small. Then
  \begin{equation}\label{eq:uV-shifts}
    \begin{gathered}
      \frac{\nu}{2}\norm{x-\ux}^2\le V(x)-V(\ux)\le C_\rho(1+\norm{V}_\infty)\norm{x-\ux}^2,
      \quad \norm{x-\ux}\le r,\\
      \frac{\nu}{2}\norm{x-\ux}\le \norm{\nabla V(x)}\le C_\rho(1+\norm{V}_\infty)\norm{x-\ux},
      \quad \norm{x-\ux}\le r, \\
      \min(\iota, \nu r^2/2)\le V(x)-V(\ux), \quad \norm{x-\ux}\ge r.
    \end{gathered}
  \end{equation}
  Analogous estimates hold for $ \ox $.
\end{lemma}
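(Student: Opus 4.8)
The plan is to read off the three displayed bounds as instances of \cref{lem:Morse7} applied to the translate $f(x)=V(x+\ux)-V(\ux)$, which satisfies $f(0)=0$, $\nabla f(0)=0$, $\fH(f)(0)=\fH(V)(\ux)\ge\nu I$, together with the elementary quadratic--form bound $\fH(\ux)\le C_\rho(1+\norm{V}_\infty)I$ that comes from Cauchy estimates (so that the upper quadratic bounds hold with $C_\rho(1+\norm{V}_\infty)$ in place of the generic $C(n)M(2)+1$). Here $M(2),M(3)\le C_\rho\norm{V}_\infty$ by Cauchy estimates on $\T_\rho^d$, so the hypothesis $|x-\ux|<\min(r_0,c(d)\nu_0 M(3)^{-1})$ of \cref{lem:Morse7} is guaranteed once $r=c\nu(1+\norm{V}_\infty)^{-1}$ with $c=c(\rho)$ small enough. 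This immediately yields the first two lines of \cref{eq:uV-shifts} for $\norm{x-\ux}\le r$.

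For the third line, when $\norm{x-\ux}\ge r$ I would split into two regimes. If $r\le \norm{x-\ux}\le r'$ for a slightly larger radius $r'=c'\nu(1+\norm{V}_\infty)^{-1}$ still inside the region where \cref{lem:Morse7} applies, the lower quadratic bound $V(x)-V(\ux)\ge \tfrac{\nu}{2}\norm{x-\ux}^2\ge \tfrac{\nu}{2}r^2$ already gives what we want. For $\norm{x-\ux}\ge r'$ I would argue that $V(x)-V(\ux)\ge\iota$: indeed, by compactness $V$ attains its minimum over the closed set $\{\,x:\norm{x-\ux}\ge r'\,\}$ at some point $x_*$; this minimizing point is either a critical point of $V$ (hence lies in $\fE\setminus\{\ux\}$ provided $r'$ is small enough that $\ux$ is the only critical point within distance $r'$, which is exactly \cref{lem:Morse7a} — it forces any other critical point to be at distance $\gtrsim c_\rho\nu(1+\norm{V}_\infty)^{-1}$ from $\ux$), or it lies on the boundary sphere $\norm{x-\ux}=r'$ where the quadratic lower bound gives $V(x_*)-V(\ux)\ge\tfrac{\nu}{2}(r')^2$. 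In the first case \cref{eq:iota} gives $V(x_*)-V(\ux)\ge\iota$, in the second case we get $\ge\tfrac{\nu}{2}(r')^2\ge\tfrac{\nu}{2}r^2$; either way $V(x)-V(\ux)\ge\min(\iota,\nu r^2/2)$ for all $\norm{x-\ux}\ge r$, matching the third line of \cref{eq:uV-shifts} after adjusting the constant in $r$ so that $r'$ and $r$ differ only by an absolute factor.

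The estimates for $\ox$ follow by applying everything above to $-V$, whose absolute minimum is attained at $\ox$ with $\fH(-V)(\ox)=-\fH(V)(\ox)\ge\nu I$ by the definition of $\nu$; this replaces $V(x)-V(\ux)$ by $V(\ox)-V(x)$ and $\nabla V$ by $-\nabla V$ (same norm), and \cref{eq:iota} is symmetric in the two extrema.

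The main obstacle — really the only non-bookkeeping point — is the separation step: one must verify that the neighborhood $\norm{x-\ux}<r'$ on which the quadratic expansion is valid can be taken small enough to exclude every other critical point of $V$, \emph{uniformly in terms of the quantities $\nu,\norm{V}_\infty,\rho$ that the statement is allowed to depend on}. This is precisely the content of \cref{lem:Morse7a}: it gives the quantitative lower bound $\norm{\ux-x_1}\ge c_\rho\norm{\fH(\ux)^{-1}}^{-1}(1+\norm{V}_\infty)^{-1}\ge c_\rho\nu(1+\norm{V}_\infty)^{-1}$ for any other critical point $x_1$, so choosing $c=c(\rho)$ small makes $r=c\nu(1+\norm{V}_\infty)^{-1}$ smaller than this separation distance and the argument closes. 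Everything else is a routine invocation of Taylor's formula and Cauchy estimates.
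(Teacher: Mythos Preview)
Your proposal is correct and follows essentially the same approach as the paper's proof: the first two lines via \cref{lem:Morse7} with Cauchy estimates controlling $M(2),M(3)$, and the third line via the observation that the minimum of $V-V(\ux)$ over $\{\norm{x-\ux}\ge r\}$ is attained either at a point of $\fE\setminus\{\ux\}$ (giving $\ge\iota$ by \cref{eq:iota}) or on the boundary sphere (giving $\ge\nu r^2/2$), with \cref{lem:Morse7a} ensuring $\fE\setminus\{\ux\}$ lies outside the ball. Your intermediate radius $r'$ is an unnecessary detour --- the paper just takes $r'=r$ --- but it introduces no error.
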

\begin{proof}
  The estimates with $ \norm{x-\ux}\le r $ follow from \cref{lem:Morse7} (we use Cauchy
  estimates to control $ M(3) $). From \cref{lem:Morse7a} we have
  that, by choosing $ r $ small enough,
  \begin{equation*}
    \fE\setminus \{ \ux
    \}\subset \T^d \setminus \{ x: \norm{x-\ux}\le r \}.
  \end{equation*}
  Then
  \begin{equation*}
	\min_{\norm{x-\ux}\ge r} (V(x)-V(\ux))=\min \left( \min_{x\in \fE\setminus \{ \ux \}}(V(x)-V(\ux)),
      \min_{\norm{x-\ux}=r}(V(x)-V(\ux))\right)
  \end{equation*}
  and the conclusion follows.
\end{proof}

For the purpose of the next result we update $ T_V $ (recall \cref{eq:TV}), to
be
\begin{equation}\label{eq:TV-a}
  T_{V}=2+\max(0,\log\norm{V}_\infty)+\max(0,\log \uiota ^{-1})
  +\max(0,\log\iota^{-1})+\max(0,\log\nu^{-1}).
\end{equation}
Clearly all the previous results using $ T_V $ also hold with this
possibly larger $ T_V $. The proofs of the next proposition and later
of \cref{prop:A-to-D} are very similar to the proofs of \cref{thm:E}
and \cref{thm:D} respectively, with some of the tools from
\cref{sec:basic-tools} replaced by their analogues from
\cref{sec:perturbative-refinements}. Due to the similarity we omit
some details. However, for clarity, we do give complete proofs, as the
key differences are spread out. Recall the exponent $ \fd $ from the
inductive conditions \refcondu{A}-\refcondu{D}.

\begin{prop}\label{prop:A-to-E}
  Assume the notation of conditions \refcondu{A}-\refcondu{D} from
  \cref{sec:edges}. Let $ \epsilon>0 $.
  There exists $ \lambda_0=\exp((T_V)^C) $, $ C=C(a,b,\rho,\epsilon) $, such
  that the following hold for $ \lambda\ge \lambda_0 $.  For any
  $ (\log\lambda)^{C(a,b,\epsilon)} \le N_0\le \exp((\log\lambda)^{\epsilon/2})$
  there exists $ \ux_0\in \T^d $, $ |\ux_0-\ux|\ll \lambda^{-1/3} $, such that the conditions
  \refcondu{A}-\refcondu{D} hold with $ s=0 $,
  $ \gamma=(\log\lambda)/2 $, $ [-N_0',N_0'']=[-N_0,N_0]$, and $
  |\lambda^{-1}\uE_0-V(\ux)|\ll \lambda^{-1/4} $. Furthermore, for any $ E_0\in \R $,
  $ \exp(-N_0^{100\fd})\le |E_0-\uE_0|\le \lambda\exp(-(\log\lambda)^{4\epsilon}) $, conditions
  \refcond{A}-\refcond{E}, with $ s=0 $, hold  for $ \uE^{[-N_0,N_0]}
  $.  Analogous statements hold relative to conditions
  \refcondo{A}-\refcondo{D}.
\end{prop}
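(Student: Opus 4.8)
The strategy is to verify the initial inductive conditions \refcondu{A}-\refcondu{D} at scale $N_0$ directly from the perturbative tools of \cref{sec:perturbative-refinements}, then deduce the ``bulk'' conditions \refcond{A}-\refcond{E} at the edge energies from \cref{prop:levelsetshifts} exactly as in the $s=1$ step of \cref{thm:E}. The first step is to pin down $\ux_0$ and the eigenpair. Set $\gamma=(\log\lambda)/2$, $\epsilon'=\epsilon/4$, and apply \cref{lem:efextension2} with $x_0=\ux$ (using that $\ux$ is the unique global minimum, together with the third estimate in \cref{eq:uV-shifts} to get $|V(\ux+n\omega)-V(\ux)|\ge \min(\iota,\nu r^2/2)\ge \exp(-(\log\lambda)^{\epsilon'})$ for all $n\neq 0$ once $\lambda$ is large — this uses the Diophantine property only to note $\ux+n\omega\neq\ux$, but actually we need the separation to hold, which is where \cref{eq:uV-shifts} and the compactness giving $\iota$ come in; more carefully one uses that $\|n\omega\|\ge a|n|^{-b}$ so $\ux+n\omega$ stays a definite distance from $\ux$ for small $|n|$, and is handled by the $\iota$ estimate for all $n$ in $[-N_0,N_0]$ once $N_0\le\exp((\log\lambda)^{\epsilon/2})$ and $\lambda$ is large relative to $T_V$). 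This produces an eigenpair $E^{[-N_0,N_0]},\psi^{[-N_0,N_0]}$ with $|\lambda^{-1}E^{[-N_0,N_0]}(x)-V(x)|\le 2\lambda^{-1}$, exponential localization at site $0$, and eigenvalue separation $\lambda^{-1}(E_j-E_k)\ge \tfrac18\exp(-(\log\lambda)^{\epsilon'})$ by part (4') (since $V(\ux+n\omega)-V(\ux)>0$). Setting $\ux_0$ to be the minimizer of $x\mapsto E^{[-N_0,N_0]}(x)$ near $\ux$ (which exists and satisfies $|\ux_0-\ux|\ll\lambda^{-1/3}$ because $\lambda^{-1}E^{[-N_0,N_0]}$ is a small analytic perturbation of the Morse function $V$, using \cref{cor:high_cart} and Cauchy estimates on the difference, together with \cref{lem:Morse7} applied to $V$), and $\uE_0=E^{[-N_0,N_0]}(\ux_0)$, gives $|\lambda^{-1}\uE_0-V(\ux)|\ll\lambda^{-1/4}$.

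\textbf{Verifying \refcondu{A}-\refcondu{D} at $s=0$.} Conditions \refcondu{A} (with $\fd$ small, $\exp(-N_0^\fd)\le\tfrac18\exp(-(\log\lambda)^{\epsilon'})$ since $N_0^\fd\le(\log\lambda)^{\epsilon\fd/2}\ll(\log\lambda)^{\epsilon'}$) and \refcondu{B} are immediate from \cref{lem:efextension2}. For \refcondu{C} we need $\ux_0$ to be a non-degenerate minimum of $E^{[-N_0,N_0]}$ with Hessian $\ge\nu_0 I$, $\nu_0=\exp(-N_0^\fd)$: this follows because $\lambda^{-1}E^{[-N_0,N_0]}$ and $V$ agree up to $O(\lambda^{-1})$ with all derivatives controlled by Cauchy estimates on a disk of radius $\simeq\lambda^{-c}$, so $\fH(\lambda^{-1}E^{[-N_0,N_0]})(\ux_0)\ge\tfrac{\nu}{2}I$, and after multiplying by $\lambda$ this is $\ge\tfrac{\lambda\nu}{2}I\gg\exp(-N_0^\fd)I$. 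Condition \refcondu{D} is the substantive one: given $h$ with $\dist(h,\fT_0)\ge\exp(-N_0^{2\fd})$, pick $|n_1|\le 3N_0/2$ and $h_1=h-n_1\omega$ with $\|h_1\|=\dist(h,\fT_0)$; I want an interval near $[-N_0,N_0]$ on which $\ux_0+h$ has spectrum bounded below away from $\uE_0$. Observe $V(\ux+\tilde n\omega)=V(\ux+(n_1+\tilde n)\omega - n_1\omega)$ — really the point is that shifting by $h$ translates the potential sample, and $|V(\ux_0+h+\tilde n\omega)-\lambda^{-1}\uE_0|$ should be bounded below for $\tilde n$ in a suitable interval. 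Using \cref{lem:uV-shifts}, $V(x)-V(\ux)\ge\tfrac{\nu}{2}\|h_1\|^2\ge\tfrac{\nu}{2}\exp(-2N_0^{2\fd})$ when $\ux_0+h\equiv\ux+h_1$ is at distance $\ge\exp(-N_0^{2\fd})$ from $\ux$; combined with the $\iota$-gap for the other shifts and \cref{cor:covering-perturb} (applied to $S=(-\infty,\uE_0]$ with $K\simeq N_0^{4\fd}$, noting $\lambda\exp(-K)\gg\exp(-N_0^{4\fd})$), this yields an interval $[-N_0+n',N_0+n'']$ with $\dist(\spec H_{[\dots]}(\ux_0+h),(-\infty,\uE_0])\ge\exp(-N_0^{4\fd})$. (When $\|h_1\|$ is very small one instead uses that the single site $\tilde n=-n_1$ contributes $V(\ux+h_1)-V(\ux)\simeq\nu\|h_1\|^2$ to keep the $[-N_0',N_0'']$-spectrum above $\uE_0$, via \cref{lem:efextension2b}.)

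\textbf{The bulk conditions at edge energies.} For the final assertion, fix $E_0$ with $\exp(-N_0^{100\fd})\le|E_0-\uE_0|\le\lambda\exp(-(\log\lambda)^{4\epsilon})$. Since $\lambda^{-1}|E_0-\uE_0|\le\exp(-(\log\lambda)^{4\epsilon})$ and $\uE^{[-N_0,N_0]}$ is a non-degenerate minimum, there is $x$ with $\uE^{[-N_0,N_0]}(x)=E_0$ and $0<\|x-\ux_0\|\lesssim\exp(-N_0^{2\fd})$ small enough. Now apply \cref{prop:levelsetshifts} to $f=\uE^{[-N_0,N_0]}$ on a polydisk of radius $\simeq\exp(-C N_0^\fd)$ around $\ux_0$, with $x$ in the role of $x_0$; this is legitimate since the derivative bounds $M(2),M(3)$ are $\le\exp(CN_0^\fd)$ by Cauchy estimates and $\nu_0=\exp(-N_0^\fd)$, giving $\nu_1\simeq\exp(-CN_0^\fd)$ and the search radius $r=\nu_1\|x-\ux_0\|\gtrsim\exp(-N_0^{300\fd})$. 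Parts (I)-(III) produce the parametrization $x_0(\phi,E)$ and the Cartan-type measure estimates \cref{eq:uE0-shift}, \cref{eq:uE0-grad} on $\Pi_0=\cI_0\times(E_0-r^2,E_0+r^2)$. At this point the argument is \emph{verbatim} the end of the proof of \cref{thm:E}: \cref{prop:inductive4D} upgrades the $N_0$-eigenpair to an $N_1$-eigenpair — wait, we only need scale $N_0$ here, so actually we directly set $x_1=x_0$ and verify \refcond{A}-\refcond{C} from \cref{lem:efextension2} (localization, separation, analyticity of $x_0$ via \cref{lem:parametrization1}); \refcond{D} follows by combining the covering form \cref{cor:covering-perturb}/\cref{lem:Greencoverap1} on intervals $J_m=m+[-N_0',N_0'']$ with \cref{eq:uE0-shift} to handle the near-resonant site $m=-m_1$, exactly as in the \refcond{D}-check in \cref{thm:E}; and \refcond{E} follows immediately from \cref{eq:uE0-grad} with $H=N_0^{2(d-1)\delta}$.

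\textbf{Main obstacle.} The delicate point is ensuring that the various exponent inequalities line up: we need $\fd$ small enough relative to $\delta,\mu,\beta,\sigma,\tau$ that all the cutoffs $\exp(-N_0^{4\fd})$, $\exp(-N_0^{100\fd})$, $\exp(-N_0^{2\mu})$ etc. are comparable in the right direction to the scales $N_0^{\sigma/2}$ appearing in the (LDT) applications, \emph{and} simultaneously that $N_0$ itself is squeezed between $(\log\lambda)^{C(a,b,\epsilon)}$ (large enough to invoke \cref{prop:LLBasic}, so $B_0=(\log\lambda)^{C(a,b,\rho)}\le N_0$) and $\exp((\log\lambda)^{\epsilon/2})$ (small enough that \cref{lem:efextension2} and \cref{lem:efextension2b} apply with the $(\log\lambda)^{\epsilon'}$-scale separation). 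The bookkeeping is entirely analogous to \cref{thm:D} and \cref{thm:E} but must be redone carefully because the initial Green's-function decay now comes from \cref{lem:efextension2b} (decay rate $\simeq\log\lambda$) rather than from a positive-Lyapunov-exponent (LDT) input, and because $\gamma=(\log\lambda)/2$ must be threaded consistently through all the localization estimates.
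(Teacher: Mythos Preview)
Your overall plan matches the paper, but two steps have genuine gaps.

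\textbf{The intermediate scale $\hat N$ is missing from your \refcondu{D} check.} In the case $\exp(-N_0^{2\fd})\le\|h_1\|<\exp(-(\log\lambda)^{2\epsilon})$ with $n_1\in[-N_0+N_0^{1/3},N_0-N_0^{1/3}]$, you propose to use $V(\ux+h_1)-V(\ux)\simeq\nu\|h_1\|^2$ together with \cref{cor:covering-perturb}. But the base point is $\ux_0$, not $\ux$, and at the site $n=-n_1$ one needs $V(\ux_0+h_1)-\lambda^{-1}\uE_0>0$. Now $\lambda^{-1}\uE_0$ is only within $2\lambda^{-1}$ of $V(\ux_0)$ (not of $V(\ux)$), and $V(\ux_0+h_1)-V(\ux_0)=\langle\nabla V(\ux_0),h_1\rangle+O(\|h_1\|^2)$ with $|\nabla V(\ux_0)|\lesssim\lambda^{-c}$; since $\|h_1\|$ can be as small as $\exp(-N_0^{2\fd})\ll\lambda^{-c}$, the linear term dominates and $V(\ux_0+h_1)-\lambda^{-1}\uE_0$ can be negative. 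Hence \cref{cor:covering-perturb} with $S=(-\infty,\uE_0]$ is inapplicable at that site. The paper fixes this by introducing $\hat N=\lceil N_0^{1/4}\rceil$ and using \cref{cor:close-ev-lambda}: for every $[a,b]\supset[-\hat N,\hat N]$, in particular $[a_1,b_1]=n_1+[-N_0,N_0]$, one has $|\uE^{[a,b]}-\uE^{[-\hat N,\hat N]}|\lesssim\exp(-(\log\lambda)\hat N/2)$, so the minimizers $\tilde\ux([a,b])$ and minimum values all agree with $\hat\ux,\hat\uE$ up to $\exp(-(\log\lambda)\hat N/6)$. One then Taylor-expands $\uE^{[a_1,b_1]}$ about its \emph{own} minimizer $\tilde\ux_1$ to get $\uE^{[a_1,b_1]}(\tilde\ux_1+h_1)-\tilde\uE_1\ge\tfrac{\nu}{2}\|h_1\|^2\ge\exp(-3N_0^{2\fd})$, and the tiny errors $|\tilde\ux_1-\ux_0|,|\tilde\uE_1-\uE_0|\lesssim\exp(-(\log\lambda)N_0^{1/4}/6)$ transfer this to $(\ux_0,\uE_0)$. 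The paper flags this explicitly: ``We need to work around the weakness of the estimate $|\tilde\ux-\ux|\ll\lambda^{-1/3}$.''

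\textbf{Rescaling before \cref{prop:levelsetshifts}.} You apply \cref{prop:levelsetshifts} to $f=\uE^{[-N_0,N_0]}$ and assert $M(2),M(3)\le\exp(CN_0^{\fd})$. This is wrong: $\uE^{[-N_0,N_0]}$ is an eigenvalue of $H_\lambda$ and its second derivatives are of order $\lambda$ (you use this yourself for \refcondu{C}). With $M(2)\simeq\lambda$ one gets $\nu_1\simeq\nu_0\lambda^{-1}$, making $\rho,r$ far too small for parts (II),(III) to yield the measure bounds at the scales $N_0^\mu,N_0^\delta$. The paper instead applies \cref{prop:levelsetshifts} to $\lambda^{-1}\uE^{[-N_0,N_0]}$, for which $\nu_0\simeq\nu/4\in(0,1)$ and $M(k)\le\exp(C(\log\lambda)^\epsilon)$ genuinely hold, giving $\nu_1\simeq\exp(-C(\log\lambda)^\epsilon)$; the parametrization is then $x_0(\phi,E)=\tilde x_0(\phi,\lambda^{-1}E)$ with $E$-window $\lambda r^2$. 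Related to this, the paper checks \refcond{A}--\refcond{E} at scale $N_1=N_0^5$ (constructing $x_1$ from $x_0$ via the implicit function theorem, using $|\uE^{[-N_1,N_1]}-\uE^{[-N_0,N_0]}|\lesssim\exp(-(\log\lambda)N_0/2)$ from \cref{cor:close-ev-lambda}) and then relabels; your ``directly set $x_1=x_0$'' skips this and leaves the construction of the parametrization satisfying \refcond{A} incomplete.
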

\begin{proof}
  To check \refcondu{D} we will need to obtain conditions
  \refcondu{A}-\refcondu{C} not just for $ [-N_0',N_0'']=[-N_0,N_0] $,
  but also for other intervals.   
  By \cref{lem:uV-shifts}, for any $ 0<|n|\le 2N_0 $ we either have
  \begin{equation*}
	V(\ux+n\omega)-V(\ux)\ge \frac{\nu}{2}\norm{n\omega}^2\ge \frac{\nu}{2} a (2N_0)^{-b},
  \end{equation*}
  or
  \begin{equation*}
	V(\ux+n\omega)-V(\ux) \ge \min(\iota,\nu r^2/2).
  \end{equation*}
  Then for large enough $ \lambda $ (this
  is why we added $ \max(0,\log\iota^{-1})+\max(0,\log\nu^{-1}) $ to $ T_V $) and $
  N_0 $ not too large, we have
  \begin{equation*}
	V(\ux+n\omega)-V(\ux)\ge \exp(-(\log\lambda)^\epsilon),\quad 0<|n|\le N_0.
  \end{equation*}
  Let $ a<0<b $, $ [a,b]\subset [-2N_0,2N_0] $. 
  Then by \cref{lem:efextension2}, there exists $
  \uE^{[a,b]}=E_{k}^{[a,b]} $ such that for any $
  |x-\ux|<\exp(-3(\log\lambda)^\epsilon) $,
  \begin{equation}\label{eq:uE-loc}
    \begin{gathered}
      |\lambda^{-1}\uE^{[a,b]}(x)-V(x)|\le 2\lambda^{-1}\\
      |\upsi^{[a,b]}(x,n)|<\exp(-(\log \lambda) |n|/2),\quad |n|>0,\\
      \lambda^{-1}(E_j^{[a,b]}(x)-\uE^{[a,b]}(x))\ge
      \frac{1}{8}\exp(-(\log\lambda)^{\epsilon}),\quad j\neq k.
    \end{gathered}	
  \end{equation}
  As in \cref{lem:uE-analyticity}, $ \uE^{[a,b]} $ is analytic on
  \begin{equation*}
	\cP'=\{ z\in \C^d: |z-\ux|<r' \},\quad r'=\exp(-4(\log\lambda)^\epsilon)
  \end{equation*}
  and
  \begin{equation*}
	\sup_{\cP'} \norm{\fH(\lambda^{-1}\uE^{[a,b]})-\fH(V)}\le \lambda^{-c(d)}.
  \end{equation*}
  As in \cref{prop:inductive5Dc}, we can find $ \tilde \ux=\tilde \ux([a,b]) $, $
  |\tilde \ux-\ux|\ll \lambda^{-1/3} $, such that
  \begin{gather*}
	\uE^{[a,b]}(\tilde \ux)\le \uE^{[a,b]}(x),\quad \text{ for any }|x-\ux|<r',\\
    \nabla \uE^{[a,b]}(\tilde \ux)=0,\quad \fH(\lambda^{-1}\uE^{[a,b]})(\tilde \ux)\ge \frac{\nu}{4}I.
  \end{gather*}
  Also, as in \cref{lem:uE0-uE1}, we have $ |\lambda^{-1}\tilde
  \uE-V(\ux)|\ll \lambda^{-1/4} $, where $ \tilde \uE=\uE^{[a,b]}(\tilde \ux) $.
  We need to work around the
  weakness of the estimate $  |\tilde \ux-\ux|\ll \lambda^{-1/3} $. From
  now on assume $ [a,b]\supset [-\hat N,\hat N] $, $ \hat
  N=\ceil{N_0^{1/4}} $. By \cref{cor:close-ev-lambda}, we have
  \begin{equation*}
	|\uE^{[a,b]}(x)-\uE^{[-\hat N,\hat N]}(x)|\lesssim \exp(-(\log\lambda)\hat N/2),
  \end{equation*}
  for any $ |x-\ux|<\exp(-3(\log\lambda)^\epsilon) $.
  Let $ \hat \ux =\tilde \ux([-\hat N,\hat N])$.   As in
  \cref{prop:inductive5Dc}, we can find, with a slight abuse of notation, $ \tilde \ux=\tilde \ux([a,b])  $,
  \begin{equation}\label{eq:tux-hux}
    |\tilde \ux-\hat \ux|<\exp(-(\log\lambda) \hat N/5),
  \end{equation}
  such that
  \begin{equation}\label{eq:umin-ab}
    \begin{gathered}
      \uE^{[a,b]}(\tilde \ux)\le \uE^{[a,b]}(x),
      \quad \text{ for any }|x-\hat \ux|<\exp(-C(\log\lambda)^\epsilon),\\
      \nabla \uE^{[a,b]}(\tilde \ux)=0,\quad \fH(\lambda^{-1}\uE^{[a,b]})(\tilde \ux)\ge \frac{\nu}{8}I.
    \end{gathered}
  \end{equation}
  Furthermore, as in \cref{lem:uE0-uE1},
  \begin{equation}\label{eq:tuE-huE}
	|\tilde \uE-\hat{\uE} |<\exp(-(\log\lambda)\hat N/6),
  \end{equation}
  with $ \hat \uE=\uE^{[-\hat N,\hat N]}(\hat \ux) $.
  Note that
  \begin{equation}\label{eq:tux-ux}
	|\tilde \ux-\ux|\ll \lambda^{-1/3},\quad |\lambda^{-1}\tilde \uE-V(\ux)|\ll \lambda^{-1/4}.
  \end{equation}
  Let $ \ux_0=\tilde \ux([-N_0,N_0]) $. Then the first statement,
  except for condition \refcondu{D}, holds by all  the above
  and by having $ N_0^{\fd}\gg (\log\lambda)^\epsilon $. As in
  \cref{sec:edges} we incorporate the dependence on $ \fd $ in the
  dependence on the Diophantine parameters.

  Next we check condition $ \refcondu{D} $. First we consider the case
  $ \dist(h,\fT_0)\ge \exp(-(\log\lambda)^{2\epsilon}) $. Since $
  \norm{h+n\omega}\ge \exp(-(\log\lambda)^{2\epsilon}) $, we have, by \cref{lem:uV-shifts},
  \begin{equation*}
	V(\ux+h +n\omega)-V(\ux)\ge \exp(-3(\log\lambda)^{2\epsilon}),\quad |n|\le N_0
  \end{equation*}
  (provided $ \lambda $ is large enough). By
  \cref{cor:covering-perturb} we get
  \begin{equation*}
	\dist(\spec H_{[-N_0,N_0]}(\ux+h),(-\infty,\lambda V(\ux)])\gtrsim\lambda\exp(-3(\log\lambda)^{2\epsilon})
  \end{equation*}
  and by \cref{eq:tux-ux},
  \begin{equation}\label{eq:uD-large-h}
	\dist(\spec H_{[-N_0,N_0]}(\ux_0+h),(-\infty,\uE_0])\gtrsim\lambda\exp(-3(\log\lambda)^{2\epsilon})
    \gg \exp(-N_0^{4\fd}).
  \end{equation}
  Next we consider the case $ \exp(-N_0^{2\fd})\le
  \dist(h,\fT_0)<\exp(-(\log\lambda)^{2\epsilon}) $. Let $ n_1 $, $
  |n_1|\le 3N_0/2 $, such that $ \dist(h,\fT_0)=\norm{h-n_1\omega}
  $. We consider two sub-cases depending on the position of $ n_1
  $. If $ n_1\notin [-N_0+N_0^{1/3},N_0-N_0^{1/3}] $, then for $ n\in[-N_0+N_0^{1/3},N_0-N_0^{1/3}] $
  \begin{equation*}
	\norm{h+n\omega}\ge \norm{(n-n_1)\omega}-\norm{h-n_1\omega}\ge a N_0^{-b}-\exp(-(\log\lambda)^{2\epsilon})
    \gg \exp(-(\log\lambda)^{2\epsilon})
  \end{equation*}
  (recall that $ N_0\le \exp((\log\lambda)^{\epsilon/2}) $) and as above
  we get
  \begin{equation*}
    \dist(\spec H_{[-N_0+N_0^{1/3},N_0-N_0^{1/3}]}(\ux_0+h),(-\infty,\uE_0])
    \gtrsim\lambda\exp(-3(\log\lambda)^{2\epsilon})\gg \exp(-N_0^{4\fd}).
  \end{equation*}
  Suppose $ n_1\in [-N_0+N_0^{1/3},N_0-N_0^{1/3}] $. Let $
  h_1=h-n_1\omega $ (so, $ \norm{h_1}=\dist(h,\fT_0) $), $
  [a_1,b_1]=n_1+[-N_0,N_0] $, $ \tilde \ux_1=\tilde \ux([a_1,b_1]) $, $ \tilde
  \uE_1=\uE^{[a_1,b_1]}(\tilde \ux_1) $. Note that $
  [a_1,b_1]\supset [-\hat N,\hat N] $. By Taylor's formula (recall
  \cref{lem:Morse7}, \cref{eq:umin-ab}),
  \begin{equation*}
	\uE^{[a_1,b_1]}(\tilde \ux_1+h_1)-\uE^{[a_1,b_1]}(\tilde \ux_1)
    \ge \frac{\nu}{2}\norm{h_1}^2\ge \exp(-3N_0^{2\fd}).
  \end{equation*}
  Then, by \cref{eq:uE-loc} (recall \cref{eq:tux-ux}),
  \begin{equation*}
	\dist(\spec H_{[a_1,b_1]}(\tilde \ux_1+h_1),(-\infty,\tilde \uE_1])\ge \exp(-3N_0^{2\fd}).
  \end{equation*}
  Since $ \spec H_{[a_1,b_1]}(\tilde \ux_1+h_1)=\spec
  H_{[-N_0,N_0]}(\tilde \ux_1+h)
  $ and by \cref{eq:tux-hux}, \cref{eq:tuE-huE},
  \begin{equation*}
	|\tilde \ux_1-\ux_0|\lesssim \exp(-(\log\lambda)N_0^{1/4}/5),
    \quad |\tilde \uE_1-\uE_0|\lesssim \exp(-(\log\lambda)N_0^{1/4}/6),
  \end{equation*}
  it follows that
  \begin{equation*}
	\dist(\spec H_{[-N_0,N_0]}(\ux_0+h),(-\infty,\uE_0])\gtrsim \exp(-3N_0^{2\fd})\gg \exp(-N_0^{4\fd})
  \end{equation*}
  Thus, condition \refcondu{D} holds.

  Next we check the last statement. Let $ N_1=N_0^5 $. Since all
  the statements of the proof hold for a range of $ N_0 $, they will
  also hold for $ N_1 $, by adjusting the range. In particular, let
  $ \ux_1=\tilde \ux([-N_1,N_1]) $. Note
  that by \cref{eq:tux-hux}, \cref{eq:tuE-huE},
  \begin{equation}\label{eq:uxE0-uxE1}
    |\ux_1-\ux_0|\lesssim \exp(-(\log\lambda)N_0^{1/4}/5),
    \quad |\uE_1-\uE_0|\lesssim \exp(-(\log\lambda)N_0^{1/4}/6).
  \end{equation}
  Fix
  $ x $,
  $
  \exp(-N_1^{100\fd})\le |x-\ux_0|\le \exp(-(\log\lambda)^{2\epsilon})$.  We
  will check that conditions \refcond{A}-\refcond{E}, with
  $ s=1 $, hold for $ \uE^{[-N_1,N_1]} $ with
  $ E_1=\uE^{[-N_0,N_0]}(x) $. Then
  the conclusion holds since
  \begin{multline*}
	\{ \uE^{[-N_0,N_0]}(x) :\exp(-N_1^{200\fd})\le |x-\ux_0|\le \exp(-(\log\lambda)^{2\epsilon}) \}\\
    \supset [\uE_0+\lambda\exp(-N_1^{150\fd}),\uE_0+\lambda\exp(-(\log\lambda)^{3\epsilon})]\\
    \supset [\uE_1+\exp(-N_1^{100\fd}),\uE_1+\lambda\exp(-(\log\lambda)^{4\epsilon})]
  \end{multline*}
  (we applied \cref{lem:Morse7} to $ \lambda^{-1}\uE^{[-N_0,N_0]} $
  and we used \cref{eq:uxE0-uxE1}).
  Note that since this statement will hold for a
  range of $ N_1 $, it will also hold for the stated range of $ N_0 $
  by relabelling. 

  We apply \cref{prop:levelsetshifts} to
  $ \lambda^{-1}\uE^{[-N_0,N_0]} $ on
  \begin{equation*}
	\cP_0'=\{ z\in \C^d :|z-\ux_0|<\exp(-4(\log\lambda)^\epsilon) \}.
  \end{equation*}
  Using the notation of \cref{prop:levelsetshifts}, we have
  \begin{equation*}
	\nu_1\simeq \exp(-C(\log\lambda)^{\epsilon}),
    \quad \rho=\exp(-4(\log\lambda)^\epsilon)\nu_1^{10}\simeq \exp(-C'(\log\lambda)^\epsilon)
    ,\quad  r=\nu_1\norm{x-\ux_0}. 
  \end{equation*}
  We chose to apply \cref{prop:levelsetshifts} to $
  \lambda^{-1}E^{[-N_0,N_0]} $ because of the $ 0<\nu_0<1 $
  restriction in the statement of the proposition. Of course, we could
  artificially choose any $ \nu_0\in (0,1) $ for $ E^{[-N_0,N_0]} $,
  but this would result in a much smaller $ \nu_1\simeq
  \lambda^{-1}\exp(C(\log\lambda)^\epsilon) $, which is too small for
  our purposes.
  Since $ 0<\norm{x-\ux_0}<\rho $, \cref{prop:levelsetshifts} 
  applies   with $ x $ in the role of
  $ x_0 $ and we get the following:

  \smallskip
  \noindent (1) There exists a  map $ x_0
  :\Pi_0\to \R^d $,
  \begin{equation*}
	\Pi_0= \cI_0\times (E_1-\lambda r^2,E_1+\lambda r^2), \quad \cI_0=(-r,r)^{d-1},
    \quad  E_1=\uE^{[-N_0,N_0]}(x), 
  \end{equation*}
  such that
  \begin{equation*}
	\uE^{[-N_0,N_0]}(x_0(\phi,E))=E,
  \end{equation*}
  $ x_0(\phi,E) $ extends analytically to
  \begin{equation*}
	\cP_0=\{ (\phi,\eta)\in \C^d: |\phi|<r,\ |E-E_1|<\lambda r^2 \}, 
  \end{equation*}
  and
  \begin{equation}\label{eq:x0-ux0-lambda}
	\norm{x_0(\phi,E)-\ux_0}<\norm{x-\ux_0}/2\lesssim \exp(-(\log\lambda)^{2\epsilon}).
  \end{equation}
  From the last estimate it follows that $
  x_0(\cP_0)\subset \T_{\rho/2}^d $. Of course,
  \cref{prop:levelsetshifts} actually gives a function $ \tilde
  x_0(\phi,\eta) $, such that $ \lambda^{-1}\uE^{[-N_0,N_0]}(\tilde
  x_0(\phi,\eta))=\eta $, and we get the above statement by setting $
  x_0(\phi,E)=\tilde x_0(\phi,\lambda^{-1}E) $.

  \smallskip
  \noindent (2) For any $ |E-E_1|<\lambda r^2 $, any vector $ h\in \R^d $
  with $ 0<\norm{h}<\rho $, and any $ H\gg 1 $, we have
  \begin{equation}\label{eq:uE0-shift-lambda}
	\mes \{ \phi\in \cI_0 : \log|\uE^{[-N_0,N_0]}(x_0(\phi,E))-E|\le H_0H \}< \exp(-H^{1/(d-1)}),
  \end{equation}
  with $ H_0=C(d)\log(\norm{h}\norm{x-\ux_0}) $.

  \smallskip
  \noindent (3) Let $ h_0 $ be an arbitrary unit vector. For any $
  |E-E_1|<\lambda r^2 $, and any $ H\gg 1 $, we have
  \begin{equation}\label{eq:uE0-grad-lambda}
	\mes \{ \phi\in \cI_0: \log|\langle \nabla \uE^{[-N_0,N_0]}(x_0(\phi,E)),h_0 \rangle|\le H_1H \}
    < \exp(-H^{1/(d-1)}),
  \end{equation}
  with $ H_1=C(d)\log(\nu_1\norm{x-\ux_0}) $.
  By \cref{cor:close-ev-lambda},
  \begin{equation}\label{eq:uE0-uE1}
	|\uE^{[-N_1,N_1]}(x)-\uE^{[-N_0,N_0]}(x)|\lesssim \exp(-(\log\lambda) N_0/2),
    \quad |x-\ux|<\exp(-3(\log\lambda)^\epsilon),
  \end{equation}
  and therefore
  \begin{equation*}
	|\uE^{[-N_1,N_1]}(x_0(\phi,E))-\uE^{[-N_0,N_0]}(x_0(\phi,E))|
    =|\uE^{[-N_1,N_1]}(x_0(\phi,E))-E|\lesssim \exp(-(\log\lambda) N_0/2),
  \end{equation*}
  for $ (\phi,E)\in \Pi_0 $. Then, just as in
  \cref{prop:inductive6nhh}, we can find a map $ x_1: \Pi_1''\to \R^d
  $,
  \begin{equation*}
	\Pi_1''=\cP_1''\cap \R^d,\quad \cP_1''=\{ (\phi,E)\in \C^d: |\phi|<r^{C_0},|E-E_1|<r^{C_0} \},
    \ C_0=C_0(d)\gg 1,
  \end{equation*}
  that extends analytically to $ \cP_1'' $, $ x_1(\cP_1'')\subset
  \T_{\rho/2}^d $, and such that
  \begin{equation}\label{eq:u-x0-x1-lambda}
	|x_1(\phi,E)-x_0(\phi,E)|<\exp(-(\log\lambda) N_0/3),\quad (\phi,E)\in \Pi_1''.
  \end{equation}
  In fact the domain in $ E $ is much larger, but we have no use for
  this improvement.
  Since $ r\gtrsim
  \exp(-N_1^{200\fd}) $ , we have that $ \cP_1 $ as defined in
  condition \refcond{B} (with $ \phi_1=0 $), satisfies $ \cP_1\subset
  \cP_1'' $ (recall that $ \fd\ll \delta $). Note that $
  |x_1(\phi,E)-\ux_0|\ll \exp(-3(\log\lambda)^\epsilon) $. Now, conditions
  \refcond{A}-\refcond{C} hold with the above choice of
  parametrization $ x_1 $ (recall that we have \cref{eq:uE-loc} with $ [a,b]=[-N_1,N_1] $).

  We proceed to check condition \refcond{D}. Let
  $|m_1|\le 3N_1/2$, $h_1\in \mathbb{R}^d$ such that
  \begin{equation*}
    \dist (h, \mathfrak{T}_{1})=\|h_1\|,\qquad
    h_1=h-m_1\omega \,(\mod \mathbb{Z}^d).
  \end{equation*}
  Recall that we have \cref{eq:dist-h-T0}.  We consider two cases:
  $ \norm{h_1}\ge \exp(-(\log\lambda)^{2\epsilon}) $ and
  $ \exp(N_1^{\mu})\le \norm{h_1}<\exp(-(\log\lambda)^{2\epsilon})
  $. In either case, by \cref{eq:dist-h-T0}, we have
  $ \dist(h+m\omega,\fT_0)\ge \exp(-(\log\lambda)^{2\epsilon}) $ for all
  $ m\in [-N_1,N_1] $ with $ |m+m_1|>3N_0/2 $.  For such $ m $,
  \cref{eq:uD-large-h} implies 
  \begin{equation}\label{eq:covering-Jm-lambda}
    \dist (\spec  H_{J_m}(\ux_0+h),(-\infty,\uE_0])
    \gtrsim \lambda \exp(-3(\log\lambda)^{2\epsilon}),
  \end{equation}
  with $ J_m=m+[-N_0,N_0] $.
  Fix $ |E-E_1|<r^{C_0} $. By
  \cref{eq:covering-Jm-lambda} and \cref{eq:x0-ux0-lambda},
  \begin{multline}\label{eq:covering-Jm-x0-lambda}
	\dist(\spec H_{J_m}(x_0(\phi,E)+h),(-\infty,\uE^{[-N_0',N_0'']}(x_0(\phi,E))])\\
    =
    \dist(\spec H_{J_m}(x_0(\phi,E)+h),(-\infty,E])\gtrsim \lambda \exp(-3(\log\lambda)^{2\epsilon}),
  \end{multline}
  provided $ |m+m_1|>3N_0/2 $.

  Now we focus on $ m=-m_1 $. We assume $ m_1\in [-N_1,N_1] $.  Let
  $J_{-m_1}:=-m_1+[-N_0,N_0]$. If
  $ \norm{h_1}\ge \exp(-(\log\lambda)^{2\epsilon}) $, then
  $ \dist(h+m_1\omega,\fT_0)\ge \exp(-(\log\lambda)^{2\epsilon}) $ and as above,
  \cref{eq:covering-Jm-x0-lambda} holds with $ m=-m_1 $. Suppose that
  $ \exp(-N_1^{\mu})\le \norm{h_1}<\exp(-(\log\lambda)^{2\epsilon}) $. From
  \cref{eq:uE0-shift-lambda} with $ H=N_1^{2(d-1)\delta} $, it follows that
  \begin{equation}\label{eq:uE-cB1-lambda}
	\mes \{ \phi\in \cI_0: |\uE^{[-N_0,N_0]}(x_0(\phi,E)+h_1)-E|
    \le \exp(-N_1^{2\mu}) \}<\exp(-N_1^{2\delta})
  \end{equation}
  (we used $ \fd\ll\delta\ll\mu $, $ H_0\gtrsim
  -(N_1^{200\fd}+N_1^\mu)\gtrsim -N_1^{\mu} $). Using \cref{eq:uE-loc} it
  follows that
  \begin{equation*}
	\dist(\spec H_{J_{-m_1}}(x_0(\phi,E)+h),E)>\exp(-N_1^{2\mu}),
  \end{equation*}
  for any $ \phi\in \cI_0\setminus \cB_1' $, where $ \cB_1' $ is the
  set from \cref{eq:uE-cB1-lambda}.
  
  Let $
  I $ be an interval as in \cref{eq:I}.
  By the above, we can use \cref{lem:Greencoverap1} (with
  $ K=N_1^{2\mu}=N_0^{10\mu}\ll N_0^{\sigma/2} $; recall that $ \mu\ll \sigma$) to get that
  \begin{equation*}
    \dist( \spec H_{I}(x_0(\phi,E)+h),E)\ge\exp( -2N_1^{2\mu})=\exp(-2N_0^{10\mu}),
  \end{equation*}
  for any $ \phi\in \cI_0\setminus \cB_1' $. Let $ \cI_1''=\Proj_\phi
  \Pi_1'' $. Then, using \cref{eq:u-x0-x1-lambda}, we get
  \begin{equation*}
    \dist( \spec H_{I}(x_1(\phi,E)+h),E)\ge\exp( -3N_1^{2\mu})>\exp(-N_1^{\beta}/2),
  \end{equation*}
  for any $ \phi\in \cI_1''\setminus \cB_1' $ (recall that $ \beta\gg
  \mu $). This implies that condition \refcond{D} holds.

  Finally, we check condition \refcond{E}. Fix $ |E-E_1|<r^{C_0} $ and
  $ h_0\in\R^d $ a unit vector. By
  \cref{eq:uE0-grad-lambda} with $ H=N_1^{2(d-1)\delta} $, 
  \begin{equation*}
	\mes \{ \phi\in \cI_0: \log|\langle \nabla \uE^{[-N_0,N_0]}(x_0(\phi,E)),h_0 \rangle|<-N_1^{\mu}/4 \}
    <\exp(-N_1^{2\delta})
  \end{equation*}
  (we used $ HH_1\gtrsim -N_1^{200\fd}N_1^{2(d-1)\delta}\gg N_1^{\mu}
  $; recall that $ \mu\gg \delta\gg \fd $). Now condition \refcond{E}
  follows by using \cref{eq:u-x0-x1-lambda} and Cauchy estimates.  
\end{proof}

For the rest of the section we assume that $ V\in \fG $, recall
\cref{defi:genericU}, and show that, for large enough coupling, we can
we can satisfy the initial inductive conditions from
\cref{sec:bulk}. In fact, it will be clear that we only use properties
(iii) and (iv) from the definition of $ \fG $. The first two
properties will only be needed in the proof of \cref{thm:A} (b). We
fix the constants $ \fc_0,\fc_1,\fC_0 $ from \cref{defi:genericU}.

\begin{prop}\label{prop:MSAIMPLICIT21}  
  Let $ x_0\in \T^d $, $ \eta_0=V(x_0) $ and assume $ \mu_0:=\norm{\nabla V(x_0)}>0 $. Let
  \begin{equation*}
    r=\min(\rho/4,c\mu_0^2(1+\norm{V}_\infty)^{-2}),
  \end{equation*}
  with $ c=c(\rho) $ small
  enough. There exists a map $ x:\Pi\to \R^d $,
  \begin{equation*}
    \Pi=\cI\times (\eta_0-r,\eta_0+r),\quad \cI=x_0+(-r,r)^{d-1},
  \end{equation*}
  such that the following hold.

  \medskip\noindent (a) The map extends analytically on the domain
  \begin{equation*}
	\cP=\{ (\phi,\eta)\in \C^d : \dist((\phi,\eta),\Pi)<r \},
  \end{equation*}
  and
  \begin{equation*}
    x(\cP)\subset \T_{\rho/2}^d,\qquad
	V(x(\phi,\eta))=\eta,\quad  (\phi,\eta)\in \cP.
  \end{equation*}

  \medskip\noindent (b) For any $K\gg \fC_0+C_\rho\max(0,\log\norm{V}_\infty) $,
  $\|h\|\ge e^{-\mathfrak{c}_0K}$,
  and $ \eta\in(\eta_0-r,\eta_0+r) $,
  \begin{gather*}
    \mes\{\phi\in \cI:
    |V(x(\phi,\eta)+h)-\eta|<\exp(-K)\}<\exp(-K^{\fc_1}/10).
  \end{gather*}

  \medskip\noindent (c) Take an arbitrary unit vector $h_0\in
  \mathbb{R}^d$. For any $ K\ge \fC_0 $, $ \eta\in (\eta_0-r,\eta_0+r) $,
  \[
    \mes \{\phi\in \cI:\log |\langle \nabla V(x(\phi,\eta)),h_0 \rangle|<-K\}
    <\exp (-K^{\fc_1}).
  \]
\end{prop}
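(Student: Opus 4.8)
The plan is to parametrize a piece of the level set $V(x)=\eta_0$ by the implicit function theorem, and then transfer conditions (iii) and (iv) of \cref{defi:genericU} to this parametrization by composing with the change of variables. Part (a) is a direct application of \cref{lem:parametrization1} combined with \cref{rem:real-valued}. Since $\mu_0=\norm{\nabla V(x_0)}>0$, I would take, in the notation of that lemma, $\rho_0=\rho/4$, $\rho_1\simeq \min(\rho,\mu_0M(2)^{-1})$ (where $M(2)\lesssim_\rho \norm{V}_\infty$ by Cauchy estimates since $V$ is analytic on $\T^d_\rho$), and then $r\simeq \rho_1\min(1,\mu_0)\simeq \min(\rho,\mu_0^2(1+\norm{V}_\infty)^{-1})$; shrinking $r$ by a further factor of $\mu_0(1+\norm{V}_\infty)^{-1}$ absorbs the buffer needed so that $\cP$ as stated still lies in the domain where the implicit function is defined. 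Setting $x(\phi,\eta)=\varphi(g(\phi-x_0,\eta),\phi-x_0;x_0)$ (using normal coordinates at $x_0$) gives the required analytic map with $V(x(\phi,\eta))=\eta$, and \cref{eg:g-bound} plus the smallness of $r$ gives $\norm{x(\phi,\eta)-x_0}<\rho/4$, so $x(\cP)\subset \T^d_{\rho/2}$.

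\textbf{Transferring the measure estimates.} For (b), fix $\eta$ and $h$ with $\norm{h}\ge e^{-\fc_0 K}$. The point is that the map $\phi\mapsto x(\phi,\eta)$ from $\cI$ into the level set $\{V=\eta\}$ is bi-Lipschitz with constants comparable to $1$ in the relevant directions — more precisely, writing $x_{\hati}$ for the coordinates transverse to the $x_i$ direction as in \cref{defi:genericU}, one checks that $\phi\mapsto x(\phi,\eta)$ has Jacobian bounded above and below (after possibly relabelling coordinates so that the $\fe_{x_0,j}$ frame is not degenerate relative to the standard frame). Then I would apply condition (iii) of \cref{defi:genericU} with $i$ chosen so that $\partial_{x_i}V(x_0)\ne 0$ (such $i$ exists since $\mu_0>0$) and some $j\ne i$: the set where $\min_{x_i}(|V(x+h)-V(x)|+|g_{V,h,i,j}(x)|)<\exp(-K)$ has measure $\le \exp(-K^{\fc_1})$ in $x_{\hati}$. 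Since along the level set $V(x(\phi,\eta)+h)-\eta = V(x(\phi,\eta)+h)-V(x(\phi,\eta))$, the event $|V(x(\phi,\eta)+h)-\eta|<\exp(-K)$ forces membership in the bad set of condition (iii) provided we also control the $g_{V,h,i,j}$ term — but in fact we only need the first term, so the inclusion is immediate and the factor $1/10$ and the exponent $\fc_1$ absorb the Jacobian distortion and the fact that $\cI$ is $(d-1)$-dimensional while $x_{\hati}$ ranges over $\T^{d-1}$. The requirement $K\gg \fC_0+C_\rho\max(0,\log\norm{V}_\infty)$ is exactly what is needed to ensure $\norm{h}\ge e^{-\fc_0 K}$ is compatible with the hypothesis $\norm{h}\ge \exp(-\fc_0 K)$ of condition (iii) after the Lipschitz rescaling, and to make $r$ small enough that $x(\phi,\eta)+h$ stays in a region where Taylor/Cauchy bounds apply. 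For (c), I would argue identically but invoking condition (iv) with $\eta$ in the role of the parameter and the given unit vector $h_0$: the set where $\min_{x_i}(|V(x)-\eta|+|\langle \nabla V(x),h_0\rangle|)<\exp(-K)$ is small, and along the level set the first term vanishes, so $|\langle \nabla V(x(\phi,\eta)),h_0\rangle|<\exp(-K)$ directly puts $\phi$ in the bad set.

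\textbf{Main obstacle.} The delicate point is not the logical structure but the book-keeping of the change of variables: the parametrization $x(\phi,\eta)$ is built in the \emph{normal coordinates} $\varphi(\xi,y;x_0)$ adapted to $\nabla V(x_0)$, whereas conditions (iii) and (iv) are phrased in terms of the \emph{standard} coordinates $x_{\hati}$ with the minimization being over the single coordinate $x_i$. To match them I need to verify that the projection of the level-set patch onto some coordinate hyperplane $\{x_i=\text{const}\}$ is itself bi-Lipschitz — equivalently that $\partial_{x_i}V$ does not vanish on the patch, which holds by continuity and the smallness of $r$ once $i$ is chosen with $|\partial_{x_i}V(x_0)|\ge \mu_0/\sqrt d$. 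Quantifying the resulting distortion constants in terms of $\norm{V}_\infty$ and $\rho$ (so that they are swallowed by replacing $K^{\fc_1}$ with $K^{\fc_1}/10$) is the part that requires care; everything else is a routine application of the implicit function theorem from \cref{lem:parametrization1} and the already-packaged Cartan-type estimates in the definition of $\fG$.
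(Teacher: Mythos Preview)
Your argument for part (c) is correct and matches the paper. However, part (b) has a genuine gap: the inclusion you claim is in the wrong direction. Condition (iii) bounds the measure of the set where the \emph{sum} $|V(x+h)-V(x)|+|g_{V,h,i,j}(x)|$ is small, and the set $\{|V(x(\phi,\eta)+h)-\eta|<\exp(-K)\}$ \emph{contains} (rather than is contained in) the corresponding set from (iii), so the measure bound does not transfer automatically. What is actually needed is to split the bad set $\cB=\{|F(\phi)|<\exp(-5K)\}$ (with $F(\phi)=V(x(\phi,\eta)+h)-\eta$) as $\cB\subset\cB'\cup\cB''$, where $\cB'$ is the set where both $|F|$ and $|g(x(\phi,\eta))|$ are $<\exp(-K)$ (this \emph{is} controlled by (iii)), and $\cB''$ is where $|F|$ is small but $|g|\ge\frac12\exp(-K)$. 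The key identity, obtained by differentiating $V(x(\phi,\eta))=\eta$, is
\[
\partial_{\phi_j}F(\phi)=\frac{g_{V,h,i,j}(x(\phi,\eta))}{\partial_{x_i}V(x(\phi,\eta))};
\]
on $\cB''$ this makes $|\partial_{\phi_j}F|\gtrsim\exp(-2K)$, and then a polynomial approximation of $F$ (truncating Taylor series) shows each connected component of $\cB''$ in the $\phi_j$-slice has length $\lesssim\exp(-3K)$, with only polynomially many components. This is the missing idea: the $g$-term in condition (iii) is there precisely to supply the derivative lower bound that handles $\cB''$.

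Separately, for part (a) you invoke \cref{lem:parametrization1} and work in normal coordinates at $x_0$, which is what creates the coordinate-matching obstacle you flag at the end. The paper sidesteps this entirely: it picks $i$ with $|\partial_{x_i}V(x_0)|\ge\mu_0/d$ and applies \cref{lem:impl2} directly to solve for $x_i$ as a function of $(x_{\hati},\eta)$, setting (for $i=1$, say) $x(\phi,\eta)=(x_1(\phi,\eta),\phi)$ with $\phi=(x_2,\dots,x_d)$. Then $\phi$ \emph{is} $x_{\hati}$, conditions (iii) and (iv) apply verbatim with no change of variables, and the bi-Lipschitz bookkeeping you identify as the main obstacle never arises.
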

\begin{proof}
  There exists $ i $ such that $ \partial_{x_i} V(x_0)\ge \mu_0/d
  $. To simplify the notation, we assume that $ i=1 $.
  Let $ \rho_1\le c_\rho \mu_0(1+\norm{V}_\infty)^{-1} $ with $ c_\rho
  $ sufficiently small. 
  Applying 
  \cref{lem:impl2} (also recall \cref{rem:real-valued}) to $ V(x)-\eta $ near $ (x_0,\eta_0) $, we get that
  there exists an analytic function $ x_1(x_2,\dots,x_d,\eta) $ on
  \begin{equation*}
	|x_2-x_{0,2}|,\dots,|x_d-x_{0,d}|,|\eta-\eta_0|<\rho_1^2
  \end{equation*}
  such that
  \begin{gather*}
	|x_1(x_2,\dots,x_d,\eta)-x_{0,1}|<\rho_1,\\	V(x_1(x_2,\dots,x_d,\eta),x_2,\dots,x_d)=\eta.
  \end{gather*}
  The existence of the map and part (a) follow by setting
  \begin{equation*}
	x(\phi,\eta)=(x_1(\phi,\eta),\phi),\quad \phi=(x_2,\dots,x_d).
  \end{equation*}
  Our choice of $ r<\rho_1^2$ is made to ensure that $ x(\cP)\subset \T_{\rho/2}^d$.

  Fix $ \norm{h}\ge \exp(-\fc_0 K) $, $ \eta\in (\eta_0-r,\eta_0+r)$.
  Let
  \begin{equation}\label{MSA3MOD12}
    F(\phi)=V(x(\phi,\eta)+h)-\eta.
  \end{equation}
  Let $g(x):=g_{V,h,1,2}(x)$ be as in \cref{defi:genericU}.
  We have
  \begin{multline}\label{eq:Fx2}
    \partial_{x_2} F(\phi)=\partial_{x_1} V(x(\phi,\eta)+h)\partial_{x_2} x_1(\phi,\eta)
    +\partial_{x_2} V(x(\phi,\eta)+h)\\
    =-\partial_{x_1} V(x(\phi,\eta)+h)\frac{\partial_{x_2}V(x(\phi,\eta))}{\partial_{x_1}V(x(\phi,\eta))}
    +\partial_{x_2} V(x(\phi,\eta)+h)
    =\frac{g(x(\phi,\eta))}{\partial_{x_1}V(x(\phi,\eta))}.
  \end{multline}
  Let $ K\ge \fC_0$. By \cref{defi:genericU} (iii) we have that
  \begin{equation*}
    \mes \{x_{\hat 1}: \min_{x_1}(|V(x+h)-V(x)|+
    |g(x)|)
    <\exp(-K)\}\le \exp(-K^{\fc_1}).
  \end{equation*}
  In particular, it follows that
  \begin{equation}\label{eq:cBprime}
    \mes \{\phi\in \cI: |V(x(\phi,\eta)+h)-\eta|+
    |g(x(\phi,\eta))|
    <\exp(-K)\}\le \exp(-K^{\fc_1}).
  \end{equation}
  Let
  \begin{gather*}
	\cB=\{ \phi\in \cI : |V(x(\phi,\eta)+h)-\eta|<\exp(-5K) \},\\
    \cB''=\{ \phi\in \cI : |V(x(\phi,\eta)+h)-\eta|<\exp(-5K),\ |g(x(\phi,\eta))|\ge \exp(-K)/2 \},
  \end{gather*}
  and $ \cB' $ the set from \cref{eq:cBprime}. Then
  \begin{equation*}
	\cB\subset \cB'\cup \cB''.
  \end{equation*}
  We want to estimate $ \mes(\cB'') $.
  Let $ z=(x_3,\dots,x_d) $ and
  \begin{equation*}
	\cB_z''= \{ x_2 : \phi=(x_2,z)\in \cB'' \}.
  \end{equation*}
  Fix $ z=(x_3,\dots,x_d) $ with $ |x_i-x_{0,i}|<r $, $ i=3,\dots,d
  $. By truncating the Taylor series (for both $ V $ and $ x(\phi,\eta) $) we can find polynomials
  $P(x_2),Q(x_2)$ (depending on $ z $) of degree
  $\le C\max(1,\log\norm{V}_\infty)K^4 $, such that for any $ |x_2-x_{0,2}|<r$,
  \begin{equation*}
    |F(x_2,z)-P(x_2)|,\ |\partial_{x_2} F(x_2,z)-P'(x_2)|,\ |g(x(x_2,z,\eta))-Q(x_2)| \le \exp(-5K).
  \end{equation*}
  Then
  \begin{gather*}
    \cB''_{z} \subset \cB'''_{z}:=\{x_2\in (x_{0,2}-r,x_{0,2}+r): |P(x_2)|\le 2\exp(-5K),\quad
    |Q(x_2)|
    \ge {1\over 4}\exp(-K)\}.
  \end{gather*}
  Using \cref{eq:Fx2} and Cauchy estimates, we have that for any $
  x_2\in \cB_z''' $,
  \begin{gather*}
    |\partial_{x_2} F(x_2,z)|
    \gtrsim \rho\norm{V}_\infty^{-1}|g(x(x_2,z,\eta))|
    \gtrsim \rho\norm{V}_\infty^{-1}(|Q(x_2)|-e^{-5K})\gtrsim\rho\norm{V}_\infty^{-1}  \exp(-K),\\
    |P'(x_2)|\gtrsim (\rho\norm{V}_\infty^{-1} \exp(-K)-\exp(-5K))>\exp(-2K),
  \end{gather*}
  provided  $ K$ is large enough.  It
  follows that each connected component of $ \cB_z''' $ has length
  $ \lesssim \exp(2K)\exp(-5K) $. Since $\cB'''_{z}$ consists of the
  union of $\lesssim (\deg P+\deg Q) $ intervals, it follows that
  \begin{equation*}
	\mes(\cB_z''')\le C\max(1,\log\norm{V}_\infty) K^4\exp(-3K)<\exp(-2K). 
  \end{equation*}
  Then we have $ \mes(\cB'')<\exp(-K) $ (recall that $ \rho\le 1 $, so
  $ r\le 1/4 $), $ \mes(\cB)<\exp(-K^{\fc_1}/2) $,
  and statement (b) follows.

  Given $ K\ge \fC_0$, by \cref{defi:genericU} (iv) we have
  \begin{equation*}
    \mes \{x_{\hat 1}: \min_{x_1}(|V(x)-\eta|+
    |\langle \nabla V(x),h_0 \rangle|)
    <\exp(-K)\}\le \exp(-K^{\fc_1}).
  \end{equation*}
  In particular, it follows that
  \begin{equation*}
    \mes \{\phi\in \cI: |V(x(\phi,\eta))-\eta|+
    |\langle \nabla V(x(\phi,\eta),h_0 \rangle|)
    <\exp(-K)\}\le \exp(-K^{\fc_1}).
  \end{equation*}
  Since $ V(x(\phi,\eta))=\eta $, statement (c) follows.
\end{proof}

For the purpose of the next  result we update $ T_V $ again to be to be
\begin{equation*}
  T_{V}=2+\max(0,\log\norm{V}_\infty)+\max(0,\log \uiota ^{-1})
  +\max(0,\log\iota^{-1})+\max(0,\log\nu^{-1})+\fC_0+\fc_0^{-1}.
\end{equation*}
We don't include $ \fc_1^{-1} $ because it doesn't depend on $ V
$.

\begin{prop}\label{prop:A-to-D}
  There exists $ \lambda_0=\exp((T_V)^C) $, $ C=C(a,b,\rho) $ such that the following hold for
  $ \lambda\ge \lambda_0 $.  Let $ x_0\in \T^d $, $ \eta_0=V(x_0) $,
  and assume
  $ \norm{\nabla V(x_0)}\ge \exp(-(\log\lambda)^{\fc_1/3}) $. Then
  for any $ (\log\lambda)^{C(a,b)} \le N_0\le \exp((\log\lambda)^{\fc_1/3})$, the
  conditions \refcond{A}-\refcond{E} hold with $ s=0 $, $ \gamma=(\log\lambda)/2 $, $
  E_0=\lambda\eta_0 $,
  and some $ \phi_0\in \R^d $.
\end{prop}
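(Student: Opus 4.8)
The plan is to follow the proof of \cref{prop:A-to-E} almost verbatim, with \cref{prop:MSAIMPLICIT21} --- which encodes properties (iii) and (iv) of \cref{defi:genericU} --- playing the role that \cref{prop:levelsetshifts} plays there, and with the non-perturbative tools of \cref{sec:basic-tools} replaced by their large-coupling refinements from \cref{sec:perturbative-refinements}. Set $\gamma=(\log\lambda)/2$; by \cref{prop:LLBasic}(a), $L(E)\ge\log\lambda-C_1(\log\lambda)^{1/2}>\gamma$ for all relevant $E$, so the Lyapunov hypotheses are automatic. First apply \cref{prop:MSAIMPLICIT21} at the given point $x_0$ to obtain the $V$-level-set parametrization $x(\phi,\eta)$, analytic on $\cP$ with $x(\cP)\subset\T^d_{\rho/2}$, $V(x(\phi,\eta))=\eta$, and the measure estimates (b),(c). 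Since $\mu_0=\norm{\nabla V(x_0)}\ge\exp(-(\log\lambda)^{\fc_1/3})$, the radius $r$ in \cref{prop:MSAIMPLICIT21} satisfies $r\gtrsim\exp(-C(\log\lambda)^{\fc_1/3})$, so $\sup_\cP|\partial_\eta x|\lesssim\exp(C(\log\lambda)^{\fc_1/3})$.

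Next, select a good phase and lift to the operator level. Using \cref{prop:MSAIMPLICIT21}(b) with $h=n\omega$, $0<|n|\le N_0$ (admissible since $\norm{n\omega}\ge a(2N_0)^{-b}\ge e^{-\fc_0 K}$ for $K=(\log\lambda)^{1/2}$), and a union bound, one finds $\phi_0\in\cI$ with $|V(x(\phi_0,\eta_0)+n\omega)-\eta_0|\ge\exp(-(\log\lambda)^{1/2})$ for all $0<|n|\le N_0$. Apply \cref{lem:efextension2} with $\epsilon=1/2$ and base point $x(\phi_0,\eta_0)$: this produces a super-localized eigenpair $E^{[-N_0,N_0]},\psi^{[-N_0,N_0]}$, defined for $|x-x(\phi_0,\eta_0)|<\exp(-3(\log\lambda)^{1/2})$, with $|\lambda^{-1}E^{[-N_0,N_0]}(x)-V(x)|\le2\lambda^{-1}$, eigenvalue separation $\ge\tfrac\lambda8\exp(-(\log\lambda)^{1/2})$, and $|\psi^{[-N_0,N_0]}(x,n)|<\exp(-(\log\lambda)|n|/2)$ for $|n|\ge1$. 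Since $\partial_\eta\big(\lambda^{-1}E^{[-N_0,N_0]}(x(\phi,\eta))\big)=1+O(\lambda^{-1}\exp(C(\log\lambda)^{1/2}))=1+o(1)$, \cref{lem:impl2} yields an analytic $\eta(\phi,E)$ with $E^{[-N_0,N_0]}(x(\phi,\eta(\phi,E)))=E$; take $z(\phi,E):=x(\phi,\eta(\phi,E))$ as the map required by \refcond{A}, $E_0:=\lambda\eta_0$, $k_0$ the localized index, $r_0:=\exp(-N_0^\delta)$, and $\phi_0$ as above. If $N_0\ge(\log\lambda)^{C(a,b)}$ with $C(a,b)$ large enough that $N_0^\delta\gg(\log\lambda)^{1/2}$, then the domain $\cP_0$ of \refcond{B} lies inside both the analyticity polydisk and $x$'s domain, and: \refcond{A} follows from the eigenvalue separation (note $\tfrac\lambda8\exp(-(\log\lambda)^{1/2})\gg r_0$); \refcond{B} from analyticity of $x$ and $\eta$ and $x(\cP)\subset\T^d_{\rho/2}$; \refcond{C} from $|\psi^{[-N_0,N_0]}(x,n)|<\exp(-(\log\lambda)|n|/2)<\exp(-\gamma|n|/10)$.

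For \refcond{E} one uses $\nabla E^{[-N_0,N_0]}=\lambda\nabla V+O(\exp(3(\log\lambda)^{1/2}))$ (a Cauchy estimate on the $O(\lambda^{-1})$ error $\lambda^{-1}E^{[-N_0,N_0]}-V$) together with \cref{prop:MSAIMPLICIT21}(c) and a Cartan estimate, exactly as in \cref{lem:inductive9} and \cref{prop:inductiveDcond}; the $-N_0^\mu/2$ cutoff is coarse enough to absorb this discrepancy and the passage from $x(\phi,\lambda^{-1}E)$ to $z(\phi,E)$. Condition \refcond{D} is the heart of the matter and follows the structure of \cref{prop:inducopositD} and \cref{prop:inductive8}: given $h$ with $\dist(h,\fT_0)\ge\exp(-N_0^\mu)$ and $|E-E_0|<r_0$, every shift $h+k\omega$ with $k\in[-N_0+n',N_0+n'']$ has $\norm{h+k\omega}\ge\dist(h,\fT_0)$ (because $-k$ lies in the index range of $\fT_0$), so \cref{prop:MSAIMPLICIT21}(b) with $h\rightsquigarrow h+k\omega$, $\eta=\lambda^{-1}E$, and a suitable cutoff $K$, together with a union bound, gives --- off a $\phi$-set of measure $<\exp(-N_0^{2\delta})$ --- the separation $|V(z(\phi,E)+h+k\omega)-\lambda^{-1}E|\ge\exp(-K)$ for all such $k$; then \cref{cor:covering-perturb} (equivalently \cref{lem:efextension2b}(a)) yields $\dist(\spec H_{[-N_0+n',N_0+n'']}(z(\phi,E)+h),E)\ge\tfrac\lambda2\exp(-K)\gg\exp(-N_0^\beta/2)$, which is \refcond{D}. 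For $h$ extremely close to $\fT_0$ one additionally exploits the stability of the localized eigenpair (\cref{lem:efextension2}, \cref{cor:close-ev-lambda}) and \cref{prop:MSAIMPLICIT21}(c), as the near-lattice-point term is treated in \cref{prop:inducopositD}.

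The main obstacle is reconciling, in \refcond{D}, the coarseness of the matching $\lambda^{-1}E^{[-N_0,N_0]}=V+O(\lambda^{-1})$ with the fine measure cutoff $\exp(-N_0^{2\delta})$: \cref{prop:MSAIMPLICIT21} controls $V$ at the $V$-level-set point $x(\phi,\lambda^{-1}E)$, whereas \refcond{D} needs control at the operator-level point $z(\phi,E)=x(\phi,\eta(\phi,E))$, and these differ by $\lesssim\lambda^{-1}\exp(C(\log\lambda)^{\fc_1/3})$ --- a loss that is harmless only when $\exp(-N_0^{2\delta})\gg\lambda^{-1}\exp(C(\log\lambda)^{\fc_1/3})$, i.e.\ for $N_0$ in a (large, but still polynomial) range in $\log\lambda$. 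I therefore expect the argument to first establish \refcond{A}--\refcond{E} for $N_0$ in such a polynomial range by the direct scheme above, and then obtain them for all admissible $N_0\le\exp((\log\lambda)^{\fc_1/3})$ by propagating along the inductive step of \cref{thm:D}; the near-resonant shifts $h$, where $\dist(h,\fT_0)$ can be as small as $\exp(-N_0^\mu)\ll\lambda^{-1}$, are precisely where one must keep the $O(\lambda^{-1})$ matching error out of the dominant terms, which is possible thanks to the near-$\delta_0$ localization and the eigenvalue separation supplied by \cref{lem:efextension2}.
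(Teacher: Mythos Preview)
Your overall plan and your identification of the main obstacle are correct: once $N_0$ exceeds a polynomial in $\log\lambda$, the $O(\lambda^{-1})$ discrepancy between the $V$-level-set point $x(\phi,\lambda^{-1}E)$ and the operator-level point $z(\phi,E)$ is too coarse for the cutoffs in \refcond{D}. However, the paper does \emph{not} resolve this via \cref{thm:D} propagation. Instead it introduces an intermediate scale $\uN=\lceil N_0^{1/4}\rceil$ and works simultaneously with all intervals $[a,b]\supset[-\uN,\uN]$ contained in $[-2N_0,2N_0]$. One first builds a crude parametrization $\ux$ at scale $\uN$ (only $|\ux-x|\lesssim\lambda^{-1/2}$), then uses \cref{cor:close-ev-lambda}, namely $|E^{[a,b]}-E^{[-\uN,\uN]}|\lesssim\exp(-(\log\lambda)\uN/2)$, to upgrade to a parametrization $\tilde x$ for $[a,b]$ satisfying $|\tilde x-\ux|\le\exp(-(\log\lambda)N_0^{1/4}/4)$. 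Since this bound is independent of $[a,b]$, any two such parametrizations agree to this super-exponential accuracy. In the hardest sub-case of \refcond{D} --- $h$ close to some $n_1\omega$ with $n_1\in[-N_0+N_0^{1/3},N_0-N_0^{1/3}]$ --- one passes to the shifted interval $[a_1,b_1]=n_1+[-N_0,N_0]$, handles it there by the gradient bound and Taylor's formula (the relevant template is \cref{prop:inductive8}/\cref{lem:inductive9}, not \cref{prop:inducopositD}, which is the edge version and relies on the Morse minimum structure you do not have here), and transfers back via $|\tilde x_1-\tilde x|\lesssim\exp(-(\log\lambda)N_0^{1/4})\ll\exp(-N_0^\beta)$. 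This works directly for every $N_0\le\exp((\log\lambda)^{\fc_1/3})$.

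A second methodological difference: for \refcond{D} the paper does not use the measure bound in \cref{prop:MSAIMPLICIT21}(b) as you do (union over $k$ to carve out a bad set). It uses (b) only to locate a \emph{single} reference phase $\hat\phi$ where the $V$-separation holds, transfers that one inequality from $x$ to $\tilde x$ (harmless since $\lambda^{-1/2}\ll\exp(-(\log\lambda)^{3/4})$), and then applies Cartan's estimate to $\phi\mapsto V(\tilde x(\phi,\eta)+h+n\omega)-\eta$. This decouples the measure exponent $N_0^{2\delta}$ from the shift-size constraint $\|h\|\ge e^{-\fc_0K}$, which is exactly what allows the direct argument to reach exponentially large $N_0$. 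Your propagation route via \cref{thm:D} is not obviously wrong, but as stated it has a gap: the inductive sequence $N_s=\lfloor N_{s-1}^A\rfloor$ need not hit a prescribed $N_0$, so you would have to argue either that the flexibility in choosing $N_s'$, $N_s''$ covers the gaps, or that the specific $N_0$ used downstream lies in the range of the iteration.
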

\begin{proof}
  The proof is similar to that of \cref{thm:D}. As in \cref{thm:D} we leave the dependence on the
  exponents $ \delta,\beta,\mu $ implicit, as part of the dependence
  on the Diophantine condition parameters $ a,b $. 

  Due to the lower bound on $ \norm{\nabla V(x_0)} $, we can apply
  \cref{prop:MSAIMPLICIT21} with $ r=\exp(-3(\log\lambda)^{\fc_1/3}) $.
  Furthermore, since $ \lambda $ is large enough, we can apply \cref{prop:MSAIMPLICIT21} (b),(c) with $
  K\gtrsim (\log\lambda)^{1/2} $ (this is why we added $ \fC_0 $ to $ T_V
  $). In what follows we let $ \cI $, $ x(\phi,\eta) $, be as in \cref{prop:MSAIMPLICIT21}.
  Let
  \begin{equation*}
	\cB_{K,\eta,h}= \{ \phi\in \cI: |V(x(\phi,\eta))-\eta|<\exp(-K) \},
    \quad \cB_{\eta,h}=\cB_{(\log\lambda)^{1/2},\eta,h}.
  \end{equation*}
  By \cref{prop:MSAIMPLICIT21}, for any $
  \eta\in(\eta_0-r,\eta_0+r) $, $ \norm{h}\ge
  \exp(-\fc_0(\log\lambda)^{1/2}) $,
  \begin{equation*}
	\mes(\cB_{\eta,h})<\exp(-(\log\lambda)^{\fc_1/2}).
  \end{equation*}
  As in \cref{lem:sa} we can find a semialgebraic set $ \tilde
  \cB_{\eta,h} $ containing $ \cB_{\eta,h} $, of degree $ \le
  (\log\lambda)^3 $, and with measure $ \le
  \exp(-(\log\lambda)^{\fc_1/2}/2) $. Let
  \begin{equation*}
	\cB_{\eta_0,N_0}=\bigcup_{0<|n|\le 2N_0} \tilde \cB_{\eta_0,n\omega}. 
  \end{equation*}
  Since $ N_0\le \exp((\log\lambda)^{\fc_1/3}) $ we have $
  \norm{n\omega}\ge \exp(-\fc_0(\log\lambda)^{1/2}) $, $ 0<|n|\le 2N_0 $ (provided $
  \lambda $ is large enough; this why we added $ \fc_0^{-1} $ to $ T_V
  $), and $ \mes(\cB_{\eta_0,N_0})<\exp(-(\log\lambda)^{\fc_1/2}/4) $.
  Since $ \cB_{\eta_0,N_0} $ is also semialgebraic of degree less than
  $ \exp(2(\log\lambda)^{\fc_1/3})$, it follows, using
  \cref{lem:sa-covering}, that there exists $ \phi_0 $, $
  |\phi_0-x_0|\ll r $, such that
  \begin{equation*}
	\cI_0'\Subset \cI\setminus \cB_{\eta_0,N_0},
    \qquad \cI_0'=\phi_0+(-r_0',r_0')^{d-1},\quad r_0'=\exp(-(\log\lambda)^{\fc_1/3}).
  \end{equation*}
  Let $ a<0<b $, $ [a,b]\subset [-2N_0,2N_0] $. We consider such
  general intervals for reasons similar to the ones in \cref{prop:A-to-E}. As in
  \cref{prop:inductive4}, but using \cref{lem:efextension2} (with $
  x_0=x(\phi,\eta_0) $, $ \phi\in \cI_0' $) instead of
  \cref{prop:stabilization}, we get that there exists $ k $ such that
  for any $ \phi\in \cI_0' $, $ y\in \R^d $, $ |y|<\exp(-4(\log\lambda)^{1/2}) $,
  $ |\eta-\eta_0|<\exp(-4(\log\lambda)^{1/2}) $,
  \begin{equation}\label{eq:loc-estimates}
    \begin{gathered}
      |\lambda^{-1}E_{k}^{[a,b]}(x(\phi,\eta)+y)-V(x(\phi,\eta)+y)|\le 2\lambda^{-1},\\
      \lambda^{-1}|E_j^{[a,b]}(x(\phi,\eta)+y)-E_{k}^{[a,b]}(x(\phi,\eta)+y)|
      >\frac{1}{8}\exp(-(\log\lambda)^{1/2}),\quad j\neq k,\\
      |\psi_{k}^{[a,b]}(x(\phi,\eta)+y,n)|<\exp(-(\log\lambda)|n|/2),\quad |n|>0.      
    \end{gathered}
  \end{equation}
  To simplify notation we will drop the index $ k $ and write $
  E^{[a,b]},\psi^{[a,b]} $. Let
  \begin{equation*}
	\cP_0''=\{ (\phi,E)\in \C^d : |\phi-\phi_0|,|E-E_0|<r_0'' \},\quad
    r_0''=\exp(-C_0(\log\lambda)^{\fc_1/3}).
  \end{equation*}
  $ C_0=C_0(d)\gg 1 $. Let $ \Pi_0''=\cP_0''\cap \R^d $, $
  \cI_0''=\Proj_{\phi}\Pi_0'' $. As in \cref{prop:inductive6nhh}, we
  can find an analytic map $ \tilde x(\phi,\eta) $ such that
  \begin{equation*}
	\lambda^{-1}E^{[a,b]}(\tilde x(\phi,\eta))=\eta,
  \end{equation*}
  for any $ (\phi,\lambda\eta)\in \cP_0'' $ and
  \begin{equation}\label{eq:delta-level}
	|\tilde x(\phi,\eta)-x(\phi,\eta)|\le \lambda^{-1/2},
  \end{equation}
  for $ (\phi,\lambda\eta)\in \Pi_0'' $ (in fact, in the definition of
  $ \cP_0'' $ we could take $ |E-E_0|<\lambda\exp(-C_0(\log\lambda)^{1/2}) $). We note that at this point,
  we have what we need for conditions \refcond{A}-\refcond{C} to hold.
  However, to check condition \refcond{D} we need to set things up
  more carefully. The problem we need to work around is the weakness
  of \cref{eq:delta-level}. From now on we assume that
  $ [a,b]\supset [-\uN,\uN] $, $ \uN=\ceil{N_0^{1/4}} $. Let $ \ux $
  be the parametrization obtained as above, so that
  \begin{equation*}
	\lambda^{-1}E^{[-\uN,\uN]}(\ux(\phi,\eta))=\eta.
  \end{equation*}
  By \cref{cor:close-ev-lambda} we have
  \begin{equation*}
	|E^{[a,b]}(x(\phi,\eta)+y)-E^{[-\uN,\uN]}(x(\phi,\eta)+y)|\lesssim \exp(-(\log\lambda) \uN/2).
  \end{equation*}
  for any $ |y|<r_0' $, $ (\phi,\lambda \eta)\in \Pi_0' $. Using
  \cref{eq:delta-level} (with $ \tilde x=\ux $) it follows that
  \begin{equation*}
	|E^{[a,b]}(\ux(\phi,\eta))-\lambda \eta|
    =|E^{[a,b]}(\ux(\phi,\eta))-E^{[-\uN,\uN]}(\ux(\phi,\eta))|\lesssim \exp(-(\log\lambda) \uN/2).
  \end{equation*}
  Again, as in \cref{prop:inductive6nhh}, we get that there exists a
  map $ \tilde x(\phi,\eta) $ such that
  \begin{equation*}
	E^{[a,b]}(\tilde x(\phi,\eta))=\lambda\eta, \qquad (\phi,\lambda\eta)\in \cP_0'',
  \end{equation*}
  and for $ (\phi,\lambda\eta)\in \Pi_0'' $,
  \begin{equation}\label{eq:better-delta-level}
	|\tilde x(\phi,\eta)-\ux(\phi,\eta)|\le \exp(-(\log\lambda)\uN/4).
  \end{equation}
  To justify keeping the same domain $ \cP_0'' $ as before we can
  increase the constant $ C_0 $ from its definition. Note that we
  still have
  \begin{equation*}
	|\tilde x(\phi,\eta)-x(\phi,\eta)|\lesssim \lambda^{-1/2},
  \end{equation*}
  and therefore (using \cref{eq:loc-estimates}) conditions
  \refcond{A}-\refcond{C} hold with
  $ x_0(\phi,E)=\tilde x(\phi,\lambda^{-1}E) $,
  $ [-N_0',N_0'']=[-N_0,N_0] $. Of course, we are assuming $ N_0 $ is large
  enough so that $ r_0=\exp(-N_0^\delta)\ll r_0'' $.

  Next we check condition \refcond{E}, as in \cref{prop:inductiveDcond}. Let $ h_0\in \R^d $ a unit
  vector, $ \eta\in (\eta_0-r_0'',\eta_0+r_0'') $. By \cref{prop:MSAIMPLICIT21}
  (c),
  \begin{equation*}
	\mes\{\phi\in \cI: |\langle \nabla V(x(\phi,\eta)),h_0 \rangle|<\exp(-(\log\lambda)^{1/2})\}
    <\exp(-(\log\lambda)^{\fc_1/2}).
  \end{equation*}
  Since $ \exp(-(\log\lambda)^{\fc_1/2})\ll \mes(\cI_0'') $, it
  follows that there exists $ \hat \phi $, $ |\hat \phi-\phi_0|\ll
  r_0'' $, such that
  \begin{equation*}
	|\langle \nabla V(x(\hat \phi,\eta)),h_0 \rangle|\ge \exp(-(\log\lambda)^{1/2})
  \end{equation*}
  and therefore
  \begin{equation}\label{eq:grad-lb}
	|\langle \nabla E^{[a,b]}(\tilde x(\hat \phi,\eta)),h_0 \rangle|
    \gtrsim \lambda \exp(-(\log\lambda)^{1/2})
  \end{equation}
  (we used the first estimate in \cref{eq:loc-estimates},
  \cref{eq:delta-level}, \cref{cor:high_cart}, and Cauchy estimates).
  Then Cartan's estimate  yields that given $ H\gg 1 $,
  \begin{equation*}
	\mes \{ \phi\in \cI_0''/10: 	|\langle \nabla E^{[a,b]}(\tilde x(\phi,\eta)),h_0 \rangle|
    <\log\lambda -CH(\log\lambda)^{1/2}\}<C(d)(r_0'')^{d-1}\exp(-H^{1/(d-1)}).
  \end{equation*}
  In particular, condition \refcond{E} follows by setting $
  H=N_0^{2(d-1)\delta} $, with $ [a,b]=[-N_0,N_0] $ (recall that $
  \mu\gg \delta $; we choose $ N_0 $ such that $ N_0^\mu\gg
  \log\lambda $).

  Finally, we check condition \refcond{D}. Fix
  $ \eta\in(\eta_0-r_0'',\eta_0+r_0'') $. For the rest of the proof $
  \tilde x $ stands for the parametrization associated with $ [a,b]=[-N_0,N_0] $. Note that for condition
  \refcond{D} to hold it is enough that, given $ h  $, $
  \dist(h,\fT_0)\ge \exp(-N_0^\mu) $, we can find $
  |n'|,|n''|<N_0^{1/2} $ such that
  \begin{equation*}
	\mes \{ \phi\in \cI_0: \dist(\spec H_{[-N_0+n',N_0+n'']}(\tilde x(\phi,\eta)),\lambda\eta)
    <\exp(-N_0^\beta/2) \}<\exp(-N_0^{2\delta}).
  \end{equation*}

  We first consider the case
  $ \dist(h,\fT_0)\ge \exp(-\fc_0(\log\lambda)^{3/4}) $. Let
  \begin{equation*}
	\cB_{\eta,h}'=\cB_{(\log\lambda)^{3/4},\eta,h},\qquad
    \cB_{N_0,\eta,h}'=\bigcup_{|n|\le N_0} \cB_{\eta,h+n\omega}'.
  \end{equation*}
  Since $ \norm{h+n\omega}\ge \exp(-\fc_0(\log\lambda)^{3/4}) $, using
  \cref{prop:MSAIMPLICIT21}, we have
  \begin{equation*}
    \mes(\cB_{N_0,\eta,h}')<\exp(-(\log\lambda)^{3\fc_1/4}/2).
  \end{equation*}
  In particular, there exists $ \hat \phi\in \cI_0''\setminus \cB_{N_0,\eta,h}' $, $ |\hat \phi-\phi_0|\ll
  r_0'' $, such that
  \begin{equation*}
	|V(x(\hat \phi,\eta)+h+n\omega)-\eta|\ge \exp(-(\log\lambda)^{3/4}),\quad |n|\le N_0,
  \end{equation*}
  and therefore
  \begin{equation*}
	|V(\tilde x(\hat \phi,\eta)+h+n\omega)-\eta|\gtrsim \exp(-(\log\lambda)^{3/4}),\quad |n|\le N_0.
  \end{equation*}
  Using Cartan's estimate
  \begin{equation*}
	\mes \{ \phi\in \cI_0''/10: \log |V(\tilde x(\phi,\eta)+h+n\omega)-\eta|
    <-C_d (\log\lambda)^{3/4} N_0^{3(d-1)\delta} \} <\exp(-2N_0^{2\delta}),\quad |n|\le N_0.
  \end{equation*}
  Using \cref{lem:efextension2b} we get
  \begin{equation*}
	\mes \{ \phi\in \cI_0''/10: \dist(\spec H_{[-N_0,N_0]}(\tilde x(\phi,\eta)+h),\lambda\eta)
    <\exp(-C(\log\lambda)^{3/4} N_0^{3(d-1)\delta}) \}<\exp(-N_0^{2\delta}),
  \end{equation*}
  and condition \refcond{D} holds, since $ \beta\gg \delta $.

  Next we consider the case $ \exp(-N_0^\mu)\le
  \dist(h,\fT_0)<\exp(-\fc_0(\log\lambda)^{3/4}) $. Let $ n_1 $, $
  |n_1|\le 3N_0/2 $, such that
  \begin{equation*}
	\dist(h,\fT_0)=\norm{h-n_1\omega}.
  \end{equation*}
  We consider two sub-cases. First, suppose
  $ n_1\notin [-N_0+N_0^{1/3},N_0-N_0^{1/3}] $. Note that for
  $ n\in [-N_0+N_0^{1/3},N_0-N_0^{1/3}] $,
  \begin{equation*}
	\norm{h+n\omega}\ge \norm{(n-n_1)\omega}-\norm{h-n_1\omega}
    \ge a(CN_0)^{-b}-\exp(-\fc_0 (\log\lambda)^{3/4})
    \ge \exp(-\fc_0 (\log\lambda)^{3/4}).
  \end{equation*}
  Then, as above, we get
  \begin{multline*}
	\mes \{ \phi\in \cI_0''/10
    : \dist(\spec H_{[-N_0+N_0^{1/3},N_0-N_0^{1/3}]}(\tilde x(\phi,\eta)+h),\lambda\eta)
    <\exp(-C(\log\lambda)^{3/4} N_0^{3(d-1)\delta}) \}\\<\exp(-N_0^{2\delta}),
  \end{multline*}
  and condition \refcond{D} holds. Next, we consider $ n_1\in [-N_0+N_0^{1/3},N_0-N_0^{1/3}] $.
  Let
  \begin{equation*}
	h_1=h-n_1\omega,\quad [a_1,b_1]=n_1+[-N_0,N_0]
  \end{equation*}
  and $ \tilde x_1(\phi,\eta) $ the parametrization associated with $
  [a_1,b_1] $. Note that  $ [a_1,b_1]\supset [-\uN,\uN]  $. Since
  \begin{equation*}
	|\tilde x_1(\phi,\eta)+h_1-x(\phi,\eta)|\lesssim \exp(-\fc_0(\log\lambda)^{3/4}),
  \end{equation*}
  using \cref{eq:loc-estimates} we have
  \begin{equation*}
	|\lambda^{-1}E^{[a_1,b_1]}(\tilde x_1(\phi,\eta)+h_1)-\eta|<\exp(-\fc_0(\log\lambda)^{3/4}/2),
  \end{equation*}
  for any $ \phi\in \cI_0'' $. Due to the separation of eigenvalues in
  \cref{eq:loc-estimates}, we now have
  \begin{equation*}
	\dist(\spec H_{[a_1,b_1]}(\tilde x_1(\phi,\eta)+h_1),\lambda\eta)
    =|E^{[a_1,b_1]}(\tilde x_1(\phi,\eta)+h_1)-\lambda\eta|.
  \end{equation*}
  Let $ \hat \phi $ be as in \cref{eq:grad-lb}, with $ [a,b]=[a_1,b_1]
  $, $ h_0=\norm{h_1}^{-1}h_1 $. Then by Taylor's formula
  \begin{multline*}
	|E^{[a_1,b_1]}(\tilde x_1(\hat \phi,\eta)+h_1)-\lambda\eta|
    \ge |\langle \nabla E^{[a_1,b_1]}(\tilde x_1(\hat \phi,\eta)),h_1 \rangle|\norm{h_1}
    -C_\rho\lambda\norm{V}_\infty \norm{h_1}^2\\
    \gtrsim \lambda\exp(-(\log\lambda)^{1/2})\norm{h_1}\ge \exp(-2N_0^\mu).
  \end{multline*}
  Using Cartan's estimate it follows that
  \begin{equation*}
	\mes \{ \phi\in \cI_0''/10: \dist(\spec H_{[a_1,b_1]}(\tilde x_1(\phi,\eta)+h_1,\lambda\eta))
    <\exp(-C(N_0^\mu+N_0^{3(d-1)\delta})) \}<\exp(-N_0^{2\delta}).
  \end{equation*}
  Now the conclusion follows from the fact that $ \spec
  H_{[-N_0,N_0]}(\tilde x(\phi,\eta)+h)=\spec H_{[a_1,b_1]}(\tilde
  x(\phi,\eta)+h_1) $, and that by \cref{eq:better-delta-level},
  \begin{equation*}
	|\tilde x_1(\phi,\eta)-\tilde x(\phi,\eta)|\lesssim \exp(-(\log\lambda)N_0^{1/4})\ll \exp(-N_0^\beta/2)
  \end{equation*}
  (also recall that $ \delta\ll \mu\ll \beta\ll 1 $).
\end{proof}

\section{Proofs of the Main Theorems}\label{sec:main-thm}

The first two results are non-perturbative and are stated for
operators as in \cref{eq:schr100}. For their statements recall the
constants $ S_V $ and $ B_0 $ introduced in \cref{eq:SV},\cref{eq:B0},
and the exponents $ \delta,\fd $ used for the inductive conditions in
\cref{sec:bulk} and \cref{sec:edges}. We will use the notation $ \cS:=\spec H(x) $.

\begin{thmb}\label{thm:B}
  Assume the notation of the inductive conditions
  \refcond{A}-\refcond{E} from \cref{sec:bulk}.  Let $ E_0\in \R $,
  $ N_0\ge 1 $, and assume $ L(E)> \gamma>0 $ for
  $ E\in(E_0-2r_0,E_0+2r_0) $, $ r_0=\exp(-N_0^\delta) $. If
  $ N_0\ge (B_0+ S_V+ \gamma^{-1})^C $, $ C=C(a,b,\rho) $, and the
  conditions \refcond{A}-\refcond{E} hold with $ s=0 $ for the given $ E_0 $, then
  $ [E_0-r_0,E_0+r_0]\subset \cS $.
\end{thmb}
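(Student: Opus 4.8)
The plan is to combine the inductive scheme of \cref{thm:D} with a softer analysis argument. First I would invoke \cref{thm:D}: since conditions \refcond{A}-\refcond{E} hold with $s=0$ and $N_0\ge(B_0+S_V+\gamma^{-1})^C$, for every $E_1\in(E_0-r_0,E_0+r_0)$ and every $s\ge1$ the conditions \refcond{A}-\refcond{E} propagate to scale $N_s$, and moreover the estimates \cref{inductive50E},\cref{inductive51EE} give that the maps $x_s(\phi,E)$ converge uniformly (on the shrinking domains $\Pi_s$, all of which can be arranged to contain a fixed point, say $(\phi_\infty,E_1)$ for a suitable $\phi_\infty$) and that the corresponding eigenvectors $\psi^{[-N_s',N_s'']}$ form a Cauchy sequence in $\ell^2$. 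The geometric decay $\exp(-\gamma N_{s-1}/30)$, $\exp(-\gamma N_{s-1}/40)$ makes these genuine Cauchy sequences, so there is a limiting phase $x_\infty=x_\infty(E_1)$ and a limiting normalized vector $\psi_\infty\in\ell^2(\Z)$.

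Next I would show $H(x_\infty(E_1))\psi_\infty=E_1\psi_\infty$, which gives $E_1\in\spec H(x_\infty(E_1))=\cS$ (using that the spectrum is phase-independent). To see this, note that by \refcond{A} at scale $s$ we have $E^{[-N_s',N_s'']}(x_s(\phi,E_1))=E_1$ exactly, i.e. $(H_{[-N_s',N_s'']}(x_s(\phi,E_1))-E_1)\psi^{[-N_s',N_s'']}=0$; by \refcond{C} the eigenfunction decays like $\exp(-\gamma|n|/10)$ for $|n|\ge N_s/4$, so the boundary terms produced when one views $\psi^{[-N_s',N_s'']}$ as an approximate eigenfunction of the whole-line operator $H(x_s(\phi,E_1))$ are of size $\exp(-\gamma N_s/40)$. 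Passing to the limit using \cref{eq:stability-x} to control $\|H(x_s(\phi,E_1))-H(x_\infty(E_1))\|\lesssim\norm{V}_\infty|x_s-x_\infty|$ and the $\ell^2$-convergence of the eigenvectors, one obtains $\|(H(x_\infty(E_1))-E_1)\psi_\infty\|=0$. Since this holds for every $E_1$ in the open interval $(E_0-r_0,E_0+r_0)$ and $\cS$ is closed, the closed interval $[E_0-r_0,E_0+r_0]$ is contained in $\cS$.

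The one point requiring a little care — and the main obstacle — is verifying that the domains $\Pi_s$ can be nested around a common center so that the limiting objects are well-defined for a single fixed $E_1$. \cref{thm:D} only asserts $\cI_s\Subset\cI_{s-1}$ and allows $E_s$ to range over $(E_{s-1}-r_{s-1},E_{s-1}+r_{s-1})$; so I would fix $E_1\in(E_0-r_0,E_0+r_0)$ at the outset, set $E_s:=E_1$ for all $s$ (which is legitimate since $r_s\downarrow0$ forces $E_1\in(E_{s-1}-r_{s-1},E_{s-1}+r_{s-1})$ eventually — and for the finitely many initial $s$ where it might fail one simply starts the induction from a larger initial scale, or observes $E_1$ is automatically in range once $N_0$ is large), and then choose $\phi_s$ so that $\phi_s\in\cI_s$ for all $s$, e.g. by a nested-interval/compactness argument since the $\cI_s$ are nested nonempty boxes. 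With $(\phi_s,E_1)\in\Pi_s$ the estimates \cref{inductive50E},\cref{inductive51EE} apply directly along this sequence and yield the Cauchy property. Everything else is a routine limiting argument; no new analytic input beyond \cref{thm:D} and the elementary stability estimate \cref{eq:stability-x} is needed.
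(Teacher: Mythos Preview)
Your proposal is correct and follows essentially the same approach as the paper: fix $E\in(E_0-r_0,E_0+r_0)$, apply \cref{thm:D} with $E_s=E$ for all $s\ge1$, pick $\hat\phi\in\bigcap_s\cI_s$ by nestedness, and use \cref{inductive50E}, \cref{inductive51EE} together with the decay from \refcond{C} to produce a genuine $\ell^2$-eigenfunction of $H(x_\infty)$ at energy $E$.

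One small comment: your worry in the last paragraph about $E_1\in(E_{s-1}-r_{s-1},E_{s-1}+r_{s-1})$ is unnecessary. Once you set $E_s=E_1$ for every $s\ge1$, the condition $E_s\in(E_{s-1}-r_{s-1},E_{s-1}+r_{s-1})$ reads $E_1\in(E_1-r_{s-1},E_1+r_{s-1})$ for $s\ge2$, which is trivial, and for $s=1$ it is exactly the hypothesis $E_1\in(E_0-r_0,E_0+r_0)$. There is nothing ``eventual'' about it and no need to restart the induction. Also, be careful with the notation ``$\phi_s$'' at the end: what you need (and what the nested-box argument gives) is a \emph{single} $\hat\phi\in\bigcap_s\cI_s$, since \cref{inductive50E} and \cref{inductive51EE} compare $x_s$ and $x_{s-1}$ at the \emph{same} $(\phi,E)$.
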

\begin{proof}
  Take an arbitrary $E\in(E_0-r_0,E_0+r_0)$ and 
  apply \cref{thm:D} with $ E_s=E $, $ s\ge 1 $. 
  Since $\mathcal{I}_{s}\Subset \mathcal{I}_{s-1}$,  there exists
  $\hat\phi\in \bigcap_s \mathcal{I}_{s}$.
  Due to \eqref{inductive50E} there exists $ x(E) $ such that
  \[
    |x(E)-x_{s}(\hat \phi,E)|<2\exp(-\gamma N_s/30),\quad s\ge 0.
  \]
  Due to \eqref{inductive51EE} there exists $ \psi(E,\cdot) $, $ \norm{\psi(E,\cdot)}=1 $, such that
  \[
    \|\psi(E,\cdot)-\psi^{[-N_s',N_s'']}(x_s(\hat \phi,E),\cdot)\|<2\exp(-\gamma N_s/40),\quad s\ge 0.
  \]
  Note that
  \begin{equation*}
    \|(H(x_{s}(\hat\phi,E))-E)\psi^{[-N_s',N_s'']}(x_{s}(\hat\phi,E),\cdot)\|\lesssim
    \exp(-\gamma N_s/20)
  \end{equation*}
  (by condition \refcond{C})
  and
  \[
    \|H(x(E))-H(x_{s}(\hat\phi,E))\|\le C_\rho \norm{V}_\infty|x(E)-x_s(\hat \phi,E)|
    <\exp(-\gamma N_s/40).
  \]
  It follows that
  \begin{equation*}
	\norm{(H(x(E))-E)\psi(E,\cdot)}\lesssim \exp(-\gamma N_s/40),\quad s\ge 0,
  \end{equation*}
  and therefore $ H(x(E))\psi(E,\cdot)=E\psi(E,\cdot) $. In
  particular, $ E\in \cS $ and the conclusion holds (recall that $ \cS
  $ is closed).
\end{proof}

\begin{thmc}\label{thm:C}
  Assume the notation of the inductive conditions
  \refcondu{A}-\refcondu{D} from \cref{sec:edges}. Let $ \ux_0\in \T^d $, $ N_0\ge 1 $, such that the
  conditions \refcondu{A}-\refcondu{D} hold, and assume $ L(E)>\gamma>0 $
  for $ E\in(\uE_0-2r_0,\uE_0+2r_0) $, $ r_0=\exp(-N_0^{\fd}) $.
  If $ N_0\ge (B_0+ S_V+ \gamma^{-1})^C $, $ C=C(a,b,\rho) $, then
  there exists $ \uE\in \R $, such that $ |\uE-\uE_0|<\exp(-\gamma
  N_0/100) $, $ \cS\cap(-\infty,\uE)=\emptyset $,  and $
  [\uE,\uE_0+\exp(-N_0^{20\fd})]\subset \cS $. Analogous statements hold relative to
  conditions \refcondo{A}-\refcondo{D}.
\end{thmc}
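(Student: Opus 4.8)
The plan is to realize $\uE$ as the limit of the finite-scale edge eigenvalues $\uE_s:=\uE^{[-N_s',N_s'']}(\ux_s)$ produced by \cref{thm:E}, and then to establish the three assertions — $\uE\in\cS$, $\cS\cap(-\infty,\uE)=\emptyset$, and $[\uE,\uE_0+\exp(-N_0^{20\fd})]\subset\cS$ — separately, using \cref{thm:E} for the scaffolding and \cref{thm:B} for filling in the interval above $\uE$. First I would invoke \cref{thm:E} with $N_s=N_{s-1}^5$ to obtain, for every $s\ge 1$, a point $\ux_s$ for which \refcondu{A}--\refcondu{D} hold, together with the estimates \cref{eq:thmE-estimates}. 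These make $(\ux_s)$ and $(\uE_s)$ Cauchy, so I set $\ux_\infty=\lim\ux_s$ and $\uE=\lim\uE_s$; summing the resulting geometric-type series gives $|\uE-\uE_0|<\exp(-\gamma N_0/100)$ (the first term dominates, since $N_{s+1}=N_s^5$). For $\uE\in\cS$ I would argue exactly as in the proof of \cref{thm:B}: condition \refcondu{B} together with \cref{eq:thmE-estimates}, plus one application of \cref{lem:eigenvector-stability} (using \cref{eq:stability-x} to compare $\upsi^{[-N_{s-1}',N_{s-1}'']}(\ux_s)$ with $\upsi^{[-N_{s-1}',N_{s-1}'']}(\ux_{s-1})$), shows $\upsi^{[-N_s',N_s'']}(\ux_s,\cdot)\to\psi$ in $\ell^2$ with $\norm{\psi}=1$; the boundary defect of the finite-scale eigenvalue equation is exponentially small in $N_s$ by \refcondu{B}, whence $H(\ux_\infty)\psi=\uE\psi$ and $\uE\in\cS$.

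For $\cS\cap(-\infty,\uE)=\emptyset$ I would use that condition \refcondu{A} forces $\uE^{[-N_s',N_s'']}$ to be the \emph{lowest} eigenvalue of $H_{[-N_s',N_s'']}$ on $\uPi_s$, so that $\inf\spec H_{[-N_s',N_s'']}(\ux_s)=\uE_s$. Fix $E^*<\uE$; since the constant in \cref{eq:stability-x} does not depend on the interval length, the displacements $|\ux_S-\ux_{s''}|\lesssim\exp(-\gamma N_S/50)$ (and $|\uE_{s''}-\uE|\lesssim\exp(-\gamma N_S/60)$) for $s''\ge S$ give, after choosing $S$ large enough,
\[
\inf\spec H_{[-N_{s''}',N_{s''}'']}(\ux_S)\ \ge\ E^*+\tfrac12(\uE-E^*)\qquad\text{for all }s''\ge S.
\]
Feeding the sequence of intervals $[-N_{s''}',N_{s''}'']$, $s''\ge S$, into \cref{lem:elemspec1} yields $\dist(E^*,\spec H(\ux_S))\ge\tfrac12(\uE-E^*)>0$, so $E^*\notin\cS$; as $E^*<\uE$ was arbitrary this finishes this part. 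Note no covering argument is needed for the lower edge.

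For the interval above $\uE$, the last part of \cref{thm:E} says that for every $s$ and every $E$ with $\uE_s+\exp(-N_s^{100\fd})\le E\le\uE_s+\exp(-N_s^{2\fd})$ the conditions \refcond{A}--\refcond{E} hold at scale $N_s$ with base energy $E$; the hypotheses $N_s\ge(B_0+S_V+\gamma^{-1})^C$ and $L>\gamma$ on a $2\exp(-N_s^\delta)$-neighborhood of $E$ are inherited from those at $N_0$, because $E$ stays within $2r_0$ of $\uE_0$ (using $N_s^{2\fd}\ge N_0^{2\fd}>N_0^\fd$ and $|\uE_s-\uE_0|\ll\exp(-N_0^\fd)$). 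Applying \cref{thm:B} at scale $N_s$ gives $[E-\exp(-N_s^\delta),E+\exp(-N_s^\delta)]\subset\cS$, and letting $E$ sweep the above range (using $\delta\gg\fd$, so $\exp(-N_s^\delta)\ll\exp(-N_s^{100\fd})$) yields $\cA_s:=[\uE_s+\tfrac12\exp(-N_s^{100\fd}),\uE_s+\exp(-N_s^{2\fd})]\subset\cS$. Since $N_{s+1}=N_s^5$ we have $\exp(-N_{s+1}^{2\fd})=\exp(-N_s^{10\fd})\gg\exp(-N_s^{100\fd})$ and $|\uE_{s+1}-\uE_s|\ll\exp(-N_s^{10\fd})$, so consecutive $\cA_s$ overlap; hence $\bigcup_s\cA_s$ is an interval contained in $\cS$ whose left endpoints converge to $\uE$ and whose right endpoint $\uE_0+\exp(-N_0^{2\fd})$ exceeds $\uE_0+\exp(-N_0^{20\fd})$ (as $2\fd<20\fd$). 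Therefore $(\uE,\uE_0+\exp(-N_0^{20\fd})]\subset\cS$, and since $\cS$ is closed and $\uE\in\cS$, also $[\uE,\uE_0+\exp(-N_0^{20\fd})]\subset\cS$. The analogous statement for the upper edge follows verbatim from \refcondo{A}--\refcondo{D}. The analytic content is entirely carried by \cref{thm:E} and \cref{thm:B}; the main thing to watch is the bookkeeping — that the overlaps of the $\cA_s$, the choice of $S$, and the repeated use of \cref{eq:stability-x} and \cref{eq:thmE-estimates} are all compatible with the exponent relations $\fd\ll\delta,\mu,\beta$ and the growth law $N_{s+1}=N_s^5$.
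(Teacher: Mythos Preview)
Your proposal is correct and follows essentially the same route as the paper: define $\uE=\lim\uE_s$ via \cref{thm:E}, use condition \refcondu{A} together with \cref{eq:stability-x} and \cref{lem:elemspec1} to rule out spectrum below $\uE$, and use the ``furthermore'' clause of \cref{thm:E} combined with \cref{thm:B} to produce the overlapping intervals $\cA_s$ above $\uE$. Two minor remarks: (i) your direct construction of an eigenfunction to show $\uE\in\cS$ is unnecessary---once $(\uE,\uE_0+\exp(-N_0^{20\fd})]\subset\cS$ and $\cS$ is closed, $\uE\in\cS$ is automatic, which is how the paper closes; (ii) the ``furthermore'' part of \cref{thm:E} is stated for $s\ge 1$, so the right endpoint of $\bigcup_s\cA_s$ is $\uE_1+\exp(-N_1^{2\fd})=\uE_1+\exp(-N_0^{10\fd})$ rather than $\uE_0+\exp(-N_0^{2\fd})$, but this still exceeds $\uE_0+\exp(-N_0^{20\fd})$ so the conclusion is unaffected.
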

\begin{proof} 
  We choose $ N_0 $ large enough for \cref{thm:E} to hold. Using
  \cref{eq:thmE-estimates}, we have 
  that there exist
  \begin{equation*}
	\underline{x}=\lim_{s\to \infty} \underline{x}_s,\quad
    \underline{E}=\lim_{s\to \infty} \uE_s,
  \end{equation*}
  and we have
  \begin{equation}\label{eq:u-us}
	|\ux-\ux_s|,\, |\uE-\uE_s|<\exp(-\gamma N_s/100),\quad s\ge 1.
  \end{equation}
  First we verify that $ (-\infty,\underline{E})\cap\cS=\emptyset $.
  Take an arbitrary $E<\underline{E}$ and let $ \rho=\uE-E>0 $. By \cref{eq:u-us}, for any $ s\ge 1 $ we have
  \begin{equation*}
	\uE_s-E>\rho-\exp(-\gamma N_s/100)
  \end{equation*}
  and therefore
  \begin{equation*}
	\dist(\spec H_{[-N_s',N_s'']}(\ux_s),E)>\rho-\exp(-\gamma N_s/100)
  \end{equation*}
  (recall condition \refcondu{A}). Using \cref{eq:u-us} again, 
  \begin{equation*}
	\dist(\spec H_{[-N_s',N_s'']}(\ux),E)>\rho-\exp(-\gamma N_s/200)\ge \rho/2>0,
  \end{equation*}
  for $ s\ge s_0 $, with $ s_0 $ such that $ \exp(-\gamma
  N_{s_0}/200)\le \rho/2
  $. Then by \cref{lem:elemspec1} we have $ \dist(E,\cS)\ge \rho/2>0 $, hence $
  E\notin \cS $, as desired. 

  By \cref{thm:E}, the conditions \refcond{A}-\refcond{E} are
  satisfied for any $ E_s $, $ \exp(-N_s^{100\fd})\le |E_s-\uE_s|\le
  \exp (-N_s^{2\fd}) $, $ s\ge 1 $. Then by \cref{thm:B},
  \begin{equation*}
	[\uE_s+\exp(-N_s^{100\fd}),\uE_s+\exp(-N_s^{2\fd})]\subset \cS.
  \end{equation*}
  These intervals overlap for consecutive $ s $ (recall that
  $ N_{s+1}=N_s^5 $ and $ |\uE_{s+1}-\uE_{s}|<\exp(-\gamma N_s/60) $)
  and we have
  \begin{equation*}
	\cS\supset \bigcup_{s\ge 0}[\uE_s+\exp(-N_s^{100\fd}),\uE_s+\exp(-N_s^{2\fd})]
    \supset (\uE,\uE_1+\exp(-N_1^{2\fd})]\supset (\uE,\uE_0+\exp(-N_0^{20\fd})]
  \end{equation*}
  The conclusion follows since $ \cS $ is closed.
\end{proof}

We are finally ready to prove \cref{thm:A}. We fix the constants $
\fc_1,\fc_0,\fC_0 $ from \cref{defi:genericU}.
\begin{proof}[Proof of \cref{thm:A}]
  (a) Let $ T_V $ as in \cref{eq:TV-a}.  Take $ C_0=C_0(a,b,\rho,d) $
  large enough, such that for $ \lambda\ge \exp((T_V)^{C_0}) $,
  \cref{prop:A-to-E} with $ \epsilon=\fc_1/20 $, \cref{thm:B}, and  
  \cref{thm:C}  hold for $ N_0=\floor{\exp((\log\log\lambda)^2)} $
  (recall \cref{prop:LLBasic} and \cref{rem:Lbridge}; of course, we take $ \gamma=\log\lambda/2 $). The choice of $
  \epsilon $ is made with part (b) in mind.

  Let $ \uE_0 $, $ |\lambda^{-1}\uE_0- V(\ux)|\ll \lambda^{-1/4} $ be
  as in \cref{prop:A-to-E} and $ \uE $,
  $ |\uE-\uE_0|<\exp(-(\log\lambda) N_0/2) $, be as in
  \cref{thm:C}. Combining \cref{prop:A-to-E} with \cref{thm:B} we have
  \begin{equation*}
    [\uE_0+\exp(-N_0^{100\fd}),\uE_0+\lambda\exp(-(\log\lambda)^{\fc_1/5})]\subset \cS_\lambda.
  \end{equation*}
  At the same time, combining \cref{prop:A-to-E} with \cref{thm:C} we
  have
  \begin{equation*}
	[\uE,\uE_0+\exp(-N_0^{20\fd})]\subset \cS_\lambda,\qquad 	(-\infty,\uE)\cap \cS_\lambda=\emptyset.
  \end{equation*}
  Then
  \begin{equation}\label{eq:uE-lambda}
	[\uE,\uE_0+\lambda\exp(-(\log\lambda)^{\fc_1/5})]\subset \cS_\lambda
  \end{equation}
  This yields part (a). Of course, the proof the statement relative to
  the absolute maximum is completely analogous. Also, in the statement
  of part (a) we could replace $ \exp(-(\log\lambda)^{1/2}) $ by $
  \exp(-(\log\lambda)^\epsilon) $, for any $ \epsilon\in(0,1) $, by
  adjusting the constant $ C_0 $ from above.

  \medskip\noindent (b)
  Recall that $ \fE $ denotes the set of critical points of $ V
  $. Note that since all the critical points are assumed to be
  non-degenerate, by \cref{lem:Morse7a}, $ \fE $ is discrete and hence
  finite. Let
  \begin{equation*}
	\nu=\min_{x\in \fE} \norm{\fH(x)^{-1}}^{-1}
  \end{equation*}
  Using \cref{lem:Morse7a} and \cref{lem:uV-shifts} we choose $
  c=c(\rho) $ small enough so that with $
  r=c\nu(1+\norm{V}_\infty)^{-1} $ we have that $ \T^d\setminus \bigcup_{x\in \fE} B(x,r)  $
  is connected and \cref{eq:uV-shifts} holds.
  Let
  \begin{equation*}
	\fg=\fg(V):=\min \{ \norm{\nabla V(x)} : x\in  \T^d\setminus \bigcup_{x\in \fE} B(x,r)  \}>0,
  \end{equation*}
  and increase $ T_V $ to be
  \begin{multline}\label{eq:TV-final}
    T_{V}=2+\max(0,\log\norm{V}_\infty)+\max(0,\log \uiota ^{-1})\\
    +\max(0,\log\iota^{-1})+\max(0,\log\nu^{-1})+\fC_0+\fc_0^{-1}+\max(0,\log \fg^{-1}).
  \end{multline}
  Take $ C_0=C_0(a,b,\rho,d) $ large enough, such that for $ \lambda\ge
  \exp((T_V)^{C_0}) $ in addition to the assumptions for part (a) we also have
  \begin{equation}\label{eq:lambda-grad-restriction}
	\exp(-(\log\lambda)^{\fc_1/3})\le \min(\nu r/2,\fg),
  \end{equation}
  and \cref{prop:A-to-D} holds with $ N_0=\floor{\exp((\log\log\lambda)^2)} $.

  Let $ r_\lambda $ such that $
  \nu r_\lambda/2=\exp(-(\log\lambda)^{\fc_1/3}) $. By
  \cref{eq:lambda-grad-restriction}, $ r_\lambda\le r $ and therefore
  $ \cG_\lambda:=\T^d\setminus \bigcup_{x\in \fE} B(x,r_\lambda) $ is
  connected. By \cref{eq:lambda-grad-restriction} and
  \cref{eq:uV-shifts},
  \begin{equation*}
	\norm{\nabla V(x)}\ge \exp(-(\log\lambda)^{\fc_1/3}),
    \quad x\in \cG_\lambda.
  \end{equation*}
  Combining \cref{prop:A-to-D} and \cref{thm:B} we have
  \begin{equation*}
	\{ \lambda V(x) : x\in \cG_\lambda\}\subset \cS_\lambda.
  \end{equation*}
  Take $ \ux',\ox'\in \cG_\lambda $, $
  \norm{\ux'-\ux}=\norm{\ox'-\ox}=r_\lambda$. Since $ \cG_\lambda $ is
  connected we have
  \begin{equation*}
	[\lambda V(\ux'),\lambda V(\ox')]\subset \{ \lambda V(x) : x\in \cG_\lambda\}\subset \cS_\lambda.
  \end{equation*}
  Let $ \uE_0,\uE $ as in part (a).
  By \cref{eq:uV-shifts} and by increasing $ C_0 $ if needed,
  \begin{equation*}
	\exp(-3(\log\lambda)^{\fc_1/3})\le V(\ux')-V(\ux)\le \exp(-(\log\lambda)^{\fc_1/3})
  \end{equation*}
  and therefore
  \begin{equation*}
	\lambda\exp(-3(\log\lambda)^{\fc_1/3})\lesssim |\lambda V(\ux')-\uE_0|
    \lesssim \lambda \exp(-(\log\lambda)^{\fc_1/3}).
  \end{equation*}
  From the above and \cref{eq:uE-lambda} it follows that $ [\uE,\lambda V(\ox')]\subset
  \cS_\lambda $. Let $ \oE $ be as in \cref{thm:C} with respect to the
  conditions \refcondot{A}-\refcondot{D}. Analogously, we get
  $ [\lambda V(\ux'),\oE]\subset \cS_\lambda $ and therefore $
  [\uE,\oE]\subset \cS_\lambda $.  Since 
  \begin{equation*}
	(-\infty,\uE)\cap \cS_{\lambda}=(\oE,\infty)\cap \cS_\lambda=\emptyset,
  \end{equation*}
  we conclude that $ \cS_\lambda=[\uE,\oE] $.
\end{proof}
\begin{remark}
  The constant $ \uiota $ in the definition of $ T_V $ from the proof
  of \cref{thm:A} (b) is redundant and can be dropped at the cost of slightly
  increasing $ C_0 $ in the lower bound for $ \lambda $. More precisely, it
  can be seen, by using Taylor's formula, that $ \uiota $ can be bound below in terms of $ \nu $,
  $ \fg $, $ \norm{V}_\infty $, and $ \rho $.
\end{remark}

\section{An Example}\label{sec:example}

For the purpose of this section it is convenient to redefine $
\T:=\R/(2\pi\Z) $. Let
\begin{equation*}
  V(x,y)=\cos(x)+s\cos(y).
\end{equation*}
We will check that $ V $ satisfies the conditions of
\cref{defi:genericU} for $ s\notin\{-1,0,1\} $.

First, a direct computation shows that conditions (i),(ii) of
\cref{defi:genericU} are satisfied for $ s\neq 0 $ and they fail for $
s=0 $.

Next we show that condition (iii) holds for $ s\notin \{ -1,0,1 \} $.  Take
\begin{equation}\label{eq:H-lb}
  H\gg 1+\max(\log |s|,\log |s|^{-1},\log |1-s^2|^{-1}),
\end{equation}
$ h\in \T^2 $, $ h=(\alpha,\beta) $, $ \norm{h}\ge \exp(-H) $. The
lower bound on $ H $ will be used tacitly in most of the estimates to
follow. Recall that when  $ \norm{\cdot} $ is applied to the shifts $
h,\alpha,\beta $, it stands for the usual norm on the torus.

\begin{lemma}\label{lem:simple-case}
  If $ \norm{\alpha}<\exp(-2H) $ or $ \norm{\beta}<\exp(-2H)$, then
  \begin{equation*}
	|V(x+\alpha,y+\beta)-V(x,y)|\gtrsim \exp(-2H),
  \end{equation*}
  for any $ (x,y)\in \T^2 $.
\end{lemma}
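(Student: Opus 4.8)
The plan is to reduce the two‑dimensional difference to a scalar trigonometric quantity with a quantitatively controlled amplitude; the point is that the hypothesis forces the shift $h$ to be essentially parallel to one of the coordinate axes. It suffices to treat the case $\norm{\alpha}<\exp(-2H)$, the case $\norm{\beta}<\exp(-2H)$ being entirely analogous with the roles of the two summands $\cos x$ and $s\cos y$ interchanged (note that by~\eqref{eq:H-lb} the factor $|s|$ is squeezed between $e^{-H/10}$ and $e^{H/10}$, so swapping it for $|s|^{-1}$ changes nothing of substance).

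Assuming $\norm{\alpha}<\exp(-2H)$, I would first record that $\norm{\beta}$ is then comparatively large: since $\exp(-H)\le\norm{h}\le\norm{\alpha}+\norm{\beta}$ we get $\norm{\beta}>\exp(-H)-\exp(-2H)>\tfrac12\exp(-H)$. Next, write
\[
  V(x+\alpha,y+\beta)-V(x,y)=\bigl(\cos(x+\alpha)-\cos x\bigr)+s\bigl(\cos(y+\beta)-\cos y\bigr)
\]
and apply $\cos(u+t)-\cos u=-2\sin(t/2)\sin(u+t/2)$ to each bracket to obtain
\[
  V(x+\alpha,y+\beta)-V(x,y)=-2\sin(\tfrac\alpha2)\sin(x+\tfrac\alpha2)-2s\sin(\tfrac\beta2)\sin(y+\tfrac\beta2).
\]
Choosing representatives with $|\alpha|,|\beta|\le\pi$, the first term is bounded in modulus by $|\alpha|=\norm{\alpha}<\exp(-2H)$ uniformly in $x$, while $|\sin(\beta/2)|\ge\tfrac2\pi\cdot\tfrac{|\beta|}{2}=\tfrac{\norm{\beta}}{\pi}$, so the second term is a sinusoid in $y$ of amplitude $2|s|\,|\sin(\beta/2)|\ge\tfrac2\pi|s|\,\norm{\beta}\gtrsim|s|\exp(-H)$, which by~\eqref{eq:H-lb} exceeds $\exp(-2H)$ by an arbitrarily large factor. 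Combining the uniform upper bound on the first term with the lower bound on the amplitude of the second term gives the claim: $V(\cdot+h)-V$ agrees, up to a uniformly small perturbation of size $\le\exp(-2H)$, with a genuine one‑variable cosine of amplitude $\asymp\exp(-2H)$.

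The only real work is bookkeeping the implied constants: one needs the elementary two‑sided comparison $\tfrac2\pi|\theta|\le|\sin\theta|\le|\theta|$ for $|\theta|\le\pi/2$ to pass between $|\sin(\beta/2)|$ and $\norm{\beta}$, and one must check that the nuisance factor $|s|$ in the amplitude does not spoil the bound — this is precisely where the hypothesis $H\gg\log|s|^{-1}$ from~\eqref{eq:H-lb} is used. There is no genuine analytic obstacle here: the substance of the lemma is that in this degenerate regime the difference collapses to a scalar trigonometric expression, so that the resultant estimates required when both $\norm{\alpha}$ and $\norm{\beta}$ are $\ge\exp(-2H)$ can be bypassed.
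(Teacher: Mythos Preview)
Your approach matches the paper's --- isolate the dominant coordinate and bound the other by $\exp(-2H)$ --- but the final inference is invalid. You correctly show that $V(\cdot+h)-V$ equals a sinusoid in $y$ of amplitude $\gtrsim|s|\exp(-H)$ plus a term of size at most $\exp(-2H)$. However, a sinusoid is not bounded below by a constant times its amplitude at every point; it has zeros. Concretely, at $(x,y)=(-\alpha/2,-\beta/2)$ both brackets $\cos(x+\alpha)-\cos x$ and $\cos(y+\beta)-\cos y$ vanish, so $V(x+\alpha,y+\beta)-V(x,y)=0$, contradicting the stated pointwise conclusion for any implied constant. Your sentence ``Combining the uniform upper bound on the first term with the lower bound on the amplitude of the second term gives the claim'' is exactly where the argument breaks.

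The paper's own proof has the same defect: it asserts $|\cos(y+\beta)-\cos y|\gtrsim\exp(-H)$ for all $y$, which fails at $y=-\beta/2$. The lemma as written is therefore false. What the application to condition~(iii) of Definition~\ref{defi:genericU} actually requires is only a measure estimate over one coordinate, and your amplitude computation delivers precisely that: the set of $y$ where $|\sin(y+\beta/2)|$ is small has small measure, and off this set the difference exceeds $\exp(-2H)$. Compare the parallel Lemma~\ref{lem:iv-simple-case}, which is correctly stated as a measure bound rather than a pointwise one.
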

\begin{proof}
  Assume $ \norm{\alpha}<\exp(-2H) $. Since $ \norm{h}\ge \exp(-H) $,
  we must have $ \norm{\beta}\gtrsim \exp(-H) $ and therefore
  \begin{multline*}
	|V(x+\alpha,y+\beta)-V(x,y)|\ge s|\cos(y+\beta)-\cos y|-|\cos(x+\alpha)-\cos x|\\
    \gtrsim s \exp(-H)-C\exp(-2H)\gtrsim  s\exp(-H)\ge  \exp(-2H).
  \end{multline*}
  Similarly, if $ \norm{\beta}<\exp(-2H) $, then $
  \norm{\alpha}\gtrsim \exp(-H) $, and
  \begin{multline*}
	|V(x+\alpha,y+\beta)-V(x,y)|\ge |\cos(x+\alpha)-\cos x|-s|\cos(y+\beta)-\cos y|\\
    \gtrsim \exp(-H)-Cs\exp(-2H)\gtrsim \exp(-H).
  \end{multline*}  
\end{proof}

Let $ g(x,y,\alpha,\beta):=g_{V,h,1,2}(x,y) $, with $ g_{V,h,1,2} $ as
in \cref{defi:genericU}. Note that $ |g_{V,h,1,2}|=|g_{V,h,2,1}| $.

\begin{lemma}\label{lem:iii-Rz}
  If $ \norm{\alpha},\norm{\beta}\ge \exp(-2H) $, then there exists an
  absolute constant $ C_0\gg 1 $ such that
  \begin{gather*}
	\mes \{ y\in \T: \min_{x}(|V(x+\alpha,y+\beta)-V(x,y)|+|g(x,y,\alpha,\beta)|)
    <\exp(-C_0H) \}<2\exp(-H/2),\\
    \mes \{ x\in \T: \min_{y}(|V(x+\alpha,y+\beta)-V(x,y)|+|g(x,y,\alpha,\beta)|)<\exp(-C_0H) \}<2\exp(-H/2).
  \end{gather*}
\end{lemma}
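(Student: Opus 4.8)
The plan is to reduce $\lemref$-style to a one-variable resultant and run Cartan's estimate on it, as suggested by \cref{rem:thmAquantify}(d). Abbreviate $F(x,y)=V(x+\alpha,y+\beta)-V(x,y)$ and $G(x,y)=g(x,y,\alpha,\beta)$, and set $z=e^{ix}$. Using $\cos(x+\alpha)-\cos x=\tfrac{e^{i\alpha}-1}{2}z+\tfrac{e^{-i\alpha}-1}{2}z^{-1}$ and the analogous identity for $\sin$, one sees that $zF$ and $zG$ are polynomials in $z$ of degree $\le 2$ whose coefficients are, for fixed $\alpha,\beta$, trigonometric polynomials in $y$ of degree $\le 1$: explicitly, with $\theta=\tfrac{e^{i\alpha}-1}{2}$,
\[
P(z)=\theta\,z^{2}+s\bigl(\cos(y+\beta)-\cos y\bigr)\,z+\overline\theta,\qquad
Q(z)=\tfrac{s}{2i}\bigl(\sin(y+\beta)-e^{i\alpha}\sin y\bigr)\,z^{2}+\tfrac{s}{2i}\bigl(e^{-i\alpha}\sin y-\sin(y+\beta)\bigr).
\]
The leading coefficient of $P$ has modulus $|\theta|=|\sin(\alpha/2)|\ge\pi^{-1}\norm{\alpha}\ge\pi^{-1}e^{-2H}$, so $\deg_z P=2$ always, and by \cref{lem:Cauchy-bound} the moduli of the roots of $P$ and of $Q$ are $\lesssim e^{CH}$ (using $|s|\le e^{H}$, which follows from the standing lower bound on $H$). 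Note also that $Q$ has no $z^{1}$ term.

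Suppose $\min_x(|F(x,y_0)|+|G(x,y_0)|)<e^{-C_0H}$, attained at some $x_0$, and put $z_0=e^{ix_0}$, so that $|P(z_0)|,|Q(z_0)|<e^{-C_0H}$. Provided the leading coefficient of $Q$ at $y_0$ also has modulus $\ge e^{-C_1H}$ (an absolute $C_1$; the exceptional $y_0$ are handled below), \cref{lem:resultant} together with the above root bounds gives $|\mathfrak R(y_0)|<e^{-(C_0-C_2)H}$ for an absolute $C_2$, where $\mathfrak R(y):=\Res_z(P,Q)$. Since $\mathfrak R$ is a determinant of bounded size in entries that are Laurent polynomials of bounded degree in $w=e^{iy}$, the function $w^{4}\mathfrak R$ is a polynomial $\widehat{\mathfrak R}(w)$ of degree $n_0\le 8$; using $\deg_z P=\deg_z Q=2$ and the vanishing $z^{1}$-coefficient of $Q$, a direct computation yields
\[
\mathfrak R(y)=\bigl(\overline\theta\, b(y)-\theta\,\overline{b(y)}\bigr)^{2}+c(y)^{2}|b(y)|^{2}=c(y)^{2}|b(y)|^{2}-4\bigl(\Im(\overline\theta\, b(y))\bigr)^{2},
\]
with $c(y)=s(\cos(y+\beta)-\cos y)$ and $b(y)=\tfrac{s}{2i}(\sin(y+\beta)-e^{i\alpha}\sin y)$.

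The heart of the proof is to show that $\widehat{\mathfrak R}$ is not negligible, i.e.\ its largest coefficient $M$ satisfies $M\gtrsim e^{-CH}$ for an absolute $C$ (equivalently $\sup_{|w|=1}|\widehat{\mathfrak R}|\gtrsim e^{-CH}$); this is precisely where $s\notin\{-1,0,1\}$ and $\norm{\alpha},\norm{\beta}\ge e^{-2H}$ are used. Writing $c=s\tilde c$, $b=s\tilde b$ with $\tilde c,\tilde b$ independent of $s$, the displayed formula becomes $\mathfrak R=s^{2}\bigl(s^{2}A(y)-B(y)\bigr)$ with $A=\tilde c^{2}|\tilde b|^{2}$ and $B=4(\Im(\overline\theta\,\tilde b))^{2}$ nonnegative trigonometric polynomials depending only on $\alpha,\beta$. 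One checks that $A\not\equiv0$ (since $\norm{\beta}\ge e^{-2H}$ forces $\tilde c\not\equiv0$, and $\tilde b\not\equiv0$ away from a small set of degenerate configurations of $(\alpha,\beta)$), and that the identity $s^{2}A\equiv B$ can occur only for such special $(\alpha,\beta)$, for which the proportionality constant is forced to equal $1$, so that $s\ne\pm1$ excludes it; in all remaining configurations $s^{2}A-B\not\equiv0$ for every $s$. Tracking the sizes of $\theta$, $s$, and of the extremal Fourier coefficient of $s^{2}A-B$ against the lower bounds $\norm{\alpha},\norm{\beta}\ge e^{-2H}$ and $|1-s^{2}|^{-1}\le e^{H}$ then yields $M\gtrsim e^{-CH}$; the same bookkeeping shows that the set of $y_0$ at which the leading coefficient of $Q$ is smaller than $e^{-C_1H}$ is itself of measure $<e^{-H/2}$ (a further application of \cref{lem:Cartan-P} to that degree-$\le2$ trigonometric polynomial in the non-degenerate configurations), so the detour above is harmless. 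I expect this explicit, case-laden resultant analysis to be by far the most delicate part.

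Granting $M\gtrsim e^{-CH}$, applying \cref{lem:Cartan-P} to $\widehat{\mathfrak R}(e^{iy})$ with its parameter equal to $H$ gives $\mes\{y\in\T:\log|\widehat{\mathfrak R}(e^{iy})|<\log M-C_0^{\mathrm{Car}}n_0H\}<e^{-H/2}$, hence $\mes\{y:|\widehat{\mathfrak R}(e^{iy})|<e^{-(C+C_0^{\mathrm{Car}}n_0)H}\}<e^{-H/2}$. Choosing the absolute constant in the statement to be $C_0:=C_1+C_2+C+C_0^{\mathrm{Car}}n_0+1$, every $y_0$ with $\min_x(|F|+|G|)(y_0)<e^{-C_0H}$ lies, after discarding the negligible set of bad leading coefficients of $Q$, in the last set; adding back that negligible set gives total measure $<2e^{-H/2}$, which is the first assertion. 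The second assertion (with $\min_y$) follows by the symmetric construction: expand $F$ and $G$ in $w=e^{iy}$ with coefficients trigonometric in $x$ — using $|g_{V,h,2,1}|=|g_{V,h,1,2}|$ — form the resultant $\widetilde{\mathfrak R}(x)$, a polynomial in $z=e^{ix}$ of bounded degree, and repeat the argument with the roles of $\alpha$ and $\beta$ interchanged.
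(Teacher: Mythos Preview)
Your strategy coincides with the paper's: complexify via $z=e^{ix}$, write $F$ and $G$ as quadratics $P,Q$ in $z$ with coefficients trigonometric in $y$, take the $z$-resultant (a trigonometric polynomial of degree at most $8$), bound its maximal coefficient from below, apply \cref{lem:Cartan-P} to the resultant, and then recover the pointwise bound on $|P|+|Q|$ via \cref{lem:resultant} together with the root bound from \cref{lem:Cauchy-bound}. The separate Cartan application to the leading coefficient of $Q$ (so that the root bound is available) is also exactly what the paper does.

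Where your proposal falls short is the crucial step, the quantitative lower bound $M\gtrsim e^{-CH}$ on the resultant's largest coefficient. You only sketch it (``one checks that $A\not\equiv 0$\ldots'', ``tracking the sizes\ldots then yields $M\gtrsim e^{-CH}$'', ``I expect this\ldots to be by far the most delicate part''), and your proposed route via the factorization $\mathfrak R=s^2(s^2A-B)$ trades one computation for an open-ended case analysis on $(\alpha,\beta)$ that you do not carry out. The paper handles this in one stroke: writing everything over $w=e^{iy}$ (so $P_1=2zwF$, $Q_1=-4zwG$), it expands $R_1(w)=\Res_z(P_1,Q_1)=\sum_{k=0}^8 c_kw^k$, observes that among the Leibniz terms of the $4\times 4$ Sylvester determinant only $a_1^2b_2b_0$ contributes to $w^8$, reads off $c_8=s^2(B-1)^2(B-sA)(sA^{-1}-B)$, and obtains $|c_8|\gtrsim s^2\|\beta\|^2\,|1-|s||^2>e^{-5H}$ directly from $|B-1|\gtrsim\|\beta\|$ and $|B-sA|,|sA^{-1}-B|\ge \bigl|1-|s|\bigr|$; this is exactly where $s\ne\pm1$ enters. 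Likewise, for the small-leading-coefficient set of $Q$ you only assert control ``in the non-degenerate configurations'', whereas the paper simply applies \cref{lem:Cartan-P} to $b_2(w)=(B-sA)w^2+sA-B^{-1}$, whose maximal coefficient is at least $|1-|s||$. Your plan is the right one, but the hard step is left undone; the paper's direct computation of $c_8$ is precisely what is missing.
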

\begin{proof}
  We only check the first estimate, the second one being completely
  analogous. Let 
  \begin{equation*}
    z=\exp(ix),\quad w=\exp(iy),\quad A=\exp(i\alpha),\quad B=\exp(i\beta).
  \end{equation*}
  Then
  \begin{equation}\label{eq:V-g-complexified}
    V(x+\alpha,y+\beta)-V(x,y)=\frac{1}{2zw} P_1(z,w),\quad g(x,y,\alpha,\beta) =-\frac{1}{4z w } Q_1(z,w),
  \end{equation}
  with
  \begin{gather*}
    P_1(z,w)=(A-1)z^2w+s(B-1)zw^2+(A^{-1}-1)w+s(B^{-1}-1)z,\\
    Q_1(z,w)=(B-sA)z^2w^2+(sA-B^{-1})z^2+(sA^{-1}-B)w^2+B^{-1}-sA^{-1}.
  \end{gather*}
  Let $ a_i $, $ b_i $ be the polynomials in $ w $ such
  that
  \begin{equation*}
	P_1(z,w)=a_2z^2+a_1z+a_0,\quad Q_1(z,w)=b_2z^2+b_1z+b_0.
  \end{equation*}
  Let
  \begin{equation*}
	R_1(w)=\Res_z (P_1,Q_1)= \det \begin{bmatrix}
      a_2 & 0 & b_2 & 0\\
      a_1 & a_2 & b_1 & b_2\\
      a_0 & a_1 & b_0 & b_1\\
      0 & a_0 & 0& b_0
    \end{bmatrix}=\sum_{k=0}^8 c_k w^k.
  \end{equation*}
  Analyzing the degrees of the terms from the Leibniz formula for the above
  determinant, one sees that the only term containing a monomial
  of degree $ 8 $  is
  \begin{equation*}
	a_1^2 b_2 b_0=[s(B-1)w^2+s(B^{-1}-1)]^2[(B-sA)w^2+sA-B^{-1}][(sA^{-1}-B)w^2+B^{-1}-sA^{-1}],
  \end{equation*}
  corresponding to the even permutation
  \begin{equation*}
	\begin{pmatrix}
      1 & 2 & 3 & 4\\
      2 & 3 & 1 & 4
    \end{pmatrix}.
  \end{equation*}
  It follows that
  \begin{equation*}
    c_8=s^2(B-1)^2(B-sA)(sA^{-1}-B)=s^2(B-1)^2(BA^{-1}-s)(s-AB),
  \end{equation*}
  and therefore
  \begin{equation*}
	|c_8|\gtrsim s^2\exp(-4H)|1-|s||^2>\exp(-5H).
  \end{equation*}
  Then, using \cref{lem:Cartan-P},
  \begin{equation*}
	|R_1(\exp(iy))|\ge \exp(-CH),
  \end{equation*}
  for $ y\in \T\setminus \cB_1 $, $ \mes(\cB_1)<\exp(-H/2) $, with $
  C $ an absolute constant. Applying \cref{lem:Cartan-P} again to $
  b_2(w)=(B-sA)w^2+sA-B^{-1} $, we get that
  \begin{equation*}
	|b_2(\exp(iy))|\ge \exp(-CH),
  \end{equation*}
  for $ y\in \T\setminus \cB_2 $, $ \mes(\cB_2)<\exp(-H/2) $. At the
  same time,
  \begin{equation*}
	|a_2(\exp(iy))|=|(A-1)\exp(iy)|\gtrsim \exp(-2H),
  \end{equation*}
  for any $ y\in \T $. Let $ r(\exp(iy)) $ be the maximum of the
  absolute values of the roots of $ P_1(\cdot,\exp(iy)) $ and $
  Q_1(\cdot,\exp(iy)) $.
  Using \cref{lem:Cauchy-bound} we have that the
   $r(\exp(iy)) \le  \exp(CH) $, for $ y\in
   \T\setminus \cB_2 $.

   Fix $ y\in \T\setminus \cB $,
   $ \cB:=\cB_1\cup \cB_2 $.
   It follows that
   \begin{equation*}
     R_1(\exp(iy))\ge 2|a_2(\exp(iy))|^2|b_2(\exp(iy))|^2 r(\exp(iy))^3 \delta,
   \end{equation*}
   where $ \delta=\exp(-CH) $, with $ C $ a sufficiently large
   absolute constant. By \cref{lem:resultant},
   \begin{equation*}
     \max (P_1(z,\exp(iy)),Q_1(z,\exp(iy)))\ge \min(|a_2(\exp(iy))|,|b_2(\exp(iy))|)\delta^2
     >\exp(-CH)
   \end{equation*}
   for any $ z $, and therefore
   \begin{equation*}
     |V(x+\alpha,y+\beta)-V(x,y)|+|g(x,y,\alpha,\beta)|\ge \exp(-CH),
   \end{equation*}
   for any $ x\in \T $ (recall \cref{eq:V-g-complexified}). The conclusion follows.
\end{proof}

Now condition (iii) follows from \cref{lem:simple-case} and
\cref{lem:iii-Rz}, by setting $ K=C_0H $, with $ C_0 $ as in
\cref{lem:iii-Rz}, and by taking $ \fc_0=1/C_0 $, $ \fc_1=1/2 $,
\begin{equation}\label{eq:fC0}
  \fC_0=C(C_0^2+C_0\max(\log |s|,\log |s|^{-1},\log |1-s^2|^{-1})),\quad C\gg 1.
\end{equation}

Finally, we check that condition (iv) holds for $ s\notin \{ -1,0,1 \}
$. Take $ H$ as in \cref{eq:H-lb}, $ \eta\in \R $, and $ h_0\in \R^2 $
a unit vector. With some abuse of notations we let
$ h_0=(\alpha,\beta) $, $ \alpha^2+\beta^2=1 $.

\begin{lemma}\label{lem:iv-simple-case}
  (a) If $ |\alpha|<\exp(-2H) $, then
  \begin{equation*}
	\mes \{ y\in \T : \min_{x} |\langle \nabla V(x,y),h_0 \rangle|< \exp(-2H)\}<\exp(-H).
  \end{equation*}
  \noindent (b) If $ |\beta|<\exp(-2H) $, then
  \begin{equation*}
	\mes \{ x\in \T : \min_{y} |\langle \nabla V(x,y),h_0 \rangle|< \exp(-2H)\}<\exp(-H).
  \end{equation*}
\end{lemma}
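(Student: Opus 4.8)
The plan is to exploit the fact that $V$ is completely explicit in this section, so both statements reduce to the elementary observation that a small neighbourhood of the zero set of $\sin$ on $\T=\R/(2\pi\Z)$ has small measure, once one knows that one of the two coordinates of the unit vector $h_0$ is forced to be bounded away from $0$.

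For part (a) I would argue as follows. Write $h_0=(\alpha,\beta)$ and recall that here $\nabla V(x,y)=(-\sin x,-s\sin y)$, so that $\langle\nabla V(x,y),h_0\rangle=-(\alpha\sin x+s\beta\sin y)$. If $|\alpha|<\exp(-2H)$, then $\beta^2=1-\alpha^2\ge 1-\exp(-4H)$ forces $|\beta|\ge 1/2$ for $H\gg 1$, hence for every $(x,y)\in\T^2$
\begin{equation*}
  |\langle\nabla V(x,y),h_0\rangle|\ge |s|\,|\beta|\,|\sin y|-|\alpha|\ge \tfrac{|s|}{2}\,|\sin y|-\exp(-2H).
\end{equation*}
Consequently, whenever $|\sin y|\ge 4|s|^{-1}\exp(-2H)$ one has $\min_x|\langle\nabla V(x,y),h_0\rangle|\ge\exp(-2H)$, so the set in question is contained in $\{y\in\T:|\sin y|<4|s|^{-1}\exp(-2H)\}$. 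This set is the union of two arcs, around $y=0$ and $y=\pi$, of length $\lesssim|s|^{-1}\exp(-2H)$ each, so it has measure $\lesssim|s|^{-1}\exp(-2H)$. Since $H\gg 1+\log|s|^{-1}$ by \cref{eq:H-lb}, we have $|s|^{-1}\exp(-H)\ll 1$, whence the measure is $<\exp(-H)$.

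The argument for part (b) is essentially the same with the roles of the coordinates reversed: if $|\beta|<\exp(-2H)$ then $|\alpha|\ge 1-\exp(-4H)\ge 1/2$, and for every $(x,y)$
\begin{equation*}
  |\langle\nabla V(x,y),h_0\rangle|\ge |\alpha|\,|\sin x|-|s|\,|\beta|\ge \tfrac12|\sin x|-|s|\exp(-2H),
\end{equation*}
so the bad set lies in $\{x\in\T:|\sin x|<2(1+|s|)\exp(-2H)\}$, of measure $\lesssim(1+|s|)\exp(-2H)$; here one invokes the other half of \cref{eq:H-lb}, namely $H\gg 1+\log|s|$, to get $(1+|s|)\exp(-H)\ll 1$ and conclude. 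There is no real obstacle in this lemma; the only thing to keep track of is the size of the parameter $s$ relative to $\exp(H)$, which is precisely what the gap between the cutoff $\exp(-2H)$ inside the minimum and the target measure bound $\exp(-H)$, together with the lower bound \cref{eq:H-lb} on $H$, is designed to absorb.
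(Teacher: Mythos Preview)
Your proof is correct and follows the same approach as the paper: use the unit-vector constraint to force the other coordinate of $h_0$ to be bounded below by $1/2$, apply the triangle inequality to reduce to a smallness condition on $|\sin y|$ (resp.\ $|\sin x|$), and then bound the measure of that set using \cref{eq:H-lb}. The only cosmetic difference is your choice of threshold $4|s|^{-1}\exp(-2H)$ versus the paper's $\exp(-3H/2)$.
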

\begin{proof}
  (a) Since $ |\alpha|<\exp(-2H) $, we have $ |\beta|\ge
  (1-\exp(-4H))^{1/2}>1/2 $, and therefore
  \begin{equation*}
	|\langle \nabla V(x,y),h_0 \rangle|=|\alpha \sin x+s\beta \sin y|
    \ge \frac{1}{2}s\sin y-\exp(-2H)\ge \exp(-2H),
  \end{equation*}
  for all $ x\in \T $, and  $ y $ such that $ |\sin y|>\exp(-3H/2)
  $. The conclusion follows.  The proof for (b) is analogous.
\end{proof}

\begin{lemma}\label{lem:iv-Rz}
  (a) If $ |\alpha|\ge \exp(-2H) $, then there exists an absolute
  constant $ C_0\gg 1 $ such that
  \begin{equation*}
	\mes \{ y\in \T : \min_{x} (|V(x,y)-\eta|+|\langle \nabla V(x,y),h_0 \rangle|)
    < \exp(-C_0H)\}<\exp(-H/2).
  \end{equation*}
  \noindent (b) If $ |\beta|\ge \exp(-2H) $, then there exists an absolute
  constant $ C_0\gg 1 $ such that
  \begin{equation*}
	\mes \{ x\in \T : \min_{y}(|V(x,y)-\eta|+|\langle \nabla V(x,y),h_0 \rangle|)< \exp(-C_0H)\}
    <\exp(-H/2).
  \end{equation*}  
\end{lemma}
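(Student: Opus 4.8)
The plan is to follow the scheme of the proof of \cref{lem:iii-Rz}: complexify the variables, pass to a one–variable resultant, and reduce the estimate to a lower bound on that resultant's coefficients, using \cref{lem:Cartan-P} and \cref{lem:resultant}. We may assume $|\eta|\le 2+|s|$, since otherwise $|V(x,y)-\eta|\ge|\eta|-1-|s|>1$ for all $(x,y)$ and the set in question is empty. Put $z=\exp(ix)$, $w=\exp(iy)$ and
\[
 P_2(z,w)=z^2w+szw^2-2\eta zw+sz+w,\qquad
 Q_2(z,w)=\alpha z^2w+s\beta zw^2-s\beta z-\alpha w,
\]
so that $2zw\,(V(x,y)-\eta)=P_2(z,w)$ and $-2izw\,\langle\nabla V(x,y),h_0\rangle=Q_2(z,w)$; since $|zw|=1$ on the torus, $|V(x,y)-\eta|+|\langle\nabla V(x,y),h_0\rangle|=\tfrac12\bigl(|P_2(z,w)|+|Q_2(z,w)|\bigr)$.

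For part (a) I would regard $P_2,Q_2$ as quadratics in $z$, with leading coefficients $a_2=w$ and $b_2=\alpha w$, and compute $R_2(w):=\Res_z(P_2,Q_2)$. Using $\Res=(a_2b_0-a_0b_2)^2-(a_2b_1-a_1b_2)(a_1b_0-a_0b_1)$, a routine expansion gives $R_2(w)=w^2\tilde R_2(w)$ with
\[
 \tilde R_2(w)=s^2(\beta^2-\alpha^2)w^4+4\alpha^2\eta s\,w^3
 +\bigl(4\alpha^2(1-\eta^2)-2s^2\bigr)w^2+4\alpha^2\eta s\,w+s^2(\beta^2-\alpha^2),
\]
a self–reciprocal polynomial of degree at most $4$. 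The crux is the claim that
\[
 M:=\max\bigl(\,|s^2(\beta^2-\alpha^2)|,\ |4\alpha^2\eta s|,\ |4\alpha^2(1-\eta^2)-2s^2|\,\bigr)\ge\exp(-CH)
\]
for some absolute $C$. I would prove this by contradiction: if all three quantities were below $\exp(-CH)$, then, using $\exp(-H)\le|s|\le\exp(H)$ and $\alpha^2+\beta^2=1$, the first puts $\alpha^2$ within $\exp(-(C-2)H)$ of $\tfrac12$; the second then forces $|\eta|$ exponentially small; and the third then forces $|1-s^2|$ exponentially small, contradicting $H\gg\log|1-s^2|^{-1}$ once $C$ is large enough. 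In particular $\tilde R_2\not\equiv0$.

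Granting this, \cref{lem:Cartan-P} applied to $\tilde R_2$ shows that off a set of $y$ of measure $<\exp(-H/2)$ one has $|R_2(\exp(iy))|=|\tilde R_2(\exp(iy))|\ge M\exp(-C'H)\ge\exp(-C''H)$. For such $y$ the leading coefficients satisfy $|a_2(\exp(iy))|=1$ and $|b_2(\exp(iy))|=|\alpha|\ge\exp(-2H)$, so both $P_2(\cdot,\exp(iy))$ and $Q_2(\cdot,\exp(iy))$ are genuine quadratics with leading coefficient bounded below, and by \cref{lem:Cauchy-bound} (together with $|\eta|\le2+|s|$ and $|s|\le\exp(H)$) the moduli of all their zeros are $\le\exp(C_1H)$. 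Taking $\delta=\exp(-C_2H)$ with $C_2$ a large enough absolute constant that $16\exp(3C_1H)\,\delta\le|R_2(\exp(iy))|$, the contrapositive of \cref{lem:resultant} yields $\max(|P_2(z,\exp(iy))|,|Q_2(z,\exp(iy))|)\ge\min(1,|\alpha|)\,\delta^2\ge\exp(-C_0H)$ for every $z\in\C$, in particular for $z=\exp(ix)$; hence $\min_x\bigl(|V(x,y)-\eta|+|\langle\nabla V(x,y),h_0\rangle|\bigr)\ge\tfrac12\exp(-C_0H)$ for all $y$ outside the bad set, which is (a). Part (b) is entirely analogous, now eliminating $w$: $P_2,Q_2$ are viewed as quadratics in $w$ with leading coefficients $sz$ and $s\beta z$, of modulus $\ge\exp(-3H)$ on $|z|=1$ by $|s|\ge\exp(-H)$ and $|\beta|\ge\exp(-2H)$, one gets $\Res_w(P_2,Q_2)=s^2z^2\tilde R_2'(z)$ with $\tilde R_2'$ self–reciprocal of degree $\le4$ and coefficient vector $(\alpha^2-\beta^2,\,4\beta^2\eta,\,4s^2\beta^2-2-4\beta^2\eta^2,\,4\beta^2\eta,\,\alpha^2-\beta^2)$, and the same three–step case analysis bounds its maximal coefficient below; the rest carries over verbatim.

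The main obstacle is the coefficient lower bound $M\ge\exp(-CH)$ (and its analogue in part (b)): unlike in \cref{lem:iii-Rz}, the leading coefficient of the resultant, namely $s^2(\beta^2-\alpha^2)$ (resp. $s^2(\alpha^2-\beta^2)$), genuinely vanishes when $|\alpha|=|\beta|$, so one cannot simply bound it below and must instead control the whole self–reciprocal coefficient vector and invoke \cref{lem:Cartan-P} in its ``$\max$ of coefficients'' form. This is the step where all three quantitative constraints on $s$ enter simultaneously; the hypotheses $|\alpha|\ge\exp(-2H)$ (part (a)) and $|\beta|\ge\exp(-2H)$ (part (b)) are needed separately, only to keep the leading coefficient of $Q_2$ under control so that \cref{lem:resultant} applies. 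The resultant computation itself is elementary, but it has to be carried out carefully enough to exhibit the exact coefficients and the self–reciprocal structure.
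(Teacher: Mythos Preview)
Your proof is correct and follows essentially the same scheme as the paper's: complexify, form the resultant $\Res_z(P_2,Q_2)$, bound its coefficient vector below, then feed this into \cref{lem:Cartan-P} and \cref{lem:resultant}. Your contradiction argument for the coefficient bound is equivalent to the paper's case split on $|\eta|$ (which uses $2c_6+c_4=4\alpha^2(1-s^2-\eta^2)$ when $|\eta|$ is small and $c_5=4\alpha^2\eta s$ otherwise); note also that your $Q_2$, carrying the factor $s\beta$, is the correct complexification of $\langle\nabla V,h_0\rangle$---the paper's printed $Q_2$ omits this $s$, but the typo is harmless since $c_5$ and the combination $2c_6+c_4$ come out the same either way.
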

\begin{proof}
  We only prove (a), the proof of the second statement being analogous.
  By letting $ z=\exp(ix) $, $ w=\exp(iy) $, we have
  \begin{equation}\label{eq:V-eta-complexified}
    V(x,y)-\eta=\frac{1}{2zw} P_2(z,w),\quad \langle \nabla V(x,y),h_0
    \rangle=-\frac{1}{2izw} Q_2(z,w),
  \end{equation} with
  \begin{gather*}
    P_2(z,w)=z^2w+szw^2-2\eta
    zw+w+sz,\\ Q_2(z,w)=\alpha z^2 w+\beta zw^2-\alpha w-\beta z.
  \end{gather*}
  Let $ a_i $, $ b_i $ be the polynomials in $ w $ such
  that
  \begin{equation*}
	P_2(z,w)=a_2z^2+a_1z+a_0,\quad Q_2(z,w)=b_2z^2+b_1z+b_0.
  \end{equation*}
  In particular, $ a_2(w)=w $ and $ b_2(w)=\alpha w $.
  A direct computation yields
  \begin{multline*}
    R_2(w)=\Res_z(P_2,Q_2)=\sum_{k=0}^6 c_kw^k\\
    \kern-20em=w^{6} \left(- \alpha^{2} s^{2} +
      \beta^{2}\right) +w^{5} (4 \alpha^{2} \eta s) \\ + w^{4} \left(- 4
      \alpha^{2} \eta^{2} - 2 \alpha^{2} s^{2} + 4 \alpha^{2} - 2
      \beta^{2}\right) + w^{3}(4 \alpha^{2} \eta s) + w^{2} \left(-
      \alpha^{2} s^{2} + \beta^{2}\right)\\ \kern-10em =w^{6} \left(1-
      \alpha^{2} (1+s^{2}) \right) +w^{5} (4 \alpha^{2} \eta s) \\
    + w^{4}\left(\alpha^2(6-4\eta^2-2s^2)-2\right) + w^{3}(4 \alpha^{2} \eta s) +
    w^{2} \left(1- \alpha^{2}(1+ s^{2}) \right).
  \end{multline*}
  We will argue that not all of the coefficients of $ R_2 $ are too
  small. To this end, note  that
  \begin{equation*}
	2\alpha^{-2}c_6+\alpha^{-2}c_4=4-4s^2-4\eta^2.
  \end{equation*}
  If $ |\eta|<\exp(-H) $, then
  \begin{equation*}
	|2\alpha^{-2}c_6+\alpha^{-2}c_4|>4|1-s^2|-4\exp(-2H)>2|1-s^2|>\exp(-H),
  \end{equation*}
  and therefore, either
  \begin{equation*}
	|c_6|\gtrsim \exp(-H)\alpha^2\ge \exp(-5H) \qquad\text{or}\qquad
    |c_4|\gtrsim \exp(-H)\alpha^2 \ge \exp(-5H).
  \end{equation*}
  On the other hand, if $ |\eta|\ge \exp(-H) $, then
  \begin{equation*}
	|c_5|\gtrsim \alpha^2\exp(-H)s>\exp(-6H). 
  \end{equation*}
  Thus, $ \max_{k}|c_k|\gtrsim \exp(-6H) $. Then, using \cref{lem:Cartan-P},
  \begin{equation*}
	|R_2(\exp(iy))|\ge \exp(-CH),
  \end{equation*}
  for $ y\in \T\setminus \cB $, $ \mes(\cB)<\exp(-H/2) $. Let $ r(\exp(iy)) $ be the maximum of the
  absolute values of the roots of $ P_2(\cdot,\exp(iy)) $ and $
  Q_2(\cdot,\exp(iy)) $. By \cref{lem:Cauchy-bound}, $
  r(\exp(iy))<\exp(3H) $. Then
  \begin{equation*}
	|R_2(\exp(iy))|\ge 2|a_2(\exp(iy))|^2|b_2(\exp(iy))|^2 r(\exp(iy))^3\delta,
  \end{equation*}
  for $ y\in \T\setminus \cB $, with $ \delta=\exp(-CH) $. By
  \cref{lem:resultant},
  \begin{equation*}
	\max(P_2(z,\exp(iy)),Q_2(z,\exp(iy)))\ge \min(|a_2(\exp(iy))|,|b_2(\exp(iy))|)\delta^2>\exp(-CH)
  \end{equation*}
  for any $ z $, and $ y\in \T\setminus \cB $. The conclusion follows
  by recalling \cref{eq:V-eta-complexified}.
\end{proof}

Now condition (iv) follows from \cref{lem:iv-simple-case} and
\cref{lem:iv-Rz}, by setting $ K=C_0H $, with $ C_0 $ as in
\cref{lem:iv-Rz}, and by taking $ \fc_1=1/2 $ and $ \fC_0 $ as in
\cref{eq:fC0}, with the new $ C_0 $.
Obviously, we can arrange for both condition (iii) and (iv) to hold
with the same $ \fC_0 $.

\begin{remark}
  (a) It should be clear that for $ s\in \{ -1,0,1 \} $ not all of the
  conditions are satisfied. Indeed, we noted that conditions (i) and (ii) fail for $ s=0
  $, and  for $ s=\pm 1 $, for example, condition (iv) fails for $
  \eta=0 $ and $ h_0 $ proportional to $ (\pm 1,1) $.

  \smallskip\noindent
  (b) Due to the choices of $ \fC_0 $ in \cref{eq:fC0} and $ \lambda_0
  $ implied by the proof of \cref{thm:A} (recall \cref{eq:TV-final}),
  we have that as $ s $ approaches $ \{ -1,0,1 \} $, $ \lambda_0 $
  approaches $ \infty $, as claimed in \cref{rem:thmAquantify} (c).

\end{remark}

\def\cprime{$'$}

\end{document}